\newcommand{\defeq}{\vcentcolon=}
\newcommand{\eqdef}{=\vcentcolon}
\newcommand{\dbl}{\{\!\!\{}
\newcommand{\dbr}{\}\!\!\}}
\definecolor{igreen}{rgb}{0.0, 0.56, 0.0}
\colorlet{gred}{-red!75!green!65!}
\colorlet{mamber}{-red!75!green!15!blue!50!}
\colorlet{grown}{-red!75!blue!20!green}
\colorlet{bled}{-red!85!blue!40!green!45!}
\colorlet{waters}{cyan!25} 
\colorlet{water}{cyan!25!green!20!} 
\definecolor{grin}{HTML}{00F9DE}
\def\ba{\begin{array}}
\def\ea{\end{array}}
\def\beann{\begin{eqnarray*}}
\def\eeann{\end{eqnarray*}}
\def\bea{\begin{eqnarray}}
\def\eea{\end{eqnarray}}
\newtheorem{theorem}{Theorem}[section]
\newtheorem{proposition}[theorem]{Proposition}
\newtheorem{corollary}{Corollary}[theorem]
\newtheorem{lemma}[theorem]{Lemma}
\theoremstyle{definition}
\newtheorem{definition}[theorem]{Definition}
\newtheorem{example}[theorem]{Example}
\theoremstyle{remark}
\newtheorem{remark}[theorem]{Remark}
\theoremstyle{conjecture}
\newtheorem{conjecture}[theorem]{Conjecture}
\DeclareMathOperator{\ima}{Im}
\title{A functorial approach to Kashiwara-Vergne}
\author[1]{Rodrigo Navarro-Betancourt}
\affil[1]{School of Mathematics, 17 Westland Row, Trinity College Dublin, Ireland}
\date{}
\begin{document}

\maketitle

\begin{abstract}
    As a consequence of the proof of the Kashiwara-Vergne conjecture of Alekseev and Torossian \cite{AlekToro}, the authors obtained an injection $\mathrm{GRT} \hookrightarrow \mathrm{KRV}$. The group $\mathrm{GRT}$ can be regarded as the group of automorphisms of the operad of parenthesized chord diagrams, while $\mathrm{KRV}$ can be recovered from the automorphism group of the Goldman-Turaev Lie bialgebra of a thrice-punctured sphere. This suggests the existence of a \textit{natural} way to derive Lie bialgebras from operads, and we verify this is the case. That is, we reproduce the Alekseev-Torossian injection by functorially constructing bracket and cobracket operations out of operad modules. This framework is enough to establish a relationship between Gonzalez' higher genus $\mathrm{GRT}_g$ groups from \cite{gonz}, and the higher genus $\mathrm{KRV}_g$ groups of \cite{Flor1}. Our construction is informed by Massuyeau \cite{Mass} and Turaev's \cite{MasTu} work on Fox pairings and quasi-derivations.
     
\end{abstract}

\tableofcontents

\section{Introduction}

The Kashiwara-Vergne (KV) conjecture is a landmark of Lie theory. In a modern formulation, it posits the existence of an automorphism $F$ on the degree completion of the free Lie algebra in generators $x$ and $y$ such that, among other relations, $F$ satisfies
\[
    F(x+y)= \log(e^x e^y e^{-x} e^{-y})=ch(x,y),
\]
where $ch(x,y)=x+y+\frac{1}{2}[x,y]+ \dots$ is the Campbell-Hausdorff series. The KV conjecture was proved in full generality in \cite{AlekMein}. It was further elucidated in \cite{AlekToro}, where Alekseev and Torossian defined the groups $\mathrm{KV}$ and $\mathrm{KRV}$, which act freely and transitively on solutions of the KV problem. In \cite{AlekToro}, the authors also proved that the graded Grothendieck-Teichm\"uller group $\mathrm{GRT}$ injects into $\mathrm{KRV}$.

In \cite{BarNat}, Bar-Natan gives a topological description of $\mathrm{GRT}$ as the group of automorphisms of the operad of parenthesized chord diagrams $\mathbf{PaCD}$. Surprisingly, the group $\mathrm{KRV}$ also admits a geometric description in terms of the Goldman-Turaev Lie bialgebra of a compact surface.

Let $\Sigma_{g,n+1}$ be a compact oriented surface of genus $g$ with $n+1$ boundary components. In \cite{Goldman1986}, Goldman defined a Lie bracket $[-,-]^G$ on the $\mathbb{K}$-span of conjugacy classes in the fundamental group $\pi_{g,n+1} \defeq \pi_1(\Sigma_{g,n+1})$,
\[
    |\mathbb{K}\pi_{g,n+1}| \defeq \mathbb{K}\pi_{g,n+1}/[\mathbb{K}\pi_{g,n+1}, \mathbb{K}\pi_{g,n+1}].
\]
Later on, in \cite{Turaev1991SkeinQO}, Turaev constructed a Lie cobracket on the space $\mathbb{K}\pi/ \mathbb{K} \textbf{1}$, where $\textbf{1}$ stands for the homotopy class of a contractible loop. This construction can be refined into a cobracket $\delta^\gamma$ on $|\mathbb{K}\pi|$ given a trivialization of the tangent bundle of the surface $\gamma$, that is, a \textit{framing}. The Goldman bracket and Turaev cobracket are compatible in the sense that $(|\mathbb{K}\pi_{g,n+1}|, [-,-]^G, \delta^\gamma)$ describes a Lie bialgebra structure. In \cite{Flor2}, the authors prove that $\mathrm{KRV}$ roughly matches the automorphism group of the associated graded\footnote{With respect to the weight filtration given in Definition \ref{wght_pi}.} Goldman-Turaev Lie bialgebra of a thrice-punctured sphere with respect to the \textit{adapted} framing (see Section \ref{GT_bialg}). 

Our overarching goal is to reproduce the \cite{AlekToro} injection of $\mathrm{GRT}$ into $\mathrm{KRV}$ functorially. To phrase this more precisely, let $\mathbf{GoTu}$ be the category in which objects consist of triplets
\begin{equation}
    (H, [-,-], \delta),
\end{equation}
where $H$ is a Hopf algebra and $[-,-]: |H| \otimes |H| \rightarrow |H|$ and $\delta: |H| \rightarrow |H| \otimes |H|$ are linear maps. Morphisms in $\mathbf{GoTu}$ consist of algebra maps that intertwine with the corresponding brackets and cobrackets. Let us also denote
$\mathcal{C}_{g,n+1}^\gamma \defeq (\mathrm{gr}(\mathbb{K}\pi_{g,n+1}), [-,-]^G_{\mathrm{gr}}, \delta_{\mathrm{gr}}^{\gamma})$ for the associated graded of the Goldman-Turaev bialgebra on $\Sigma_{g,n+1}$ with respect to the framing $\gamma$. The injection of $\mathrm{GRT}$ into $\mathrm{KRV}$ can then be phrased as a morphism 
\begin{equation*}
    \mathrm{Aut}_{\mathrm{Operads}}(\mathbf{PaCD}) \hookrightarrow \mathrm{Aut}_{\mathbf{GoTu}}(\mathcal{C}_{0,3}^{\mathrm{adp}}).
\end{equation*}
From this presentation, it seems natural to guess that there exists a functor $\Lambda: \mathrm{Operads} \rightarrow\mathbf{GoTu}$ such that $\Lambda(\mathbf{PaCD})=\mathcal{C}_{0,3}^\mathrm{adp}$. The main result of this paper is to construct such a functor for a suitable variant of operads. Namely, we consider the category $\mathrm{OpR}^\Delta_f(n)$ of (suitably marked) pairs $(\mathcal{P}, \mathcal{M})$, where $\mathcal{M}$ is an operadic right $\mathcal{P}$-module over the category of small $\mathrm{CoAlg}$-enriched categories. Whenever $(\mathcal{P},\mathcal{M})\in \mathrm{OpR}^\Delta_f(n)$, certain objects of $\mathcal{M}$ can be regarded as \textit{parenthesizations}, and the label $n$ indicates the endomorphism coalgebras of parenthesizations of length $n$ are constrained. Furthermore, objects in $\mathrm{OpR}^\Delta_f(n)$ must be enhanced with data corresponding to a \textit{framing} $\gamma$ (see Definition \ref{op_delta}). For simplicity, we will omit all extra markings on objects in $\mathrm{OpR}^\Delta_f(n)$ for the Introduction. Let $\mathbf{PaCD}^f$ be Gonzalez' \textit{framed} analogue of $\mathbf{PaCD}$ (see Definition \ref{def_PaCD}). The pair $(\mathbf{PaCD}^f, \mathbf{PaCD}^f)$ naturally lifts to an object in $\mathrm{OpR}^\Delta_f(n)$, and we prove that
\begin{theorem}[Theorem \ref{big_adp}]
\label{intro_D}
    For $n\geq 1$, there exist functors $\Lambda_{n+1}: \mathrm{OpR}^\Delta_f(n+1) \rightarrow \mathbf{GoTu}$ such that $\Lambda_{n+1}(\mathbf{PaCD}^f, \mathbf{PaCD}^f)$ is isomorphic to the associated graded of the Goldman-Turaev bialgebra of the compact surface of genus $0$ with $n+1$ boundary components.
\end{theorem}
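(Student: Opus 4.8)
The plan is to construct $\Lambda_{n+1}$ by extracting, from an object $(\mathcal{P},\mathcal{M})\in\mathrm{OpR}^\Delta_f(n+1)$, the three pieces of data required to land in $\mathbf{GoTu}$: a Hopf algebra, a bracket, and a cobracket. For the Hopf algebra I would take the endomorphism coalgebra of a distinguished length-$(n+1)$ parenthesization $p\in\mathcal{M}$. Since $\mathcal{M}$ is enriched in $\mathrm{CoAlg}$, the object $\mathrm{End}_{\mathcal{M}}(p)$ carries a comultiplication, while categorical composition supplies an associative product; the first step is therefore to check that these two structures are compatible and admit an antipode, so that they assemble into a Hopf algebra $H_{(\mathcal{P},\mathcal{M})}$. (For $\mathbf{PaCD}^f$ this will be the relevant framed chord-diagram Hopf algebra, cocommutative with chords primitive.) The bracket and cobracket are then defined on the quotient $|H| = H/[H,H]$, which is where the $\mathbf{GoTu}$ structure lives.

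The bracket and cobracket are where the Massuyeau–Turaev formalism enters. I would read the right $\mathcal{P}$-action on $\mathcal{M}$ as a family of insertion/composition maps and use it to produce a Fox pairing on $H$: the intersection-type pairing is recorded by the two distinct ways an operation may be inserted into the parenthesization $p$. Antisymmetrizing this Fox pairing and descending to $|H|$ should yield a well-defined Lie bracket $[-,-]$. For the cobracket I would use the framing data $\gamma$ of Definition \ref{op_delta}: a framing provides a canonical way to resolve the diagonal/self-interaction ambiguity that obstructs a naive splitting of $p$, following Turaev's prescription. Combined with the coproduct of $H$, this self-interaction resolution packages into a map $\delta\colon|H|\to|H|\otimes|H|$, and the Lie-bialgebra compatibility of $[-,-]$ and $\delta$ should be inherited directly from the Fox-pairing/quasi-derivation identities.

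Functoriality is then verified piece by piece. A morphism $(\mathcal{P},\mathcal{M})\to(\mathcal{P}',\mathcal{M}')$ carries the distinguished length-$(n+1)$ parenthesization to its counterpart, inducing a coalgebra map on endomorphism coalgebras that respects composition, hence a Hopf-algebra map $H\to H'$. Because such a morphism intertwines the module structures by definition, the induced map commutes with the Fox pairings and therefore with the brackets; because it respects the framing markings, it commutes with the quasi-derivations and therefore with the cobrackets. Thus the assignment lands in $\mathrm{Hom}_{\mathbf{GoTu}}$, and preservation of identities and of composition is formal, giving a functor $\Lambda_{n+1}$.

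For the identification $\Lambda_{n+1}(\mathbf{PaCD}^f,\mathbf{PaCD}^f)\cong\mathcal{C}_{0,n+1}^{\mathrm{adp}}$, I would first recall that for genus $0$ the group $\pi_{0,n+1}$ is free of rank $n$, so its weighted associated graded (Definition \ref{wght_pi}) is the cocommutative Hopf algebra $\mathrm{gr}(\mathbb{K}\pi_{0,n+1})$ with primitives the free Lie algebra on $n$ generators; I would match this with the endomorphism coalgebra of the length-$(n+1)$ parenthesization in $\mathbf{PaCD}^f$, which by Gonzalez' description is exactly the framed chord-diagram Hopf algebra on $n+1$ points. With the Hopf algebras identified, it remains to check that the Fox-pairing bracket reproduces $[-,-]^G_{\mathrm{gr}}$ and that the framing cobracket reproduces $\delta^{\mathrm{adp}}_{\mathrm{gr}}$. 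I expect the bracket comparison to be largely routine once the intersection pairing is read off from the chord-insertion structure. \textbf{The main obstacle} will be the cobracket: one must verify that the framing data carried by $\mathbf{PaCD}^f$ corresponds, under the above identification, to the adapted framing on $\Sigma_{0,n+1}$, and that the operadic self-interaction resolution agrees term-by-term with Turaev's self-intersection sum. This is the step most sensitive to conventions---orientations, intersection signs, and the normalization of the framing---and it is where the genuine content of the verification will concentrate.
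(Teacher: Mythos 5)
There is a genuine gap, and it occurs at the very first step: your choice of the underlying Hopf algebra is wrong. You take $H$ to be the endomorphism coalgebra $\mathrm{End}_{\mathcal{M}}(p)$ of a length-$(n+1)$ parenthesization. For $(\mathbf{PaCD}^f,\mathbf{PaCD}^f)$ this is $U\hat{\mathfrak{t}}^f_{n+1}$, the enveloping algebra of the framed Drinfeld--Kohno Lie algebra on $n+1$ strands, which contains \emph{all} the generators $t_{ij}$, $1\le i,j\le n+1$, subject to the (F4T)-type relations. This Hopf algebra is strictly larger than, and not isomorphic to, $\mathrm{gr}^{\mathrm{wt}}(\mathbb{K}\pi_{0,n+1})\cong \mathbb{K}\langle\!\langle z_1,\dots,z_n\rangle\!\rangle$, which is free on $n$ primitive generators (already for $n=2$, $\mathfrak{t}_3$ is a central extension of a free Lie algebra on two generators, not free, and $\mathfrak{t}^f_3$ has three further central generators $t_{ii}$). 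So the identification you propose in your last paragraph cannot hold. The paper's construction instead takes the enveloping algebra of $\overline{K}_{n+1}=K_{n+1}/\langle t_{(n+1)(n+1)}\rangle$, where $K_{n+1}=\ker\bigl(s_{n+1}\colon \Upsilon_{n+1}\to\Upsilon_{n}\bigr)$ is the kernel of the strand-deletion map; for $\mathbf{PaCD}^f$ this quotient is exactly the free Lie algebra $L^n=\widehat{\mathrm{Lie}}(t_{1(n+1)},\dots,t_{n(n+1)})$, matching the surface. Topologically the point is that the Goldman--Turaev bialgebra lives on the surface itself, which is the \emph{fiber} of the map forgetting a configuration point, not on the whole configuration space; the kernel of $s_{n+1}$ is the algebraic incarnation of that fiber, and skipping it is not a convention issue but a loss of the theorem.

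A second, related gap is the mechanism producing the bracket and cobracket. Saying the Fox pairing is "recorded by the two distinct ways an operation may be inserted" and that the framing "resolves the self-interaction ambiguity" does not determine any maps. In the paper these operations are extracted cohomologically: one forms the abelian extension $0\to N\to\mathfrak{g}_{n+2}\xrightarrow{\;s_{n+2}\oplus s_{n+1}\;}\overline{K}_{n+1}\oplus\overline{K}_{n+1}\to 0$ (where $\mathfrak{g}_{n+2}$ is a quotient of the kernel of deleting \emph{two} strands), equipped with the section $d_{n+1}\circ\gamma^f$ over the diagonal and the module isomorphism $\phi\colon N\to U\overline{K}_{n+1}$; this is an object of $\mathbf{RelE}^\eta$, and under the equivalences $\mathbf{RelE}^\eta\cong\mathbf{RelCo}^\eta\cong\mathbf{Fox}^\eta$ the extension class yields the Fox pairing $\rho_G$ (hence the bracket) while the section --- i.e.\ the framing data --- yields the quasi-derivation $q^f$ (hence the cobracket). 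In the genus-zero verification, the extension $\mathfrak{g}_{n+2}$ is identified with Kuno's emergent Drinfeld--Kohno algebra $\widehat{\mathrm{edk}}_{n,2}\cong(L\oplus L)\times_G A$, and the comparison with the Goldman bracket and Turaev cobracket reduces to the already-computed Lemmas \ref{rho_adp} and \ref{q_adp}. Note also that no skew-symmetrization of the Fox pairing is needed (Proposition \ref{brac_from_fox}), and that the framing is not something to be matched to the adapted framing: the section $\gamma^f$ in the marked data ranges over all framings via Lemma \ref{sec_from_frames}, so the theorem holds framing-by-framing by construction.
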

The construction also applies to surfaces of higher genera. Following \cite{gonz}, let $(\mathbf{PacD}^f, \mathbf{PacD}_g^f)$ denote the operad module of parenthesized framed genus $g$ chord diagrams. 
\begin{theorem}[Theorem \ref{PaCD_is_GT}]
\label{intro_D1}
    For $n\geq 1$, there is a natural isomorphism between $\Lambda_{n+1}(\mathbf{PaCD}^f, \mathbf{PaCD}_g^f)$ and the associated graded of the Goldman-Turaev bialgebra of the compact surface of genus $g$ with $n$ boundary components.
\end{theorem}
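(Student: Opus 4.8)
The plan is to evaluate the functor on the pair $(\mathbf{PaCD}^f,\mathbf{PaCD}_g^f)$ and to match the resulting triplet $\Lambda_{n+1}(\mathbf{PaCD}^f,\mathbf{PaCD}_g^f)=(H,[-,-],\delta)$ component by component against $\mathcal{C}_{g,n}^\gamma=(\mathrm{gr}(\mathbb{K}\pi_{g,n}),[-,-]^G_{\mathrm{gr}},\delta^\gamma_{\mathrm{gr}})$. The genus $0$ situation is already settled by Theorem~\ref{intro_D}, which treats the module $\mathbf{PaCD}^f$ and produces $\Sigma_{0,n+1}$; accordingly the real content here is to account for the extra handle data carried by Gonzalez' genus $g$ module $\mathbf{PaCD}_g^f$ and to check that it reproduces exactly the contributions of the $2g$ handle curves in positive genus. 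I would arrange the argument so that the construction of $\Lambda_{n+1}$ does most of the bookkeeping, leaving three comparisons to verify.

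First I would identify the underlying Hopf algebra $H$. The parenthesizations of length $n+1$ in $\mathbf{PaCD}_g^f$, together with their endomorphism coalgebras, should assemble into a filtered model of $\mathbb{K}\pi_{g,n}$: since $\Sigma_{g,n}$ is homotopy equivalent to a wedge of $2g+n-1$ circles, $\pi_{g,n}$ is free of that rank, and the chord/handle generators of $\mathbf{PaCD}_g^f$ should map bijectively onto a standard free basis, namely $n-1$ boundary generators together with the $2g$ handle generators $a_1,b_1,\dots,a_g,b_g$. Passing to the associated graded for the weight filtration of Definition~\ref{wght_pi} then yields $H\cong\mathrm{gr}(\mathbb{K}\pi_{g,n})$ as Hopf algebras; this step is essentially formal once the generators are matched, and it is where the drop from $n+1$ to $n$ boundary components becomes visible.

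Next I would match the two operations. For the bracket, the functorial $[-,-]$ is produced from the operadic insertion of module elements, which geometrically realises the juxtaposition of based loops along a transverse intersection; the signed count that results is precisely Goldman's $[-,-]^G_{\mathrm{gr}}$, now including the cross terms between the $a_i$ and $b_i$ coming from the handles. For the cobracket, the framing data built into an object of $\mathrm{OpR}^\Delta_f(n+1)$ supplies exactly the self-linking correction that distinguishes Turaev's unframed cobracket from $\delta^\gamma_{\mathrm{gr}}$, and I would check that the framing $\gamma$ encoded in $\mathbf{PaCD}_g^f$ matches the topological trivialisation used to define $\delta^\gamma_{\mathrm{gr}}$ on $\Sigma_{g,n}$. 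Naturality of the isomorphism then follows from naturality of $\Lambda_{n+1}$ together with the fact that both sides are determined by their values on generators.

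The main obstacle is the handle contribution to the bracket and, especially, to the cobracket. In genus $0$ there are no curves with forced nonzero geometric intersection, whereas the handle curves $a_i,b_i$ meet once and have self-intersection behaviour that depends sensitively on $\gamma$; verifying that Gonzalez' combinatorial handle diagrams produce exactly these signs is the delicate point. I expect the cleanest route is a gluing argument: realise $\Sigma_{g,n}$ by identifying $g$ pairs of boundary circles of the planar surface $\Sigma_{0,n+2g}$, show that both $\Lambda_{n+1}$ and the Goldman--Turaev construction are compatible with this boundary identification, and thereby reduce the positive-genus statement to the genus $0$ base case of Theorem~\ref{intro_D}. Should such compatibility fail to hold directly at the level of $\mathrm{OpR}^\Delta_f$, the fallback is a direct diagrammatic computation of $[-,-]$ and $\delta$ on the handle generators, compared term by term with the known formulas for the Goldman bracket and framed Turaev cobracket on $\Sigma_{g,n}$.
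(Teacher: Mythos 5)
Your overall shape (evaluate the functor, then match the Hopf algebra, the bracket, and the cobracket) is the one forced by the statement, but the mechanisms you propose for the substantive steps are not the paper's and, as proposed, do not work. The computation of the functor on $(\mathbf{PaCD}^f,\mathbf{PaCD}^f_g)$ is not geometric bookkeeping: by construction $\Lambda_{n+2}$ factors through $\mathbf{RelE}^\eta \simeq \mathbf{RelCo}^\eta \simeq \mathbf{Fox}^\eta$, so the real content is a braid-algebra computation. Concretely, one needs (i) the generators-and-relations identification of the kernels of the string-deletion maps, $K_{g,n+2}\cong\mathfrak{k}_{g,n+2}$ and $H_{g,n+3}\cong\mathfrak{h}_{g,n+3}$ (Propositions \ref{def_kg,n} and \ref{def_hg,n}), which is what shows $\overline{K}_{g,n+2}\cong L^{n+2}$ is free --- this, not a statement about wedges of circles or about passing to the associated graded of a ``filtered model'' (the endomorphism coalgebras of $\mathbf{PaCD}^f_g$ are already the graded objects $U\mathfrak{t}^f_{g,n}$), is why the underlying Hopf algebra is $UL^{n+2}\cong A$; and (ii) the isomorphism of Proposition \ref{Cgamma_is_C}, identifying the abelian extension $\mathfrak{g}$ with $(L^{n+2}\oplus L^{n+2})\times_G UL^{n+2}$, which is exactly where the handle relations of $\mathfrak{t}^f_{g,n+3}$ (e.g.\ $[x_i^a,y_j^b]=\delta_{ab}t_{ij}$ and (\ref{FTg})) are converted into the Fox pairing $\tilde{\rho}_G$ with entries $\tilde{\rho}_G(x^i_{n+2},y^i_{n+2})=1$, $\tilde{\rho}_G(t_{j(n+2)},t_{j(n+2)})=-t_{j(n+2)}$, and where the section $\gamma^f$ becomes the quasi-derivation $q^{r^f}$. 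Your proposal contains no substitute for either computation, and the ``signed count of transverse intersections'' you invoke for the bracket has no footing in this framework: nothing intersects in $U\mathfrak{t}^f_{g,n}$.

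Your preferred route --- gluing $g$ pairs of boundary circles of $\Sigma_{0,n+2g}$ to reduce to the genus-zero case of Theorem \ref{intro_D} --- is backwards relative to how the theory is organized and would require machinery that does not exist. There is no boundary-identification operation on $\mathrm{OpR}^\Delta_f(k)$ relating $(\mathbf{PaCD}^f,\mathbf{PaCD}^f)$ to $(\mathbf{PaCD}^f,\mathbf{PaCD}^f_g)$, and the Goldman--Turaev structure (the framed cobracket especially) does not transform simply under such gluings; in fact the paper proves the higher-genus case directly and then treats genus zero \emph{separately} (Appendix \ref{app_KRV}), precisely because $\mathbf{PaCD}^f_{g=0}$ is only morally equal to $\mathbf{PaCD}^f$. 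The comparison with the surface side is likewise not done by diagrammatics or re-derivation of intersection formulas: once Proposition \ref{PaCD--GoTU} gives $\Lambda_{n+2}(\mathcal{N}^f)=(UL^{n+2},[-,-]^{\tilde{\rho}_G},\delta_{q^{r^f}})$, one renames generators via the isomorphism (\ref{exp_iso}) and quotes the formulas of \cite{Flor1} for $\kappa_{\mathrm{gr}}$ and $\mu^f_{r,\mathrm{gr}}$, checking (Corollaries \ref{rho_G=gold} and \ref{qf=Tur}) that they are precisely the bracket and cobracket attached to this Fox pairing and quasi-derivation. The ``delicate handle contribution to the cobracket'' you anticipate dissolves here: $r^f$ vanishes on the handle generators, and all self-intersection subtleties are absorbed into the cited results of \cite{Flor1}.
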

Analogously, in \cite{Flor1}, the authors define the higher genus groups $\mathrm{KRV}^\gamma_{g,n+1}$, and prove these act on $\mathcal{C}^\gamma_{g,n+1}$ by bialgebra automorphisms. From the previous theorem, we immediately obtain the following corollary:
\begin{theorem}[Theorem \ref{GRT_in_KRV}]
\label{intro_GRT}
    For $n \geq 0$, there exist natural (non-trivial) homomorphisms
    \begin{equation}\label{main_intro}
        \mathrm{GRT}_g^\gamma = \mathrm{Aut}(\mathbf{PaCD}^f,\mathbf{PaCD}^f_g)  \rightarrow \mathrm{Aut}(\mathcal{C}_{g,n+1}^\gamma) =: \overline{\mathrm{KRV}}_{g,n+1}^\gamma.
    \end{equation}
\end{theorem}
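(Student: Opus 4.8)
The plan is to reduce the statement almost entirely to the functoriality of $\Lambda_{n+1}$ together with the identification furnished by Theorem \ref{PaCD_is_GT}. Recall the elementary categorical fact that any functor $F\colon \mathcal{C}\to\mathcal{D}$ restricts, for each object $X$, to a group homomorphism $\mathrm{Aut}_{\mathcal{C}}(X)\to\mathrm{Aut}_{\mathcal{D}}(F(X))$: since $F$ preserves identities and composition, it sends an isomorphism $\phi$ with two-sided inverse $\phi^{-1}$ to the isomorphism $F(\phi)$ with inverse $F(\phi^{-1})$. First I would apply this with $F=\Lambda_{n+1}$ to the object $(\mathbf{PaCD}^f,\mathbf{PaCD}^f_g)\in\mathrm{OpR}^\Delta_f(n+1)$, obtaining
\[
    \mathrm{Aut}_{\mathrm{OpR}^\Delta_f(n+1)}(\mathbf{PaCD}^f,\mathbf{PaCD}^f_g)\longrightarrow \mathrm{Aut}_{\mathbf{GoTu}}\!\bigl(\Lambda_{n+1}(\mathbf{PaCD}^f,\mathbf{PaCD}^f_g)\bigr).
\]
Theorem \ref{PaCD_is_GT} provides a natural isomorphism $\Lambda_{n+1}(\mathbf{PaCD}^f,\mathbf{PaCD}^f_g)\cong\mathcal{C}^\gamma_{g,n+1}$ in $\mathbf{GoTu}$, and conjugating by it identifies the target above with $\mathrm{Aut}_{\mathbf{GoTu}}(\mathcal{C}^\gamma_{g,n+1})=\overline{\mathrm{KRV}}^\gamma_{g,n+1}$. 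Composing yields a homomorphism with exactly the domain and codomain required by \eqref{main_intro}.

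Before declaring this to be the map \eqref{main_intro}, I would check that its domain is genuinely $\mathrm{GRT}^\gamma_g$. By definition $\mathrm{GRT}^\gamma_g=\mathrm{Aut}(\mathbf{PaCD}^f,\mathbf{PaCD}^f_g)$ is taken in the category of operad modules, whereas the homomorphism above is indexed by automorphisms in $\mathrm{OpR}^\Delta_f(n+1)$, whose objects are decorated with the framing data of Definition \ref{op_delta}. The point to verify is that the chosen lift of $(\mathbf{PaCD}^f,\mathbf{PaCD}^f_g)$ to $\mathrm{OpR}^\Delta_f(n+1)$ has the property that its operad-module automorphisms automatically preserve the extra markings, so that the two automorphism groups coincide. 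This should be immediate from the fact that the framing decoration on the framed module $\mathbf{PaCD}^f_g$ is pinned down by distinguished diagrammatic generators that no operad-module automorphism can displace; the superscript $\gamma$ on $\mathrm{GRT}^\gamma_g$ records precisely this compatibility. Naturality of the family \eqref{main_intro} in $n$ then follows from the compatibility of the functors $\Lambda_{n+1}$ with the structural operad-module maps already built into the construction behind Theorem \ref{PaCD_is_GT}.

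The only content beyond formal nonsense is non-triviality, since a functor might in principle collapse an automorphism group onto the identity. I would establish it by comparison with the base case $(g,n)=(0,2)$: there Theorem \ref{big_adp} identifies $\Lambda_3(\mathbf{PaCD}^f,\mathbf{PaCD}^f)$ with the Goldman-Turaev bialgebra of the thrice-punctured sphere, and the homomorphism produced above is exactly the Alekseev-Torossian injection $\mathrm{GRT}\hookrightarrow\mathrm{KRV}$ of \cite{AlekToro}, which is non-trivial because $\mathrm{GRT}$ is (it acts non-trivially already through the associator deformation detected by $\zeta(3)$). To transport this to arbitrary $(g,n)$ I would fit \eqref{main_intro} into a commutative square with the base-case injection, using the structural morphisms of $\mathrm{OpR}^\Delta_f$ that compare $(\mathbf{PaCD}^f,\mathbf{PaCD}^f_g)$ with $(\mathbf{PaCD}^f,\mathbf{PaCD}^f)$ --- capping handles and all but two boundary components --- together with the induced maps on Goldman-Turaev bialgebras and the naturality of $\Lambda$; a class that is non-trivial in the base case then cannot have trivial image upstairs, and since the theorem asks only for one non-trivial element this suffices. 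I expect the genuine obstacle to be precisely this last compatibility: arranging that the comparison morphisms respect the framing data of Definition \ref{op_delta} across surfaces of different genus and boundary count, and that $\Lambda_{n+1}$ intertwines them. Once that square commutes, non-triviality --- and with it the theorem --- is immediate.
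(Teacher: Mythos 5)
Your skeleton --- lift $(\mathbf{PaCD}^f,\mathbf{PaCD}^f_g)$ to the decorated category, let the functor $\Lambda$ act on automorphism groups, and identify the target via Theorem \ref{PaCD_is_GT} --- is exactly the paper's route (modulo an index slip: for $\Sigma_{g,n+1}$ the relevant functor is $\Lambda_{n+2}$, applied to the object $\mathcal{N}^f\in\mathrm{OpR}^\Delta_f(n+2)$). The gap is in your middle verification. You claim that operad-module automorphisms of $(\mathbf{PaCD}^f,\mathbf{PaCD}^f_g)$ ``automatically preserve the extra markings, so that the two automorphism groups coincide,'' and that this is ``immediate.'' This is false for two of the three decorations of Definition \ref{op_delta}. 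First, a general element $(F,G)\in\mathrm{GRT}_g$ need only \emph{scale} the marked element, $F(t_{11})=k_F\,t_{11}$; fixing $t_{11}$ on the nose is the condition cutting out the proper subgroup $\mathrm{GRT}_1^g$. Second, preservation of the section $\gamma^f:\overline{K}_{g,n+2}\to K_{g,n+2}$ is a further genuine restriction, defining $\mathrm{GRT}_1^{g,f}$ inside $\mathrm{GRT}_1^g$; this is precisely why the theorem's domain is the subgroup $\mathrm{GRT}_g^\gamma$ and not all of $\mathrm{Aut}(\mathbf{PaCD}^f,\mathbf{PaCD}^f_g)$ (the displayed ``equality'' in the Introduction is loose notation, corrected in the sentence following it). Only preservation of the module isomorphism $\phi:M\to UL^{n+2}$ is automatic, and even that is not formal: it is Lemma \ref{phi_is_fix}, whose proof needs the diagrammatic identity $H=d_1T-d_0T-d_2T$ to show that every element of $\mathrm{GRT}_1^g$ fixes $t_{(n+2)(n+3)}$. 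The paper's entire proof is the resulting inclusion $\mathrm{GRT}_1^{g,f}\subset\mathrm{Aut}_{\mathrm{OpR}^\Delta_f(n+2)}(\mathcal{N}^f)$, after which functoriality of $\Lambda_{n+2}$ and Theorem \ref{PaCD_is_GT} finish the argument, exactly as in your first paragraph.

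Your non-triviality step is also off track. Morphisms in $\mathrm{OpR}^\Delta_f$ comparing $(\mathbf{PaCD}^f,\mathbf{PaCD}^f_g)$ with $(\mathbf{PaCD}^f,\mathbf{PaCD}^f)$ (``capping'') would not help even if they existed: $\mathrm{Aut}$ is not functorial along non-invertible morphisms, so such maps induce no homomorphisms between the automorphism groups, and no cross-genus comparison is constructed in the paper. None of this machinery is needed. The homomorphism obtained from the paper's proof is completely explicit, $(F,G)\mapsto G_{n+2}$, i.e.\ the automorphism of $L^{n+2}\cong K_{g,n+2}/\langle t_{(n+2)(n+2)}\rangle$ induced by restricting $G$ to the endomorphism coalgebra $\Upsilon^{\mathcal{M}}_{n+2}$; its non-triviality is read off from this description directly, rather than transported from the genus-zero case.
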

Here, $\mathrm{GRT}_g$ is the genus $g$ version of the Grothendieck-Teichm\"uller group as defined in \cite{gonz}, and $\mathrm{GRT}_g^{\gamma}$ is the subgroup preserving the framing $\gamma$. By results of \cite{Flor3}, the group $\overline{\mathrm{KRV}}_{g,n+1}^\gamma$ is roughly an extension of $\mathrm{KRV}_{g, n+1}^\gamma$ by inner derivations (see Proposition \ref{acc_ext}).

We think of the functors $\Lambda_{n+1}$ as extracting the Goldman-Turaev Lie bialgebra of an $n+1$-fold punctured surface (encoded by its operadic right module of configuration of points) and they are constructed as follows. We split the problem into the two steps indicated by the factorization of $\Lambda_{k}$ as
\begin{equation*}
   \begin{tikzcd}
        & \mathbf{Fox} \arrow[dr, "\Psi"] & \\
    \mathrm{OpR}^\Delta_f(k) \arrow[ur, "\mathrm{IC}_k"] \arrow[rr, "\Lambda_k"] & & \mathbf{GoTu}.
   \end{tikzcd}
\end{equation*}

The category $\mathbf{Fox}$ admits two equivalent descriptions, each of which is useful for constructing one of the two functors $\mathrm{IC}_k$ and $\Psi$. On the one hand, an object in  $\mathbf{Fox}$ consists of a free Lie algebra $\mathfrak{g}$ together with a quasi-derivation $q \colon U\mathfrak{g} \to U\mathfrak{g}$ in the sense of \cite{Mass}. The functor $\Psi$ is then essentially given by the work of Massuyeau and Turaev, deriving bracket operations from Fox pairings \cite{MasTu}, and cobracket operations from quasi-derivations \cite{Mass}. On the other hand, we show that $\mathbf{Fox}$ is equivalent to the category of \textit{relative} abelian Lie algebra extensions $\tilde{\mathfrak{g}}$ of the form  
\[
0 \rightarrow U\mathfrak{g} \to \tilde{\mathfrak{g}} \to \mathfrak{g} \oplus \mathfrak{g} \rightarrow 0.
\]
These are still abelian extensions in the classical sense, in that the previous sequence is short-exact. They are \textit{relativized} by imposing compatibility with a section over $\mathfrak{g} \subset \mathfrak{g} \oplus \mathfrak{g}$ (see Definition \ref{def_relE}). The classical correspondence between abelian extensions and Lie 2-cocycles generalizes to the relative setting, where our notion of relative Lie cohomology is now derived from \cite{KaYu}. In particular, we obtain the following cohomological description of quasi-derivations.
\begin{theorem}[Theorem \ref{quasi-iso}]
    There is a natural bijection between $H^2(\mathfrak{g} \oplus \mathfrak{g}, \mathfrak{g} ; U \mathfrak{g})$ and the space of quasi-derivations on $U\mathfrak{g}$ up to exact ones.
\end{theorem}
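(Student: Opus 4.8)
The plan is to reduce the statement to a \emph{relative} version of the classical bijection between abelian Lie-algebra extensions and second cohomology, and then transport the result across the equivalence of $\mathbf{Fox}$ with its presentation by relative extensions (Definition \ref{def_relE}). Throughout, $\mathfrak{g}\oplus\mathfrak{g}$ acts on the coefficient module $U\mathfrak{g}$ through the standard bimodule structure $(x,x')\cdot a = xa - ax'$, and the relative subalgebra is the copy of $\mathfrak{g}$ singled out by the fixed section, which acts by the commutator $[x,a]$. Since the entire content is to classify the relative extensions
\[
0 \to U\mathfrak{g} \to \tilde{\mathfrak{g}} \to \mathfrak{g}\oplus\mathfrak{g} \to 0
\]
up to equivalence, and a quasi-derivation is precisely the datum of such an extension under the equivalence of categories, it suffices to match equivalence classes of these extensions with $H^2(\mathfrak{g}\oplus\mathfrak{g}, \mathfrak{g}; U\mathfrak{g})$ in the sense of \cite{KaYu}.

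First I would make the relative cochain complex of \cite{KaYu} explicit for the pair $(\mathfrak{g}\oplus\mathfrak{g}, \mathfrak{g})$ and identify what a relative $2$-cocycle is: a Chevalley--Eilenberg $2$-cochain $\omega\colon \wedge^2(\mathfrak{g}\oplus\mathfrak{g}) \to U\mathfrak{g}$ that is a cocycle for the ambient algebra and is \emph{horizontal} along $\mathfrak{g}$, together with whatever lower-degree correction datum the cone-type complex of \cite{KaYu} records. Given a relative extension with its fixed Lie section $\sigma$ over $\mathfrak{g}$, I would extend $\sigma$ to a linear section $s$ of the whole sequence and set $\omega(u,v) = [s(u),s(v)] - s([u,v])$, which lands in $U\mathfrak{g}$ since $s$ is a section. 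The cocycle identity for $\omega$ is exactly the Jacobi identity in $\tilde{\mathfrak{g}}$, while the relativity of $\omega$ (its basic behaviour along $\mathfrak{g}$) is forced by $\sigma$ being a Lie homomorphism. Conversely, a relative $2$-cocycle reconstructs a bracket on $U\mathfrak{g}\oplus(\mathfrak{g}\oplus\mathfrak{g})$ split over $\mathfrak{g}$, and the two assignments are mutually inverse on objects.

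Next I would pass to the quotient by coboundaries. Replacing $s$ by another linear section agreeing with $\sigma$ on $\mathfrak{g}$ changes $\omega$ by $\partial\phi$ for a relative $1$-cochain $\phi$ vanishing on $\mathfrak{g}$, and two relative extensions are equivalent precisely when their cocycles differ by such a relative coboundary; this yields a bijection between equivalence classes of relative extensions and $H^2(\mathfrak{g}\oplus\mathfrak{g},\mathfrak{g};U\mathfrak{g})$. Transporting along the equivalence $\mathbf{Fox}\simeq\{\text{relative extensions}\}$, equivalence classes of extensions correspond to quasi-derivations, and I would verify that the \emph{exact} quasi-derivations of \cite{Mass} are exactly the images of the relative coboundaries, i.e. the inner quasi-derivations induced by a relative $1$-cochain $\mathfrak{g}\oplus\mathfrak{g}\to U\mathfrak{g}$. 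Finally, naturality follows because every step — the bimodule action, the cocycle formula, the reconstruction of the bracket, and the passage to quasi-derivations — is functorial in $\mathfrak{g}$: a morphism of free Lie algebras induces compatible maps of extensions and of relative complexes, so the bijection assembles into a natural one.

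The main obstacle I anticipate is the precise bookkeeping of the \cite{KaYu} relative complex. Unlike the Chevalley--Eilenberg relative complex of \emph{basic} cochains, the cone-type construction records an auxiliary lower-degree cochain along $\mathfrak{g}$, and I must show that the condition ``$\tilde{\mathfrak{g}}$ is split over $\mathfrak{g}$ compatibly with the fixed section'' translates \emph{exactly} into the relative cocycle condition rather than merely into horizontality. Matching this correction datum with the quasi-derivation's value on the subalgebra — and correspondingly matching ``exact'' with the relative coboundaries — is where the genuine work lies; the remainder is a faithful transcription of the classical extension/cohomology dictionary, made delicate only by the fact that the coefficients $U\mathfrak{g}$ form an infinite-dimensional bimodule, so that all identifications must be carried out through the explicit cocycle formulas rather than through any dualization.
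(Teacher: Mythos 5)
Your proposal has a genuine circularity at its core. The step where you ``transport the result across the equivalence of $\mathbf{Fox}$ with its presentation by relative extensions'' assumes exactly what the theorem asserts: that a quasi-derivation modulo exact ones is the same datum as a relative extension (equivalently, a relative $2$-cocycle modulo coboundaries). Definition \ref{def_relE} only \emph{defines} the category $\mathbf{RelE}$; the equivalence of $\mathbf{Fox}^\eta$ with $\mathbf{RelE}^\eta$ (equivalently with $\mathbf{RelCo}^\eta$) is the content of Theorem \ref{quasi-iso} itself, not an input available to its proof. What you sketch in detail --- the dictionary between relative extensions and $H^2(\mathfrak{g}\oplus\mathfrak{g},\mathfrak{g};U\mathfrak{g})$ via $\omega(u,v)=[s(u),s(v)]-s([u,v])$ --- is the paper's Proposition \ref{cat-equiv} (the equivalence $\mathbf{RelCo}\simeq\mathbf{RelE}$), a separate result proved beforehand which makes no mention of quasi-derivations. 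So your argument establishes ``extensions $\leftrightarrow$ cohomology'' and then silently borrows ``quasi-derivations $\leftrightarrow$ extensions,'' which is the theorem.

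The missing substance is the comparison between Fox-theoretic data and Chevalley--Eilenberg-type data, and it is not mere bookkeeping. One direction is cheap: restricting $q$ and its Fox pairing $\rho$ to primitives gives the cocycle $\omega_q=q|_{\mathfrak{g}}$, $c_\rho(v,w)=\rho(v_1,w_2)-\rho(w_1,v_2)$ (the paper's functor $\mathrm{E}$), and even the matching of \emph{exact} quasi-derivations with coboundaries requires a PBW argument (Proposition \ref{def_E}): a relative $1$-cochain $s$ yields Fox derivatives only after extending $s_L(x)=s(x\oplus 0)$ and $s_R(x)=s(0\oplus x)$ from $\mathfrak{g}$ to $U\mathfrak{g}$ and checking well-definedness. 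The hard direction --- producing a quasi-derivation from an \emph{arbitrary} relative $2$-cocycle, i.e.\ essential surjectivity --- is absent from your plan, and it is precisely where freeness of $\mathfrak{g}$ (which your argument never uses) is indispensable. The paper proves it by exhibiting the small $(U\mathfrak{f}\otimes U\mathfrak{f})$-projective resolution $0\to K\otimes K\to (K\otimes U\mathfrak{f})\oplus(U\mathfrak{f}\otimes K)\to U\mathfrak{f}\otimes U\mathfrak{f}\to\mathbb{K}$, exact exactly because $\mathfrak{f}$ is free, identifying $\mathrm{Hom}_{U\mathfrak{f}\otimes U\mathfrak{f}}(K\otimes K,U\mathfrak{f})$ with $\mathrm{Fox}(U\mathfrak{f})$, choosing comparison chain maps with the Chevalley--Eilenberg resolution, and using the resulting chain homotopy $\Psi_*$ both to define $\rho_c=(D_2)^*c$ and to supply the correction term $\Delta^*(\Psi_1)^*c$ in $q_\omega$; the constructed object then hits the given cocycle only up to the coboundary $\delta(-(\Psi_1)^*c)$. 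A Chevalley--Eilenberg $2$-cochain lives on $\bigwedge^2(\mathfrak{g}\oplus\mathfrak{g})$ while a Fox pairing lives on $U\mathfrak{g}\times U\mathfrak{g}$; that these compute the same cohomology is a statement about resolutions over a free Lie algebra, not a formal consequence of the extension/cohomology dictionary you invoke.
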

Exact quasi-derivations (see Definition \ref{def_ex_qder}) are defined such that they induce zero bracket and cobracket in $\mathbf{GoTu}$. The functors $\mathrm{IC}_k$ capitalize on this second cohomological description of $\mathbf{Fox}$. Although our approach is entirely algebraic, the maps $\mathrm{IC}_k: \mathrm{OpR}^\Delta_f(k) \rightarrow \mathbf{Fox}$ have the following topological interpretation. Thinking of elements in $\mathbf{Fox}$ as intersection pairings, we can construct such a pairing for an $k$-fold punctured surface by considering configuration spaces of (up to $k+2$) points on the unpunctured surface. The collection of (the fundamental groupoids of) such configuration spaces are exactly an object in $\mathrm{OpR}^\Delta_f(k)$. In the language of \cite{3flav}, the functor $\mathrm{IC}_k$ extracts the \textit{intersection contexts} of the $k$-punctured surface from which the Goldman-Turaev bialgebra can be realized as the string bracket and cobracket (in the sense of string topology). We do not explore this connection further, and refer the reader to \cite{3flav} for more details.

Lastly, the reader may be curious about how the morphism of equation $(\ref{main_intro})$ factors through the category $\mathbf{Fox}$. Even at this intermediate stage, we can almost recover the entirety of $\overline{\mathrm{KRV}}_{g,n+1}^\gamma$. Let us define $\mathcal{C}_{g,n+1}^\gamma \defeq \mathrm{IC}_{n+2}(\mathbf{PaCD}^f, \mathbf{PaCD}^f_{g})$. We prove
\begin{theorem}[Theorem \ref{diff_char}]
\label{intro_KRV}
    There exists an isomorphism between $\mathrm{Aut}_{\mathbf{Fox}}(\mathcal{C}_{g,n+1}^\gamma)/\mathbb{K}$ and $\overline{\mathrm{KRV}}_{g,n+1}^\gamma$.
\end{theorem}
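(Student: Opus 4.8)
The natural strategy is to exhibit the claimed isomorphism as the map induced on automorphism groups by the functor $\Psi$. From the factorization $\Lambda_{n+2} = \Psi \circ \mathrm{IC}_{n+2}$ and Theorem \ref{PaCD_is_GT}, the object $\Psi(\mathcal{C}_{g,n+1}^\gamma) = \Lambda_{n+2}(\mathbf{PaCD}^f,\mathbf{PaCD}^f_g)$ is (isomorphic to) the associated graded Goldman--Turaev bialgebra of $\Sigma_{g,n+1}$, whose $\mathbf{GoTu}$-automorphism group is by definition $\overline{\mathrm{KRV}}_{g,n+1}^\gamma$. Functoriality of $\Psi$ therefore produces a group homomorphism
\[
    \Psi_* \colon \mathrm{Aut}_{\mathbf{Fox}}(\mathcal{C}_{g,n+1}^\gamma) \longrightarrow \mathrm{Aut}_{\mathbf{GoTu}}\bigl(\Psi(\mathcal{C}_{g,n+1}^\gamma)\bigr) = \overline{\mathrm{KRV}}_{g,n+1}^\gamma,
\]
and the theorem reduces to the two claims that $\Psi_*$ is surjective and that $\ker \Psi_* \cong \mathbb{K}$.

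For both claims I would first make the two automorphism groups explicit. On the source side I use the quasi-derivation description of $\mathbf{Fox}$: an automorphism of $\mathcal{C}_{g,n+1}^\gamma$ is a filtered Lie algebra automorphism $\phi$ of the underlying free Lie algebra $\mathfrak{g}$ that is compatible with the framing datum $\gamma$ and intertwines the quasi-derivation $q$, i.e.\ $U\phi \circ q = q \circ U\phi$. On the target side I use Proposition \ref{acc_ext} (after \cite{Flor3}), which describes $\overline{\mathrm{KRV}}_{g,n+1}^\gamma$ as the group of tangential automorphisms of $|U\mathfrak{g}|$ preserving the associated graded Goldman bracket and Turaev cobracket, sitting in an extension of $\mathrm{KRV}_{g,n+1}^\gamma$ by inner derivations. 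The purpose of the comparison is that the bracket is governed by the Fox pairing underlying $q$ and the cobracket by the divergence encoded in $q$, so that both pieces of the $\mathbf{GoTu}$-datum have a counterpart already at the level of $\mathfrak{g}$.

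Surjectivity is where I expect the main work to lie, and I would split it along these two pieces. Given $F \in \overline{\mathrm{KRV}}_{g,n+1}^\gamma$, its compatibility with the Goldman bracket should force it to be induced by a tangential Lie automorphism $\phi$ of $\mathfrak{g}$, by the rigidity of the (nondegenerate) Fox pairing; this is also the step that realizes the inner-derivation part of the extension in Proposition \ref{acc_ext} as genuine inner automorphisms at the level of $\mathfrak{g}$. It then remains to correct $\phi$ so that it respects $q$ on the nose. Here I would invoke Theorem \ref{quasi-iso}: since $F$ preserves the Turaev cobracket, the pulled-back quasi-derivation $U\phi^{-1} \circ q \circ U\phi$ and $q$ represent the same class in $H^2(\mathfrak{g}\oplus\mathfrak{g}, \mathfrak{g}; U\mathfrak{g})$, hence differ by an exact quasi-derivation. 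As exact quasi-derivations induce the zero bracket and cobracket (Definition \ref{def_ex_qder}), adjusting $\phi$ by the corresponding exact correction yields an honest $\mathbf{Fox}$-automorphism mapping to $F$ under $\Psi_*$.

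Finally, for the kernel I would identify the $\mathbf{Fox}$-automorphisms inducing the identity on $(|U\mathfrak{g}|, [-,-], \delta)$. Such an automorphism fixes every cyclic word and preserves bracket and cobracket, and a direct analysis should show that the only remaining freedom is the one-parameter family corresponding to the constant-loop/unit ambiguity quotiented out in passing from $\mathbb{K}\pi$ to $|U\mathfrak{g}|$ and in Turaev's $\mathbb{K}\mathbf{1}$-reduction; this is a central subgroup isomorphic to $(\mathbb{K},+)$. Hence $\ker \Psi_* \cong \mathbb{K}$ and $\Psi_*$ descends to the desired isomorphism $\mathrm{Aut}_{\mathbf{Fox}}(\mathcal{C}_{g,n+1}^\gamma)/\mathbb{K} \xrightarrow{\sim} \overline{\mathrm{KRV}}_{g,n+1}^\gamma$. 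The principal obstacle is the surjectivity step: lifting an automorphism that lives at the cyclic level $|U\mathfrak{g}|$ to the finer data on $\mathfrak{g}$ and its enveloping algebra, while simultaneously tracking framing-compatibility and matching the inner-derivation part of the extension of Proposition \ref{acc_ext}.
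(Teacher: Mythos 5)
Your overall skeleton agrees with the paper's (its Theorem \ref{biiig_theo}): consider the homomorphism induced by the functor $\Psi$, show it is surjective onto $\overline{\mathrm{KRV}}{}^\gamma_{g,n+1}=\mathrm{Aut}^+(L,[-,-]_{\mathrm{gr}},\delta^\gamma_{\mathrm{gr}})$, and show its kernel is $\mathbb{K}$. But both of your two substantive steps have problems. The serious gap is in surjectivity: you assert that since $F$ preserves the Turaev cobracket, the pulled-back quasi-derivation and $q$ "represent the same class in $H^2(\mathfrak{g}\oplus\mathfrak{g},\mathfrak{g};U\mathfrak{g})$, hence differ by an exact quasi-derivation," citing Theorem \ref{quasi-iso}. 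That theorem only identifies quasi-derivations modulo exact ones with $H^2$; it does \emph{not} say the assignment (cohomology class) $\mapsto$ (cobracket) is injective, i.e.\ that two quasi-derivations inducing the same cobracket on cyclic words are cohomologous. That converse is exactly the hard content of the theorem, and the paper never proves it in general. Instead, surjectivity is obtained from structure theory: every $g\in\mathrm{Aut}(L,[-,-]_{\mathrm{gr}})$ factors as $g=h\circ g_0$ with $g_0$ special and $h$ conjugation by a group-like element (\cite{Flor3}); special automorphisms preserve $\rho_G$ on the nose (Proposition \ref{spec_fix_rho}), so $\psi_{g_0}=g_0^{-1}q^f g_0-q^f$ is an honest derivation; a delicate cyclic-word argument (Lemma \ref{cyc_aux}, evaluating on $\alpha_i^N$ and $\alpha_1^N\alpha_j^M$) then shows that a skew-symmetric derivation with vanishing cobracket is \emph{inner}; and explicit computations show that conjugations perturb $\rho_G$ and $q^f$ only by exact terms and that inner derivations are exact. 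Your parallel appeal to "rigidity of the nondegenerate Fox pairing" on the bracket side conceals the same missing work: bracket-preservation alone does not give preservation of $\rho_G$ up to exact terms without this decomposition.

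The kernel computation is also misidentified, although the answer $\mathbb{K}$ is correct. By Definition \ref{KRV_def}, elements of $\overline{\mathrm{KRV}}{}^\gamma_{g,n+1}$ are already Lie automorphisms of $L$, so there is no lifting problem "from the cyclic level," and the $\mathbb{K}$ has nothing to do with a constant-loop or unit ambiguity. A morphism in $\mathbf{Fox}$ is a pair $(f,\partial_L\oplus\partial_R)$ and $\Psi$ forgets the Fox-derivative datum; hence kernel elements are pairs $(\mathrm{id}_L,\partial_L\oplus\partial_R)$ with $\mu(\partial_L\oplus\partial_R)=0$ and $\tau(\partial_L\oplus\partial_R)=0$, forcing $\partial_R=-\partial_L$ with $\partial_L$ simultaneously a left and a right Fox derivative. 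By Lemma \ref{LcapR=D} this intersection is $\mathbb{K}\{D\}$, and that one-parameter family of decorations of the identity is the $\mathbb{K}$ being quotiented. Relatedly, your description of a $\mathbf{Fox}$-automorphism as one that intertwines $q$ on the nose is too restrictive: morphisms in $\mathbf{Fox}$ are only required to preserve $q$ and $\rho$ up to specified exact terms, which is precisely what the Fox-derivative decoration records and what makes surjectivity possible at all.
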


\subsection{Organization of the paper}

In Chapter 2, we review Fox pairings and quasi-derivations following Massuyeau and Turaev in \cite{MasTu}. We define notions of \textit{exact} Fox pairings and \textit{exact} quasi-derivations, and we prove that the bracket associated to an exact Fox pairing (in the sense of \cite{MasTu}) and the cobracket associated to an exact quasi-derivation (in the sense of \cite{Mass}) are both null. 

In Chapter 3, we review relative Lie cohomology as defined in \cite{KaYu}. Given a free Lie algebra $\mathfrak{f}$, we prove that elements in $H^2(\mathfrak{f} \oplus \mathfrak{f}, \mathfrak{f}; U\mathfrak{f})$ are in bijection with both \textit{relative} Lie algebra extensions and the set of quasi-derivations on $U\mathfrak{f}$ modulo exact ones.

In Chapter 4, we derive a special category $\mathrm{OpR}^\Delta_f(n)$ from $\mathrm{OpR}\, \mathbf{Cat}(\mathbf{CoAlg})$, the category of operadic right modules over small $\mathrm{CoAlg}$-enriched categories, and sketch how one can tentatively produce Lie bialgebras out of its objects. As a special instance of this, we compute the bracket and cobracket operations that can be extracted from the $\mathbf{PaCD}^f$-operad module $\mathbf{PaCD}^f_g$.

In Chapter 5, we prove the main results of this paper. After reviewing the higher genus graded Kashiwara-Vergne groups following \cite{Flor1}, we prove the bracket and cobracket operations functorially derived from $\mathbf{PaCD}^f_g$ coincide with those of the Goldman-Turaev bialgebra of a higher genus surface. As a consequence of this fact, we obtain a non-trivial map from Gonzalez' higher genus versions of the graded Grothendieck-Teichm\"uller group into the corresponding higher genus Kashiwara-Vergne groups.

\subsection*{Acknowledgments}

The author is grateful to Anton Alekseev, Yusuke Kuno, and Muze Ren, for fruitful discussions and for sharing preliminary versions of their work. The author would like to extend special thanks to his advisor Florian Naef, for his guidance along the course of this project and for valuable comments on earlier versions of this text.

\subsection*{Conventions}
\begin{itemize}
    \item Throughout this paper, we work with respect to a field of characteristic zero $\mathbb{K}$.
    \item Given an associative algebra $A$ over $\mathbb{K}$, we denote
    \begin{equation*}
        |A|\defeq A/\langle (ab-ba)\, |\,  a,b\in A \rangle,
    \end{equation*}
    the $\mathbb{K}$-vector space of cyclic words on $A$.
    \item We will only be dealing with cocommutative, involutive Hopf algebras. We typically use $S$ and $\varepsilon$ to denote the antipode and augmentation maps, respectively. We will use Sweedler notation when dealing with the coproduct:
    \begin{equation*}
        \Delta(a)=a \otimes a''.
    \end{equation*}
    \item For a given Lie algebra $\mathfrak{h}$, the associative algebra $U\mathfrak{h}$ has the structure of a Hopf algebra. We identify $\mathfrak{h}$ with the degree zero component of $U\mathfrak{h}$. For any element $x \in \mathfrak{h}$, the coproduct, antipode, and augmentation map are given by
    $\Delta(x)=1 \otimes x + x \otimes 1$, $S(x)=-x$, and $\varepsilon(x)=0$, respectively.
    \item If $V$ and $W$ are filtered vector spaces, their tensor product carries the filtration
    \begin{equation*}
        V \otimes W(m)=\bigoplus_{m=k+l}V(k) \otimes W(l).
    \end{equation*}
    Unless it can lead to confusion, we will not indicate the completion with respect to this filtration in our notation, so that $V \hat{\otimes} W=V \otimes W$.
    \item We denote by $\mathbf{Cat}(\mathbf{CoAlg)}$ the category of small $\mathrm{CoAlg}$-enriched categories. We consider it a symmetric monoidal category with respect to the product given by:
    \begin{itemize}
        \item $\mathrm{Ob}(\mathcal{C} \otimes \mathcal{C}') \defeq \mathrm{Ob}(\mathcal{C}) \times \mathrm{Ob}(\mathcal{C}')$;
        \item $\mathrm{mor}_{\mathcal{C}\otimes \mathcal{C}}((c,c'), (d,d'))\defeq \mathrm{mor}_\mathcal{C}(c,d)\, \hat{\otimes}\, \mathrm{mor}_\mathcal{C'}(c',d')$,
    \end{itemize}
    where the tensor product is completed with respect to the descending filtrations defined by powers of the augmentation ideal.
    
\end{itemize}

\section{Fox pairings and quasi-derivations}\label{fox}

\begin{definition}    
    Let $\mathbf{GoTu}$ be the category with objects given by triplets
    \begin{equation*}
        ( H, [-,-], \delta ),
    \end{equation*}
    where $H$ is a cocommutative, involutive Hopf algebra. The other components are $\mathbb{K}$-linear maps 
    \begin{equation*}
        [-,-]: |H| \otimes |H| \rightarrow |H|, \hspace{1 em} \delta:|H| \rightarrow |H| \otimes |H|
    \end{equation*}
    defined on the space of cyclic words in $H$. Morphisms in $\mathbf{GoTu}$ consist of Hopf algebra maps that preserve the corresponding operations. Concretely, the space
    \begin{equation*}
        \mathbf{GoTu}((H, [-,-], \delta),(H', [-,-]',\delta'))    
    \end{equation*}
    consists of Hopf algebra maps $f: H \rightarrow H'$ that define commutative diagrams\footnote{ We do not distinguish between the Hopf morphism $f: H \rightarrow H'$ and its induced map $f:|H| \rightarrow |H'|$.}
    \begin{equation*}
        \begin{tikzcd}
            \vert H \vert \otimes \vert H \vert \arrow[r, "{[-,-]}"] \arrow[d, "f \otimes f"] & \vert H \vert \arrow[d, "f"]\\
            \vert H' \vert \otimes \vert H' \vert \arrow[r, " {[-,-]'}"]& \vert H'
        \end{tikzcd}, \hspace{3 em}
        \begin{tikzcd}
            \vert H \vert \arrow[r, "\delta"] \arrow[d, "f"] & \vert H \vert \otimes \vert H \vert \arrow[d, "f \otimes f"]\\
            \vert H' \vert \arrow[r, "\delta'"] & \vert H'\vert \otimes \vert H' \vert
        \end{tikzcd}.
    \end{equation*}
\end{definition}    
In this chapter, we define the category $\mathbf{Fox}$; its objects similarly consist of triplets $(H, \rho, q)$, where $\rho$ and $q$ are a Fox pairing and a quasi-derivation on $H$ in the sense of Massuyeau-Turaev and Massuyeau (see definitions below), respectively. Our main goal is to define a functor
\begin{equation*}
    \begin{split}
        \Psi: \hspace{2em} \mathbf{Fox} &\longrightarrow \mathbf{GoTu}\\
        (H, \rho, q) & \longmapsto (H, [-,-]^\rho, \delta_q).
    \end{split}
\end{equation*}
This is essentially a recasting of Massuyeau and Turaev's work, deriving bracket operations from Fox pairings \cite{MasTu}, and cobracket operations from quasi-derivations \cite{Mass}. Our main contribution has to do with subtleties in the definition of $\mathbf{Fox}$: while naively one could define morphisms in $\mathbf{Fox}$ as the Hopf algebra maps preserving Fox pairings and quasi-derivations on the nose, we will instead ask these operations to be preserved up to (specified) \textit{exact} terms. That is, we will define \textit{exact} Fox pairings and \textit{exact} quasi-derivations, and prove their associated brackets and cobrackets are null. The very suggestive "exactness" moniker comes from ties with Lie algebra cohomology we will explore in Chapter \ref{chap_coho}.

\subsection{Fox pairings}

In this section, we recall Massuyeau and Turaev's definition of Fox pairings and their associated brackets following \cite{MasTu}.

Let $f:H \rightarrow I$ be a map between Hopf algebras over $\mathbb{K}$. The map $f$ induces an $H$-bimodule structure on $I$ via 
\begin{equation*}
    h \cdot a \defeq f(h)a, \hspace{1em} a \cdot h \defeq af(h),
\end{equation*}
for all $h\in H$ and $a\in I$. To simplify the notation throughout this paper, we somewhat abusively abbreviate $h\cdot a=ha$, $a \cdot h=ah$. That is, we omit any mention of the map $f$ or the $H$-actions on $I$, except when it would lead to confusion. Furthermore, our notation will usually not distinguish between the Hopf algebra operations in $H$ and $I$.

\begin{definition}
     A linear map $\partial: H \rightarrow I$ is a left (resp. right) \textit{Fox derivative} if
    \[
        \partial(ab)=\partial(a) \varepsilon(b)+a\partial(b) \quad (\text{resp.}\,\, \partial(ab)=\partial(a)b+\varepsilon(a)\partial(b))
    \]
    for all $a,b \in H$.
    This definition depends on the morphism $f:H \rightarrow I$. We denote the space of left (resp. right) Fox derivatives from $H$ into $I$ mediated by $f$ by $\mathrm{LFox}_f(H,I)$ (resp. $\mathrm{RFox}_f(H,I)).$ In what follows, we will mainly be interested in the case when $H=I$, so we make special distinction of the spaces
    \begin{equation*}
        \mathrm{LFox}(H)\defeq \mathrm{LFox}_{\mathrm{id}}(H,H), \hspace{1em} \mathrm{RFox}(H)\defeq \mathrm{RFox}_{\mathrm{id}}(H,H).
    \end{equation*}
\end{definition}
When dealing with Fox derivatives in a fixed Hopf algebra, unless otherwise stated, we will always assume they are mediated by the identity map.
\begin{example}
    Given an arbitrary Hopf algebra $H$, the map $D: H \rightarrow H$ defined by $D(a)=a-\varepsilon(a)$ is both a left and a right Fox derivative.
\end{example}

\begin{example}
    Let $H$ be the free associative algebra in the generators $\{x_i\}_{i=1, \dots, n}$. Then we may define Fox derivatives solely by specifying their values on the generators. Let $\overrightarrow{\partial_i}$ (resp. $\overleftarrow{\partial_i}$) be the left (resp.right) Fox derivative uniquely characterized by setting
    \begin{equation} \label{fox_base}
        \overrightarrow{\partial_i}(x_j)=\overleftarrow{\partial_i}(x_j)=\delta_{ij}.
    \end{equation}
\end{example}
\begin{lemma}\label{LcapR=D}
    Let $H$ be the degree completion of the free associative algebra in the generators $\{x_i\}_{i=1, \dots,n}$, for $n \geq$ 2. Then
    \begin{equation*}
        \mathrm{LFox}(H) \cap \mathrm{RFox}(H)= \mathbb{K}\{D\}.
    \end{equation*}
    That is, up to scalars, $D$ is the only map that is a left and right Fox derivative at once.
\end{lemma}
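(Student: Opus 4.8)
The plan is to reduce the entire statement to the values $v_i \defeq \partial(x_i)$ on the generators and then to solve one simple family of commutation relations. First I would exploit that $\varepsilon(x_i)=0$ for every generator: a left Fox derivative then collapses on a word by peeling off the \emph{last} letter, $\partial(x_{i_1}\cdots x_{i_k})=x_{i_1}\cdots x_{i_{k-1}}\,v_{i_k}$, while a right Fox derivative peels off the \emph{first} letter, $\partial(x_{i_1}\cdots x_{i_k})=v_{i_1}\,x_{i_2}\cdots x_{i_k}$. In particular a map that is simultaneously left and right Fox is determined (continuously, on the completion) by the $v_i$, and comparing the two expressions on the two-letter word $x_ix_j$ produces exactly the relations
\[
    x_i v_j = v_i x_j \qquad (1\le i,j\le n).
\]
A short induction, applying $x_a v_b=v_a x_b$ repeatedly, shows that the longer words impose no new conditions, so the whole content of the lemma is to solve this system in the degree-completed free algebra.

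Next I would solve $x_iv_j=v_ix_j$ degree by degree, writing $v_j=\sum_m v_j^{(m)}$ so that $x_i v_j^{(m)} = v_i^{(m)} x_j$ holds in each homogeneous degree $m+1$. In degree $0$ the components are scalars $\beta_j$ and the relation reads $\beta_j x_i=\beta_i x_j$; since $n\ge 2$ the generators are linearly independent for $i\ne j$, forcing every $\beta_j=0$. For $m\ge 1$ the structural observation is that matching first and last letters in $x_i v_j^{(m)}=v_i^{(m)}x_j$ forces every monomial of $v_j^{(m)}$ to both begin and end with $x_j$. Hence I can factor $v_j^{(m)}=x_j U_j=W_j x_j$ with $U_j,W_j$ homogeneous of degree $m-1$; substituting into the relation gives $x_iW_jx_j=x_iU_ix_j$, and cancelling the (non-zero-divisor) factors $x_i$ on the left and $x_j$ on the right yields $W_j=U_i$ for all $i,j$. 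Thus this common value $Z$ is independent of the indices and satisfies $x_jZ=Zx_j$ for all $j$, i.e. $Z$ is central.

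Finally I would invoke that the center of a free associative algebra on $n\ge 2$ generators contains no nonzero homogeneous element of positive degree: a central $Z$ of degree $\ge 1$ would, by comparing $Zx_1=x_1Z$ and $Zx_2=x_2Z$, have each of its monomials begin with both $x_1$ and $x_2$, which is impossible. Therefore $Z$ reduces to a scalar $c$, so $m-1=0$; that is, $v_j^{(m)}=0$ for $m\ge 2$ and $v_j^{(1)}=c\,x_j$ with a single common constant $c$. Assembling the three degree ranges gives $v_j=c\,x_j=c\,D(x_j)$, and since $D$ is itself both a left and right Fox derivative and such a derivative is determined by its generator values, $\partial=cD$. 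Together with $D\in\mathrm{LFox}(H)\cap\mathrm{RFox}(H)$ this proves the intersection is $\mathbb{K}\{D\}$.

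The step I expect to require the most care is the degree-$\ge 1$ analysis: getting the first/last-letter factorization right and performing the left/right cancellation cleanly, with the positive-degree triviality of the center as the clinching input. Everything else is a routine comparison of coefficients. It is worth flagging that the hypothesis $n\ge 2$ enters in precisely two places — the linear independence used in degree $0$ and the triviality of the center in positive degree — and that for $n=1$ the algebra is commutative and the statement genuinely fails, which serves as a useful sanity check on the argument.
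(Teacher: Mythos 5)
Your proof is correct, and while it shares the paper's overall strategy (reduce everything to the generator values and compare the left-peeling and right-peeling formulas on words), the execution is genuinely different. The paper tests $\partial$ on the words $x_ix_i$, $x_ix_jx_i$ and $x_ix_j$: from $x_ia_i=a_ix_i$ it invokes, without proof, the centralizer fact that an element commuting with a free generator $x_i$ is a power series in $x_i$; the length-three word then kills all but the linear term of $a_i$, and the cross word $x_ix_j$ equalizes the scalars $\lambda_i=\lambda_j$. You, by contrast, never leave degree-two test words: you extract the full system $x_iv_j=v_ix_j$, decompose homogeneously, and use first/last-letter matching to write $v_j^{(m)}=x_jZ=Zx_j$ with a common $Z$ that must be central, finishing with the (inline, one-line) triviality of the center of the free algebra in positive degree. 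What each approach buys: the paper's argument is shorter on the page but leans on the unproved centralizer description, whose own proof is of exactly the same first/last-letter nature as your analysis; yours is fully self-contained and trades that black box for a modest amount of degree-by-degree bookkeeping. Both arguments use $n\ge 2$ in essentially the same way (the paper by choosing $j\neq i$, you via linear independence in degree zero and the centrality contradiction), and both implicitly use that a map which is simultaneously a left and right Fox derivative is determined by its values on generators, a point you at least flag explicitly.
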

\begin{proof}
    Let $\partial \in \mathrm{LFox}(H) \cap \mathrm{RFox}(H)$. We define elements $a_i \defeq \partial(x_i)$ for all $i=1, \dots n$. Notice that the equality
    \begin{equation*}
        \partial(x_ix_i)=x_i a_i=a_ix_i
    \end{equation*}
    implies that $a_i$ is a power series in the generator $x_i$; that is, $a_i \in \mathbb{K}\langle \!\langle x_i \rangle \! \rangle$. As $n \geq 2$, we may choose $j \neq i$. Then equalities
    \begin{equation*}
        \partial(x_ix_jx_i)=a_ix_jx_i=x_ix_ja_i
    \end{equation*}
    mean that $\partial(x_i)=\lambda_ix_i$ for some $\lambda_i \in \mathbb{K}$. Furthermore, since
    \begin{equation*}
        \partial(x_ix_j)=(\lambda_ix_i)x_j=x_i(\lambda_jx_j),
    \end{equation*}
    it must be that $\lambda=\lambda_i=\lambda_j$ for all $i,j$. This concludes the proof, since the map $\lambda D(a)= \lambda(a-\varepsilon(a))$ takes the same values as $\partial$ on the generators of $H$.
\end{proof}

\begin{definition}
    Given a left or right Fox derivative $\partial: H \rightarrow I$, we define its \textit{transpose} by $\partial^t \defeq S_I \circ \partial \circ S_H$.
\end{definition}
\begin{lemma}\label{Fox-transpose}
    If $\partial: H \rightarrow I$ is a left (resp. right) Fox derivative, then its transpose $\partial^t$ is a right (resp. left) Fox derivative.
\end{lemma}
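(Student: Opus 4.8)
The plan is to verify the defining Leibniz identity by a direct computation, relying on three structural facts about morphisms of involutive Hopf algebras: the antipode is an algebra anti-automorphism, i.e. $S(xy)=S(y)S(x)$; the counit is antipode-invariant, i.e. $\varepsilon\circ S=\varepsilon$; and the mediating map $f$ intertwines the two antipodes, i.e. $f\circ S_H=S_I\circ f$ (which holds trivially in the principal case $H=I$, $f=\mathrm{id}$), together with involutivity $S_I^2=\mathrm{id}$. It suffices to treat the case in which $\partial$ is a left Fox derivative and to show $\partial^t$ is a right Fox derivative; the reverse implication follows from the identical computation with the two Leibniz rules exchanged.

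First I would unfold $\partial^t(ab)=S_I\,\partial\,S_H(ab)$ and use that $S_H$ reverses products, giving $S_H(ab)=S_H(b)S_H(a)$. Applying the left Fox rule to $\partial\bigl(S_H(b)S_H(a)\bigr)$ splits it as $\partial(S_H(b))\,\varepsilon(S_H(a))+f(S_H(b))\,\partial(S_H(a))$, where the second summand invokes the left $H$-action on $I$. On the first summand I use $\varepsilon\circ S_H=\varepsilon$ to rewrite $\varepsilon(S_H(a))=\varepsilon(a)$; after applying $S_I$ it becomes $\varepsilon(a)\,\partial^t(b)$. On the second summand the crucial point is the anti-multiplicativity of $S_I$, which turns $f(S_H(b))\,\partial(S_H(a))$ into $S_I\partial(S_H(a))\cdot S_I f(S_H(b))=\partial^t(a)\cdot S_I f S_H(b)$; here the intertwining $fS_H=S_If$ together with involutivity collapse $S_I f S_H(b)=S_I^2 f(b)=f(b)$, so this term reads $\partial^t(a)\,b$ in the right-action notation. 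Combining the two summands yields exactly $\partial^t(ab)=\partial^t(a)\,b+\varepsilon(a)\,\partial^t(b)$, the right Fox derivative rule.

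The computation is routine once the conventions are pinned down; the only point demanding care — and the likeliest source of error — is the bookkeeping of the $H$-bimodule structure on $I$, namely remembering that the juxtapositions $a\,\partial(b)$ and $\partial(a)\,b$ silently insert $f$, and tracking how $S_I$ interacts with these hidden copies of $f$. This is precisely the step where the hypothesis that $f$ is a Hopf algebra morphism is consumed: without $f\circ S_H=S_I\circ f$ one would only obtain that $\partial^t$ is a right Fox derivative mediated by the twisted map $S_I\circ f\circ S_H$ rather than by $f$ itself. Since the paper works almost exclusively with $f=\mathrm{id}$ on involutive Hopf algebras, this subtlety evaporates and the stated conclusion holds verbatim.
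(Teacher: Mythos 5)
Your proof is correct and follows essentially the same route as the paper's: unfold $\partial^t(ab)$ via anti-multiplicativity of $S_H$, apply the left Fox rule, then push $S_I$ through each summand using $\varepsilon\circ S=\varepsilon$, anti-multiplicativity, and involutivity. The only difference is that you make explicit the bookkeeping of the mediating map $f$ (via $f\circ S_H=S_I\circ f$), which the paper suppresses by convention; this is a welcome clarification but not a different argument.
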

\begin{proof}
    Suppose $\partial \in \mathrm{LFox}_f(H,I)$, then
    \begin{equation*}
        \begin{split}
            \partial^t(ab)&=S \partial(S(b)S(a))\\
            &=S(\partial(S(b))\varepsilon(S(a)) + S(b) \partial(S(a)))\\
            &=\partial^t(b) \varepsilon(a) + \partial^t{\partial}(a)b,
        \end{split}
    \end{equation*}
    for any $a,b \in H$. The proof is analogous when dealing with a right Fox derivative.
\end{proof}

\begin{definition}
    A \textit{Fox pairing} from $H$ into $I$ is a bilinear map $\rho: H \times H \rightarrow I$ that is a left Fox derivative with respect to the first variable and a right Fox derivative with respect to the second variable. In formulas,
    \[
        \begin{split}
            \rho(a_1 a_2, b) &= \rho(a_1, b) \varepsilon(a_2) + a_1 \rho(a_2, b),\\
            \rho(a, b_1b_2) &= \rho(a, b_1) b_2 + \varepsilon(b_1)\rho(a, b_2).
        \end{split}
    \]
    We denote the space of Fox pairings associated to $f$ by $\mathrm{Fox}_f(H,I)$. We will mainly deal with the instance
    \begin{equation*}
        \mathrm{Fox}(H)\defeq \mathrm{Fox}_{\mathrm{id}}(H,H).
    \end{equation*}
\end{definition}

\begin{definition}\label{fox-skew-def}
    Let $\rho \in \mathrm{Fox}_f(H, I)$. The \textit{transpose} of $\rho$ is given by
    \begin{equation*}
        \rho^t(a, b) \defeq S_I \rho (S_H(b), S_H(a))
    \end{equation*}
    for all $a, b \in H$. The Fox pairing $\rho$ is \textit{skew-symmetric} if $\rho = - \rho^t$.
\end{definition}

We now move on to describing Massuyeau and Turaev's construction of the bracket associated to a Fox pairing. We first need to discuss the notion of \textit{double bracket}.

Let $H$ be a Hopf algebra. We can endow $H^{\otimes n}$ with the \textit{outer} bimodule structure, 
\[
    aub \defeq au_1 \otimes \cdots \otimes u_n b 
\]
for $u= u_1 \otimes \cdots \otimes u_2 \in H^{\otimes n}$ and $a, b \in H$; but we can also choose the \textit{inner} bimodule structure, 
\[
    a*u*b \defeq u_1b \otimes \cdots \otimes a u_n.
\]

\begin{definition}
    Let $f: H \rightarrow I$ be a Hopf algebra map. A linear map $\dbl -,-\dbr: H \otimes H \rightarrow I \otimes I $ is a \textit{double bracket} if 
    \begin{gather} \label{double}
        \dbl a, b_1 b_2 \dbr= b_1 \dbl a, b_2 \dbr + \dbl a, b_1 \dbr b_2,\\
        \dbl a_1 a_2, b\dbr = a_1 *\dbl a_2, b \dbr + \dbl a_1, b\dbr*a_2,
    \end{gather}
    where as before we have left implicit the $H$ bimodule structure on $I$. We say the double bracket is mediated by the Hopf algebra map $f$. Whenever $H=I$ and no mention of a map is made, we will assume $f= \mathrm{id}$.
\end{definition}

\begin{proposition}\label{brac_from_double}
    A double bracket $\dbl -,- \dbr:H \otimes H \rightarrow I \otimes I$ induces an operation $[-,- ]: |H| \otimes |H| \rightarrow |I|$ defined by
    \[
        [ |a|, |b| ] \defeq | \dbl a, b \dbr^{(1)}  \dbl a, b \dbr^{(2)}|,
    \]
    where we have used Sweedler notation to write the components of the double bracket as $\dbl-,- \dbr = \dbl-,-\dbr^{(1)} \otimes \dbl-,- \dbr^{(2)}$. We call $[-,-]$ the \textit{bracket} associated to $\dbl -,- \dbr$. 
\end{proposition}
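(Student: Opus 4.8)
The proposition asserts that the stated formula is \emph{well-defined}, i.e.\ that $[\,\cdot\,,\cdot\,]$ factors through the quotient $|H|\otimes|H|$ and lands in $|I|$. Writing $m\colon I\otimes I\to|I|$ for the linear map $x\otimes y\mapsto|xy|$, the operation in the statement is precisely $[|a|,|b|]=m(\dbl a,b\dbr)$, so the plan is to show that $m\circ\dbl-,-\dbr$ annihilates the subspaces $\langle a_1a_2-a_2a_1\rangle\otimes H$ and $H\otimes\langle b_1b_2-b_2b_1\rangle$. Since bilinearity is inherited directly from $\dbl-,-\dbr$, it suffices to verify the two cyclic invariances $[|a_1a_2|,|b|]=[|a_2a_1|,|b|]$ and $[|a|,|b_1b_2|]=[|a|,|b_2b_1|]$, as the elements $a_1a_2-a_2a_1$ span the commutator subspace in each slot.

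For the first slot I would expand both sides using the Leibniz rule in the left variable, which by (\ref{double}) is governed by the \emph{inner} bimodule structure. Abbreviating $u=\dbl a_2,b\dbr$ and $v=\dbl a_1,b\dbr$ (each with Sweedler components), the rule reads
\[
\dbl a_1a_2,b\dbr=a_1*u+v*a_2=\bigl(u^{(1)}\otimes a_1u^{(2)}\bigr)+\bigl(v^{(1)}a_2\otimes v^{(2)}\bigr),
\]
so applying $m$ gives $[|a_1a_2|,|b|]=|u^{(1)}a_1u^{(2)}|+|v^{(1)}a_2v^{(2)}|$. Swapping $a_1$ and $a_2$ produces the very same two cyclic words in the opposite order, so the equality holds \emph{on the nose}, with no appeal to the cyclic quotient: the inner structure places the inserted factor on the tensor leg opposite the one carrying $b$, so that after multiplying the two legs together the factor $a_1$ (resp.\ $a_2$) sits between the two Sweedler components, exactly as required.

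For the second slot the relevant Leibniz rule is governed instead by the \emph{outer} structure. With $w=\dbl a,b_2\dbr$ and $z=\dbl a,b_1\dbr$, relation (\ref{double}) gives $\dbl a,b_1b_2\dbr=b_1w+zb_2=(b_1w^{(1)}\otimes w^{(2)})+(z^{(1)}\otimes z^{(2)}b_2)$, whence $[|a|,|b_1b_2|]=|b_1w^{(1)}w^{(2)}|+|z^{(1)}z^{(2)}b_2|$, while swapping $b_1,b_2$ yields $[|a|,|b_2b_1|]=|b_2z^{(1)}z^{(2)}|+|w^{(1)}w^{(2)}b_1|$. Here the inserted factors $b_1,b_2$ land at the outer ends of the words rather than between the Sweedler legs, so I would invoke cyclic invariance in $|I|$ — namely $|b_1w^{(1)}w^{(2)}|=|w^{(1)}w^{(2)}b_1|$ and $|z^{(1)}z^{(2)}b_2|=|b_2z^{(1)}z^{(2)}|$ — to match the two expressions term by term. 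This is the only genuine use of passing to cyclic words, and it is exactly what the outer-versus-inner dichotomy is engineered to accommodate.

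I do not expect a deep obstacle: the whole statement is a well-definedness check. The only place an error could creep in is the bookkeeping of the inner versus outer bimodule structures together with the Sweedler indices — one must keep straight, before applying $m$, on which tensor leg each inserted factor is deposited. Once that is pinned down, the verification reduces to the short computation sketched above, with cyclic invariance of $|I|$ needed solely in the second slot.
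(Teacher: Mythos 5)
Your proof is correct and follows essentially the same route as the paper: expand $\dbl a_1a_2,b\dbr$ and $\dbl a,b_1b_2\dbr$ via the inner/outer Leibniz rules, apply the multiplication map to $|I|$, and observe symmetry under swapping the two factors. The paper carries out only the second slot explicitly (using cyclic invariance exactly as you do) and declares the first slot analogous, so your added observation that the first slot holds on the nose—because the inner structure deposits the inserted factor between the two Sweedler legs—is a nice clarification of what that "analogous" computation actually looks like, but not a different method.
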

\begin{proof}
        By abuse of notation, we use the same symbol to denote the map $[-,-]:H \otimes H \rightarrow |I|$. Then, for a fixed $a \in H$, equation (\ref{double}) gives
    \[
        [ a, b_1 b_2] = |\dbl a, b_2 \dbr^{(2)} b_1 \dbl a, b_2 \dbr^{(1)}| + |\dbl a, b_1 \dbr^{(2)} b_2 \dbl a, b_1\dbr^{(1)}|,
    \]
    which we can see is symmetric in $b_1$ and $b_2$. This means that commutators in the second variable are mapped to zero. We can prove an analogous statement when fixing the first variable instead.
\end{proof}

In \cite{MasTu}, to any Fox pairing $\rho \in \mathrm{Fox}_{f}(H,I)$ between involutive and cocommutative Hopf algebras, Massuyeau and Turaev associate a linear map $\dbl -,- \dbr ^\rho: H \otimes H \rightarrow I \otimes I$ given by
\begin{equation} \label{masbr}
        \dbl a, b \dbr^{\rho}= b'S(\rho(a'', b'')')a' \otimes \rho(a'',b'')''.
    \end{equation}
The authors then prove that $\dbl -,- \dbr^\rho$ is a double bracket whenever $\rho$ is skew-symmetric. This indirectly gives us a way to construct brackets out of skew-symmetric Fox pairings. However, the assignment $\rho \mapsto [-,-]^\rho$ is well defined even without the skew-symmetry assumption; the following proposition emphasizes this fact:
\begin{proposition}[\cite{MasTu}, Lemma 6.2]
\label{brac_from_fox}
    Let $f:H \rightarrow I$ be a morphism between cocommutative, involutive Hopf algebras. Then there is a map
    \begin{equation*}
        \begin{split}
            \mathrm{Fox}_f(H, I) &\rightarrow \mathrm{Hom}_\mathbb{K}(|H| \otimes |H|, |I|)\\
        \rho & \mapsto [-,-]^\rho
        \end{split},
    \end{equation*}
    where $[|a|, |b| ]^\rho=|b'S(\rho(a'', b'')')a'\rho(a'',b'')''|$.
\end{proposition}
\begin{proof}
    We need to show that the bracket $[-,-]^\rho$ is well defined on the quotient of cyclic words. Let $a,b,c \in H$. By abuse of notation, we denote the map $[-,-]^\rho:H \otimes H \rightarrow |I|$ by the same symbol. Notice that, since $\rho$ is a left Fox derivative in its first variable,
    \begin{equation*}
        \begin{split}
            [ab, c] &= |c' S(a''\rho(b'', c'')')a'b'a'''\rho(b'', c'')'' + c'S(\varepsilon(b'') \rho(a'', c'')')a'b'\epsilon(b''')\rho(a'', c'')''|\\
            &=|c'S(\rho(b'', c'')')b'a\rho(b'', c'')''| +|c'S(\rho(a'', c'')')a'b\rho(a'', a'')|.
        \end{split}
    \end{equation*}
    Since the final equality is symmetric in $a$ and $b$, commutators in the first variable must be mapped to zero. We can analogously show $[a, bc-cb]=0$.
\end{proof}
\begin{remark}
    The bracket $[-,-]^\rho$ is skew-symmetric whenever $\rho$ is.
\end{remark}

\begin{definition}\label{FoxP_exact}
    Let $\rho \in \mathrm{Fox}_f(H, I)$. We say $\rho$ is \textit{exact} if it is expressible as
    \begin{equation}
        \rho(a,b)= \partial_L(a)D(b)+D(a)\partial_R(b), \ a,b \in H,
    \end{equation}
    where $\partial_L \in \mathrm{LFox}_f(H,I)$ and $\partial_R \in \mathrm{RFox}_f(H,I)$. This motivates us to define a map
\begin{equation}\label{tau_fox}
    \begin{split}
         \tau: \hspace{1em} \mathrm{LFox}_f(H,I) \oplus \mathrm{RFox}_f(H,I) & \longrightarrow \mathrm{Fox}_f(H,I)\\
         \partial_L \oplus \partial_R & \longmapsto ((a, b) \mapsto\partial_L(a)D(b) + D(a)\partial_R(b)).
    \end{split}
\end{equation}
\end{definition}

\begin{proposition}\label{FP_exact=null}
    Let $\rho \in \mathrm{Fox}_f(H, L)$. The bracket $[-,-]^\rho: |H| \times |H| \rightarrow |I|$ is identically zero whenever $\rho$ is exact.
\end{proposition}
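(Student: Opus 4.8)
The plan is to exploit the linearity of the assignment $\rho \mapsto [-,-]^\rho$ furnished by Proposition \ref{brac_from_fox}. Since an exact pairing is $\rho = \tau(\partial_L \oplus \partial_R)$, i.e. $\rho(a,b) = \partial_L(a)D(b) + D(a)\partial_R(b)$, and the bracket depends linearly on $\rho$, it suffices to show that the two ``pure'' pairings $\rho_1(a,b) = \partial_L(a)D(b)$ and $\rho_2(a,b) = D(a)\partial_R(b)$ each induce the zero bracket. These two cases are interchanged by the transpose: since $D^t = D$ (using involutivity of $S$ and $\varepsilon\circ S=\varepsilon$) and Lemma \ref{Fox-transpose} turns $\partial_L$ into a right Fox derivative, one has $\rho_1^t(a,b)=D(a)\partial_L^t(b)$, which is of the form of $\rho_2$. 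So in principle it is enough to treat $\rho_1$; I would nonetheless record both computations, as they are short and essentially identical.

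For $\rho_1$ I would substitute $D(b) = b - \varepsilon(b)$ into the defining formula $[|a|,|b|]^{\rho_1} = |\,b'\,S(\rho_1(a'',b'')')\,a'\,\rho_1(a'',b'')''\,|$, splitting the bracket into the difference of the contributions of $\partial_L(a'')b''$ and of $\varepsilon(b'')\partial_L(a'')$. The heart of the argument is then a collapse of the iterated coproduct of $b$. Writing $\Delta(\partial_L(a''))=\partial_L(a'')'\otimes\partial_L(a'')''$ and expanding everything by coassociativity, the first contribution becomes, after one use of cyclic invariance in $|I|$ to bring the two outermost $b$-factors together, an expression whose $b$-dependent cluster is $\sum b_{(3)}b_{(1)}S(b_{(2)})$ in threefold Sweedler notation. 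Using cocommutativity to relabel this as $\sum b_{(1)}b_{(2)}S(b_{(3)})$ and then contracting the last two factors via the antipode axiom $\sum x'S(x'')=\varepsilon(x)1$ reduces the cluster to $b$. The second contribution collapses even more directly, since $\sum \varepsilon(b'')b' = b$. Both contributions thereby equal $\pm\,|\,b\,S(\partial_L(a'')')\,a'\,\partial_L(a'')''\,|$, and they cancel. The computation for $\rho_2$ is the mirror image: now $D(a)=a-\varepsilon(a)$ is split, the relevant cluster is $\sum S(a_{(2)})a_{(1)}a_{(3)}$, which cocommutativity rearranges into $\sum S(a_{(1)})a_{(2)}a_{(3)}$ and the antipode axiom contracts to $a$, again producing two terms that cancel.

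I expect the main obstacle to be organizational rather than conceptual: keeping track of the three layers of comultiplication --- the outer coproducts of $a$ and $b$, and the further coproduct of the value $\rho(a'',b'')$, which itself splits because $D$ and $\partial_L$ are comultiplied separately --- without dropping a Sweedler index, and invoking cyclic invariance of $|I|$ at the precise point where the two $b$-factors sit at opposite ends of the word. Once the bookkeeping is in place the vanishing is forced by the two standard Hopf identities, cocommutativity (to rearrange the repeated tensor factors) together with the antipode and counit axioms (to contract $S(x')x''$ and $x'\varepsilon(x'')$). The genuine content is recognizing that the $D$-factor is exactly what makes the ``$b''$-cluster'' (resp.\ ``$a''$-cluster'') contract to $b$ (resp.\ $a$) identically in both summands, which is what guarantees the cancellation.
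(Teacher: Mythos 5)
Your proposal is correct and follows essentially the same route as the paper's proof: reduce by linearity of $\rho \mapsto [-,-]^\rho$ to the two pure pairings $\partial_L(a)D(b)$ and $D(a)\partial_R(b)$, expand $D$ and the Sweedler coproducts, and let cyclicity, cocommutativity, and the antipode axiom collapse each $b$-cluster (resp.\ $a$-cluster) so that the two resulting terms $|Xb|$ and $|bX|$ cancel in $|I|$. The additional observation that the two cases are exchanged by transposition is correct but not used in the paper, which simply notes the second computation is analogous.
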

\begin{proof}
    Let $\rho$ be an exact Fox pairing, with
    \[
        \rho(a, b)= \partial_L(a)D(b) + D(a) \partial_R(b)
    \]
    for all $a, b \in H$. Because the assignment $\rho \mapsto [ -,- ]^{\rho}$ is linear in $\rho$, we will first prove $[-,-]^{\gamma}=0$ for the Fox pairing $\gamma(a, b)= \partial_L(a)D(b)$. We begin by computing
    \[
        \Delta(\gamma(a'', b''))= \partial_L (a'')' b'' \otimes \partial_L(a'')'' b'''- \partial_L(a'')'\varepsilon(b'')\otimes \partial_L(a'')''.
    \]
    Now we can plug this into the definition of $[ -,-]^{\gamma}$:
    \begin{equation*}
        \begin{split}
        [ a, b ] ^{\gamma}&= |b'S(b'')S(\partial_L(a'')')a' \partial_L(a'')''b'' -b'\varepsilon (b'') S(\partial_L(a'')')a' \partial_L(a'')''|\\
        &= |S(\partial_L(a'')')a' \partial_L(a'')''b| - |b S(\partial_L (a'')')a'  \partial_L(a'')''|=0\\
        \end{split}
    \end{equation*}
    We can analogously show that $[ -,-]^{D \cdot \partial_R}=0$ for any right Fox derivative $\partial_R$.
\end{proof}

\begin{definition}\label{inner}
    A Fox pairing $\rho: H \rightarrow H$ is \textit{inner} if there exists an $e\in H$ such that
    \begin{equation*}
        \rho(a,b)=(a-\varepsilon(a))e(b-\varepsilon(b)) = D(a) e D(b)
    \end{equation*}
    for all $a,b\in H$.
\end{definition}
In \cite[Lemma 6.3]{MasTu}, the authors prove that the bracket $[-,-]^\rho$ associated to any inner Fox pairing is null. Notice that we may write
\begin{equation*}
    \begin{split}
        D(a)eD(b)=\left(\frac{1}{2} D(a) e \right)D(b)+D(a)\left(\frac{1}{2}eD(a) \right),
    \end{split}
\end{equation*}
where the maps
\begin{equation*}
    a\mapsto \frac{1}{2}D(a)e \quad \text{and} \quad a\mapsto \frac{1}{2}eD(a)
\end{equation*}
are left and right Fox derivatives, respectively. It follows that all inner Fox parings are exact.

\subsection{Quasi-derivations}\label{quasisec}

In this section we follow \cite{Mass} in order to produce a cobracket in $|H|$ from a quasi-derivation in $H$. Similarly to the previous section, we fix a Hopf algebra map $f: H \rightarrow I$ and capitalize on the $H$-bimodule structure it induces on $I$ with our notation.

\begin{definition}
    A linear map $q: H \rightarrow I$ is a \textit{quasi-derivation} if the map
    \begin{equation} \label{quasi}
        \begin{split}
            \rho : \hspace{1em} H \times H & \rightarrow I\\
            (a,b) & \mapsto q(a)b + aq(b) - q(ab)
        \end{split}
    \end{equation}
    is a Fox pairing in $\mathrm{Fox}_f(H,I)$. We say $q$ is a quasi-derivation associated to the Fox pairing $\rho$, and write $q\in \mathrm{Qder}_f(\rho)$. As before, we will mainly concern ourselves with quasi-derivations in
    \begin{equation*}
        \mathrm{Qder}(\rho) \defeq \mathrm{Qder}_\mathrm{id}(\rho).
    \end{equation*}
    Whenever we do not want to fix a specific Fox pairing, we will write $\mathrm{Qder}_f(H,I)$ for the space of quasi-derivations mediated by the map $f$. We also denote $\mathrm{Qder}(H) \defeq \mathrm{Qder}_{\mathrm{id}}(H)$.
\end{definition}
\begin{example}
    Given a left (resp. right) Fox derivative $\partial_L$ (resp. $\partial_R$), the map $q(a)= \partial_L(a) + \partial_R(a)$ is a quasi-derivation associated to the Fox pairing
    \[
        \rho(a,b)=-(\partial_L(a)D(b) + D(a)\partial_R(b)).
    \]
\end{example}

Similarly to before, we will consider \textit{skew-symmetric} quasi-derivations.
\begin{definition}\label{qder-skew-def}
    Let $q \in \mathrm{Qder}_f(\rho)$. The \textit{transpose} $q^t$ of $q$ is given by
    \begin{equation*}
        q^t(a) \defeq S_I q (S_H(a)) \hspace{1em} \text{for all }a\in H.
    \end{equation*}
    We say $q$ is \textit{skew-symmetric} if $q= -q^t$.
\end{definition}
\begin{lemma}[\cite{Mass}, Lemma 2.3]
    If $q: H \rightarrow I$ is a quasi derivation associated to a Fox pairing $\rho: H \times H \rightarrow I$, then its transpose $q^t$ is a quasi-derivation associated to $\rho^t$.
\end{lemma}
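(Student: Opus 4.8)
The plan is to show directly that the Fox pairing canonically attached to $q^t$ — that is, the defect $(a,b)\mapsto q^t(a)b + a\,q^t(b) - q^t(ab)$ appearing in equation (\ref{quasi}) — coincides with $\rho^t$. Once this identity is established, the lemma reduces to the observation that $\rho^t$ is genuinely a Fox pairing, which is the Fox-pairing analogue of Lemma \ref{Fox-transpose}: transposition swaps the left-derivation property in the first slot with the right-derivation property in the second. I would take this last point as known (it is exactly what makes Definition \ref{fox-skew-def} well posed), so the whole content of the lemma is the defect computation.

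The computation begins from the definition $q^t(ab)=S\,q(S(ab))$ together with anti-multiplicativity of the antipode, $S(ab)=S(b)S(a)$, giving $q^t(ab)=S\,q\big(S(b)S(a)\big)$. I would then apply the quasi-derivation relation for $q$ to the product $S(b)S(a)$, writing $q\big(S(b)S(a)\big) = q(S(b))S(a) + S(b)q(S(a)) - \rho\big(S(b),S(a)\big)$, and apply $S$ term by term.

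The only genuine bookkeeping is tracking how $S_I$ interacts with the $H$-bimodule structure that $f$ induces on $I$. The two identities I would use are $S_I(x\cdot h)=S_H(h)\cdot S_I(x)$ and $S_I(h\cdot x)=S_I(x)\cdot S_H(h)$ for $x\in I$ and $h\in H$; both follow from $S_I$ being anti-multiplicative combined with $f\circ S_H = S_I\circ f$ (the fact that $f$ is a Hopf algebra map). Using these together with involutivity $S^2=\mathrm{id}$ and the definitions of $q^t$ and $\rho^t$, the three terms collapse to $S\big(q(S(b))S(a)\big)=a\,q^t(b)$, $S\big(S(b)q(S(a))\big)=q^t(a)\,b$, and $S\big(\rho(S(b),S(a))\big)=\rho^t(a,b)$. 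Hence $q^t(ab)=q^t(a)b + a\,q^t(b) - \rho^t(a,b)$, which rearranges to $\rho^t(a,b)=q^t(a)b + a\,q^t(b) - q^t(ab)$, exactly identifying the defect of $q^t$ with $\rho^t$.

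The main obstacle is not algebraic depth but careful bookkeeping of the bimodule actions: each application of $S_I$ reverses the multiplication order and converts a left $H$-action into a right one and vice versa, and it is precisely the interplay of $f\circ S_H = S_I\circ f$ with $S^2=\mathrm{id}$ that makes the stray antipodes on the $H$-factors $S(a)$ and $S(b)$ cancel, returning the plain left and right multiplications by $a$ and $b$. Keeping the inner versus outer actions straight is the one place where a sign or an order error would creep in, so I would carry the $f$'s explicitly in a scratch version before suppressing them in the final write-up.
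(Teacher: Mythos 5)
Your computation is correct, and there is nothing in the paper to diverge from: this lemma is quoted from Massuyeau (\cite{Mass}, Lemma 2.3) and the paper supplies no proof of its own. Your argument is the natural one: write $q^t(ab)=S\,q(S(b)S(a))$, expand with the quasi-derivation identity, and push $S_I$ through each term using anti-multiplicativity, $S_I\circ f=f\circ S_H$, and $S^2=\mathrm{id}$, which yields $q^t(ab)=q^t(a)b+a\,q^t(b)-\rho^t(a,b)$. The bimodule bookkeeping you flag is indeed the only delicate point, and your identities $S_I(x\cdot h)=S_H(h)\cdot S_I(x)$ and $S_I(h\cdot x)=S_I(x)\cdot S_H(h)$ handle it correctly.

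One deferred point should be made explicit rather than waved at: by the paper's definition, $q^t$ is a quasi-derivation only if its defect is a \emph{Fox pairing}, so you still owe the claim that $\rho^t$ is one. This is not literally what makes Definition \ref{fox-skew-def} well posed (that definition only requires $\rho^t$ to be a bilinear map), and Lemma \ref{Fox-transpose} in the paper covers Fox derivatives, not pairings. The check is routine and parallel to Lemma \ref{Fox-transpose}: for instance, $\rho^t(a_1a_2,b)=S\rho\bigl(S(b),S(a_2)S(a_1)\bigr)$, and expanding via the right Fox property of $\rho$ in its second slot and applying $S$ gives $\rho^t(a_1,b)\varepsilon(a_2)+a_1\rho^t(a_2,b)$, i.e.\ the left Fox property of $\rho^t$ in its first slot; the second slot is handled symmetrically. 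With that half-paragraph added, your proof is complete.
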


Our immediate goal is to define a cobracket $\delta_q: |H| \rightarrow |I| \otimes |I|$ associated to any quasi-derivation $q$ mediated by a Hopf algebra map $f: H \rightarrow I$. In \cite{Mass}, Massuyeau defines a linear map $d_q: H \rightarrow I \otimes I$ by
\begin{equation}
    d_q(a)= a'S(q(a'')') \otimes q(a'')''
\end{equation}
for any quasi-derivation $q$. The author then proves that the map
\begin{equation}\label{masdelta_def}
    \begin{split}
        \delta_q':\quad H & \rightarrow I \otimes I\\
        a & \mapsto d_q(a) - P d_q(a)
    \end{split}
\end{equation}
induces a linear map $|\delta_q'|: |H| \rightarrow |I| \otimes |I|$ whenever the quasi-derivation $q$ is skew-symmetric (cf. \cite[Lemma 2.6]{Mass}). Here, $P: I \otimes I \rightarrow I \otimes I$ is the permutation map given by $P(a\otimes b)=b\otimes a$ for all $a, b \in I$.  We need to extend this result to non-symmetric quasi-derivations. For this reason, we deviate from \cite{Mass} and instead define
\begin{equation}\label{delta_def}
    \delta_q(a) \defeq d_q(a)+Pd_{q^t}(a).
\end{equation}
Notice that $\delta_q'=\delta_q$ whenever $q$ is skew-symmetric.

\begin{proposition} \label{cobracyc}
    Suppose $q: H \rightarrow I$ is a quasi-derivation between involutive, cocommutative Hopf algebras. Then $\delta_q$ descends to a map $\delta_q: |H| \rightarrow |I| \otimes |I|$, which we call the cobracket associated to $q$. 
\end{proposition}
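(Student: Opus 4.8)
The statement to prove is that, after projecting to cyclic words, $\delta_q$ kills commutators: $\delta_q(ab - ba) = 0$ in $|I| \otimes |I|$ for all $a, b \in H$, which is exactly the condition for the map defined in (\ref{delta_def}) to factor through $H \to |H|$. Setting $W_q \defeq d_q(ab - ba)$, the definition $\delta_q = d_q + P d_{q^t}$ together with $P^2 = \mathrm{id}$ rewrites this as the single identity $d_{q^t}(ab - ba) = -P\, d_q(ab - ba)$ in $|I| \otimes |I|$, which I will call $(\star)$. Since $(\star)$ is linear in $q$, and since $q^t = -q$ for skew-symmetric $q$ turns $(\star)$ into the relation $W_q = P W_q$ that is precisely \cite[Lemma 2.6]{Mass}, the only genuinely new content is the symmetric case $q^t = q$, i.e.\ the $P$-antisymmetry $W_q = -P W_q$.

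The main tool I would establish first is a transpose--antipode relation. Expanding $d_{q^t}(x)$ through $q^t = S q S$, and using cocommutativity (so that $\Delta S = (S \otimes S)\Delta$) together with involutivity $S^2 = \mathrm{id}$, one computes, with $c \defeq q(S(x''))$, that $d_{q^t}(x) = x' c' \otimes S(c'')$; the same manipulation applied to $d_q(S(x))$ gives $(S \otimes S)\, d_q(S(x)) = c' x' \otimes S(c'')$. As $|x' c'| = |c' x'|$ in the cyclic quotient, these coincide, so that $d_{q^t}(x) \equiv (S \otimes S)\, d_q(S(x))$ in $|I| \otimes |I|$. Since $S(ab - ba) = -(S(a)S(b) - S(b)S(a))$ is again a commutator, plugging $x = ab - ba$ reduces $(\star)$ to an identity involving $d_q$ alone. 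To dispatch it I would expand $d_q$ on each commutator using the quasi-derivation relation (\ref{quasi}) in the form $q(a'' b'') = q(a'')b'' + a''q(b'') - \rho(a'', b'')$, and reduce modulo commutators in each factor using the antipode axioms $a' S(a'') = \varepsilon(a) = S(a')a''$ and cyclic invariance, splitting the outcome into a Leibniz part and a Fox-pairing part.

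The Leibniz part is routine: the term coming from $q(a'')b''$ collapses via the antipode axiom to $a' S(q(a'')') \otimes q(a'')''\, b$, and the various such contributions match across the two commutators. The step I expect to be the main obstacle is the cancellation of the Fox-pairing part, the terms governed by $\rho$ and $\rho^t$. This is the precise point at which Massuyeau's original argument uses skew-symmetry in the guise $q^t = -q$; our modified definition (\ref{delta_def}) instead retains the honest transpose, and verifying that the $\rho$-corrections still cancel in $|I| \otimes |I|$ requires the compatibility of $q^t$ with $\rho^t$ from \cite[Lemma 2.3]{Mass} together with careful bookkeeping of the Sweedler legs and antipodes under cocommutativity and involutivity. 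Once this cancellation is isolated by the splitting above, it is carried out by the same antipode manipulations as in \cite{Mass}.
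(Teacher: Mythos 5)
Your preliminary reductions are correct, and they do organize the argument differently from the paper: the paper expands $\delta_q(ab)$ directly via the quasi-derivation relation, observes that the Leibniz-type terms are symmetric under $a \leftrightarrow b$ (hence die on commutators), and then cancels the four surviving $\rho$/$\rho^t$-terms in pairs. Your reformulation $(\star)$, the splitting $q = \tfrac{1}{2}(q+q^t) + \tfrac{1}{2}(q-q^t)$ (legitimate, since quasi-derivations form a vector space and $q^t$ is again a quasi-derivation by \cite[Lemma 2.3]{Mass}), and the transpose--antipode relation $d_{q^t}(x) \equiv (S \otimes S)\,d_q(S(x))$ in $|I| \otimes |I|$ are all verifiably correct; in particular the skew-symmetric summand is genuinely disposed of by citing Massuyeau.

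However, there is a real gap at exactly the point you flag as ``the main obstacle'': the cancellation of the Fox-pairing terms is never established, and deferring it to ``the same antipode manipulations as in \cite{Mass}'' does not close it. After the Leibniz terms cancel, what survives is
\begin{equation*}
    a'b' S(\rho(a'',b'')') \otimes \rho(a'',b'')'' \;-\; b'a' S(\rho(b'',a'')') \otimes \rho(b'', a'')'' \;+\; P\bigl(\text{the same expressions with } \rho^t\bigr),
\end{equation*}
and the entire proposition rests on the identity
\begin{equation*}
    y'S(\rho(x'', y'')')x' \otimes \rho(x'', y'')'' \;=\; P\bigl(x' S(\rho^t(y'', x'')')y' \otimes \rho^t(y'', x'')''\bigr)
\end{equation*}
in $|I| \otimes |I|$, valid for an \emph{arbitrary} Fox pairing, which pairs the first term with the fourth and the second with the third. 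Your route does not avoid this identity: for your symmetric summand ($q^t = q$, hence $\rho^t = \rho$, since $\rho$ is determined by $q$) the required cancellation is precisely this identity specialized to $\rho^t = \rho$, and that case is \emph{not} covered by \cite[Lemma 2.6]{Mass}, whose statement and use are confined to the skew-symmetric setting $\rho^t = -\rho$. Extending the cancellation beyond skew-symmetry is the actual new content of the proposition, so it must be stated and proved --- or at least isolated as the displayed transpose identity, as the paper does following \cite{Mass} --- rather than asserted; as written, the crux of your proposal is an unproven claim, and the reductions preceding it, while correct, do not make that claim any easier than it is in the paper's direct approach.
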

\begin{proof}
    By abuse of notation, we will use the same symbol for the map $\delta_q: H \rightarrow |I| \otimes |I|$. We need to show that $\delta_q(ab -ba)=0$ for any $a,b \in H$. Let $q \in \mathrm{Qder}_f(\rho)$, so that
    \begin{equation*}
        q(ab)= q(a)b+aq(b) + \rho(a,b).
    \end{equation*}
    Note this assumption also implies that $q^t$ is associated to $\rho^t$. On the one hand,
    \begin{equation*}
        \begin{split}
            \delta_q(a b) &= d_q(ab) + P(d_{{q}^t}(ab))\\
            &= a'b'S(q(a''b'')') \otimes q(a''b'')''+ P(d_{q^t}(ab))\\
            &=a'b'S(q(a'')'b'')\otimes q(a'')''b'''+a'b'S(a''q(b'')') \otimes a'''q(b'')'' + a'b' S(\rho(a'',b'')') \otimes \rho(a'', b'')'' + P(d_{q^t}(ab))\\
            &= a' S(q(a'')') \otimes q(a'')'' + b' S(q(b'')') \otimes a q(b'')'' + P(a' S(q^t(a'')') \otimes q^t(a'')'' + b' S(q^t(b'')') \otimes a q^t(b'')'')\\
            &+ a' b' S(\rho(a'',b'')') \otimes \rho(a'',b'')'' + P(a' b' S(\rho^t(a'',b'')') \otimes \rho^t(a'',b'')'').
        \end{split}
    \end{equation*}
    Notice the first two terms in the last equality are symmetric under the exchange of $a$ and $b$, and likewise for the third and fourth terms. This means
    \begin{equation}\label{qauxf}
        \begin{split}
            \delta_q(ab-ba)&=a'b' S(\rho(a'',b'')') \otimes \rho(a'',b'')'' -b'a' S(\rho(b'',a'')') \otimes \rho(b'', a'')''\\
            &+P(a'b' S(\rho^t(a'',b'')') \otimes \rho^t(a'',b'')'' -b'a' S(\rho^t(b'',a'')') \otimes \rho^t(b'', a'')'').
        \end{split}
    \end{equation}
    Now it is enough to observe, following \cite[Lemma 2.6]{Mass}, that
    \begin{equation*}
        y'S(\rho(x'', y'')')x' \otimes \rho(x'', y'')''=P(x' S(\rho^t(y'', x'')')y' \otimes \rho^t(y'', x'')'')
    \end{equation*}
    for any $x,y \in H$. This implies that the first and fourth term in equation (\ref{qauxf}) cancel each other out, and similarly for the second and third terms.
\end{proof}

Now we will define \textit{exact} quasi-derivations; these naturally correspond to exact relative Lie cocycles, as we will see in Section \ref{coho}.
\begin{definition}\label{def_ex_qder}
    A quasi-derivation $q:H \rightarrow I$ is \textit{exact} if it is expressible as
    \begin{equation}
        q(a)= \partial_L(a) + \partial_R(a), \, a \in H,
    \end{equation}
    for some $\partial_L \in \mathrm{LFox}_f(H, I)$ and $\partial_R \in \mathrm{RFox}_f(H, I)$. Notice that in this case, $q$ is associated to the Fox pairing
    \begin{equation*}
        (a,b) \longmapsto -\partial_L(a)D(b) - D(a)\partial_R(b)=-\tau(\partial_L \oplus \partial_R)(a, b). 
    \end{equation*}
    We consolidate this information by defining a map
    \begin{equation} \label{mu_qder}
        \begin{split}
            \mu: \hspace{1em} \mathrm{LFox}_f(H, I) \oplus \mathrm{RFox}_f(H,I) & \longrightarrow \mathrm{Qder}_f(H,I)\\
            \partial_L \oplus \partial_R & \longmapsto(a \mapsto\partial_L(a) + \partial_R(a)).
        \end{split}
    \end{equation}
\end{definition}

We aim to prove that the cobracket associated to exact quasi-derivations is null. We will first need an auxiliary lemma:
\begin{lemma} \label{Fox_transpose_alt}
    Let $\partial \in \mathrm{LFox}_f(H,I)$ (resp. $\partial \in \mathrm{RFox}_f(H,I)$), where both $H$ and $I$ are involutive Hopf algebras. Then  
    \begin{equation*}
        \partial^t(a)=-S\partial(a'')a' \, \,(\text{resp.}\ \partial^t(a)= -a' S\partial(a''))
    \end{equation*}
    for any $a \in H$.
\end{lemma}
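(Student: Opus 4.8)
The plan is to derive the formula exactly as one proves the analogous antipode identity for an ordinary derivation, using the antipode axiom $S(a')a''=\varepsilon(a)1=a'S(a'')$ together with the involutivity $S^2=\mathrm{id}$. I will carry out the left case in detail; the right case is parallel.

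First I would record that $\partial(1)=0$ for any Fox derivative, since applying the Leibniz rule to $1=1\cdot 1$ gives $\partial(1)=\partial(1)\varepsilon(1)+1\cdot\partial(1)=2\partial(1)$. Next, I apply $\partial$ to the identity $S(a')a''=\varepsilon(a)1$. The right-hand side is sent to $\varepsilon(a)\partial(1)=0$, while the left Fox derivative rule expands the left-hand side as
\[
0=\partial\big(S(a')a''\big)=\partial(S(a'))\,\varepsilon(a'')+S(a')\,\partial(a'').
\]
Contracting the second leg with the counit identity $a'\varepsilon(a'')=a$ collapses the first summand to $\partial(S(a))$, yielding
\[
\partial(S(a))=-\,S(a')\,\partial(a'').
\]

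Finally I would apply the antipode $S_I$ of $I$ to both sides. Since $S$ is an algebra anti-homomorphism and $f$ intertwines the antipodes (so that the suppressed $H$-action on $I$ through $f$ is compatible), this gives
\[
\partial^t(a)=S\partial(S(a))=-\,S\big(\partial(a'')\big)\,S\big(S(a')\big),
\]
and involutivity $S(S(a'))=a'$ collapses the right-hand side to $\partial^t(a)=-\,S\partial(a'')\,a'$, as claimed. The right case is handled identically, starting instead from $a'S(a'')=\varepsilon(a)1$ and the right Leibniz rule; the same manipulations produce $-a''\,S\partial(a')$, which equals the desired $-a'\,S\partial(a'')$ once one invokes the standing cocommutativity hypothesis to exchange the Sweedler legs.

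I expect the main obstacle to be purely one of bookkeeping rather than genuine difficulty: keeping track of the order reversal introduced by the antipode, and remembering that every product appearing is an $H$-action on $I$ mediated by $f$, which is legitimate precisely because $f$ commutes with the antipodes and counits. The only place where an extra structural assumption enters is the right case, where reconciling the Sweedler indices $a''\otimes a'$ with $a'\otimes a''$ requires cocommutativity.
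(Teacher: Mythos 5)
Your proof is correct and follows the paper's argument essentially verbatim: apply the Fox--Leibniz rule to the antipode identity $S(a')a''=\varepsilon(a)1$, collapse the counit term, and then apply $S_I$, using involutivity and the compatibility of $f$ with the antipodes. The only difference is cosmetic: the paper dismisses the right case as ``analogous,'' and your appeal to cocommutativity to swap the Sweedler legs there is legitimate under the paper's standing conventions (one could even avoid it, since $\Delta(S(a))=S(a'')\otimes S(a')$ lets the mirror argument produce $-a'S\partial(a'')$ directly, but this is immaterial here).
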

\begin{proof}
    Let $\partial: H \rightarrow I$ be a left Fox derivative. For any $a \in H$, $\epsilon(a)1= S(a')a''$, so that
    \begin{equation*}
        \begin{split}
            0=\partial(\varepsilon(a)1)&=\partial(S(a')a'')\\
            &=S(a')\partial(a'') + \varepsilon(a'')\partial(S(a'))\\
            &=S(a')\partial(a'') + \partial(S(a)).
        \end{split}
    \end{equation*}
    We get the desired result by evaluating the last equality on the antipode $S_I$. The proof is analogous when dealing with right Fox derivatives.
\end{proof}

\begin{proposition}  \label{big2}
    Let $q: H \rightarrow I$ be an exact quasi-derivation. Then its associated cobracket $\delta_q : |H| \rightarrow |I| \otimes |I|$ is identically zero.
\end{proposition}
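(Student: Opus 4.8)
The plan is to reduce the whole statement to a single computation for left Fox derivatives, by exploiting linearity together with a transpose symmetry. First I would note that the assignment $q \mapsto \delta_q = d_q + P d_{q^t}$ is $\mathbb{K}$-linear in $q$: both $d_q(a) = a'S(q(a'')')\otimes q(a'')''$ and the transpose $q \mapsto q^t = S q S$ are linear. Writing the exact quasi-derivation as $q = \partial_L + \partial_R$ with $\partial_L \in \mathrm{LFox}_f(H,I)$ and $\partial_R \in \mathrm{RFox}_f(H,I)$, this gives $\delta_q = \delta_{\partial_L} + \delta_{\partial_R}$; each summand is legitimate because a lone Fox derivative is already a quasi-derivation (take the other summand to vanish in the example above). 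So it suffices to prove $\delta_{\partial_L}=0$ and $\delta_{\partial_R}=0$ separately.

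To avoid doing the work twice, I would record the identity $\delta_{q^t} = P \circ \delta_q$, valid for every quasi-derivation $q$. It follows at once from $(q^t)^t = q$ (as $S^2 = \mathrm{id}$) and $P^2 = \mathrm{id}$: on one hand $\delta_{q^t} = d_{q^t} + P d_q$, on the other $P\delta_q = P d_q + d_{q^t}$. Since $P$ is invertible, $\delta_q$ vanishes if and only if $\delta_{q^t}$ does. Because $\partial_R^t$ is a left Fox derivative by Lemma \ref{Fox-transpose}, proving the vanishing for left Fox derivatives automatically settles the right case: $\delta_{\partial_R^t}=0$ forces $\delta_{\partial_R}=0$.

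The heart of the argument is then the computation of $\delta_{\partial_L}$. Setting $v = \partial_L(a'')$ one has directly $d_{\partial_L}(a) = a' S(v') \otimes v''$. For the transpose term I would apply Lemma \ref{Fox_transpose_alt} to rewrite $\partial_L^t(a'') = -S(\partial_L(a_{(3)}))\,a_{(2)}$, take its coproduct (using cocommutativity, so that $\Delta S = (S\otimes S)\Delta$), apply the antipode to the first slot, and finally use the antipode axiom $\sum c_{(1)}S(c_{(2)}) = \varepsilon(c)1$ to contract the two leftmost coproduct factors of $a$. This should collapse the expression to $d_{\partial_L^t}(a) = -\,v' \otimes S(v'')a'$, whence $P d_{\partial_L^t}(a) = -\,S(v'')a' \otimes v'$ and
\[
    \delta_{\partial_L}(a) = a' S(v') \otimes v'' - S(v'')a' \otimes v'.
\]

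Passing to $|I| \otimes |I|$ I would cancel the two terms: cocommutativity of $I$ lets me swap $v' \leftrightarrow v''$ in the summation of the second term, turning it into $-S(v')a' \otimes v''$, and cyclicity of the first tensor slot gives $|S(v')a'| = |a'S(v')|$, exactly the negative of the first term, so $\delta_{\partial_L}=0$ on $|H|$ (the induced map being well defined by Proposition \ref{cobracyc}). Combined with the symmetry identity this yields $\delta_q = \delta_{\partial_L}+\delta_{\partial_R}=0$. I expect the main obstacle to be the transpose computation of $d_{\partial_L^t}$: the nested Sweedler splittings and the exact placement of the antipode must be tracked carefully, and the decisive step is the antipode contraction that frees the $v$-factors so the final cocommutativity-plus-cyclicity cancellation can take place.
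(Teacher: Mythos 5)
Your proposal is correct and follows essentially the same route as the paper: the same linear decomposition $\delta_q = [d_{\partial_L} + P d_{\partial_L^t}] + [d_{\partial_R} + P d_{\partial_R^t}]$, the same key input (Lemma \ref{Fox_transpose_alt}), and the same central computation showing $d_{\partial^t}(a) = -v' \otimes S(v'')a' = -P(d_\partial(a))$ modulo the projection to $|I|\otimes|I|$, where $v = \partial(a'')$. The only (harmless) variation is that where the paper dispatches the right Fox derivative case as ``analogous,'' you instead derive it formally from the left case via the identity $\delta_{q^t} = P\delta_q$ together with Lemma \ref{Fox-transpose}, which is a clean way to avoid repeating the computation.
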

\begin{proof}
    Let $q = \partial_L + \partial_R$ be an exact quasi-derivation, with $\partial_L \in \mathrm{LFox}_f(H, I)$ and $\partial_R \in \mathrm{RFox}_f(H, I)$. We can write
     \begin{equation*}
         \begin{split}
             \delta_q &= d_q + P(d_{q^t})\\
             &= [d_{\partial_L}+ P(d_{\partial_L^t})]+[d_{\partial_R}+P(d_{\partial_R^t})].
         \end{split}
     \end{equation*}
     Looking at the preceding equation, notice we will finish the proof if we can show $d_{\partial}+ P(d_{\partial^t})=0$ for any left or right Fox derivative $\partial$. Suppose $\partial$ is a left Fox derivative, then Lemma \ref{Fox_transpose_alt} implies that, given $a \in H$,
     \begin{equation*}
         d_{\partial^t}(a)=-\partial(a'')' \otimes S(\partial(a'')')a'= -P(d_\partial(a)),
     \end{equation*}
     where the last equality holds after the projection $H \otimes H \to |H| \otimes |H|$.
     Showing the preceding equality is analogous for right Fox derivatives.
\end{proof}

\subsection{Brackets and cobrackets}

\begin{definition}
    Let $\mathbf{Fox}$ be the category consisting of objects of the form $( \mathfrak{h}, q \oplus \rho)$, where $\mathfrak{h}$ is a Lie algebra, $\rho \in \mathrm{Fox}( U\mathfrak{h})$, and $q \in \mathrm{Qder}(-\rho)$. A morphism between two objects $(\mathfrak{h}, q \oplus \rho)$ and $(\mathfrak{h}', q' \oplus \rho')$ is specified by a Lie algebra map $f: \mathfrak{h} \rightarrow \mathfrak{h}'$ and a pair of Fox derivatives $\partial_L \oplus \partial_R \in \mathrm{LFox}_f(U\mathfrak{h},U\mathfrak{h}') \oplus \mathrm{RFox}_f(U\mathfrak{h}, U\mathfrak{h}')$, subject to the constraint\footnote{We do not distinguish between the Lie map $f: \mathfrak{h} \rightarrow \mathfrak{h}'$ and its associated algebra map $f: U\mathfrak{h} \rightarrow U\mathfrak{h}'$.}
    \small
    \begin{equation}\label{mor_fox}
        \mathbf{Fox}((\mathfrak{h}, q \oplus \rho), (\mathfrak{h}', q' \oplus \rho'))=
        \left\{ 
            \begin{tikzcd}
                \mathfrak{h} \arrow[d, "f"]\\
                \mathfrak{h}'
            \end{tikzcd},
            \partial_L \oplus \partial_R\, \middle| \, \mu(\partial_L \oplus \partial_R) \oplus\tau(\partial_L \oplus \partial_R) =(q'f-fq) \oplus (\rho'(f\otimes f)-f\rho)
        \right\},
    \end{equation}
    \normalsize
    Composition in $\mathbf{Fox}$ is defined by
    \begin{equation*}
        \left(
            \begin{tikzcd}
                \mathfrak{h} \arrow[d, "f"] \\
                \mathfrak{h}'
            \end{tikzcd},
            \partial_L \oplus \partial_R
        \right) \circ_{\mathbf{Fox}}
        \left(
            \begin{tikzcd}
                \mathfrak{h}' \arrow[d, "f'"] \\
                \mathfrak{h}''
            \end{tikzcd},
            \partial_L' \oplus \partial_R'
        \right) \defeq
        \left(
            \begin{tikzcd}
                \mathfrak{h} \arrow[d, "f' \circ f"] \\
                \mathfrak{h}''
            \end{tikzcd},
            (f'\partial_L + \partial_L'f) \oplus (f'\partial_R + \partial_R'f)
        \right).
    \end{equation*}
\end{definition}
    One can check by direct computation that the composition $\circ_\mathbf{Fox}$ is associative. The maps $\mu$ and $\tau$ are defined in equations (\ref{mu_qder}) and (\ref{tau_fox}), respectively; they yield exact quasi-derivations and exact Fox pairings from Fox derivatives. The interpretation of constraint (\ref{mor_fox}) is that a morphism $f:U\mathfrak{h} \rightarrow U\mathfrak{h}'$ in $\mathbf{Fox}$ is not required to precisely fix the structures in the respective Hopf algebras; we allow deviations up to exact terms.
\begin{remark}
    Notice that, given any $(\mathfrak{h}, q \oplus \rho) \in \mathrm{Ob}(\mathbf{Fox})$, the presence of $\rho$ is entirely redundant in the sense that if $q \in \mathrm{Qder}(-\rho)$, then $\rho$ can be recovered from $q$. We insist on this notation because it will help us translate objects in $\mathbf{Fox}$ into relative Lie cocycles (cf. Section \ref{coho})).
\end{remark}

\begin{theorem}\label{RelCo--GoTu}
    The assignment $\Psi: \mathbf{Fox} \rightarrow \mathbf{GoTu}$ defined by
    \begin{equation*}
        (\mathfrak{h}, q \oplus \rho ) \mapsto ( U\mathfrak{h}, [-,-]^{\rho},\delta_q ),
    \end{equation*}
    \begin{equation*}
         \left( 
        \begin{tikzcd}
            \mathfrak{h} \arrow[d, "f"]\\
            \mathfrak{h}'
        \end{tikzcd},\,
        \partial_L \oplus \partial_R
        \right) \mapsto
        (f: U\mathfrak{h} \rightarrow U\mathfrak{h}'),
    \end{equation*}
    on objects and on morphisms, respectively, is a functor. 
\end{theorem}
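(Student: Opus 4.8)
The plan is to verify the two defining conditions of a functor separately: that $\Psi$ sends objects of $\mathbf{Fox}$ to genuine objects of $\mathbf{GoTu}$, and that it sends morphisms to morphisms, compatibly with identities and composition. Well-definedness on objects is already in hand: $U\mathfrak{h}$ is a cocommutative involutive Hopf algebra by our conventions, Proposition \ref{brac_from_fox} guarantees that $[-,-]^\rho$ descends to a map $|U\mathfrak{h}| \otimes |U\mathfrak{h}| \to |U\mathfrak{h}|$, and Proposition \ref{cobracyc} guarantees that $\delta_q$ descends to $|U\mathfrak{h}| \to |U\mathfrak{h}| \otimes |U\mathfrak{h}|$. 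Preservation of identities and composition is also essentially formal, since on morphisms $\Psi$ simply remembers the underlying Hopf algebra map $f$ and forgets the Fox-derivative data: the identity in $\mathbf{Fox}$ has underlying map $\mathrm{id}_{U\mathfrak{h}}$, and the composition law in $\mathbf{Fox}$ composes underlying maps as $f' \circ f$. The real content is therefore to show that whenever $(f, \partial_L \oplus \partial_R)$ satisfies constraint (\ref{mor_fox}), the Hopf algebra map $f$ is a morphism in $\mathbf{GoTu}$, i.e. it intertwines brackets and cobrackets.

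For the bracket, I would first exploit that $f$ is simultaneously an algebra map, a coalgebra map, and antipode-preserving to push $f$ through the defining formula of Proposition \ref{brac_from_fox}. Writing $f\rho$ for the Fox pairing $(a,b) \mapsto f(\rho(a,b))$, which lies in $\mathrm{Fox}_f(U\mathfrak{h}, U\mathfrak{h}')$, the coproduct- and antipode-compatibility of $f$ yields $f([|a|,|b|]^\rho) = [|a|,|b|]^{f\rho}$. Symmetrically, using that $f$ is a coalgebra morphism to rewrite $f(a)' \otimes f(a)'' = f(a') \otimes f(a'')$, one obtains $[|f(a)|,|f(b)|]^{\rho'} = [|a|,|b|]^{\rho'(f\otimes f)}$, where $\rho'(f\otimes f)$ is again a Fox pairing mediated by $f$. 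Since $\rho \mapsto [-,-]^\rho$ is linear (Proposition \ref{brac_from_fox}), the obstruction to bracket-compatibility is
\[
f([|a|,|b|]^\rho) - [|f(a)|,|f(b)|]^{\rho'} = [|a|,|b|]^{f\rho - \rho'(f\otimes f)} = [|a|,|b|]^{-\tau(\partial_L \oplus \partial_R)},
\]
the last equality being precisely constraint (\ref{mor_fox}), which identifies $\rho'(f\otimes f) - f\rho$ with the exact Fox pairing $\tau(\partial_L \oplus \partial_R)$. As the negative of an exact Fox pairing is again exact, this obstruction vanishes by Proposition \ref{FP_exact=null}.

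The cobracket case runs in parallel. Pushing $f \otimes f$ through the formula $\delta_q = d_q + P d_{q^t}$ and using $(fq)^t = f q^t$ (a consequence of $f S = S f$) gives $(f\otimes f)\delta_q = \delta_{fq}$, where $fq \in \mathrm{Qder}_f(-f\rho)$; likewise precomposition yields $\delta_{q'} \circ f = \delta_{q'f}$ with $q'f \in \mathrm{Qder}_f(-\rho'(f\otimes f))$, using $(q'f)^t = q'^t f$. By linearity of $q \mapsto \delta_q$ the defect is $\delta_{fq - q'f}$, and constraint (\ref{mor_fox}) identifies $q'f - fq$ with the exact quasi-derivation $\mu(\partial_L \oplus \partial_R)$, so the defect is (up to sign) the cobracket of an exact quasi-derivation and vanishes by Proposition \ref{big2}. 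This completes the check that $f$ is a $\mathbf{GoTu}$-morphism.

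The step I expect to require the most care is the bookkeeping in the two transport identities $f([-,-]^\rho) = [-,-]^{f\rho}$ and $(f\otimes f)\delta_q = \delta_{fq}$, together with their precomposition analogues: one must track Sweedler indices carefully and repeatedly invoke that $f$ is an algebra map, a coalgebra map, and intertwines the antipodes, and one must confirm that $fq$ and $q'f$ really are quasi-derivations mediated by $f$ with the claimed associated Fox pairings, so that Propositions \ref{FP_exact=null} and \ref{big2} genuinely apply. Once these identities are in place, the reduction to exactness and the two appeals to the vanishing propositions are immediate, and the verification of identities and composition is routine.
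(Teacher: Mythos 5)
Your proposal is correct and follows essentially the same route as the paper's proof: transport $f$ through the defining formulas to get the naturality identities $f[-,-]^{\rho}=[-,-]^{f\rho}$ and $f\delta_q=\delta_{fq}$, then use linearity together with constraint (\ref{mor_fox}) to reduce the defect to the bracket of an exact Fox pairing and the cobracket of an exact quasi-derivation, which vanish by Propositions \ref{FP_exact=null} and \ref{big2}. If anything, you are slightly more explicit than the paper on the precomposition identity $[|f(a)|,|f(b)|]^{\rho'}=[|a|,|b|]^{\rho'(f\otimes f)}$ and on the routine verification of identities and composition, both of which the paper leaves implicit.
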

\begin{proof}
    Here, $[-,-]^\rho$ is the bracket induced by the Fox pairing $\rho$, in the sense of Proposition \ref{brac_from_fox}. Likewise, $\delta_q$ is the cobracket induced by the quasi-derivation $q$, in the sense of Proposition \ref{cobracyc}. 

    Note that if $f: \mathfrak{h} \rightarrow \mathfrak{h}'$ is a Lie algebra map, then its unique extension to an algebra map $U\mathfrak{h} \rightarrow U\mathfrak{h}'$ is also a Hopf algebra map, and it trivially descends to a map on cyclic words. For this proof, we drop the convention of leaving implicit the $U\mathfrak{h}$-bimodule structure on $U\mathfrak{h}'$. Instead, we will abbreviate $x=f(\tilde{x})$ for any $\tilde{x}\in \mathfrak{h}$.

    To show $\mathrm{\Psi}$ is well-defined , we need to prove that the map $f: |U\mathfrak{h}| \rightarrow |U\mathfrak{h}'|$ intertwines with the brackets and cobrackets of the corresponding spaces. Explicitly, we want to prove the equalities
    \begin{equation} \label{fix_brac}
        f\circ [-,-]^{\rho}=[-,-]^{\rho'} \circ f,
    \end{equation}
    \begin{equation} \label{fix_cobra}
        f \circ \delta_q=\delta_{q'} \circ f.
    \end{equation}
    By assumption, there exist Fox derivatives $r= \partial_L \oplus \partial_R \in \mathrm{LFox}_f(U\mathfrak{h}, U\mathfrak{h}') \oplus \mathrm{RFox}_f(U\mathfrak{h}, U\mathfrak{h}')$ such that
    \begin{equation} \label{delta_r1}
        (fq-q'f) \oplus(f \rho - \rho'f)= -\mu r \oplus -\tau r.
    \end{equation}
    Let $a,b \in U\mathfrak{h}$. To ease the notation, we will not distinguish between the Hopf algebra operations on $U\mathfrak{h}$ and $U\mathfrak{h}'$. Firstly, notice that the assignment $\rho \mapsto [-,-]^\rho$ is natural in the sense that $f \circ [-,-]^\rho=[-,-]^{f\rho}$. Indeed,
    \begin{equation*}
        \begin{split}            f[|\tilde{a}|,|\tilde{b}|]^{\rho}&=|f(\tilde{b}'S(\rho(\tilde{a}'',\tilde{b}'')')\tilde{a}'\rho(\tilde{a}'',\tilde{b}'')'')|\\
            &=|b'S((f\rho)(\tilde{a}'',\tilde{b}'')')a'(f\rho)(\tilde{a}'',\tilde{b}'')''|\\
            &= [|\tilde{a}|, |\tilde{b}|]^{f\rho}
        \end{split}
    \end{equation*}
    But then equation (\ref{delta_r1}) implies,
    \begin{equation*}
        f[-,-]^\rho=[-,-]^{\rho'f}-[-,-]^{\tau r}
    \end{equation*}
    The Fox paring $\tau r: U\mathfrak{h} \otimes U \mathfrak{h} \rightarrow U \mathfrak{h}'$ is exact in the sense of Definition \ref{FoxP_exact}. According to Proposition $\ref{FP_exact=null}$, $[-,-]^{\tau r}$ must be null, so that equation (\ref{fix_brac}) is satisfied.

    Notice now that
    \begin{equation*}
        \begin{split}
            fd_q(\tilde{a})&=f(\tilde{a}'S(q(\tilde{a}'')')) \otimes f(q(\tilde{a}'')'')\\
            &=a'S(fq(\tilde{a}'')') \otimes fq(\tilde{a}'')''\\
            &=d_{fq}(\tilde{a})
        \end{split}
    \end{equation*}
    for any $\tilde{a}\in U\mathfrak{h}$, and similarly for $fd_{q^t}$. Analogously to before, this implies that the assignment $q \rightarrow \delta_q$ is natural, in the sense that $f\delta_q=\delta_{fq}$. Equation (\ref{delta_r1}) now yields that
    \begin{equation*}
        f\delta_q=\delta_{q'f}-\delta_{\mu r}.
    \end{equation*}
    The quasi-derivation $\mu r:U\mathfrak{h} \rightarrow U\mathfrak{h}'$ is exact in the sense of Definition \ref{def_ex_qder}, which means that the cobracket $\delta_{\mu r} $ must be null (cf. Proposition \ref{big2}). This last statement implies that equation (\ref{fix_cobra}) is indeed satisfied.
\end{proof}

\section{Lie cohomology}\label{chap_coho}

Our goal for this chapter is to make precise the connection between the category $\mathbf{Fox}$ and Lie algebra cohomology.
We introduce the cateogry $\mathbf{RelCo}$ of \textit{relative} Lie cocycles, and the category $\mathbf{RelE}$ of \textit{relative} abelian extensions of Lie algebras. Schematically, we prove special (non-full) subcategories of $\mathbf{Fox}$, $\mathbf{RelCo}$, and $\mathbf{RelE}$ are all equivalent. Our notion of relative Lie cohomology is not related to Chevalley and Eilenberg's \cite[\S 22]{Chevalley:1948zz}, but rather to Kawazumi and Kuno's \cite{KaYu}. In this way, the equivalence $\mathbf{RelCo}\cong\mathbf{RelE}$ can be seen as a generalization of the classical result equating regular Lie cohomology with abelian extensions, which we review presently.

\subsection*{Motivation}

Let $\mathfrak{g}$ be a Lie algebra and $M$ an abelian $\mathfrak{g}$-module. Recall how a Lie algebra $\mathfrak{e}$ is an abelian extension of $\mathfrak{g}$ by $M$ if it slots into a short exact sequence:
\begin{equation*}
    \begin{tikzcd}[column sep = small]
        0 \arrow[r] & M \arrow[r] & \mathfrak{e} \arrow[r] & \mathfrak{g} \arrow[r] & 0
    \end{tikzcd}.
\end{equation*}
Two abelian extensions $\mathfrak{e}$ and $\mathfrak{e}'$ are equivalent if there exists an isomorphism $\varphi:\mathfrak{e} \rightarrow \mathfrak{e}'$ such that the following is a commutative diagram:
\begin{equation*}
        \begin{tikzcd}
        M \arrow[r, "i"] \arrow[d, "\text{id}_M"] & \mathfrak{e} \arrow[r, "p"] \arrow[d, "\varphi"]& \mathfrak{g} \arrow[d, "\text{id}_\mathfrak{g}"]\\
        M \arrow[r, "i'"]& \mathfrak{e}' \arrow[r, " p' "]& \mathfrak{g}
    \end{tikzcd}.
    \end{equation*}
We denote the set of such equivalence classes by $\mathrm{Ext}(\mathfrak{g},M)$. Given a Chevalley-Eilenberg 2-cocycle $[c] \in H^2(\mathfrak{g}, M)$, we can construct an abelian extension of $\mathfrak{g}$
\begin{equation*}
    \begin{tikzcd} [sep=small]
        0 \arrow[r]& M \arrow[r]& \mathfrak{g} \times_c M \arrow[r]& \mathfrak{g} \arrow[r]& 0,
    \end{tikzcd}
\end{equation*}
where the Lie bracket on $\mathfrak{g} \times_c M$ is given by
\begin{equation*}
    [(x_1, v_1), (x_2, v_2)]=([x_1, x_2 ], x_2\cdot v_1 -x_1 \cdot v_2 +c(x_2, x_2)).
\end{equation*}
It is a well-known fact that 
\begin{theorem}[\cite{Chevalley:1948zz}] \label{ext_coho}
    There is a one-to-one correspondence between $H^{2}(\mathfrak{g}, M)$ and equivalence classes of abelian extensions of $\mathfrak{g}$ by $M$; it is induced by the map
    \begin{equation*}
            \mathcal{D}:\quad [c] \mapsto \begin{tikzcd}[column sep=small]
                M \arrow[r] & \mathfrak{g} \times_c M  \arrow[r] & \mathfrak{g}
            \end{tikzcd}
    \end{equation*}
    from $H^{2}(\mathfrak{g}, M)$ into $\mathrm{Ext}(\mathfrak{g}, M)$.
\end{theorem}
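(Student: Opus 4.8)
The plan is to exhibit $\mathcal{D}$ as a bijection by constructing an explicit inverse built from a choice of linear section, and then checking that every choice and verification is controlled precisely by the cohomology. I would organize the argument into three steps: that $\mathcal{D}$ is well-defined on cohomology classes, that it is surjective, and that it is injective.

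First, to see $\mathcal{D}$ is well-defined, I would check that for a $2$-cocycle $c$ the bracket displayed above is genuinely a Lie bracket on $\mathfrak{g} \times_c M$: antisymmetry is immediate from that of $[-,-]$ on $\mathfrak{g}$ together with the module action, while the Jacobi identity for the new bracket unwinds---using that $M$ is abelian and the module axioms---into exactly the Chevalley--Eilenberg cocycle condition $d_{\mathrm{CE}}\, c = 0$. One then confirms that $M \hookrightarrow \mathfrak{g} \times_c M \twoheadrightarrow \mathfrak{g}$ is an abelian extension inducing the prescribed module structure. Finally, if $c' = c + d_{\mathrm{CE}}\, b$ for a linear $1$-cochain $b \colon \mathfrak{g} \to M$, the map $(x,v) \mapsto (x, v + b(x))$ is a Lie algebra isomorphism $\mathfrak{g} \times_c M \to \mathfrak{g} \times_{c'} M$ commuting with $i$ and $p$, so cohomologous cocycles produce equivalent extensions and $\mathcal{D}$ descends to a map on $H^{2}(\mathfrak{g}, M)$.

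For surjectivity, given any abelian extension $0 \to M \xrightarrow{i} \mathfrak{e} \xrightarrow{p} \mathfrak{g} \to 0$, I would choose a $\mathbb{K}$-linear section $s \colon \mathfrak{g} \to \mathfrak{e}$ of $p$ and set $c(x,y) \defeq [s(x), s(y)] - s([x,y])$, which lands in $\ker p = i(M)$ since $p$ is a Lie map. The Jacobi identity in $\mathfrak{e}$ forces $c$ to be a $2$-cocycle, and $s$ gives a vector-space splitting $\mathfrak{e} \cong M \oplus s(\mathfrak{g})$ under which the bracket of $\mathfrak{e}$ matches that of $\mathfrak{g} \times_c M$; hence $[\mathfrak{e}] = \mathcal{D}([c])$. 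For injectivity, suppose $\mathcal{D}([c])$ and $\mathcal{D}([c'])$ are equivalent via $\varphi \colon \mathfrak{g} \times_c M \to \mathfrak{g} \times_{c'} M$. Commutativity with $i$ and $p$ forces $\varphi(x,v) = (x, v + b(x))$ for some linear $b \colon \mathfrak{g} \to M$, and imposing that $\varphi$ preserve both brackets yields precisely $c' - c = d_{\mathrm{CE}}\, b$, so $[c] = [c']$ in $H^{2}(\mathfrak{g}, M)$.

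The bulk of the work, and the step I expect to be the main (if routine) obstacle, is verifying the equivalence ``Jacobi identity $\Longleftrightarrow$ cocycle condition'': each of the well-definedness, surjectivity, and injectivity arguments rests on expanding a Jacobi-type identity and matching it term by term against $d_{\mathrm{CE}}$, taking care with the sign and index conventions in the displayed bracket (the $c(x_1, x_2)$ term) and with the fact that the module structure on $M$ does not depend on the chosen section. Tracking these bookkeeping details uniformly across the three steps is the only real subtlety; no deeper idea is required.
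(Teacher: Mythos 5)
Your proof is correct and is the standard argument; note that the paper itself does not prove this classical statement at all (it is cited to Chevalley--Eilenberg, with functoriality deferred to Weibel), so your write-up supplies details the paper omits rather than deviating from it. Moreover, your three-step outline --- Jacobi $\Leftrightarrow$ cocycle for well-definedness, section $\mapsto$ difference cocycle $c(x,y)=[s(x),s(y)]-s([x,y])$ for surjectivity, and the forced form $\varphi(x,v)=(x,v+b(x))$ for injectivity --- is exactly the scheme the paper later deploys to prove its relative generalization (Proposition \ref{cat-equiv}), where essential surjectivity and fully-faithfulness are established by the same two constructions; your caution about sign and index bookkeeping is warranted, since the paper's displayed bracket even contains a typo ($c(x_2,x_2)$ for $c(x_1,x_2)$).
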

It is understood that this correspondence is actually functorial (see \cite[\S 7.6]{weibel}. A way to view this explicitly is by phrasing Theorem \ref{ext_coho} as an equivalence between appropriately defined categories. We will do this schematically:
\begin{itemize}
    \item Let $\mathbf{Lie E}$ stand for the category of abelian Lie extensions. Its objects consist of short exact sequences of Lie algebras
    \begin{equation*}
        \begin{tikzcd}[column sep=small]
            M \arrow[r] & \mathfrak{e} \arrow[r] & \mathfrak{g}
        \end{tikzcd},
    \end{equation*}
    where $M$ is an abelian Lie algebra. Morphisms are defined in the natural way.
    \item Let $\mathbf{LieCo}$ stand for the category of Lie cohomology. Its objects consist of triplets $(\mathfrak{g}, M, c)$, where $M$ is an abelian $\mathfrak{g}$-module and $c\in Z^2(\mathfrak{g},M)$ is a Chevalley-Eilenberg cochain in the sense of \cite{Chevalley:1948zz}. Morphisms are also triplets
    \begin{equation*}
        \left(\begin{tikzcd}[row sep= scriptsize]
            \mathfrak{g} \arrow[d, "G"] \\
            \mathfrak{g}'
        \end{tikzcd}, 
        \begin{tikzcd}[row sep= scriptsize]
            M \arrow[d, "\alpha"] \\
            M'
        \end{tikzcd}, r\right),
    \end{equation*}
    where $r \in C^1(\mathfrak{g},M')$ satisfies $dr=\alpha c - c' G$.
\end{itemize}

\begin{theorem}
    The functor $\mathcal{D}: \mathbf{LieCo} \rightarrow \mathbf{LieE}$ defined on objects and morphisms, respectively, by
    \begin{equation*}
        (\mathfrak{g}, M, c)  \longmapsto
             \left(\begin{tikzcd}[column sep=small]
                 M \arrow[r] & \mathfrak{g} \times_c M \arrow[r] & \mathfrak{g}
             \end{tikzcd}\right) ,
    \end{equation*}
    \begin{equation*}
        \left( \begin{tikzcd}[row sep= scriptsize]
                \mathfrak{g} \arrow[d, "G"]\\
                \mathfrak{g}'
            \end{tikzcd}, 
            \begin{tikzcd}[row sep= scriptsize]
                M \arrow[d, "\alpha"]\\
                M'
            \end{tikzcd}, r \right)  \longmapsto
            \left( \begin{tikzcd}[row sep= scriptsize]
                M \arrow[d, "\alpha"]\\
                M'
            \end{tikzcd},
            \begin{tikzcd}[row sep= scriptsize]
                \mathfrak{g} \times_c M \arrow[d, "\sigma"]\\
                \mathfrak{g}' \times_{c'} M'
            \end{tikzcd},
            \begin{tikzcd}[row sep= scriptsize]
                \mathfrak{g} \arrow[d, "G"]\\
                \mathfrak{g}'
            \end{tikzcd}\right),
    \end{equation*}
    where $\sigma(x,m) \defeq (G(x), \alpha(m) + r(x))$, is an equivalence of categories.
\end{theorem}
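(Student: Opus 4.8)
The plan is to verify the three defining properties of an equivalence: that $\mathcal{D}$ is a well-defined functor, that it is essentially surjective, and that it is fully faithful. Since the categories and the functor have already been spelled out, most of the work is bookkeeping, organized around the single observation that the Chevalley-Eilenberg cocycle and coboundary conditions translate exactly into the Jacobi identity and the homomorphism property on the extension side.

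First I would check well-definedness on objects. Given $(\mathfrak{g}, M, c)$ with $c \in Z^2(\mathfrak{g},M)$, expanding the Jacobi identity for the bracket on $\mathfrak{g} \times_c M$ evaluated on triples $(x_i, v_i)$ splits into the Jacobi identity of $\mathfrak{g}$ (automatic) and the vanishing of the alternating sum $\sum_{\mathrm{cyc}} \bigl(x_1 \cdot c(x_2, x_3) - c([x_1,x_2], x_3)\bigr)$, which is precisely $dc = 0$. Hence $\mathfrak{g} \times_c M$ is a Lie algebra and $M \to \mathfrak{g} \times_c M \to \mathfrak{g}$ is a short exact sequence with $M$ abelian, i.e. an object of $\mathbf{LieE}$. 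For a morphism $(G, \alpha, r)$, I would verify that $\sigma(x,m) = (G(x), \alpha(m) + r(x))$ is a Lie homomorphism: comparing $\sigma[(x_1,m_1),(x_2,m_2)]$ with $[\sigma(x_1,m_1), \sigma(x_2,m_2)]$, the $\mathfrak{g}'$-components agree because $G$ is a Lie map, and the $M'$-components agree if and only if $x_1 \cdot r(x_2) - x_2 \cdot r(x_1) - r[x_1,x_2] = \alpha c(x_1,x_2) - c'(Gx_1,Gx_2)$, which is exactly the imposed relation $dr = \alpha c - c' G$ (using that $\alpha$ is a module map over $G$). Functoriality then follows by checking against the evident composition $(G',\alpha',r')\circ(G,\alpha,r) = (G'G, \alpha'\alpha, \alpha' r + r'G)$ in $\mathbf{LieCo}$, which $\mathcal{D}$ sends to the composite of the $\sigma$'s since $\sigma' \circ \sigma_r = \sigma_{\alpha' r + r' G}$.

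For essential surjectivity, given an arbitrary object $M \to \mathfrak{e} \to \mathfrak{g}$ of $\mathbf{LieE}$ with inclusion $i$ and projection $p$, I would choose a $\mathbb{K}$-linear section $s$ of $p$ and set $c(x_1,x_2) := [s(x_1),s(x_2)] - s[x_1,x_2]$, which lands in $\ker p = i(M)$; the Jacobi identity in $\mathfrak{e}$ forces $dc = 0$, and the $\mathfrak{g}$-module structure on $M$ is the one induced by the adjoint action transported through $s$. The map $(x,m) \mapsto s(x) + i(m)$ is then an isomorphism of extensions from $\mathcal{D}(\mathfrak{g},M,c)$ to the given sequence, so $\mathcal{D}$ reaches every object up to equivalence.

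Finally, for full faithfulness I would show the assignment $(G,\alpha,r)\mapsto (\alpha,\sigma_r,G)$ is a bijection on hom-sets. It is injective since $r$ is recovered as the $M'$-component of $\sigma_r(-,0)$, and $G,\alpha$ are already recorded. For surjectivity, any morphism of extensions $(\alpha,\sigma,G)$ satisfies $p'\sigma = Gp$ and $\sigma i = i'\alpha$ by commutativity of the two squares; defining $r(x)$ to be the $M'$-component of $\sigma(x,0)$ and using linearity gives $\sigma(x,m) = (Gx, \alpha m + r x)$, while $\sigma$ being a Lie map forces $dr = \alpha c - c'G$ by running the object-level computation backwards. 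Thus $(G,\alpha,r)$ is a genuine $\mathbf{LieCo}$-morphism with $\sigma_r = \sigma$. I expect no serious obstacle here, since the content is entirely the classical $H^2 \leftrightarrow \mathrm{Ext}$ correspondence; the only points demanding care are pinning down the sign convention in the Chevalley-Eilenberg differential so that the relation $dr = \alpha c - c'G$ emerges with the displayed signs, and confirming that the schematic composition law in $\mathbf{LieCo}$ is the one making $\mathcal{D}$ strictly functorial.
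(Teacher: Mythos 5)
Your proof is correct and follows essentially the same route as the paper: the paper states this classical theorem only schematically (deferring to Chevalley--Eilenberg and Weibel for the correspondence and its functoriality), but its detailed proof of the relative generalization (Proposition \ref{cat-equiv}, completed in Proposition \ref{cat_appendix}) is exactly your three-step argument --- cocycle/coboundary identities giving well-definedness of $\mathcal{D}$ on objects and morphisms, a linear section $s$ with $c(x,y)=[s(x),s(y)]-s[x,y]$ and the isomorphism $(x,m)\mapsto s(x)+m$ for essential surjectivity, and recovery of $r$ as the $M'$-component of $\sigma(-,0)$ for full faithfulness. Your closing caution about signs is warranted but harmless: as printed, the paper's bracket on $\mathfrak{g}\times_c M$ (with its flipped action terms and the typo $c(x_2,x_2)$) is consistent with $-dr=\alpha c - c'G$ rather than $dr=\alpha c - c'G$, a notational wrinkle in the paper rather than a gap in your argument, which is internally consistent with the standard convention.
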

Although the previous theorem is phrased in an unusual way, it contains the same information as Theorem \ref{ext_coho}. We insist on this presentation because it is easier to generalize to relative Lie cohomology, which is our goal for the coming section. As mentioned earlier, our definition of relative Lie cohomology follows \cite{KaYu}, so we borrow their notation.

\subsection{Relative cohomology}

Let $f:\mathfrak{h}\rightarrow \mathfrak{g}$ be a Lie algebra map and $M$ a left $\mathfrak{g}$ module. The map $f$ induces an $\mathfrak{h}$-module structure on $M$ via $x \cdot m \defeq f(x) \cdot m$ for all $x \in \mathfrak{h}$ and $m \in M$. 
\begin{definition}\label{def_rel_coho}
    Given a Lie map $f: \mathfrak{h} \rightarrow \mathfrak{g}$ we define the cohomology of $\mathfrak{g}$ with coefficients in $M$ relative to $f$ as the cohomology of the total complex
    \begin{equation}\label{chevalley_one}
    \begin{tikzcd}
        M \arrow[d, "d"] & M \arrow[l, "-\mathrm{id}"] \arrow[d, "-d"]\\ 
        \mathrm{Hom}_\mathbb{K}(\mathfrak{h}, M) \arrow[d, " d "]& \mathrm{Hom}_\mathbb{K}(\mathfrak{g}, M) \arrow[l, "-f^*"] \arrow[d, "-d"]\\
        \mathrm{Hom}_\mathbb{K}(\bigwedge^2 \mathfrak{h}, M) \arrow[d, "d"] & \mathrm{Hom}_\mathbb{K}(\bigwedge^2 \mathfrak{g}, M) \arrow[l, "-f^*"] \arrow[d, "-d"]\\
        \mathrm{Hom}_\mathbb{K}(\bigwedge^3 \mathfrak{h}, M) \arrow[d] & \mathrm{Hom}_\mathbb{K}(\bigwedge^3 \mathfrak{g}, M) \arrow[l, "-f^*"] \arrow[d]\\
        \vdots & \vdots
    \end{tikzcd},
\end{equation}
where $d$ stands for the appropriate iteration of the Chevalley-Eilenberg differential. For reference, $dm(x)=x\cdot m$ for $m \in M$ and $x\in \mathfrak{h}$, and
\begin{equation*}
    \begin{split}
        d\omega(x_1, \dots, x_{n+1})= &\sum_i(-1)^{i+1}x_i \cdot \omega(x_1, \dots, \hat{x_{i}}, \dots, x_{n+1})\\
        &+ \sum_{i<j}(-1)^{i+j}\omega([x_i,x_j],x_1, \dots, \hat{x_i}, \dots, \hat{x_j}, \dots, x_{n+1})
    \end{split}
\end{equation*}
whenever $\omega \in \mathrm{Hom}_\mathbb{K}(\bigwedge^n\mathfrak{h}, M)$. We use the notation $H^*(\mathfrak{g},\mathfrak{h};M)$ for the $*$-th relative cohomology space.
\end{definition}
In practice, we will truncate the first row of the complex (\ref{chevalley_one}), so that
\begin{gather*}
    \text{Tot}(\mathcal{M})^1= \text{Hom}_\mathbb{K}(\mathfrak{g}, M);\\
     \text{Tot}(\mathcal{M})^2= \text{Hom}_\mathbb{K}(\mathfrak{h}, M) \oplus \text{Hom}_\mathbb{K}(\bigwedge^2 \mathfrak{g}, M);\\
      \text{Tot}(\mathcal{M})^3= \text{Hom}_\mathbb{K}(\bigwedge^2 \mathfrak{h}, M) \oplus \text{Hom}_\mathbb{K}(\bigwedge^3 \mathfrak{g}, M).
\end{gather*}
Our notion of relative Lie cohomology does not match that of Chevalley and Eilenberg \cite{Chevalley:1948zz}, but it coincides with Kawazumi and Kuno's \cite{KaYu}. Given a cochain map $f:\, (\mathbf{A}_*, d) \rightarrow (\mathbf{C}_*, \delta)$ between two cochain complexes, recall that its mapping cone $\mathbf{C}_{*-1} \times_f \mathbf{A}_*$ is the complex whose term of degree $n$ is $( \mathbf{C}_{*-1}\times_f \mathbf{A}_*)_n \defeq \mathbf{C}_{n-1} \oplus \mathbf{A}_{n}$, and with differential 
\begin{equation*}
    D_n(c_{n-1}, a_n) \defeq(\delta_{n-1}c_{n-1}-f_n a_n, -d_n a_n).
\end{equation*}
Let $\begin{tikzcd}[cramped, sep=small]
    P_* \arrow[r, "\varepsilon"] & \mathbb{K}
\end{tikzcd}$ and $\begin{tikzcd}[cramped, sep=small]
    F_* \arrow[r, "\varepsilon"] & \mathbb{K}
\end{tikzcd}$ be $U\mathfrak{g}$ and $U\mathfrak{h}$-projective resolutions of $\mathbb{K}$, such that $P_0=U\mathfrak{g}$ and $F_o=U\mathfrak{h}$. We can choose a chain map $f: F_* \rightarrow P_*$ that respects the morphism $f:U\mathfrak{h}\rightarrow U\mathfrak{g}$ and the respective augmentation maps. Kawazumi and Kuno define the cohomology of $\mathfrak{g}$ relative to $f$ with coefficients $M$ as the cohomology of the mapping cone
\begin{equation*} 
        H^*(\mathfrak{g}, \mathfrak{h};M)\defeq H^*((\text{Hom}_{U\mathfrak{h}}(F_{*-1}, M)) \times_{f^*} \text{Hom}_{U\mathfrak{g}}(P_*, M))
    \end{equation*}
    of the cochain map $f^*: \text{Hom}_{U\mathfrak{g}}(P_{*}, M) \rightarrow \text{Hom}_{U\mathfrak{h}}(F_{*}, M)$. Both definitions coincide if we choose $P_*$ and $F_*$ to be the Chevalley-Eilenberg complexes of $\mathfrak{g}$ and $\mathfrak{h}$, respectively. 

\subsection{Relative extensions}

\begin{definition}\label{def_relE}
    We write $\mathbf{RelE}$ for the category whose objects are commutative diagrams between Lie algebras of the sort:
    \begin{equation} \label{def_rel_e}
        \begin{tikzcd}
            & \mathfrak{h} \arrow[d, "f"] \arrow[ld, "s"']\\
            \mathfrak{e} \arrow[r, "p"]& \mathfrak{g}
        \end{tikzcd},
    \end{equation}
    with the added requirement that the Lie map $p:\mathfrak{e}\rightarrow \mathfrak{g}$ should be surjective with an abelian kernel. The morphisms in $\mathbf{RelE}$ consist of Lie maps $H: \mathfrak{h} \rightarrow \mathfrak{h}'$, $G: \mathfrak{g}\rightarrow \mathfrak{g}'$, and $E: \mathfrak{e}\rightarrow \mathfrak{e}'$, which fit into commutative diagrams
    \[
        \begin{tikzcd}
            \mathfrak{h} \arrow[d, "H"] \arrow[r, "f"]& \mathfrak{g} \arrow[d, "G"]\\
            \mathfrak{h}' \arrow[r, "f'"]& \mathfrak{g}'
        \end{tikzcd},\hspace{1 em}
        \begin{tikzcd}
            \mathfrak{h} \arrow[r, "s"] \arrow[d, "H"]& \mathfrak{e} \arrow[d, "E"]\\
            \mathfrak{h}' \arrow[r, "s'"]& \mathfrak{e}'
        \end{tikzcd}, \hspace{1 em}
        \begin{tikzcd}
            \mathfrak{e} \arrow[r, "p"] \arrow[d, "E"] & \mathfrak{g} \arrow[d, "G"]\\
            \mathfrak{e} \arrow[r, "p'"]& \mathfrak{g}'
        \end{tikzcd}.
    \]
    The composition of morphisms in $\mathbf{RelE}$ is defined in the natural way. Whenever
\begin{equation*}
        \begin{tikzcd}
            & \mathfrak{h} \arrow[d, "f"] \arrow[ld, "s"']\\
            \mathfrak{e} \arrow[r, "p"]& \mathfrak{g}
        \end{tikzcd} \in \mathrm{Ob}(\mathbf{RelE}),
\end{equation*}
we say $\mathfrak{e}$ is an abelian extension of $\mathfrak{g}$ by $\ker p$ relative to $f: \mathfrak{h} \rightarrow \mathfrak{g}$. Relative Lie algebra extensions are also extensions in the classical sense.
\end{definition}

\begin{remark}
    In the notation of diagram (\ref{def_rel_e}, let $u: \mathfrak{g} \rightarrow \mathfrak{e}$ be a section of the projection $p: \mathfrak{e} \rightarrow \mathfrak{g}$. Notice that $\ker p$ is a $\mathfrak{g}$-module with respect to the action
    \begin{equation}\label{g-mod_act}
        g \cdot m \defeq [m, u(g)]
    \end{equation}
    for any $g\in \mathfrak{g}$ and $m\in \ker p$. Since $\ker p$ is abelian, this action is independent of the choice of section.
\end{remark}

\begin{definition}
We write $\mathbf{RelCo}$ for the category whose objects consist of the data:
    \[
        ( f: \mathfrak{h} \rightarrow \mathfrak{g},\, M,\, w\oplus c \in Z^2(\mathfrak{g},\mathfrak{h};M)),
    \]
    where $f: \mathfrak{h} \rightarrow \mathfrak{g}$ is a Lie algebra map and $M$ is an abelian $\mathfrak{g}$-module. Morphisms now consist of Lie maps $H: \mathfrak{h} \rightarrow \mathfrak{h}'$, $G: \mathfrak{g} \rightarrow \mathfrak{g}'$, $\alpha: M \rightarrow M'$, and a cochain $r \in C^1(\mathfrak{g}, \mathfrak{h}; M')=\mathrm{Hom}_\mathbb{K}(\mathfrak{g}, M')$, such that
    \[
        \delta r= (\alpha\omega - \omega'H) \oplus (\alpha c - c'(G\otimes G)).
    \]
    Here, we are implicitly considering $M'$ as a $\mathfrak{g}$-module through $G$:
    \[
        x\cdot m= G(x) \cdot m,\, \text{for all}\, x\in \mathfrak{g}\, \text{and}\, m\in M'.
    \]
    Additionally, we also ask that the Lie maps satisfy the commutativity constraint $Gf=fH$, and also that
    \[
        \alpha(x \cdot m)=G(x)\cdot \alpha(m)
    \]
    for all $x\in \mathfrak{g}$ and $m\in M$.
    Composing the Lie map components of morphisms in $\mathbf{RelCo}$ is straightforward, but if $r \in C^1(\mathfrak{g}, \mathfrak{h}; M')$ and $r' \in C^1(\mathfrak{g}', \mathfrak{h}'; M'')$, then we define
    \[
        r' \circ_{\mathbf{RelCo}}r=\alpha' r +r'G.
    \]
We can check by direct computation that the composition $\circ_{\mathbf{RelCo}}$ is indeed associative.
\end{definition}

\begin{proposition} \label{cat-equiv}
    The functor $\mathrm{F}: \mathbf{RelCo} \rightarrow \mathbf{RelE}$, defined on objects by
    \begin{equation*}
        \left( \begin{tikzcd}
            \mathfrak{h} \arrow[r,"f"]& \mathfrak{g}
        \end{tikzcd},\, M,\, \omega \oplus c \in Z^2(\mathfrak{g}, \mathfrak{h}; M) \right) \mapsto \begin{tikzcd}
            & \mathfrak{h} \arrow[d, "f"] \arrow[dl, "f \times \omega" swap]\\
            \mathfrak{g}\times_c M \arrow[r] & \mathfrak{g}
        \end{tikzcd},
    \end{equation*}
    and on morphisms by
    \begin{equation*}
        \left( \begin{tikzcd}
            \mathfrak{h} \arrow[r, "f"] \arrow[d, "H"] & \mathfrak{g} \arrow[d, "G"]\\
            \mathfrak{h}' \arrow[r, "f'"]& \mathfrak{g}'
        \end{tikzcd},\, 
        \begin{tikzcd}
            M \arrow[d, "\alpha"]\\
            M'
        \end{tikzcd},\,
        r \in C^1(\mathfrak{g}, \mathfrak{h}; M')
        \right) \mapsto
        \left( \begin{tikzcd}
            \mathfrak{h} \arrow[r, "f"] \arrow[d, "H"] & \mathfrak{g} \arrow[d, "G"]\\
            \mathfrak{h}' \arrow[r, "f'"]& \mathfrak{g}'
        \end{tikzcd},\, 
        \begin{tikzcd}
            \mathfrak{h} \arrow[r, "f \times \omega"] \arrow[d, "H"] & \mathfrak{g}\times_c M \arrow[d, "\sigma"]\\
            \mathfrak{h}' \arrow[r, "f' \times \omega'"]  &\mathfrak{g}' \times_{c'} M'
        \end{tikzcd},\,
        \begin{tikzcd}
            \mathfrak{g} \times_c M \arrow[r] \arrow[d, "\sigma"]& \mathfrak{g} \arrow[d, "G"] \\
            \mathfrak{g}' \times_{c'} M' \arrow[r] & \mathfrak{g}'
        \end{tikzcd}
        \right),
    \end{equation*}
    where $\sigma(x,m)= (G(x), \alpha (m) +r(x))$ for all $(x,m)\in \mathfrak{g}\times_c M$, is an equivalence of categories.
\end{proposition}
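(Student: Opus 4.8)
The plan is to prove that $\mathrm{F}$ is fully faithful and essentially surjective, rather than to construct a strict quasi-inverse; this sidesteps the section choices one would need to make the inverse into an honest functor. Throughout I would lean on the classical equivalence $\mathbf{LieCo}\cong\mathbf{LieE}$ (the functorial form of Theorem \ref{ext_coho}), which already handles the ``bottom row'' $\mathfrak{e}\to\mathfrak{g}$ together with the cochain $c$; the genuinely new content is that the section data $s\colon\mathfrak{h}\to\mathfrak{e}$ is recorded precisely by the $\mathrm{Hom}_\mathbb{K}(\mathfrak{h},M)$-component $\omega$ of a relative cocycle. A first bookkeeping step is to unwind the mapping-cone differential of Definition \ref{def_rel_coho}: an element $\omega\oplus c$ of $\mathrm{Tot}(\mathcal{M})^2$ is a cocycle exactly when $dc=0$ and $d\omega=f^*c$. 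I would record this at the outset, since the condition $d\omega=f^*c$ is what makes $f\times\omega\colon\mathfrak{h}\to\mathfrak{g}\times_c M$, $x\mapsto(f(x),\omega(x))$, a Lie homomorphism, and hence what makes $\mathrm{F}$ land in $\mathbf{RelE}$.

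For essential surjectivity, start from a relative extension as in diagram (\ref{def_rel_e}) and choose a $\mathbb{K}$-linear section $u\colon\mathfrak{g}\to\mathfrak{e}$ of $p$. Set $M\defeq\ker p$ with the $\mathfrak{g}$-module structure of equation (\ref{g-mod_act}), let $c(x,y)\defeq[u(x),u(y)]-u([x,y])\in M$ be the usual Chevalley--Eilenberg cocycle, and define $\omega(x)\defeq s(x)-u(f(x))$, which lies in $M$ because $ps=f=puf$. The key verification is that $\omega\oplus c$ is a relative cocycle: $dc=0$ is classical, while $d\omega=f^*c$ follows by expanding $s([x,y])=[s(x),s(y)]$ (using that $s$ is a Lie map), substituting $s=uf+\omega$, and discarding the term $[\omega(x),\omega(y)]=0$ since $M$ is abelian. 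Finally, the map $(x,m)\mapsto u(x)+m$ is a Lie isomorphism $\mathfrak{g}\times_c M\xrightarrow{\sim}\mathfrak{e}$ intertwining $f\times\omega$ with $s$ and the two projections, exhibiting the given extension as isomorphic to $\mathrm{F}(f,M,\omega\oplus c)$.

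For full faithfulness, I would exhibit the inverse of $\mathrm{F}$ on hom-sets explicitly. A morphism $\mathrm{F}(X)\to\mathrm{F}(Y)$ in $\mathbf{RelE}$ consists of $(H,G,E)$ with $E\colon\mathfrak{g}\times_c M\to\mathfrak{g}'\times_{c'}M'$; compatibility with the projections forces the first component of $E(x,m)$ to be $G(x)$, restriction to kernels yields $\alpha\defeq E|_M\colon M\to M'$, and setting $r(x)\defeq\pi_{M'}E(x,0)$ gives $E(x,m)=(G(x),\alpha(m)+r(x))=\sigma(x,m)$, so $E$ is determined by $(G,\alpha,r)$. It then remains to check that $(H,G,\alpha,r)$ is a morphism in $\mathbf{RelCo}$ iff $(H,G,E)$ is a morphism in $\mathbf{RelE}$: equating the second components of $E[(x_1,0),(x_2,0)]=[E(x_1,0),E(x_2,0)]$ reproduces the $\bigwedge^2\mathfrak{g}$-component $\alpha c-c'(G\otimes G)$ of $\delta r$, while commutativity of the square relating the two sections, $E(f\times\omega)=(f'\times\omega')H$, reproduces the $\mathfrak{h}$-component $\alpha\omega-\omega'H$. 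Since these are exactly the defining equations of $\mathbf{RelCo}$-morphisms, the assignment $(H,G,E)\mapsto(H,G,\alpha,r)$ is a two-sided inverse to $\mathrm{F}$ on morphisms.

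The main obstacle I anticipate is purely in the sign and index bookkeeping of the mapping-cone differential $\delta$: one must confirm that the $\mathrm{Hom}_\mathbb{K}(\mathfrak{h},M)$- and $\mathrm{Hom}_\mathbb{K}(\bigwedge^2\mathfrak{g},M)$-components of $\delta r$ appearing in the definition of $\mathbf{RelCo}$ match, with correct signs, the two Lie-homomorphism/section identities produced above, and likewise that the cocycle splitting $dc=0$, $d\omega=f^*c$ carries the signs dictated by the $-f^*$ and $-d$ entries of complex (\ref{chevalley_one}). Everything else — associativity of composition, functoriality of $\mathrm{F}$, and independence of the module structure on $M$ from the choice of $u$ — is routine given the classical case and the observation following equation (\ref{g-mod_act}).
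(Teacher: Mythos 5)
Your proposal is correct and follows essentially the same route as the paper: essential surjectivity via a linear section $u$ of $p$, the cocycle $\omega\oplus c$ with $\omega = s - uf$, and the isomorphism $(x,m)\mapsto u(x)+m$; full faithfulness via the explicit inverse on hom-sets extracting $\alpha = E|_M$ and $r(x)=\pi_{M'}E(x,0)$, which is exactly the paper's $r(x)=\varphi(u(x))-u'(G(x))$ for the canonical sections. The only differences are organizational ones in your favor: you verify explicitly that $\omega\oplus c$ is a relative cocycle (which the paper asserts without proof) and you fold the well-definedness of $\mathrm{F}$ on morphisms into the ``iff'' of full faithfulness, whereas the paper relegates those checks to its appendix.
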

\begin{proof}
    We prove the functor $\mathrm{F}$ is well defined in Appendix \ref{cat_appendix}. Firstly, we will prove that $\mathrm{F}$ is essentially surjective. Let 
    \begin{equation*}
        \mathcal{O}=\begin{tikzcd}
            & \mathfrak{h} \arrow[d, "f"] \arrow[ld, "s"']\\
            \mathfrak{e} \arrow[r, "p"]& \mathfrak{g}
        \end{tikzcd}
    \end{equation*}
    be an arbitrary object in $\mathbf{RelE}$. We choose a section $u: \mathfrak{g} \rightarrow \mathfrak{e}$ of the map $p$, and define the relative cochain
    \begin{gather*}
        \omega \oplus c \in Z^2(\mathfrak{g}, \mathfrak{h}; \ker p);\\
        c(x,y)=[u(x),u(y)]-u([x,y]),\, \text{for }\, x,y \in \mathfrak{g},\\
        \omega(h)=s(h)-us(h),\, \text{for }\,h\in\mathfrak{h}.
    \end{gather*}
    Here, we regard $\ker p$ as a $\mathfrak{g}$-module with respect to the action
    \[
        x \cdot m = [m, u(x)],
    \]
    for $x\in \mathfrak{g}$ and $m\in \ker p$.
    We can define a morphism from $\mathrm{F}(\mathcal{O})$ to $\mathcal{O}$ in $\mathbf{RelE}$ by the data:
    \begin{equation*}
        \begin{tikzcd}
            \mathfrak{h} \arrow[r, "f"] \arrow[d, "\mathrm{id}"]& \mathfrak{g} \arrow[d, "\mathrm{id}"]\\
            \mathfrak{h} \arrow[r, "f"] & \mathfrak{g}
        \end{tikzcd},\,
        \begin{tikzcd}
            \mathfrak{h} \arrow[r, "f \times \omega"] \arrow[d, "\mathrm{id}"] & \mathfrak{g} \times_c \ker p \arrow[d, "\psi"]\\
            \mathfrak{h} \arrow[r, "s"] & \mathfrak{e}
        \end{tikzcd},\,
        \begin{tikzcd}
            \mathfrak{g}\times_c \ker p \arrow[r] \arrow[d, "\psi"] & \mathfrak{g} \arrow[d, "\mathrm{id}"]\\
            \mathfrak{e} \arrow[r, "p"]& \mathfrak{g}
        \end{tikzcd};
    \end{equation*}
    where $\psi(x,m)=u(x)+m$ for all $(x,m)\in \mathfrak{g}\times_c \ker p$. The map $\psi$ is actually an isomorphism of Lie algebras, and thus defines an isomorphism in $\mathbf{RelE}$. The invertibility of $\psi$ is implied by the commutativity of the diagram
    \begin{equation*}
        \begin{tikzcd}
            0 \arrow[r] & \ker p \arrow[r] \arrow[d, "\mathrm{id}"] & \mathfrak{g} \times_c \ker p \arrow[r] \arrow[d, "\psi"] & \mathfrak{g} \arrow[r] \arrow[d, "\mathrm{id}"] & 0\\
            0 \arrow[r] & \ker p \arrow[r] & \mathfrak{e} \arrow[r, "p"] & \mathfrak{g} \arrow[r] & 0
        \end{tikzcd},
    \end{equation*}
    where each of the rows is a short exact sequence. We will now prove that the functor $\mathrm{F}$ is fully faithful. Denote two objects in $\mathbf{RelCo}$ by
    \begin{equation*}
        X=(f: \mathfrak{h}\rightarrow \mathfrak{g},\, M,\, \omega \oplus c \in Z^2(\mathfrak{g}, \mathfrak{h};M)),\hspace{1 em} 
        Y= (f': \mathfrak{h}'\rightarrow \mathfrak{g}',\, M',\, \omega' \oplus c' \in Z^2(\mathfrak{g}', \mathfrak{h}';M')).
    \end{equation*}
    To prove that $\mathrm{F}: \mathbf{RelCo}(X,Y) \rightarrow \mathbf{RelE}(F(X), F(Y))$ is a bijection, we propose an explicit inverse,
    \begin{equation*}
        \mathcal{G}: \mathbf{RelE}(F(x),  F(Y)) \rightarrow \mathbf{RelCo}(X,Y),
    \end{equation*}
    defined by
    \begin{equation*}
        \left( \begin{tikzcd}
            \mathfrak{h} \arrow[r, "f"] \arrow[d, "H"] & \mathfrak{g} \arrow[d, "G"] \\
            \mathfrak{h}' \arrow[r, "f'"] & \mathfrak{g}'
        \end{tikzcd},\,
        \begin{tikzcd}
            \mathfrak{h} \arrow[r, "f\times \omega"] \arrow[d, "H"] & \mathfrak{g} \times_c M \arrow[d, "\varphi"]\\
            \mathfrak{h}' \arrow[r, "f' \times \omega'"]& \mathfrak{g}' \times_{c'} M'
        \end{tikzcd},\,
        \begin{tikzcd}
            \mathfrak{g}\times_c M \arrow[d, "\varphi"] \arrow[r, "\pi"] & \mathfrak{g} \arrow[d, "G"]\\
            \mathfrak{g}' \times_c' M' \arrow[r, "\pi'"]& \mathfrak{g}'
        \end{tikzcd} \right) \mapsto
        \left( \begin{tikzcd}
            \mathfrak{h} \arrow[r, "f"] \arrow[d, "H"] & \mathfrak{g} \arrow[d, "G"] \\
            \mathfrak{h}' \arrow[r, "f'"] & \mathfrak{g}'
        \end{tikzcd},\,
        \begin{tikzcd}
            M \arrow[d, "\varphi|_M"] \\
            M'
        \end{tikzcd},\, r\in C^1(\mathfrak{g}, \mathfrak{h}; M')
        \right),
    \end{equation*}
    where $r(x)= \varphi(u(x))-u'(G(x))$ for the choice of sections
    \begin{equation*}
        \begin{split}
            u: \hspace{0.5 em} \mathfrak{g} &\rightarrow \mathfrak{g} \times_c M\\
            x & \mapsto (x,0)
        \end{split}\hspace{2 em} \text{and} \hspace{2 em} 
        \begin{split}
             u': \hspace{0.5 em} \mathfrak{g}' &\rightarrow \mathfrak{g}' \times_{c'} M'\\
            x & \mapsto (x,0)
        \end{split}
    \end{equation*}
    of $\pi$ and $\pi'$, respectively.  Note that condition $\pi'\varphi=G\pi$ implies that $\varphi$ truly restricts to a map $\varphi|_M: M \rightarrow M'$. Additionally, 
    \begin{equation*}
        \begin{split}
            \pi'r(x)=(\pi'\varphi)u(x)-(\pi'u')G(x)=G(\pi u)(x)-G(x)=0
        \end{split},
    \end{equation*}
    which verifies $r(x) \in M'=\ker \pi'$ for all $x\in \mathfrak{g}$.

    The map $\mathcal{G}$ is indeed an inverse to $\mathrm{F}$. We can explicitly see this on the relevant morphism data of $\mathbf{RelCo}$ and $\mathbf{RelE}$, respectively:
    \begin{equation}
        \begin{split}
            \mathcal{G}\circ \mathrm{F}: \hspace{1 em} C^1(\mathfrak{g}, \mathfrak{h}; M') \ni r \mapsto (\,x \mapsto& \sigma u(x)-u'G(x)\\
            &=(G(x), r(x))-(G(x),0)\\
            &=(0, r(x))\,);
        \end{split}
    \end{equation}
    \begin{equation} \label{fg=id}
        \begin{split}
            \mathrm{F} \circ \mathcal{G}: \hspace{1 em} \mathrm{Iso}_{\mathrm{Lie}}(\mathfrak{g}\times_c M, \mathfrak{g}'\times_{c'} M') \ni \varphi \mapsto ( \,(x,m)\mapsto &(G(x), \varphi|_M(m)+ r(x))\\
            &= \varphi(x,m)\,).
        \end{split}
    \end{equation}
    To verify the last equality, notice that the conditions $\pi' \varphi = G \pi$ and $\varphi(0,m)\in \ker \pi'$ for all $m\in M$ imply that $\phi$ is of the form
    \begin{equation*}
        \varphi(x, m)=(G(x), \varphi|_M(m)+\phi(x));
    \end{equation*}
    equation (\ref{fg=id}) follows from a straightforward calculation showing $\phi =r$.
\end{proof}

\subsection{Revisiting $\mathbf{Fox}$} \label{coho}

As can be gauged from its definition, the category $\mathbf{RelCo}$ is intimately related to $\mathbf{Fox}$, except the former seemingly needs more information to specify an object. In this subsection, we define a subcategory $\mathbf{RelCo}^\eta$ of $\mathbf{RelCo}$, and explicitly tie it to $\mathbf{Fox}$. For completeness, we also define the subcategory $\mathbf{RelE}^\eta$ of $\mathbf{RelE}$ which corresponds to $\mathbf{RelCo}^\eta$ under the equivalence of Proposition \ref{cat-equiv}.

Let $\mathfrak{h}$ an arbitrary Lie algebra. For the remainder of this paper, we will mainly be dealing with cohomology spaces $H^*(\mathfrak{h} \oplus \mathfrak{h}, \mathfrak{h}; U\mathfrak{h})$, where the $(\mathfrak{h} \oplus \mathfrak{h})$-module structure on $U\mathfrak{h}$ is given by
\begin{equation*}
    (x \oplus y) \cdot a=xa -ay, 
\end{equation*}
and we compute cohomology relative to the diagonal map,
\begin{equation} \label{diag_map}
    \begin{split}
        \Delta: \hspace{1em} \mathfrak{h} & \longrightarrow \mathfrak{h} \oplus \mathfrak{h}\\
        x & \longmapsto x \oplus x
    \end{split}.
\end{equation}

We will not be dealing with the entirety of $\mathbf{RelCo}$, but rather with sufficiently refined subcategories. 

\begin{definition}
Let $\mathbf{RelCo}^{\Delta}$ be the (non-full) subcategory of $\mathbf{RelCo}$ comprised of triplets of the form
    \begin{equation*}
    (\Delta: \mathfrak{h} \rightarrow \mathfrak{h} \oplus \mathfrak{h},\, U\mathfrak{h},\, \omega \oplus c \in Z^2(\mathfrak{h} \oplus \mathfrak{h}, \mathfrak{h}; U\mathfrak{h})),
\end{equation*}
where we restrict the notion of morphisms so that these consist of the data
\begin{equation*}
    (f: \mathfrak{h} \rightarrow \mathfrak{h}',\, r \in C^1(\mathfrak{h} \oplus \mathfrak{h}, \mathfrak{h}; U\mathfrak{h}'))
\end{equation*}
satisfying that
\begin{equation*}
    \delta r=(f \omega - \omega'f) \oplus(fc -c'(f \otimes f)).
\end{equation*}
\end{definition}

To bridge the gap between $\mathbf{Fox}$ and $\mathbf{RelCo^\Delta}$, we can recast the definition of the former so that it more closely resembles the latter. Given a Lie algebra $\mathfrak{h}$, we can essentially treat quasi-derivations and Fox pairings in $U\mathfrak{h}$ as cocycles in $H^2(\mathfrak{h}\oplus \mathfrak{h}, \mathfrak{h}; U \mathfrak{h})$. Let $f: \mathfrak{h} \rightarrow \mathfrak{h}'$ be an arbitrary but fixed Lie map; we define the total complex
\begin{equation} \label{goodone}
        \mathcal{M}(f) \defeq \begin{tikzcd} [sep=scriptsize]
            \mathrm{Qder}_f(U\mathfrak{h}, U\mathfrak{h}') \arrow[d, "\sigma"] & \mathrm{LFox}_f(U\mathfrak{h}, U \mathfrak{h}') \oplus \mathrm{RFox}_f(U\mathfrak{h}, U \mathfrak{h}') \arrow[l, "-\mu"] \arrow[d, "-\tau"]\\
            \mathcal{B}^3(\mathfrak{h} \oplus \mathfrak{h}, \mathfrak{h}; U \mathfrak{h}') \arrow[d] & \mathrm{Fox}_f(U\mathfrak{h}, U\mathfrak{h}') \arrow[l, "-g"] \arrow[d]\\
            0 \arrow[d] & 0 \arrow[l] \arrow[d]\\
            \vdots & \vdots
        \end{tikzcd}
    \end{equation}
where $\mathcal{B}^3(\mathfrak{h} \oplus \mathfrak{h}, \mathfrak{h}; U\mathfrak{h}')$ suggestively denotes the linear subspace defined by
    \begin{equation*}
        \mathcal{B}^3(\mathfrak{h} \oplus \mathfrak{h}, \mathfrak{h}; U\mathfrak{h}') \defeq \{ w \in \mathrm{Hom}_\mathbb{K}(K\otimes K, U\mathfrak{h}')\,|\, w=\sigma(q)+g(\rho)\, \text{for some }q\oplus\rho\in C^2(\mathcal{M}(f)) \}, 
    \end{equation*}
    and $K$ stands for the kernel of the augmentation map, $\varepsilon: U \mathfrak{h} \rightarrow \mathbb{K}$. The relevant maps are given by
    \begin{gather*}
        \mu(\partial_L \oplus \partial_R)(a)= \partial_L(a)+\partial_R(a),\\
        \sigma (q)(a,b)= q(a)b+a q(b) -q(ab),\\
        \tau(\partial_L \oplus \partial_R)(a,b)= \partial_L(a)D(b) + D(a)\partial_R(b),\\
        g(\rho)=\rho,
    \end{gather*}
    for $a, b \in U\mathfrak{h}$. As in Section \ref{fox}, we capitalize on the $f$-induced $U\mathfrak{h}$-bimodule structure on $U\mathfrak{h}'$, and abbreviate
    \begin{equation*}
        xay = x \cdot a \cdot y =f(x)af(y)
    \end{equation*}
    for $x,y \in U\mathfrak{h}$, $a \in U\mathfrak{h}'$. Notice that the notion of exact Fox pairing and exact quasi-derivation we described in previous sections (see Definition \ref{FoxP_exact} and Definition \ref{def_ex_qder}, respectively), matches the notion of exactness we can extract from the total complex (\ref{goodone}). Moreover, the condition $q \in \mathrm{Qder}(-\rho)$ is equivalent to asking that $q \oplus \rho \in Z^2(\mathcal{M}(\mathrm{id}_\mathfrak{h}))$, so we can rephrase $\mathbf{Fox}$ as the category consisting of objects of the form
    \begin{equation*}
        ( \mathfrak{h}, q \oplus \rho \in Z^2(\mathcal{M}(\mathrm{id}_\mathfrak{h})) ),
    \end{equation*}
    and in which the set of morphisms between objects $(\mathfrak{h}, q \oplus \rho )$ and $( \mathfrak{h}', q' \oplus \rho')$ is given by
    \begin{equation*}
        \{ f: \mathfrak{h} \rightarrow \mathfrak{h}', r \in C^1(\mathcal{M}(f)) \, |\, \delta^{\mathcal{M}(f)} r = (f q - q' f) \oplus (f \rho - \rho' (f \otimes f))\}
    \end{equation*}

\begin{proposition}\label{def_E}
    The functor $\mathrm{E}: \mathbf{Fox} \rightarrow \mathbf{RelCo}^{\Delta}$, defined on objects and morphisms, respectively, by
    \begin{equation*}
        (\mathfrak{h}, q \oplus \rho) \mapsto (\Delta:\mathfrak{h} \rightarrow \mathfrak{h} \oplus \mathfrak{h},  U\mathfrak{h} , \omega_q \oplus c_\rho),
    \end{equation*}
    \begin{equation*}
        \left(
        \begin{tikzcd}
            \mathfrak{h} \arrow[d, "f"]\\
            \mathfrak{h}'
        \end{tikzcd},
        r \in C^1(\mathcal{M}(f))
        \right) \mapsto
        \left(
        \begin{tikzcd}
            \mathfrak{h} \arrow[d, "f"]\\
            \mathfrak{h}'
        \end{tikzcd},
        \begin{tikzcd}
            U\mathfrak{h} \arrow[d, "f"]\\
            U \mathfrak{h}'
        \end{tikzcd},
        \mathrm{E}(r) \in C^1(\mathfrak{h} \oplus \mathfrak{h}, \mathfrak{h}; U\mathfrak{h}')
        \right),
    \end{equation*}
    is fully faithful, where 
    \begin{gather*}
        \omega_q(x)= q(x) \hspace{1em}\text{for all }x \in \mathfrak{h};\\
        c_\rho(v,w)=\rho(v_1, w_2) - \rho(w_1, v_2) \hspace{1em} \text{for all }v=v_1 \oplus v_2, w=w_1 \oplus w_2 \in \mathfrak{h} \oplus \mathfrak{h};\\
        \text{for }r= \partial_L \oplus \partial_R,\, E(r)(h_1 \oplus h_2)=\partial_L(h_1) + \partial_R(h_2).
    \end{gather*}
\end{proposition}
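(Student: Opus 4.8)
The plan is to exploit that $\mathrm{E}$ leaves the underlying Lie map $f:\mathfrak{h}\to\mathfrak{h}'$ untouched, so that full faithfulness reduces, for each fixed pair of objects and each fixed $f$, to showing that the assignment $r=\partial_L\oplus\partial_R\mapsto \mathrm{E}(r)$ is a bijection between the two constrained sets of $1$-cochains. The crucial observation is that, under the splitting $\mathrm{Hom}_\mathbb{K}(\mathfrak{h}\oplus\mathfrak{h},U\mathfrak{h}')\cong\mathrm{Hom}_\mathbb{K}(\mathfrak{h},U\mathfrak{h}')^{\oplus 2}$, the map $\mathrm{E}$ is simply restriction to the generators: $\mathrm{E}(r)=(\partial_L|_\mathfrak{h},\partial_R|_\mathfrak{h})$. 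Along the way one also records that $\mathrm{E}$ is a functor: that $\omega_q\oplus c_\rho$ is a relative $2$-cocycle is exactly the condition $q\oplus\rho\in Z^2(\mathcal{M}(\mathrm{id}_\mathfrak{h}))$, and compatibility of $\circ_{\mathbf{Fox}}$ with $\circ_{\mathbf{RelCo}}$ is a direct check.

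Faithfulness is the easy half. A left Fox derivative $\partial\colon U\mathfrak{h}\to U\mathfrak{h}'$ is determined by its restriction to $\mathfrak{h}$: since $\varepsilon$ vanishes on $\mathfrak{h}$, the Fox--Leibniz rule forces $\partial(x_1\cdots x_n)=x_1\cdots x_{n-1}\,\partial(x_n)$ on monomials in generators, and symmetrically for right Fox derivatives. Hence $\mathrm{E}$ is injective already on the ambient groups of $1$-cochains, a fortiori on morphisms.

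Fullness is the heart of the matter. Given a morphism $(f,\tilde r)$ of $\mathbf{RelCo}^{\Delta}$, I would write $\tilde r(h_1\oplus h_2)=\phi_L(h_1)+\phi_R(h_2)$ and extract the two copies of $\mathfrak{h}$ from the $\textstyle\bigwedge^2(\mathfrak{h}\oplus\mathfrak{h})$-component of the cocycle identity $\delta\tilde r=(f\omega_q-\omega_{q'}f)\oplus(fc_\rho-c_{\rho'}(f\otimes f))$. Evaluating this component on the ``same-copy'' pairs $(x\oplus 0,y\oplus 0)$ and $(0\oplus x,0\oplus y)$, the right-hand side vanishes because $c_\rho(x\oplus 0,y\oplus 0)=\rho(x,0)-\rho(y,0)=0$ and likewise for the second copy; what remains is precisely the statement that $\phi_L$ (resp. $\phi_R$) is a Lie $1$-cocycle for the left- (resp. right-) multiplication module structure on $U\mathfrak{h}'$, i.e. $\phi_L([x,y])=f(x)\phi_L(y)-f(y)\phi_L(x)$ and the mirror identity for $\phi_R$. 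Invoking the standard identification of derivations out of an enveloping algebra with Lie $1$-cocycles, each such cocycle extends uniquely to a Fox derivative; this yields $\partial_L\in\mathrm{LFox}_f$ and $\partial_R\in\mathrm{RFox}_f$ with $\mathrm{E}(\partial_L\oplus\partial_R)=\tilde r$.

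It remains to promote the restricted constraints on $\mathfrak{h}$ to the full $\mathbf{Fox}$ coboundary condition on $U\mathfrak{h}$; this is where the main obstacle lies. The mixed pair $(x\oplus 0,0\oplus y)$ of the $\textstyle\bigwedge^2$-component recovers $\tau r(x,y)=\phi_L(x)f(y)+f(x)\phi_R(y)$, which matches $f\rho-\rho'(f\otimes f)$ on $\mathfrak{h}\times\mathfrak{h}$; since both $\tau r$ and $f\rho-\rho'(f\otimes f)$ are Fox pairings and a Fox pairing is determined by its values on generators, they agree on all of $U\mathfrak{h}$. Consequently $\mu r$ and $fq-q'f$ are quasi-derivations with the same associated Fox pairing, and the $\mathrm{Hom}_\mathbb{K}(\mathfrak{h},U\mathfrak{h}')$-component of $\delta\tilde r$ (the $\Delta^*$-term) shows they agree on $\mathfrak{h}$; their difference is then a genuine derivation vanishing on generators, hence zero, so $\mu r=fq-q'f$ on all of $U\mathfrak{h}$. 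This is exactly the defining constraint of a $\mathbf{Fox}$-morphism, so $(f,r)$ is a well-defined preimage and $\mathrm{E}$ is full. Throughout, the only real care needed is in bookkeeping the signs in the mapping-cone differentials of $\mathcal{M}(f)$ and of the relative complex so that the two coboundary conditions line up on the nose.
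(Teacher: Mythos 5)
Your proof is correct and follows essentially the same route as the paper: both reduce everything to generators via PBW, use the vanishing of $c_\rho$ on same-copy pairs $(x\oplus 0, y\oplus 0)$ to show the restricted maps satisfy the $1$-cocycle condition and hence extend uniquely to left/right Fox derivatives, and then identify the two coboundary constraints. The only difference is one of completeness: you spell out the verification that the constraints genuinely correspond on all of $U\mathfrak{h}$ (the mixed-pair evaluation pinning down $\tau r$, and the rigidity of quasi-derivations with the same associated Fox pairing pinning down $\mu r$), a step the paper compresses into ``we can check by direct computation.''
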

\begin{proof}
    Let $\delta$ denote the differential in the Chevalley-Eilenberg total complex (\ref{chevalley_one}). To check that $\mathrm{E}$ is well defined, we should verify that
    \begin{equation*}
        \delta(\omega_q \oplus c_\rho)=(d \omega_q - \Delta^* c_\rho) \oplus d c_\rho=0 \oplus0. 
    \end{equation*}
    We can check by direct computation that $d c_\rho$ is identically zero for all Fox pairings $\rho$. Likewise, let $x,y \in \mathfrak{h}$, then
    \begin{equation*}
        \begin{split}
            d \omega_q(x,y) &=x \cdot \omega_q(y)-y\cdot \omega_q(x)- \omega_q([x,y])\\
            &=xq(y)-q(y)x-yq(x)+q(x)y-q(xy)+q(yx)\\
            &=(xq(y)+q(x)y-q(xy))-(q(y)x+yq(x)-q(yx))\\
            &=\rho(x,y)-\rho(y,x)\\
            &=c_\rho(x\oplus x, y \oplus y)=\Delta^*c_\rho(x,y), 
        \end{split}
    \end{equation*}
    where on the third equality we used that $q \in \mathrm{Qder}(-\rho)$. 
    
    To verify that $\mathrm{E}$ is fully faithful, consider two objects in $\mathbf{Fox}$, $\mathrm{X}\defeq (\mathfrak{h}, q \oplus \rho)$ and $\mathrm{Y} \defeq ( \mathfrak{h}', q' \oplus \rho')$. To check that $\mathbf{Fox}(\mathrm{X,Y})$ and $\mathbf{RelCo}^\Delta(\mathrm{E(X), E(Y)})$ have the same cardinality, we need to construct a bijection between the sets
    \begin{equation*}
        C^1(\mathcal{M}(f)) \hspace{1em} \text{and} \hspace{1em} S \defeq \{s \in C^1(\mathfrak{h} \oplus \mathfrak{h}, \mathfrak{h}; U \mathfrak{h}')\, |\, \delta s=(f \omega_q-\omega_{q'}f) \oplus (fc_\rho - c_{\rho'}(f\otimes f))\},
    \end{equation*}
    for any Lie morphism $f:\mathfrak{h} \rightarrow \mathfrak{h}'$. By the Poincar\'e-Birkhoff-Witt theorem, any left (resp. right) Fox derivative on $\mathrm{LFox}_f(U\mathfrak{h}, U \mathfrak{h}')$ (resp. $\mathrm{RFox}_f(U\mathfrak{h}, U\mathfrak{h}')$) is fully determined by its values on $\mathfrak{h}$. Given $s \in S$, we define $s_L$ (resp. $s_R$) as the unique left (resp. right) Fox derivative such that
    \begin{equation*}
        s_L(x)=s(x \oplus 0) \hspace{0.5em} (\text{resp.}\,\, s_R(x)=s(0 \oplus x)) \hspace{1.5em} \text{for all }x\in \mathfrak{h}.
    \end{equation*}
    To check that $s_L$ and $s_R$ are well defined, notice that, since $s \in S$,
    \begin{equation*}
        ds(x \oplus 0, y \oplus 0)=fc_\rho(x \oplus0, y\oplus 0)-c_{\rho'}(f(x) \oplus0, f(y) \oplus0).
    \end{equation*}
    But $c_\rho(x \oplus0, y\oplus 0)= \rho(x,0)-\rho(y,0)=0$, and similarly for $c_{\rho'}$. It follows that 
    \begin{equation}\label{ds=0}
        ds(x\oplus 0, y \oplus 0)=0 \hspace{1em} \text{for all }x,y\in \mathfrak{h}.
    \end{equation}
    Now we may compute
    \begin{equation*}
        \begin{split}
            s_L([x,y])=s([x,y] \oplus0)&=f(x)s(y\oplus0)-f(y)s(x \oplus 0)\\
            &=f(x)s_L(y)-f(y)s_L(x)\\
            &=s_L(xy)-s_L(yx),
        \end{split}
    \end{equation*}
    where we used equation (\ref{ds=0}) on the second equality, and the condition that $s_L$ be a left Fox derivative on the last one. This shows $s_L$ is well defined on $U\mathfrak{h}$, and an analogous argument works for $s_R$. We can check by direct computation that
    \begin{equation*}
        \begin{split}
            S &\longrightarrow C^1(\mathcal{M}(f))\\
            s & \longmapsto s_L \oplus s_R,
        \end{split}
    \end{equation*}
    is an inverse to $\mathrm{E}:C^1(\mathcal{M}(f)) \rightarrow S$.
\end{proof}

The functor $\mathrm{E}: \mathbf{Fox} \rightarrow \mathbf{RelCo}^\Delta$ is not essentially surjective. This is why we restrict to finer subcategories to obtain an equivalence. 

\begin{definition}
Let $\mathbf{Fox}^\eta$ be the full subcategory of $\mathbf{Fox}$ consisting of objects of the form,
\begin{equation*}
    (\mathfrak{f}, q \oplus \rho \in Z^2(\mathcal{M}(\mathrm{id}_\mathfrak{f})) ),
\end{equation*}
where $\mathfrak{f}$ is isomorphic to a free Lie algebra\footnote{The isomorphism between $\mathfrak{f}$ and the free Lie algebra is not specified, and it need not be \textit{preserved} by the morphisms in $\mathbf{Fox}^\eta$.}. Analogously, let $\mathbf{RelCo}^\eta$ be the full subcategory of $\mathbf{RelCo}^\Delta$ in which objects are of the form
\begin{equation}
        (\Delta:\mathfrak{f} \rightarrow \mathfrak{f}\oplus \mathfrak{f},\, U\mathfrak{f},\, \omega \oplus c \in Z^2(\mathfrak{f} \oplus \mathfrak{f}, \mathfrak{f}; M)),
\end{equation}
where $\mathfrak{f}$ is isomorphic to a free Lie algebra.
\end{definition}

\begin{theorem}\label{quasi-iso}
    Restricted to the subcategory $\mathbf{Fox}^\eta$, the functor $\mathrm{E}: \mathbf{Fox}^\eta \rightarrow \mathbf{RelCo}^\eta$ is an equivalence of categories.
\end{theorem}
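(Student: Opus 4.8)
The plan is to exploit that $\mathrm{E}\colon \mathbf{Fox}\to\mathbf{RelCo}^{\Delta}$ is already fully faithful by Proposition \ref{def_E}. Since $\mathbf{Fox}^{\eta}$ and $\mathbf{RelCo}^{\eta}$ are \emph{full} subcategories and $\mathrm{E}$ carries free Lie algebras to free Lie algebras, the restriction $\mathrm{E}\colon\mathbf{Fox}^{\eta}\to\mathbf{RelCo}^{\eta}$ inherits full faithfulness at no cost. Thus the entire content of the theorem is \emph{essential surjectivity}. Recalling that a morphism in $\mathbf{RelCo}^{\Delta}$ with identity underlying Lie and module maps and cochain $r$ satisfies $\delta r=(\omega-\omega')\oplus(c-c')$, and that $(\mathrm{id},r)$ and $(\mathrm{id},-r)$ are mutually inverse, two objects over the same $\Delta\colon\mathfrak{f}\to\mathfrak{f}\oplus\mathfrak{f}$ become isomorphic in $\mathbf{RelCo}^{\eta}$ as soon as their $2$-cocycles are cohomologous. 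Hence essential surjectivity is equivalent to showing that every class in $H^{2}(\mathfrak{f}\oplus\mathfrak{f},\mathfrak{f};U\mathfrak{f})$ admits a representative of the special form $\omega_{q}\oplus c_{\rho}$ lying in the image of $\mathrm{E}$; this is precisely the quasi-isomorphism statement suggested by the name of the theorem.

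The two facts I will lean on are that $\mathfrak{f}$ is free, so it has cohomological dimension one and $H^{n}(\mathfrak{f},M)=0$ for all $n\geq 2$ and every module $M$, and that $U\mathfrak{f}$ is the free associative algebra $T(V)$ on the generating space $V$, so a Fox derivative, Fox pairing, or quasi-derivation may be prescribed arbitrarily on generators and extended with no consistency obstruction. Given a cocycle $\omega\oplus c$, I first decompose $c\in\mathrm{Hom}_{\mathbb{K}}(\bigwedge^{2}(\mathfrak{f}\oplus\mathfrak{f}),U\mathfrak{f})$ into its three blocks $c_{LL},c_{LR},c_{RR}$ obtained by restricting the two arguments to the first or the second summand, and read off the consequences of $dc=0$. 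Evaluating on triples drawn from a single copy shows that $c_{LL}$ and $c_{RR}$ are ordinary Chevalley--Eilenberg $2$-cocycles of $\mathfrak{f}$ valued in $U\mathfrak{f}$ with the left- and (negated) right-multiplication actions, respectively; evaluating on mixed triples shows that $c_{LR}$ satisfies the left/right Leibniz identities defining a Fox pairing. This translation of the relative cocycle identities into the Fox-pairing axioms is the computation that makes the whole correspondence run, and I expect it to be the most delicate bookkeeping.

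Next I remove the diagonal blocks. By $H^{2}(\mathfrak{f},U\mathfrak{f})=0$ there are $\beta_{L},\beta_{R}\in\mathrm{Hom}_{\mathbb{K}}(\mathfrak{f},U\mathfrak{f})$ with $c_{LL}=d\beta_{L}$ and $c_{RR}=d\beta_{R}$; assembling these into a degree-one cochain $r_{0}(x\oplus y)=\beta_{L}(x)+\beta_{R}(y)$ and subtracting $\delta r_{0}$ replaces $\omega\oplus c$ by a cohomologous cocycle whose $c$-part has vanishing diagonal blocks, the correction simultaneously altering $\omega$ and the off-diagonal block but preserving the cocycle condition. I then define a Fox pairing $\rho$ by setting it equal to the surviving off-diagonal block on generators and extending through $T(V)$, so that $c=c_{\rho}$; the Leibniz identities established above guarantee this agrees with $c_{LR}$ on all of $\mathfrak{f}$. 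Finally, I define a quasi-derivation $q$ by prescribing $q$ on generators to agree with $\omega$ and extending by the rule $q(ab)=q(a)b+aq(b)-\rho(a,b)$, which is unambiguous because $U\mathfrak{f}$ is free associative; the remaining cocycle relation $d\omega=\Delta^{*}c_{\rho}$ then forces $\omega=\omega_{q}$ and $q\in\mathrm{Qder}(-\rho)$, exactly the compatibility verified inside the proof of Proposition \ref{def_E}. Hence the adjusted cocycle equals $\mathrm{E}(\mathfrak{f},q\oplus\rho)$, so the original $\omega\oplus c$ is cohomologous to an object in the image, and $\mathrm{E}$ is essentially surjective, hence an equivalence.

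The principal obstacle is making the reduction of the preceding paragraph genuinely compatible: the coboundary $\delta r_{0}$ that trivializes $c_{LL}$ and $c_{RR}$ simultaneously perturbs $\omega$ and $c_{LR}$, and one must check that after this single adjustment the surviving data still assemble into a bona fide quasi-derivation/Fox-pairing pair rather than demanding further incompatible corrections. Everything hinges on invoking freeness at two logically distinct points, the vanishing $H^{2}(\mathfrak{f},U\mathfrak{f})=0$ for the coboundary step and the freeness of $T(V)$ for the obstruction-free extension of $q$ and $\rho$, so the care lies in confirming that the relative cocycle identities $dc=0$ and $d\omega=\Delta^{*}c$ convert precisely into the Fox-pairing axioms and the quasi-derivation relation with no leftover term.
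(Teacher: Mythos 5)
Your proof is essentially correct, but it takes a genuinely different route to essential surjectivity than the paper does. The paper's own argument is resolution-theoretic: since $\mathfrak{f}$ is free, it writes down the small $(U\mathfrak{f}\otimes U\mathfrak{f})$-projective resolution $0 \to K\otimes K \to (K\otimes U\mathfrak{f})\oplus(U\mathfrak{f}\otimes K) \to U\mathfrak{f}\otimes U\mathfrak{f} \to \mathbb{K}$, compares it to the Chevalley--Eilenberg resolution via chain maps $D_*, E_*$ and a chain homotopy $\Psi_*$, identifies $\mathrm{Hom}_{U\mathfrak{f}\otimes U\mathfrak{f}}(K\otimes K, U\mathfrak{f})\cong\mathrm{Fox}(U\mathfrak{f})$, and then sets $\rho_c=(D_2)^*c$, with the chain homotopy supplying the explicit coboundary $\delta(-(\Psi_1)^*c)$ that makes $\mathrm{E}(q_\omega\oplus\rho_c)$ cohomologous to $\omega\oplus c$. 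You instead decompose $c$ into blocks $c_{LL}, c_{LR}, c_{RR}$, kill the diagonal blocks using $H^2(\mathfrak{f},M)=0$ (cohomological dimension one of free Lie algebras), and extend the surviving off-diagonal block to a Fox pairing by prescribing it on generators. Both arguments ultimately rest on the same fact --- freeness of the augmentation ideal of $U\mathfrak{f}$ --- but the paper's version handles the diagonal and mixed parts in one stroke and avoids case analysis, while yours is more elementary and makes visible exactly which piece of the relative cocycle identity produces which Fox axiom.

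Two points of care in your write-up. First, your second paragraph asserts that the mixed cocycle identities show $c_{LR}$ satisfies the Fox-pairing Leibniz identities; this is not true before correction. The mixed evaluation on $(x_1\oplus 0, x_2\oplus 0, 0\oplus y)$ gives
\begin{equation*}
  c_{LR}([x_1,x_2],y) = x_1\,c_{LR}(x_2,y) - x_2\,c_{LR}(x_1,y) - c_{LL}(x_1,x_2)\,y,
\end{equation*}
so the pure Leibniz rule only emerges \emph{after} you subtract $\delta r_0$ and the diagonal blocks vanish. Your third paragraph implicitly uses the corrected cocycle, so the argument goes through, but the earlier claim should be stated as holding modulo $c_{LL}$ and $c_{RR}$. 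Second, your extension rule $q(ab)=q(a)b+aq(b)-\rho(a,b)$ puts $q$ in $\mathrm{Qder}(\rho)$ rather than $\mathrm{Qder}(-\rho)$ as required for objects of $\mathbf{Fox}$; this is a sign that must be flipped for the final identification $\omega'=\omega_q$ to match the relation $d\omega_q = \Delta^*c_\rho$ used in Proposition \ref{def_E}. Neither issue is structural: the induction on bracket length that forces $q|_{\mathfrak{f}}=\omega'$ from agreement on generators is exactly right, and with these signs repaired your proof is a valid alternative to the paper's.
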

\begin{proof}
    All that remains to show is that $\mathrm{E}$ is essentially surjective. To simplify the notation, suppose $\mathfrak{f}$ is a free Lie algebra, as opposed to just isomorphic to one. Since $\mathfrak{f}$ is free, then the following is a $(U\mathfrak{f} \otimes U \mathfrak{f})-$projective resolution of $\mathbb{K}$:
    \begin{equation}\label{free_proj}
        P_*:\, \begin{tikzcd}
            0 \arrow[r] & K \otimes K \arrow[r, "f"] & (K \otimes U\mathfrak{f}) \oplus (U\mathfrak{f} \otimes K) \arrow[r, "g"]& U\mathfrak{f} \otimes U\mathfrak{f} \arrow[r, "\varepsilon \otimes \varepsilon"] & \mathbb{K},
        \end{tikzcd}
    \end{equation}
    where $f(x) = x \oplus x$ for all $x \in K \otimes K$ and $g(x \oplus y)= x-y$ for $x \in K \otimes U\mathfrak{f}$ and $y \in U\mathfrak{f} \otimes K$. We are viewing our spaces as left $(U\mathfrak{f}\otimes U\mathfrak{f})$-modules through the action
    \[
        (a\otimes b) \cdot (x \otimes y)= ax \otimes by.
    \]
    Note also the left $(U\mathfrak{f} \otimes U\mathfrak{f})$-module structure on $U\mathfrak{f}$, $(a\otimes b) \cdot x = axS(b)$.

    We also consider the Chevalley-Eilenberg chain complex as a separate $(U\mathfrak{f} \otimes U \mathfrak{f)}-$projective resolution:
    \begin{equation*}
        \begin{tikzcd}
            Q_*:\, \cdots \arrow[r] & (U\mathfrak{f} \otimes U \mathfrak{f}) \otimes \bigwedge^2(\mathfrak{f} \oplus \mathfrak{f}) \arrow[r, "d_2"] & (U\mathfrak{f} \otimes U \mathfrak{f}) \otimes (\mathfrak{f} \oplus \mathfrak{f}) \arrow[r, "d_1"] & U\mathfrak{f} \otimes U\mathfrak{f} \arrow[r, "\varepsilon \otimes \varepsilon"] & \mathbb{K}
        \end{tikzcd}
    \end{equation*}
    It follows that there must exist chain maps $D_*: P_* \rightarrow Q_*$ and $E_*: Q_* \rightarrow P_*$ inducing the identity on $\mathbb{K}$. In particular, we pinpoint the algebra morphism
    \begin{equation*}
        \begin{split}
            E_2: \hspace{1em} (U\mathfrak{f} \otimes U \mathfrak{f}) \otimes \bigwedge^2 (\mathfrak{f} \oplus \mathfrak{f}) & \rightarrow K \otimes K\\
           (a \otimes b) \otimes (x \wedge y) & \mapsto ay_1 \otimes bx_2-ax_1 \otimes by_2 .
        \end{split}
    \end{equation*}
    There must also exist a uniquely defined chain homotopy $\Psi_*:Q_* \rightarrow Q_{*+1}$ connecting $D_*E_*: Q_* \rightarrow Q_*$ to the identity on $Q_*$.

    We can verify by direct computation that the map 
    \begin{equation*}
        \begin{split}
            \mathrm{Hom}_{U \mathfrak{f}\otimes U \mathfrak{f}}(K \otimes K, U\mathfrak{f}) &\rightarrow \mathrm{Fox}(U\mathfrak{f})\\
            \rho & \mapsto ((a,b) \mapsto \rho(a, S(b))
        \end{split}
    \end{equation*}
    is a bijection. For clarity, we will leave it implicit, so that we can interpret $(D_2)^*$ as a map between the spaces\footnote{We have also made the identification $\mathrm{Hom}_{U\mathfrak{f}\otimes U\mathfrak{f}}((U\mathfrak{f} \otimes U \mathfrak{f})\otimes \bigwedge^2(\mathfrak{f}\oplus \mathfrak{f}), U\mathfrak{f}) \cong \mathrm{Hom}_\mathbb{K}(\bigwedge^2(\mathfrak{f}\oplus \mathfrak{f}), U\mathfrak{f})$.}
    \begin{equation*}
        (D_2)^*: \mathrm{Hom}_\mathbb{K}(\bigwedge^2(\mathfrak{f} \oplus \mathfrak{f}), U\mathfrak{f}) \rightarrow \mathrm{Fox}(U\mathfrak{f}).
    \end{equation*}

    With the previous set-up, let
    \begin{equation} \label{gen_obj}
        (\Delta:\mathfrak{f} \rightarrow \mathfrak{f}\oplus \mathfrak{f},\, U\mathfrak{f},\, \omega \oplus c \in Z^2(\mathfrak{f} \oplus \mathfrak{f}, \mathfrak{f}; U\mathfrak{f})) \in \mathrm{Ob}(\mathbf{RelCo}^\eta).
    \end{equation}
    Consider now $( \mathfrak{f}, q_\omega \oplus \rho_c \in Z^2(\mathcal{M}(\mathrm{id}_{\mathfrak{f}}))) \in \mathrm{Ob}(\mathbf{Fox}^\eta)$, where
    \begin{equation*}
        \rho_c \defeq (D_2)^*c, 
    \end{equation*}
    and $q_\omega$ is uniquely defined by imposing that $q_\omega \in \mathrm{Qder}(-\rho_c)$ and
    \begin{equation*}
    q_\omega(\alpha_i)=\omega(\alpha_i)+\Delta^*(\Psi_1)^*c(\alpha_i)
    \end{equation*}
    for all $i\in I$. 
    
    Since $q_\omega \in \mathrm{Qder}(-\rho_c)$, $q_\omega \oplus \rho_c \in Z^2(\mathcal{M}(\mathrm{id}_\mathfrak{f}))$. Additionally, 
    \begin{equation*}
        \mathrm{E}(q_\omega \oplus \rho_c)= (\omega + \Delta^*(\Psi_1)^*c) \oplus (D_2 \circ E_2)^*c, 
    \end{equation*}
    where we used $\mathrm{E}(\rho)=c_\rho = (E_2)^*\rho$ for any $\rho \in \mathrm{Fox}(U\mathfrak{f})$. It follows that
    \begin{equation*}
        \mathrm{E}(q_\omega \oplus \rho_c) - \omega \oplus c = \Delta^*(\Psi_1)^*c \oplus d(\Psi_1)^*c=\delta(-(\Psi_1)^*c),
    \end{equation*}
    where we used the condition that $\Psi_*$ is a chain homotopy. This implies that
    \begin{equation*}
        (\mathrm{id}_\mathrm{f}, \mathrm{id}_{U\mathfrak{f}}, -(\Psi_1)^*c \in C^1(\mathfrak{f}\oplus \mathfrak{f}, \mathfrak{f}; U \mathfrak{f}))
    \end{equation*}
    is an isomorphism in $\mathbf{RelCo}^\eta$ between the objects $\mathrm{E}( \mathfrak{f}, q_\omega \oplus \rho_c)$ and $( \Delta,\, U\mathfrak{f},\, \omega \oplus c)$.
\end{proof}

For future reference, we also define a category $\mathbf{RelE}^\eta$; it will be (equivalent to) the subcategory of $\mathbf{RelE}$ that corresponds to $\mathbf{RelCo^\eta}$ under the categorical equivalence of Proposition \ref{cat-equiv}. Unlike $\mathbf{RelCo}^\eta$, the objects of $\mathbf{RelE}^\eta$ will be decorated with a module isomorphism.
\begin{definition}
Let $\mathbf{RelE}^\eta$ be the category with objects consisting of pairs
    \begin{equation*}
        \left(
            \begin{tikzcd}
                & \mathfrak{f} \arrow[ld, "g" swap] \arrow[d, "\Delta"]\\
                \mathfrak{e} \arrow[r, "p"]& \mathfrak{f} \oplus \mathfrak{f}
            \end{tikzcd},
            \begin{tikzcd}
                \ker p \arrow[r, "\phi"] & U\mathfrak{f}
            \end{tikzcd}
        \right),
    \end{equation*}
    where $\mathfrak{f}$ is isomorphic to a free Lie algebra, and the commutative diagram in Lie algebras satisfies that
    \begin{itemize}
        \item The map $p$ is surjective with an abelian kernel;
        \item The diagonal map $\Delta: \mathfrak{f} \rightarrow \mathfrak{f} \oplus \mathfrak{f}$ is given by $\Delta(x)=x\oplus x$.
    \end{itemize}
    The map $\phi: \ker p \rightarrow U\mathfrak{f}$ is an isomorphism of $\mathfrak{f} \oplus \mathfrak{f}$-modules, where the action on $U\mathfrak{f}$ is given by
    \begin{equation*}
        x \cdot a \defeq xa -ax
    \end{equation*}
    for $x\in \mathfrak{f}$ and $a\in U\mathfrak{f}$. Recall how a choice of section of $p: \mathfrak{e} \rightarrow \mathfrak{f}\oplus \mathfrak{f}$ defined an $\mathfrak{f\oplus \mathfrak{f}}$-module structure on $\ker p$ (cf. equation (\ref{g-mod_act})). Morphisms in $\mathbf{RelE}^\eta$ consist of pairs of Lie maps
    \begin{equation*}
        (f:\mathfrak{f} \rightarrow \mathfrak{f}', \psi: \mathfrak{e} \rightarrow \mathfrak{e}')
    \end{equation*}
    which slot into commutative diagrams:
    \begin{equation*}
        \begin{tikzcd}
            \mathfrak{f} \arrow[r, "s"] \arrow[d, "f"]  &\mathfrak{e} \arrow[d, "\psi"] \\
            \mathfrak{f}' \arrow[r, "s'"] & \mathfrak{e}' 
        \end{tikzcd} \hspace{2.5em}
        \begin{tikzcd}
            \mathfrak{e} \arrow[r, "p"] \arrow[d, "\psi"] & \mathfrak{f}\oplus \mathfrak{f} \arrow[d, "f \oplus f"] & \\
            \mathfrak{e}' \arrow[r, "p'"] & \mathfrak{f}' \oplus \mathfrak{f}' 
        \end{tikzcd}
        \begin{tikzcd}
            \ker p \arrow[r, "\phi"] \arrow[d, "\psi"] & U\mathfrak{f} \arrow[d, "f"]\\
            \ker p' \arrow[r, "\phi'"] & U\mathfrak{f}'
        \end{tikzcd}
    \end{equation*}
\end{definition}
Notice that the condition that $p' \psi(m)=(f \oplus f)p(m)$ for all $m \in \mathfrak{e}$ ensures that $\psi$ restricts to a map on the kernels of the projections.

The following result follows immediately from the equivalence of categories $\mathbf{RelE}$ and $\mathbf{RelCo}$; we state it for the sake of completeness:
\begin{proposition}\label{co_del=e_del}
    The functor $\mathrm{F}: \mathbf{RelCo}^\eta \rightarrow \mathbf{RelE}^\eta$, defined on objects by
    \begin{equation*}
        ( \Delta:\mathfrak{f} \rightarrow \mathfrak{f} \oplus \mathfrak{f},\,U\mathfrak{f},\, \omega \oplus c \in Z^2(\mathfrak{f} \oplus f, \mathfrak{f}; U\mathfrak{f}) ) \mapsto 
        \left(
        \begin{tikzcd}
            & \mathfrak{f} \arrow[d, "\Delta"] \arrow[dl, "\Delta \times \omega" swap]\\
            (\mathfrak{f} \oplus \mathfrak{f})\times_c U\mathfrak{f} \arrow[r] & \mathfrak{f} \oplus \mathfrak{f}
        \end{tikzcd},\, \mathrm{id}: U\mathfrak{f} \rightarrow U\mathfrak{f}
       \right), 
    \end{equation*}
    and on morphisms by
    \begin{equation*}
        \left(
        \begin{tikzcd}
            \mathfrak{f} \arrow[d, "f"]\\
            f
        \end{tikzcd},\,
        r \in C^1(\mathfrak{f} \oplus \mathfrak{f}, \mathfrak{f}; U\mathfrak{f}')
        \right) \mapsto 
        \left(
            \begin{tikzcd}
                \mathfrak{f} \arrow[d, "f"] \\
            \mathfrak{f}'
            \end{tikzcd}\,
            \begin{tikzcd}
                \mathfrak{e} \times_c U\mathfrak{f} \arrow[d, "\sigma"]\\
                \mathfrak{e}' \times_{e'} U\mathfrak{f}'
            \end{tikzcd}
        \right),
    \end{equation*}
    where $\sigma(x,m) = (\psi(x), \alpha(m) + r(x))$, is an equivalence of categories.
\end{proposition}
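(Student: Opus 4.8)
The plan is to bootstrap from the equivalence $\mathrm{F} \colon \mathbf{RelCo} \rightarrow \mathbf{RelE}$ of Proposition \ref{cat-equiv} rather than reprove essential surjectivity and full faithfulness from scratch, since the two refinements are dual constraints that match under $\mathrm{F}$. Passing from $\mathbf{RelCo}$ to $\mathbf{RelCo}^\eta$ amounts to fixing the coefficient module to be $U\mathfrak{f}$ with the commutator action $x\cdot a = xa-ax$ and the relative map to be the diagonal $\Delta$, while discarding the independent module component of a morphism so that it is forced to be the map $f\colon U\mathfrak{f}\to U\mathfrak{f}'$ induced on enveloping algebras. On the extension side, the decoration $\phi \colon \ker p \to U\mathfrak{f}$ of an object of $\mathbf{RelE}^\eta$ is exactly the datum that pins the abstract kernel to the fixed module $U\mathfrak{f}$, and the commuting $\phi$-square demanded of a morphism of $\mathbf{RelE}^\eta$ is exactly what forces the module component to be the induced $f$. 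So the two extra constraints are translates of one another, and the statement should fall out of Proposition \ref{cat-equiv}.

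First I would check well-definedness with values in $\mathbf{RelE}^\eta$. For an object $(\Delta, U\mathfrak{f}, \omega\oplus c)$ the extension $(\mathfrak{f}\oplus\mathfrak{f})\times_c U\mathfrak{f} \to \mathfrak{f}\oplus\mathfrak{f}$ has kernel literally $U\mathfrak{f}$, and the $\mathfrak{f}\oplus\mathfrak{f}$-action induced by any section via $(\ref{g-mod_act})$ agrees with the prescribed action $x\cdot a = xa-ax$; hence the tautological $\phi=\mathrm{id}$ is a module isomorphism and (as $\mathfrak{f}$ is free) the decorated object lies in $\mathbf{RelE}^\eta$. On a morphism $(f,r)$ the image furnished by Proposition \ref{cat-equiv} is $\sigma(x,m)=((f\oplus f)(x),\, f(m)+r(x))$, whose module component $\alpha = f$ is precisely what makes the $\phi$-square commute when $\phi=\phi'=\mathrm{id}$.

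For full faithfulness I would invoke the bijection $\mathrm{F}\colon \mathbf{RelCo}(X,Y)\to \mathbf{RelE}(\mathrm{F}X,\mathrm{F}Y)$ together with its explicit inverse $\mathcal{G}$ from Proposition \ref{cat-equiv}, and observe that it carries $\mathbf{RelCo}^\eta$-morphisms onto $\mathbf{RelE}^\eta$-morphisms. A morphism of $\mathbf{RelCo}$ between objects of $\mathbf{RelCo}^\eta$ lies in $\mathbf{RelCo}^\eta$ iff its module component is the induced map $f\colon U\mathfrak{f}\to U\mathfrak{f}'$; under $\mathcal{G}$ this is read off as $\varphi|_M = f$, and via the commuting squares with $\phi=\phi'=\mathrm{id}$ this is equivalent to the $\mathbf{RelE}$-morphism underlying a morphism of $\mathbf{RelE}^\eta$. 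Thus $\mathrm{F}$ restricts to a bijection on these hom-sets.

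Finally, for essential surjectivity I would use the decoration to reduce to the case already treated. Given an object of $\mathbf{RelE}^\eta$ with $\phi\colon \ker p \to U\mathfrak{f}$, the object-level construction of Proposition \ref{cat-equiv} produces $\omega, c$ valued in $\ker p$; composing these with $\phi$ gives $(\Delta, U\mathfrak{f}, \phi\omega \oplus \phi c)\in \mathrm{Ob}(\mathbf{RelCo}^\eta)$, and transporting the isomorphism $\psi$ of that proof along $\phi$ exhibits $\mathrm{F}$ of this object as isomorphic in $\mathbf{RelE}^\eta$ to the given one, decorations included. The step needing the most care — though still short — is the morphism-level matching of the previous paragraph: one must verify that the $\phi$-commutativity condition of $\mathbf{RelE}^\eta$ is not an extra restriction but exactly the translate of the ``module map is induced by $f$'' condition of $\mathbf{RelCo}^\Delta$, so that no morphisms are lost or gained in either direction.
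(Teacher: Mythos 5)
Your proposal is correct and follows exactly the route the paper intends: the paper gives no separate proof, stating only that the result ``follows immediately from the equivalence of categories $\mathbf{RelE}$ and $\mathbf{RelCo}$'' (Proposition \ref{cat-equiv}). Your write-up supplies precisely the routine verifications behind that assertion --- that the decorations $\phi$ and the induced-module-map constraint are translates of one another under $\mathrm{F}$, so the equivalence restricts --- and each of your three checks (well-definedness, restriction of the hom-set bijection, essential surjectivity via transport along $\phi$) is sound.
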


\section{Operads}

In this chapter, we construct bracket and cobracket operations out of right operad modules in the category of small $\mathrm{CoAlg}$-enriched small categories. Unfortunately, we cannot define a nontrivial functor $\Lambda: \mathrm{OpR}\, \mathbf{Cat}(\mathbf{CoAlg}) \rightarrow \mathbf{GoTu}$. Rather, the category $\mathrm{OpR}\, \mathbf{Cat}(\mathbf{CoAlg})$ must become sufficiently decorated before we can produce operations out of it. In particular, we must have a notion of \textit{parenthesizations} as objects in our candidate operad modules, and they must come with a marked operation in arity one (cf. Definition \ref{delta_cat}). Our intention is to produce diagrams in $\mathbf{RelE}^\eta$ from operad modules. Given the equivalences of categories $\mathbf{RelE}^\eta \cong \mathbf{RelCo}^\eta \cong \mathbf{Fox}^\eta$, and the functor $\Psi: \mathbf{Fox} \rightarrow \mathbf{GoTu}$, such a diagram would contain the information needed to define bracket and cobracket operations. We close this chapter by reviewing Gonzalez' $\mathbf{PaCD}^f$-operad module of framed paranthesized chord diagrams of genus $g$, $\mathbf{PaCD}^f_g$, and then computing the bracket and cobracket we may extract from them. 

\subsection{Operad modules}\label{operad_sec}

\begin{definition}\cite[\S 1]{gonz}\label{OpR_def}
    Let $\mathcal{C}$ be a symmetric monoidal category. We write $\mathrm{OpR}\, \mathcal{C}$ for the category of pairs $(\mathcal{P}, \mathcal{M})$, where $\mathcal{P}$ is an operad in $\mathcal{C}$ and $\mathcal{M}$ is a right $\mathcal{P}$-module. A morphism $(\mathcal{P}, \mathcal{M}) \rightarrow (\mathcal{Q}, \mathcal{N})$ consists of an operad map $f: \mathcal{P} \rightarrow \mathcal{Q}$ and a $\mathcal{P}$-module map $g: \mathcal{M} \rightarrow \mathcal{N}$, where $\mathcal{N}$ is regarded as a $\mathcal{P}$-module via $f$.
\end{definition}
All our operads and operad modules will be pointed in the sense of \cite{gonz}. For our purposes, this essentially means there exist operations of arity zero.

\begin{definition}
Let $\mathbf{Cat(CoAlg)}$ denote the symmetric monoidal category of small $\mathrm{CoAlg}$-enriched categories. We write $\mathbf{Pa}$ for the operad of parenthesized permutations in $\mathbf{Cat(CoAlg)}$. More explicitly, for each $n  > 0$, $\mathbf{Pa}(n)$ is the category defined by:
    \begin{enumerate}
        \item Objects of $\mathbf{Pa}(n)$ consist of maximal parenthesized permutations of the set $\{1,\dots, n\}$.
        \item For any two parenthesized permutations $\sigma$ and $\sigma'$,
        \begin{equation*}
            \mathrm{mor_{\mathbf{Pa}(n)}}(\sigma, \sigma') =\begin{cases}\mathbb{K} & \text{if}\,\, \sigma=\sigma',\\
            0 & \text{otherwise}                
            \end{cases}
        \end{equation*}
    \end{enumerate}
    We also set $\mathbf{Pa}(0)=\mathbf{Pa}(1)$. The operadic composition in $\mathbf{Pa}(n)$ is defined by substituting parenthesizations in place of numbers. Composing with $\mathbf{Pa}(0)$ on the right deletes the corresponding number. For $1 \leq i \leq n$, we will use the following shorthand notation:
\begin{equation*}
    \begin{split}
        d_i: \hspace{1em} \mathrm{Ob}(\mathbf{Pa}(n)) & \longrightarrow \mathrm{Ob}(\mathbf{Pa}(n+1))\\
        p_n &\longmapsto p_n \circ_i^{n,2} (12);
    \end{split} \hspace{2 em}
    \begin{split}
        s_i: \hspace{1em} \mathrm{Ob}(\mathbf{Pa}(n)) & \longrightarrow \mathrm{Ob}(\mathbf{Pa}(n-1))\\
        p_n & \longmapsto p_n\circ_i^{n,0} 1 .
    \end{split}
\end{equation*}
Furthermore, we also define:
\begin{equation*}
    \begin{split}
        d_0: \hspace{1em} \mathrm{Ob}(\mathbf{Pa}(n)) & \longrightarrow \mathrm{Ob}(\mathbf{Pa}(n+1))\\
        p_n & \longmapsto (12) \circ_2^{2,n} p_n;
    \end{split} \hspace{2em}
    \begin{split}
        d_{n+1}: \hspace{1em} \mathrm{Ob}(\mathbf{Pa}(n)) & \longrightarrow \mathrm{Ob}(\mathbf{Pa}(n+1))\\
        p_n & \longmapsto (12) \circ_1^{2,n} p_n.
    \end{split}
\end{equation*}
Additionally, we abbreviate $d^{i}\defeq(d_0)^i$ for $i \geq 2$. 
\end{definition}

We can regard $\mathbf{Pa}$ as a right operad $\mathbf{Pa}$-module, so that $(\mathbf{Pa}, \mathbf{Pa)} \in \mathrm{Ob}(\mathrm{OpR}\, \mathbf{Cat}(\mathbf{CoAlg}))$. In the future, we will only deal with operads in $\mathrm{OpR}\, (\mathbf{CoAlg})$ that share the same objects as $\mathbf{Pa}$, in a sense we will make precise presently.

\begin{definition}
Recall that the \textit{under category} $(\mathbf{Pa}, \mathbf{Pa})/\mathrm{OpR}\, \mathbf{Cat}(\mathbf{CoAlg})$ is the category in which objects are morphisms $F \in \mathrm{Mor(\mathrm{OpR}\, \mathbf{Cat}(\mathbf{CoAlg}))}$ such that $\mathrm{dom}(F)=(\mathbf{Pa}, \mathbf{Pa})$, and in which
    \small
            \begin{equation*}
                \mathrm{mor}_{(\mathbf{Pa}, \mathbf{Pa})/\mathrm{OpR}\, \mathbf{Cat}(\mathbf{CoAlg})}\left(
                \begin{tikzcd}[row sep = scriptsize]
                    (\mathbf{Pa}, \mathbf{Pa}) \arrow[d, "F"]\\
                    (\mathcal{P}, \mathcal{M})
                \end{tikzcd},
                \begin{tikzcd}[row sep = scriptsize]
                    (\mathbf{Pa}, \mathbf{Pa}) \arrow[d, "F'"]\\
                    (\mathcal{P}',\mathcal{M}')
                \end{tikzcd}
                \right) = \left\{ G \in \mathrm{mor}_{\mathrm{OpR}\, \mathbf{Cat}(\mathbf{CoAlg})}((\mathcal{P}, \mathcal{M}), (\mathcal{P}', \mathcal{M}'))\, |\, G \circ F =F'  \right\}. 
            \end{equation*}
    \normalsize
By abuse of notation, whenever 
\begin{equation*}
    \left(
    \begin{tikzcd}[column sep= large]
        (\mathbf{Pa}, \mathbf{Pa}) \arrow[r, "{(F_\mathcal{P}, F_\mathcal{M})}"] & (\mathcal{P}, \mathcal{M})
    \end{tikzcd}
    \right) \in (\mathbf{Pa}, \mathbf{Pa})/\mathrm{OpR}\, \mathbf{Cat} (\mathbf{CoAlg})
\end{equation*}
we will abbreviate $F_\mathcal{P}(\sigma) = F_\mathcal{M}(\sigma)=\sigma$ for all parenthesizations $\sigma \in \mathrm{Ob}(\mathbf{Pa})$. Furthermore, we will often omit any mention of the morphism $(F_\mathcal{P}, F_\mathcal{M})$, and simply write $(\mathcal{P}, \mathcal{M})\in (\mathbf{Pa}, \mathbf{Pa})/\mathrm{OpR}\, \mathbf{Cat}(\mathbf{CoAlg})$. More plainly, in all operads and operad modules in $(\mathbf{Pa}, \mathbf{Pa})/\mathrm{OpR}\, \mathbf{Cat} (\mathbf{CoAlg})$, we can single out special objects as parenthesizations; morphisms in $(\mathbf{Pa}, \mathbf{Pa})/\mathrm{OpR}\, \mathbf{Cat}(\mathbf{CoAlg})$ fix these parenthesizations.
\end{definition}

Mirroring the notation we introduced in the operad $\mathbf{Pa}$, we define analogous operations in $(\mathbf{Pa}, \mathbf{Pa})/\mathrm{OpR}\, \mathbf{Cat}(\mathbf{CoAlg})$. For $1 \leq i \leq n$, set
\begin{equation} \label{strand_rem}
    \begin{split}
        s_i:\, \mathrm{End}_{\mathcal{O}(n)}(d^{n-1}(1)) &\longrightarrow \mathrm{End}_{\mathcal{O}(n-1)}(s_id^{n-1}(1))\\
         A_n & \longmapsto A_n \circ_i^{n,0} \mathbf{u};
    \end{split}
\end{equation}
\begin{equation}\label{strand_add}
    \begin{split}
        d_i: \hspace{1em} \mathrm{End}_{\mathcal{O}(n)}(d^{n-1}(1)) & \longrightarrow \mathrm{End}_{\mathcal{O}(n+1)}(d_id^{n-1}(1))\\
        A_n & \longmapsto A_n \circ_i^{n,2} \mathbf{m};
    \end{split}
\end{equation}
where $\mathbf{u}$ and $\mathbf{m}$ denote the multiplicative identity in $\mathrm{End}_{\mathcal{P}(0)}(1)$ and $\mathrm{End}_{\mathcal{P}(2)}(12)$, respectively\footnote{The maps $s_i$ and $d_i$ depend on both the operad (module) $\mathcal{O}$ and the arity $n$. The notation does not reflect this, to help with readability.}. The $\mathbb{S}$-module $\mathcal{O}$ can stand for either an operad $\mathcal{P}$ or an operad module $\mathcal{M}$. For the rest of this section, we will abbreviate $\Upsilon_n^\mathcal{M} \defeq \mathrm{End}_{\mathcal{M}(n)}(d^{n-1}(1))$. Notice that any $(f,g) \in \mathrm{Aut}_{(\mathbf{Pa},\mathbf{Pa)/\mathrm{OpR}\, ( \mathbf{CoAlg})}}(\mathcal{P}, \mathcal{M})$ defines Hopf algebra automorphisms $g_n: \Upsilon_n^\mathcal{M} \rightarrow \Upsilon_n^\mathcal{M} $, since morphisms fix parenthesizations.

\begin{lemma}
    Let $n \geq 1$ be fixed. The operations $s_i$ and $d_i$ satisfy the following properties:
    \begin{equation} \label{n_n_id}
            s_n \circ d_n = \mathrm{id}_{\Upsilon_n};
    \end{equation}
    \begin{equation} \label{n+1_n_id}
        s_{n+1} \circ d_n = \mathrm{id}_{\Upsilon_n}.
    \end{equation}
\end{lemma}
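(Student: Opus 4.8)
The plan is to unwind both composites straight from the definitions (\ref{strand_rem}) and (\ref{strand_add}) and reduce each to a single unit axiom for the marked operations $\mathbf{m}$ and $\mathbf{u}$. First I would note that each identity is really a two-part assertion: that the target of $s_\bullet \circ d_n$ is genuinely $\Upsilon_n$ (i.e.\ the underlying parenthesizations agree, $s_n d_n d^{n-1}(1) = d^{n-1}(1)$ and $s_{n+1} d_n d^{n-1}(1) = d^{n-1}(1)$), and that the induced map on $\Upsilon_n$ is the identity. The object-level identities are the usual (co)simplicial relations in $\mathbf{Pa}$, obtained by feeding the arity-zero operation into either input of $\mathbf{m}$; granting these, it suffices to track what each composite does to an arbitrary $A_n \in \Upsilon_n$. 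Throughout, the relevant compositions $A_n \circ_i \mathbf{m}$, $A_n \circ_i \mathbf{u}$ are instances of the right $\mathcal{P}$-module action (or the operad composition when $\mathcal{O}=\mathcal{P}$), so the associativity and unit axioms I invoke are exactly the module axioms.

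For (\ref{n_n_id}) I would compute
\[
    s_n \circ d_n(A_n) = \big(A_n \circ_n^{n,2} \mathbf{m}\big) \circ_n^{n+1,0} \mathbf{u}.
\]
The crucial point is the bookkeeping of operadic slots: in $d_n(A_n)$ the two inputs of $\mathbf{m}$ occupy positions $n$ and $n+1$, so the slot $n$ consumed by $s_n$ is exactly the first input of $\mathbf{m}$. The nested associativity axiom then permits the re-bracketing
\[
    \big(A_n \circ_n^{n,2} \mathbf{m}\big) \circ_n^{n+1,0} \mathbf{u} = A_n \circ_n^{n,1} \big(\mathbf{m} \circ_1^{2,0} \mathbf{u}\big),
\]
and the unit axiom collapses $\mathbf{m} \circ_1^{2,0} \mathbf{u}$ to the arity-one identity operation $\mathrm{id}_1 \in \mathrm{End}_{\mathcal{P}(1)}(1)$. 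Hence $s_n d_n(A_n) = A_n \circ_n^{n,1} \mathrm{id}_1 = A_n$, using the unit axiom for composition with $\mathrm{id}_1$.

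The proof of (\ref{n+1_n_id}) is identical save for one index: $s_{n+1}$ feeds $\mathbf{u}$ into slot $n+1$, which is the \emph{second} input of $\mathbf{m}$, so the same associativity axiom yields $A_n \circ_n^{n,1}\big(\mathbf{m}\circ_2^{2,0}\mathbf{u}\big)$, and the unit axiom again reduces $\mathbf{m}\circ_2^{2,0}\mathbf{u}$ to $\mathrm{id}_1$, giving $A_n$. I expect the only genuine obstacle to be the position bookkeeping—verifying that the slot consumed by $s_n$ (resp.\ $s_{n+1}$) after $d_n$ lands on the first (resp.\ second) input of $\mathbf{m}$, and that the arity superscripts on $\circ$ update consistently under the associativity rule—after which both identities fall out of the unit axioms. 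Since $\mathbf{m}$ and $\mathbf{u}$ are the identity endomorphisms of their respective objects, all these manipulations take place coherently at the level of the Hopf algebras $\Upsilon_n$, so no separate verification that the maps respect the coalgebra structure is required.
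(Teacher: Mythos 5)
Your proof is correct and follows essentially the same route as the paper: rewrite $s_\bullet \circ d_n(A_n)$ as an iterated partial composition, use operadic associativity to re-bracket it as $A_n \circ_n (\mathbf{m} \circ_i \mathbf{u})$ with $i=1$ or $2$, identify $\mathbf{m}\circ_i\mathbf{u}$ with the arity-one identity $\mathbf{1}$, and conclude by the unit axiom. The only differences are cosmetic — you spell out both cases and the object-level bookkeeping, where the paper computes one case and declares the other analogous.
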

\begin{proof}
    To verify equation (\ref{n+1_n_id}), take $a \in \Upsilon_n$, then
    \begin{equation*}
        \begin{split}
            s_{n+1} \circ d_n (a) = (a \circ_n \mathbf{m}) \circ_{n+1} \mathbf{u}
            = a \circ_n (\mathbf{m} \circ_2 \mathbf{u})
             = a \circ_n \mathbf{1}
            =a,
        \end{split}
    \end{equation*}
    where we have denoted the multiplicative identity in $\mathrm{End}_{\mathcal{P}(1)}(1)$ by $\mathbf{1}$. The proof of equality (\ref{n_n_id}) is analogous.
\end{proof}

Our overarching goal is to construct bracket and cobracket operations from special operads in $\mathrm{OpR\, \mathbf{Cat}(\mathbf{CoAlg})}$. The candidates for this procedure still need to be endowed with additional structure.

\begin{definition} \label{delta_cat}
        Let $\mathrm{OpR}^{\Delta}$ be the category consisting of triplets $(\mathcal{P}, \mathcal{M}, t_\mathcal{P})$, where
        \begin{equation*}
            (\mathcal{P}, \mathcal{M})\in (\mathbf{Pa}, \mathbf{Pa})/\mathrm{OpR}\, \mathbf{Cat}(\mathbf{CoAlg})   
        \end{equation*}
        and $t_\mathcal{P}\in \mathrm{End}_{\mathcal{P}(1)}(1)$ satisfies that
        \[
            t_\mathcal{P} \circ_1^{1,0} \mathbf{u}=0.
        \]
        Morphisms in $\mathrm{OpR}^{\Delta}$ are the morphisms in $(\mathbf{Pa}, \mathbf{Pa})/\mathrm{OpR}\, \mathbf{Cat}(\mathbf{CoAlg})$ that preserve the marked element $t_{\mathcal{P}}$.
        In other words, $\mathrm{OpR}^{\Delta}$ is the category of elements of the functor 
        \begin{align*}
            (\mathbf{Pa}, \mathbf{Pa})/\mathrm{OpR}\, \mathbf{Cat}(\mathbf{CoAlg}) & \to \mathbf{Set} \\
            (\mathcal{P}, \mathcal{M}) & \mapsto \ker(\mathrm{End}_{\mathcal{P}(1)}(1) \overset{\circ_1^{1,0} \mathbf{u}}{\longrightarrow} \mathrm{End}_{\mathcal{P}(0)}(\varnothing,\varnothing)).
        \end{align*}
        
\end{definition}
The first condition also imposes requirements on the module-map component of morphisms in $\mathrm{OpR}^\Delta$. For all $n \geq 0$, define operations
        \begin{equation*}
            \begin{split}
                q_i^\mathcal{M}: \hspace{1 em} \mathcal{M}(n) \otimes \mathcal{P}(1) & \longrightarrow \mathcal{M}(n)\\
                m \otimes p &\longmapsto m \circ_i^{n,1} p.
            \end{split}
        \end{equation*}
The commutativity of the diagram 
\begin{equation*}
    \begin{tikzcd}
        \Upsilon_n \otimes \mathrm{End}_{\mathcal{P}(1)}(1)  \arrow[d, "g \otimes f" ] \arrow[r, "\circ_i^{n,1}"] & \Upsilon_n \arrow[d, "g"]\\
        \Upsilon_n \otimes \mathrm{End}_{\mathcal{P}(1)}(1) \arrow[r, "\circ_i^{n,1}"] & \Upsilon_n
    \end{tikzcd}
\end{equation*}
for any morphism $(f: \mathcal{P} \rightarrow \mathcal{Q}, g: \mathcal{M} \rightarrow \mathcal{N}$), implies that
\begin{equation} \label{q_ik_f}
    \begin{tikzcd}
        q^{\mathcal{M}}_i(1 \otimes t_\mathcal{P}) \arrow[r, maps to, "g" ]&  q_i^{\mathcal{N}}(1 \otimes t_\mathcal{P})
    \end{tikzcd}.
\end{equation}
 
\begin{definition}
We define the category of Pre-Relative Extensions $\mathbf{PRelE}$ to be the category whose objects are commutative diagrams between Lie algebras of the sort:
    \begin{equation}\label{PRelE_ob}
        \begin{tikzcd}
            \tilde{\mathfrak{h}} \arrow[r, "\pi"] \arrow[d, "s"] & \mathfrak{h} \arrow[d, "f"] \\
        \mathfrak{e} \arrow[r, "p"] & \mathfrak{g}
        \end{tikzcd}, 
    \end{equation}
with the added requirements that the Lie map $p: \mathfrak{e} \rightarrow \mathfrak{g}$ be surjective and have an abelian kernel. Morphisms in $\mathbf{PRelE}$ consist of Lie maps $E: \mathfrak{e} \rightarrow \mathfrak{e}'$, $G: \mathfrak{g} \rightarrow \mathfrak{g}'$, $H: \mathfrak{h} \rightarrow \mathfrak{h}'$, and $\tilde{H}: \tilde{\mathfrak{h}} \rightarrow \tilde{\mathfrak{h}}'$, which slot into commutative diagrams:
    \begin{equation} \label{PRelE_mor}
        \begin{tikzcd}
            \mathfrak{h} \arrow[d, "H"] \arrow[r, "f"]& \mathfrak{g} \arrow[d, "G"]\\
            \mathfrak{h}' \arrow[r, "f'"]& \mathfrak{g}'
        \end{tikzcd},\hspace{1 em}
        \begin{tikzcd}
            \tilde{\mathfrak{h}} \arrow[r, "s"] \arrow[d, "\tilde{H}"]& \mathfrak{e} \arrow[d, "E"]\\
            \tilde{\mathfrak{h}}' \arrow[r, "s'"]& \mathfrak{e}'
        \end{tikzcd}, \hspace{1 em}
        \begin{tikzcd}
            \mathfrak{e} \arrow[r, "p"] \arrow[d, "E"] & \mathfrak{g} \arrow[d, "G"]\\
            \mathfrak{e}' \arrow[r, "p'"]& \mathfrak{g}'
        \end{tikzcd}, \hspace{1 em}
        \begin{tikzcd}
        \tilde{\mathfrak{h}} \arrow[r, "\pi"] \arrow[d, "\tilde{H}"] & \mathfrak{h} \arrow[d, "H"]\\
        \tilde{\mathfrak{h}}' \arrow[r, "\pi'"] & \mathfrak{h}'
        \end{tikzcd}.
    \end{equation}
\end{definition}

\begin{definition}
We define $\mathbf{PRelE}^\Gamma$ to be the subcategory of $\mathbf{PRelE}$ in which objects like (\ref{PRelE_ob}) come with a marked section $\gamma: \mathfrak{h} \rightarrow \tilde{\mathfrak{h}}$ of $\pi$,
    \begin{equation*}
        \begin{tikzcd}
            \tilde{\mathfrak{h}} \arrow[r, "\pi"] \arrow[d, "s"] & \mathfrak{h} \arrow[d, "f"] \arrow[l, bend right=40, swap, "\gamma"] \\
        \mathfrak{e} \arrow[r, "p"] & \mathfrak{g}
        \end{tikzcd}.
    \end{equation*}
    Morphisms in $\mathbf{PRelE}^\Gamma$ match those in $\mathbf{PRelE}$, but with the added requirement that they preserve the marked sections:
    \begin{equation} \label{PRelE_gamma}
        \begin{tikzcd}
            \mathfrak{h} \arrow[r, "\gamma"] \arrow[d, "H"] & \tilde{\mathfrak{h}} \arrow[d, "\tilde{H}"]\\
            \mathfrak{h} \arrow[r, "\gamma'"] & \tilde{\mathfrak{h}}' 
        \end{tikzcd}.
    \end{equation}
\end{definition}

\begin{proposition}\label{Prele_to_rele}
    There exists a functor $\mathrm{P}: \mathbf{PRelE}^\Gamma \rightarrow \mathbf{RelE}$ defined on objects and morphisms, respectively, by 
    \begin{equation*}
        \begin{tikzcd}
            \tilde{\mathfrak{h}} \arrow[r, "\pi"] \arrow[d, "s"] & \mathfrak{h} \arrow[d, "f"] \arrow[l, bend right=40, swap, "\gamma"] \\
            \mathfrak{e} \arrow[r, "p"] & \mathfrak{g}
        \end{tikzcd} \longmapsto
        \begin{tikzcd}
            & \mathfrak{h} \arrow[dl, swap, "s \circ \gamma"] \arrow[d, "f"] \\
            \mathfrak{e} \arrow[r, "p"] & \mathfrak{g}
        \end{tikzcd}.
    \end{equation*}
    \small
    \begin{equation*}
            \left\{\begin{tikzcd}
            \mathfrak{h} \arrow[d, "H"] \arrow[r, "f"]& \mathfrak{g} \arrow[d, "G"]\\
            \mathfrak{h}' \arrow[r, "f'"]& \mathfrak{g}'
        \end{tikzcd},\,
        \begin{tikzcd}
            \tilde{\mathfrak{h}} \arrow[r, "s"] \arrow[d, "\tilde{H}"]& \mathfrak{e} \arrow[d, "E"]\\
            \tilde{\mathfrak{h}}' \arrow[r, "s'"]& \mathfrak{e}'
        \end{tikzcd}, \,
        \begin{tikzcd}
            \mathfrak{e} \arrow[r, "p"] \arrow[d, "E"] & \mathfrak{g} \arrow[d, "G"]\\
            \mathfrak{e}' \arrow[r, "p'"]& \mathfrak{g}'
        \end{tikzcd}, \,
        \begin{tikzcd}
        \tilde{\mathfrak{h}} \arrow[r, "\pi"] \arrow[d, "\tilde{H}"] & \mathfrak{h} \arrow[d, "H"]\\
        \tilde{\mathfrak{h}}' \arrow[r, "\pi'"] & \mathfrak{h}'
        \end{tikzcd} \right\} \mapsto
        \left\{\begin{tikzcd}
            \mathfrak{h} \arrow[d, "H"] \arrow[r, "f"]& \mathfrak{g} \arrow[d, "G"]\\
            \mathfrak{h}' \arrow[r, "f'"]& \mathfrak{g}'
        \end{tikzcd},\,
        \begin{tikzcd}
            \mathfrak{h} \arrow[r, "s \circ \gamma"] \arrow[d, "H"] & \mathfrak{e} \arrow[d, "E"]\\
            \mathfrak{h}' \arrow[r, "s' \circ \gamma'"] & \mathfrak{e}'
        \end{tikzcd},\,
        \begin{tikzcd}
            \mathfrak{e} \arrow[r, "p"] \arrow[d, "E"] & \mathfrak{g} \arrow[d, "G"]\\
            \mathfrak{e}' \arrow[r, "p'"]& \mathfrak{g}'
        \end{tikzcd}\right\}
    \end{equation*}
    \normalsize
\end{proposition}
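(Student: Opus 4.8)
The plan is to verify that $\mathrm{P}$ is well-defined on objects, well-defined on morphisms, and functorial, in that order; each reduces to a short diagram chase, and the only subtlety is isolating where the $\Gamma$-decoration is actually used.

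First I would check that the image of an object is a genuine object of $\mathbf{RelE}$. The projection $p\colon \mathfrak{e}\to\mathfrak{g}$ is left untouched by $\mathrm{P}$, so it remains surjective with abelian kernel; the only thing to verify is that the triangle $\mathrm{P}$ produces commutes, i.e. that $p\circ(s\circ\gamma)=f$. This follows from the two relations carried by a $\mathbf{PRelE}^\Gamma$ object: commutativity of the square (\ref{PRelE_ob}) gives $p\circ s=f\circ\pi$, and $\gamma$ being a section of $\pi$ gives $\pi\circ\gamma=\mathrm{id}_{\mathfrak{h}}$. Composing, $p\circ s\circ\gamma=f\circ\pi\circ\gamma=f$, which is exactly the commutativity required of the triangle (\ref{def_rel_e}).

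Next I would check well-definedness on morphisms. Given a morphism $(E,G,H,\tilde{H})$ of $\mathbf{PRelE}^\Gamma$, I must show that $(H,G,E)$ satisfies the three commuting squares required of a morphism in $\mathbf{RelE}$. The square relating $f,f'$ through $H,G$ and the square relating $p,p'$ through $E,G$ are literally two of the four squares in (\ref{PRelE_mor}), so they transfer verbatim. The only square demanding an argument is the one involving the section map $s\circ\gamma$, namely $E\circ(s\circ\gamma)=(s'\circ\gamma')\circ H$. This is precisely where the $\Gamma$-decoration earns its keep: combining the compatibility $E\circ s=s'\circ\tilde{H}$ from (\ref{PRelE_mor}) with the section-preservation condition $\tilde{H}\circ\gamma=\gamma'\circ H$ from (\ref{PRelE_gamma}) yields $E\circ s\circ\gamma=s'\circ\tilde{H}\circ\gamma=s'\circ\gamma'\circ H$. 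I regard this as the crux of the statement, and the main point to get right — without the hypothesis (\ref{PRelE_gamma}) this identity can fail, which is exactly why one must pass to $\mathbf{PRelE}^\Gamma$ rather than work with $\mathbf{PRelE}$ directly.

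Finally, functoriality is immediate and I would dispatch it quickly. The functor acts on a morphism simply by discarding the datum $\tilde{H}$ (along with the maps $\pi,\gamma,s$ that are not part of a $\mathbf{RelE}$ object) and retaining $(H,G,E)$. Since composition in both $\mathbf{PRelE}^\Gamma$ and $\mathbf{RelE}$ is componentwise composition of the underlying Lie maps, and identities manifestly map to identities, $\mathrm{P}$ preserves composition and units on the nose. In summary, the whole content of the proposition is the bookkeeping observation that the section identity $\pi\circ\gamma=\mathrm{id}$ collapses the square (\ref{PRelE_ob}) into a commuting triangle, while the morphism-level section-preservation (\ref{PRelE_gamma}) is exactly what makes this collapse compatible with morphisms; neither step presents a genuine obstacle beyond verifying that (\ref{PRelE_gamma}) is used essentially.
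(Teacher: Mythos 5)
Your proof is correct and follows essentially the same route as the paper: the paper's proof likewise reduces everything to the commutativity of the square $E\circ(s\circ\gamma)=(s'\circ\gamma')\circ H$, deduced exactly as you do from $E\circ s=s'\circ\tilde{H}$ together with the section-preservation condition (\ref{PRelE_gamma}). Your additional explicit checks (the object-level identity $p\circ s\circ\gamma=f\circ\pi\circ\gamma=f$ and the componentwise functoriality) are details the paper leaves implicit, and they are carried out correctly.
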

\begin{proof}
    To check that $\mathrm{P}$ is well-defined, we only need to verify the commutativity of the diagram
    \begin{equation*}
        \begin{tikzcd}
            \mathfrak{h} \arrow[r, "s \circ \gamma"] \arrow[d, "H"] & \mathfrak{e} \arrow[d, "E"]\\
            \mathfrak{h}' \arrow[r, "s' \circ \gamma'"] & \mathfrak{e}'
        \end{tikzcd},
    \end{equation*}
    but this follows from the imposed condition (\ref{PRelE_gamma}).
\end{proof}

We now mean to define a functor from $\mathrm{OpR}^\Delta$ into $\mathbf{PRelE}$. Throughout the following proposition, we will implicitly identify Hopf algebra maps with their restriction to primitive elements. The bracket between two primitive elements $x$ and $y$ is defined as
    \begin{equation*}
        [x, y] = x \cdot y-y\cdot x,
    \end{equation*}
where $\cdot$ stands for the algebraic product in the underlying Hopf algebra. Whenever $X \subset \mathfrak{h}$ is a subset of a Lie algebra, we denote the ideal generated by $X$ by $\langle X \rangle_\mathfrak{h}$. Unless it can lead to confusion, we usually drop the subscript $_\mathfrak{h}$. 
\begin{proposition} \label{gen_squares}
    For each $n \geq 2$, there exist functors $\Pi_n: \mathrm{OpR}^\Delta \rightarrow \mathbf{PRelE}$. On objects, $\Pi_n$ is defined by 
    \begin{equation*}
        (\mathcal{P}, \mathcal{M}, t_\mathcal{P}) \longmapsto
        \begin{tikzcd} [column sep= large]
            K_n \arrow[r, "\pi"] \arrow[d, "d_n"] & K_n/\langle q_n(1 \otimes t_\mathcal{P}) \rangle \arrow[d, "\Delta"]\\
           \mathfrak{g}_{n+1} \arrow[r, "s_{n+1} \oplus s_{n}"] & (K_n/\langle q_n(1 \otimes t_\mathcal{P})\rangle)^{\oplus 2}
        \end{tikzcd},
    \end{equation*}
    where
    \begin{equation*}
        \mathfrak{g}_{n+1} \defeq (H_{n+1}/ \bigoplus_{i\in\{n,n+1\}} \langle q_i(1 \otimes t_{\mathcal{P}}) \rangle)/\langle[\ker (s_{n+1} \oplus s_n), \ker (s_{n+1} \oplus s_n)] \rangle.
    \end{equation*}
    We have also set:
    \begin{itemize}
        \item $K_n \defeq \ker (s_n: \Upsilon_n\rightarrow \Upsilon_{n-1})$;\\
        \item $H_n \defeq \ker (s_{n-1} \circ s_{n}^: \Upsilon_{n}\rightarrow \Upsilon_{n-2} )$;\\
        \item The map $\Delta$ is the diagonal map, so that $\Delta(m)=m \oplus m$ for all $m\in K_n/\langle q_n(1 \otimes t_\mathcal{P}) \rangle$.
    \end{itemize}
    On morphisms, $\Pi_n$ is defined in the natural way. Explicitly,
    \begin{equation*}
        \begin{tikzcd}
            (\mathcal{P}, \mathcal{M}) \arrow[d, "g", xshift=1.5ex] \arrow[d, "f", xshift=-1.5ex, swap]\\
            (\mathcal{P}', \mathcal{M}') 
        \end{tikzcd} \longmapsto
        \left(
        \begin{tikzcd}
             K_n^\mathcal{M} \arrow[d, "g_n"] \\
             K_n^\mathcal{N}
        \end{tikzcd},
        \begin{tikzcd}
            K_n^\mathcal{M}/ \langle q_n(1 \otimes t_\mathcal{P}) \rangle \arrow[d, "g_n"]\\
            K_n^\mathcal{N}/ \langle q_n(1 \otimes t_\mathcal{P}) \rangle
        \end{tikzcd},
        \begin{tikzcd}
            (K_n^\mathcal{M}/ \langle q_n(1 \otimes t_\mathcal{P}) \rangle)^{\oplus^2} \arrow[d, "g_n \oplus g_n"]\\
            (K_n^\mathcal{N}/ \langle q_n(1 \otimes t_\mathcal{P}) \rangle)^{\oplus^2}
        \end{tikzcd},
        \begin{tikzcd}
            \mathfrak{g}_{n+1}^\mathcal{M} \arrow[d, "g_{n+1}"]\\
            \mathfrak{g}_{n+1}^\mathcal{N}
        \end{tikzcd}
        \right)
    \end{equation*}
\end{proposition}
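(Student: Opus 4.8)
The plan is to check the three things a functor into $\mathbf{PRelE}$ requires: that $\Pi_n$ sends an object to a genuine object (the square commutes, and $p=s_{n+1}\oplus s_n$ is surjective with abelian kernel), that it sends morphisms to morphisms, and that it preserves identities and composition. I would begin with the structural bookkeeping. Because $\mathbf{m}$ and $\mathbf{u}$ are the (group-like) identities of $\mathrm{End}_{\mathcal{P}(2)}(12)$ and $\mathrm{End}_{\mathcal{P}(0)}(1)$, the interchange between operadic insertion and endomorphism composition makes each $s_i$ and $d_i$ a Hopf-algebra map, hence a Lie map on primitives that respects kernels. Using the defining relation $t_\mathcal{P}\circ_1^{1,0}\mathbf{u}=0$ of Definition \ref{delta_cat} together with operadic associativity, I would record the elementary identities $s_n(q_n(1\otimes t_\mathcal{P}))=0$, $s_{n+1}(q_n(1\otimes t_\mathcal{P}))=q_n(1\otimes t_\mathcal{P})$, and $s_n(q_{n+1}(1\otimes t_\mathcal{P}))=s_{n+1}(q_{n+1}(1\otimes t_\mathcal{P}))=q_n(1\otimes t_\mathcal{P})$ (the last two in the appropriate arities). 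These show $q_n(1\otimes t_\mathcal{P})\in K_n$, that $q_i(1\otimes t_\mathcal{P})\in H_{n+1}$ for $i\in\{n,n+1\}$, and that $s_n,s_{n+1}$ carry the ideals $\langle q_i(1\otimes t_\mathcal{P})\rangle$ into $\langle q_n(1\otimes t_\mathcal{P})\rangle_{K_n}$, so that $p$ descends to the quotients defining $\mathfrak{g}_{n+1}$ and its target.

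Next I would verify the square commutes and that $d_n$ and $p$ are well defined. Commutativity is immediate from the Lemma: $s_{n+1}d_n=s_n d_n=\mathrm{id}$ on $\Upsilon_n$ (equations (\ref{n_n_id}) and (\ref{n+1_n_id})) gives $(s_{n+1}\oplus s_n)(d_n(x))=(\bar x,\bar x)=\Delta(\pi(x))$. For $d_n(K_n)\subseteq H_{n+1}$ one uses $s_{n+1}d_n(x)=x$ followed by $s_n(x)=0$ for $x\in K_n=\ker s_n$. For $p$ I would check both components land in $K_n$: the first via $s_n s_{n+1}(x)=0$ for $x\in H_{n+1}$, the second via the degeneracy identity $s_n^{(n)}s_n^{(n+1)}=s_n^{(n)}s_{n+1}^{(n+1)}$ (removing strands $n$ then $n$ equals removing $n{+}1$ then $n$), so that $s_n(x)\in K_n$ as well.

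The abelian-kernel requirement is built into the definition of $\mathfrak{g}_{n+1}$: since $p$ annihilates $[\ker p,\ker p]$, the ideal $\langle[\ker p,\ker p]\rangle$ lies in $\ker p$, so quotienting $H_{n+1}/\bigoplus_i\langle q_i(1\otimes t_\mathcal{P})\rangle$ by it leaves $p$ and its image unchanged while forcing $\ker p$ abelian. The one genuinely nontrivial point — and the only place the hypothesis $n\ge 2$ enters — is surjectivity of $p$, which I would obtain by producing two sections. The Lemma gives $p(d_n(a))=(\bar a,\bar a)$, hitting the diagonal. Computing with operadic associativity, the map $d_{n-1}$, which exists precisely because $n-1\ge 1$, satisfies $s_{n+1}d_{n-1}=d_{n-1}s_n$ and $s_n d_{n-1}=\mathrm{id}$ on $\Upsilon_n$ (the latter from $\mathbf{m}\circ_2\mathbf{u}=\mathbf{1}$, exactly as in the Lemma's proof); hence for $a\in K_n=\ker s_n$ one gets $d_{n-1}(a)\in H_{n+1}$ and $p(d_{n-1}(a))=(0,\bar a)$. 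Then $p\big((d_n-d_{n-1})(a)\big)=(\bar a,0)$, so $x=(d_n-d_{n-1})(a)+d_{n-1}(b)\in H_{n+1}$ satisfies $p(x)=(\bar a,\bar b)$, proving surjectivity onto $(K_n/\langle q_n(1\otimes t_\mathcal{P})\rangle)^{\oplus 2}$.

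Finally, I would dispatch the morphism clause. A morphism $(f,g)$ of $\mathrm{OpR}^\Delta$ fixes parenthesizations, so it induces Hopf-algebra maps $g_k\colon\Upsilon_k^\mathcal{M}\to\Upsilon_k^\mathcal{N}$ that commute with every $s_i$ and $d_i$ by naturality of the operadic structure maps, and with $q_i(1\otimes t_\mathcal{P})$ by (\ref{q_ik_f}) (using $f(t_\mathcal{P})=t_{\mathcal{P}'}$). Consequently the $g_k$ descend to all the kernels and quotients and assemble into the four vertical Lie maps $g_n$, $g_n$, $g_n\oplus g_n$, $g_{n+1}$ of a $\mathbf{PRelE}$-morphism, filling the squares of (\ref{PRelE_mor}); preservation of identities and of composition is inherited from the evident functoriality of $(f,g)\mapsto(g_k)_k$. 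I expect the surjectivity argument to be the main obstacle: it is the only step that does not follow formally from the Lemma and the only one that genuinely uses $n\ge 2$, since it hinges on locating the second section $d_{n-1}$ and checking the associativity identities that give $p\circ d_{n-1}=(0,\mathrm{id})$.
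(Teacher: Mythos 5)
Your verification of well-definedness tracks the paper's own proof closely: commutativity of the square via $s_{n+1}\circ d_n = s_n\circ d_n = \mathrm{id}$ (equations (\ref{n_n_id}) and (\ref{n+1_n_id})), the containment $d_n(K_n)\subseteq H_{n+1}$, the fact that $s_{n+1}\oplus s_n$ carries $H_{n+1}$ into $K_n\oplus K_n$ (via $s_n\circ s_{n+1}=s_n\circ s_n$), the transport of the ideals $\langle q_i(1\otimes t_\mathcal{P})\rangle$ into $\langle q_n(1\otimes t_\mathcal{P})\rangle$ using $t_\mathcal{P}\circ_1^{1,0}\mathbf{u}=0$, and morphism equivariance via (\ref{q_ik_f}); the abelian-kernel observation is likewise the intended one. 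A minor point: your list of $q_i$-identities is internally inconsistent --- under the convention in which $s_n(q_n(1\otimes t_\mathcal{P}))=0$, one must also have $s_{n+1}(q_{n+1}(1\otimes t_\mathcal{P}))=0$ rather than $q_n(1\otimes t_\mathcal{P})$, though this does not affect the conclusion that both ideals are carried into $\langle q_n(1\otimes t_\mathcal{P})\rangle$.

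The genuine gap is your surjectivity argument. The map $d_{n-1}$ does not land in $\Upsilon_{n+1}$. By (\ref{strand_add}), $d_i$ sends $\mathrm{End}_{\mathcal{M}(n)}(d^{n-1}(1))$ to $\mathrm{End}_{\mathcal{M}(n+1)}(d_i d^{n-1}(1))$, and $d_i d^{n-1}(1)=d^n(1)$ holds only for $i=n$: the object $d^{n-1}(1)$ is the right-nested parenthesization, and doubling its last letter again yields the right-nested one, whereas doubling letter $n-1$ does not (already for $n=2$ one gets $((12)3)$ versus $d^2(1)=(1(23))$). So $d_{n-1}(a)$ is an endomorphism of a \emph{different} object of $\mathcal{M}(n+1)$; it is not an element of $H_{n+1}\subset\Upsilon_{n+1}$, the expression $(d_n-d_{n-1})(a)$ is undefined (one cannot subtract morphisms with distinct sources in an enriched category), and the formula $p(d_{n-1}(a))=(0,\bar a)$ has no meaning. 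In $\mathbf{PaCD}^f$ one could transport $d_{n-1}(a)$ into $\Upsilon_{n+1}$ by conjugating with an associator, but a general object of $\mathrm{OpR}^\Delta$ carries no morphisms at all between distinct parenthesizations, so the device cannot be repaired at this level of generality. For what it is worth, the paper's proof of Proposition \ref{gen_squares} never addresses surjectivity of $s_{n+1}\oplus s_n$: it only checks well-definedness, and in the cases that actually matter surjectivity/exactness is established separately and by hand from explicit generators-and-relations descriptions of the braid algebras (Lemma \ref{ker_sis}, relying on Appendix \ref{F-well-def}). So your instinct that surjectivity needs an argument beyond the section $d_n$ (which only reaches the diagonal) is sound, but the step you propose fails, and no purely operadic argument of this kind can work for an arbitrary object of $\mathrm{OpR}^\Delta$.
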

\begin{proof}
    The functorial nature of the map $\Pi_n$ is apparent, since we essentially only used the operadic composition to construct the corresponding object in $\mathbf{PRelE}$. We only need to check that the functor is well-defined. We will not distinguish between Lie algebra maps and the maps they induce on various quotients.

    Take $a \in K_n$. In accordance with equations (\ref{n_n_id}) and (\ref{n+1_n_id}), 
    \begin{equation*}
        s_{n+1} \oplus s_n(d_n(a))=a \oplus a = \Delta(a),
    \end{equation*}
    which implies the diagram $\Pi_n(\mathcal{P},\mathcal{M}, t_\mathcal{P})$ is commutative. 
    
    Now take instead $a \in K_n$, then
    \begin{equation*}
    s_{n} \circ s_{n+1}(d_n(a))= s_n(a)=0
    \end{equation*}
    by assumption, implying that the map $d_n:K_n \rightarrow \mathfrak{g}_{n+1}$ is well-defined.

    Next, we would like to prove the map
    \begin{equation*}
        s_{n+1} \oplus s_n : \mathfrak{g}_{n+1} \rightarrow (K_n/\langle q_n (1 \otimes t_\mathcal{P})\rangle)^{\oplus2}
    \end{equation*}
    is well-defined. Firstly, notice the equations
    \begin{equation*}
        s_n \circ s_{n+1}(a)=(a \circ_{n+1} \mathbf{u}) \circ_n \mathbf{u}=(a \circ_{n} \mathbf{u}) \circ_n \mathbf{u}=s_n \circ s_n (a).
    \end{equation*}
    These imply that if $a \in H_{n+1}$, then $s_{n+1} \oplus s_n (a) \in K_n \oplus K_n$. We also need to prove that
    \begin{equation*}
        s_{n+1} \oplus s_n (q_i(1 \otimes t_\mathcal{P})) \in \langle q_n(1 \otimes t_\mathcal{P})\rangle
    \end{equation*}
    for $i=n, n+1$. But notice that
    \begin{equation*}
        \begin{split}
            s_n \circ q_n(1 \otimes t_\mathcal{P}) &= (1 \circ_n t_\mathcal{P}) \circ_n \mathbf{u}\\
            &=(1 \circ_n \mathbf{u}) \circ_n t_\mathcal{P}\\
            &= 1 \circ_n t_\mathcal{P}\\
            &=q_n(1 \otimes t_\mathcal{P});
        \end{split}
    \end{equation*}
    \begin{equation*}
        \begin{split}
            s_{n+1} \circ q_n(1 \otimes t_\mathcal{P}) &= (1 \circ_n t_\mathcal{P}) \circ_{n+1} \mathbf{u}=1 \circ_n (t_\mathcal{P} \circ_1 \mathbf{u})=0;
        \end{split}
    \end{equation*}
    where for the last equality we used the second requirement from Definition \ref{delta_cat}. Similarly, $s_{n} \circ q_{n+1}(1  \otimes t_\mathcal{P})=q_n(1 \otimes t_\mathcal{P})$ and $s_{n+1} \circ q_{n+1}(1 \otimes t_\mathcal{P})=0$.
    
    The previous arguments prove that the assignment $\Pi_n$ is well-defined on objects. With regards to morphisms, the only possible point of contention comes from quotienting by subspaces of the sort $\bigoplus_{i\in I} \langle q_i^\mathcal{M} (1 \otimes t_p) \rangle$, but notice how equation (\ref{q_ik_f}) already shows that these subalgebras are equivariant with respect to operadic module maps.
\end{proof}

For a fixed $n \geq 2$ and $(\mathcal{P}, \mathcal{M},t_\mathcal{P}) \in \mathrm{OpR}^\Delta$, let us introduce some shorthand notation to do with the components of $\Pi_n(\mathcal{P}, \mathcal{M}, t_\mathcal{P})$:
\begin{itemize}
    \item $\overline{K}_n \defeq K_n/\langle q_n(1 \otimes t_\mathcal{P}) \rangle$;
    \item $M \defeq \ker (s_{n+1} \oplus s_n:\mathfrak{g}_{n+1} \rightarrow \overline{K}_n \oplus \overline{K}_n)$.
\end{itemize}
Given Proposition \ref{gen_squares}, it is clear we still need to enhance objects in $\mathrm{OpR}^\Delta$ with both a choice of section $\gamma: \overline{K}_n \rightarrow K_n$ of the canonical projection, and a $\overline{K}_n \oplus \overline{K}_n$-module isomorphism between $M$ and $U\overline{K}_n$. To reiterate, $M$ is a $\overline{K}_n \oplus \overline{K}_n$-module with respect to the action
\begin{equation*}
    (x\oplus y) \cdot m \defeq [m, u(x \oplus y)],
\end{equation*}
where $u: \overline{K}_n \oplus \overline{K}_n \rightarrow \mathfrak{g}_{n+1}$ is a section\footnote{Since $M$ is abelian by definition, this action is independent from the choice of section} of the projection $s_{n+1} \oplus s_n:\mathfrak{g}_{n+1} \rightarrow \overline{K}_n$. On the other hand, $U\overline{K}_n$ is a $\overline{K}_n \oplus \overline{K}_n$-module with respect to the action
\begin{equation*}
    (x\oplus y) \cdot a= xa-ay.
\end{equation*}
We now introduce a \textit{framed} counterpart to $\mathrm{OpR}^\Delta$:
\begin{definition}\label{op_delta}
    For a fixed $n \geq 2$, let $\mathrm{OpR}^\Delta_f(n)$ be the category consisting of triplets
    \begin{equation*}
        \left( (\mathcal{P}, \mathcal{M}, t_\mathcal{P}) \in \mathrm{OpR^\Delta},\, \gamma: \overline{K}_n \rightarrow K_n,\, \phi: M \rightarrow U\overline{K}_n \right),
    \end{equation*}
    where $\gamma$ is a section of the natural projection $\pi: K_n \rightarrow \overline{K}_n$, and $\phi$ is an isomorphism of $\overline{K}_n \oplus \overline{K}_n$-modules. We also add the requirement that $\overline{K}_n$ must be isomorphic to a free Lie algebra.
    Morphisms in $\mathrm{OpR}^\Delta_f(n)$ consist of morphisms in $\mathrm{OpR}^\Delta$ that preserve the marked sections and module isomorphisms. More precisely,
\begin{equation*}
    (F,G) \in \mathrm{mor}_{\mathrm{OpR}^\Delta_f(n)}
    \left( 
        \left(
            (\mathcal{P}, \mathcal{M}, t_\mathcal{P}),\,
            \begin{tikzcd} [row sep =small]
                \overline{K}_n \arrow[d, "\gamma"]\\
                K_n
            \end{tikzcd},\,
            \begin{tikzcd}[row sep=small]
                M \arrow[d, "\phi"]\\
                U\overline{K}_n
            \end{tikzcd}
        \right),
        \left(
            (\mathcal{P}', \mathcal{M}', t_\mathcal{P'}),\,
            \begin{tikzcd} [row sep =small]
                \overline{K}_n' \arrow[d, "\gamma'"]\\
                K_n'
            \end{tikzcd},\,
            \begin{tikzcd}[row sep=small]
                M' \arrow[d, "\phi'"]\\
                U\overline{K}_n'
            \end{tikzcd}
        \right)
    \right)
\end{equation*}
if and only if $(F,G)$ is a morphism between the corresponding objects in $\mathrm{OpR}^\Delta$ and the following diagrams are commutative:
\begin{equation*}
    \begin{tikzcd}
        \overline{K}_n \arrow[r, "\gamma"] \arrow[d, "G"] & K_n \arrow[d, "G"]\\
        \overline{K}_n' \arrow[r, "\gamma'"] & K_n'
    \end{tikzcd}, \hspace{1em}
    \begin{tikzcd}
        M \arrow[r, "\phi"] \arrow[d, "G"] & U \overline{K}_n \arrow[d, "G"] \\
        M' \arrow[r, "\phi'"] & U\overline{K}_n'
    \end{tikzcd}.
\end{equation*}
\end{definition}

\begin{proposition}
    Let $n \geq 2$. There exist a functors $\mathrm{D}_n: \mathrm{OpR}^\Delta_f(n) \rightarrow \mathbf{RelE}^\eta$, defined on objects by
    \begin{equation}
        \left((\mathcal{P}, \mathcal{M}, t_\mathcal{P}),\,
        \begin{tikzcd}\label{D_of_op}
            \overline{K}_n \arrow[d, "\gamma"] \\
            K_n
        \end{tikzcd},\,
        \begin{tikzcd}
            M \arrow[d, "\phi"]\\
            U\overline{K}_n
        \end{tikzcd}\right) \longmapsto
        \left( \mathcal{A}^\gamma \defeq\begin{tikzcd} [column sep=large]
            & \overline{K}_n \arrow[dl, "d_n \circ \gamma" swap] \arrow[d, "\Delta"]\\
            \mathfrak{g}_{n+1} \arrow[r, "s_{n+1} \oplus s_n" swap] & \overline{K}_n \oplus \overline{K}_n
        \end{tikzcd},\,
        \begin{tikzcd}
            M \arrow[d, "\phi"]\\
            U\overline{K}_n
        \end{tikzcd}\right),
    \end{equation}
    and in the natural way on morphisms.
\end{proposition}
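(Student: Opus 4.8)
The plan is to realize $\mathrm{D}_n$ as a composite of functors already available, using the extra data $\gamma$ and $\phi$ to bridge the intermediate categories. Forgetting $\gamma$ and $\phi$, an object of $\mathrm{OpR}^\Delta_f(n)$ is an object of $\mathrm{OpR}^\Delta$, so I would first apply the functor $\Pi_n$ of Proposition \ref{gen_squares}. This yields the object of $\mathbf{PRelE}$ with $\tilde{\mathfrak{h}}=K_n$, $\mathfrak{h}=\overline{K}_n$, $\mathfrak{e}=\mathfrak{g}_{n+1}$, $\mathfrak{g}=\overline{K}_n\oplus\overline{K}_n$, where $\pi\colon K_n\to\overline{K}_n$ is the canonical projection, $s=d_n$, $f=\Delta$, and $p=s_{n+1}\oplus s_n$. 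Since by hypothesis $\gamma\colon\overline{K}_n\to K_n$ is a section of $\pi$, it is exactly the marked section needed to lift this object to $\mathbf{PRelE}^\Gamma$. Applying the functor $\mathrm{P}$ of Proposition \ref{Prele_to_rele} then produces the relative extension $\mathcal{A}^\gamma$ with diagonal section $s\circ\gamma=d_n\circ\gamma$ and vertical map $\Delta$, which is precisely the diagram displayed in the statement.

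It then remains to verify that $\mathcal{A}^\gamma$, once decorated with $\phi$, is a genuine object of $\mathbf{RelE}^\eta$. The projection $p=s_{n+1}\oplus s_n$ is surjective with abelian kernel by the construction of $\Pi_n$, since the bracket $[\ker(s_{n+1}\oplus s_n),\ker(s_{n+1}\oplus s_n)]$ was quotiented out in the definition of $\mathfrak{g}_{n+1}$, and the vertical map is the diagonal by construction. The requirement that $\overline{K}_n$ be isomorphic to a free Lie algebra is built into Definition \ref{op_delta}. Finally, $\ker p$ is exactly $M$, whose $\overline{K}_n\oplus\overline{K}_n$-module structure $(x\oplus y)\cdot m=[m,u(x\oplus y)]$ (as in equation (\ref{g-mod_act})) agrees with the action on $U\overline{K}_n$ appearing in the definition of $\mathbf{RelE}^\eta$, and $\phi\colon M\to U\overline{K}_n$ is by hypothesis an isomorphism of such modules. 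Thus every defining condition of $\mathbf{RelE}^\eta$ is met. On morphisms, a morphism $(F,G)$ of $\mathrm{OpR}^\Delta_f(n)$ is in particular a morphism of $\mathrm{OpR}^\Delta$, so $\Pi_n(F,G)$ is a morphism of $\mathbf{PRelE}$; the first compatibility square of Definition \ref{op_delta}, namely $G\gamma=\gamma'G$, is exactly condition (\ref{PRelE_gamma}), so $\Pi_n(F,G)$ refines to a $\mathbf{PRelE}^\Gamma$-morphism and $\mathrm{P}$ sends it to a $\mathbf{RelE}$-morphism, while the second compatibility square supplies the module-isomorphism square required in $\mathbf{RelE}^\eta$. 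Preservation of identities and composition is inherited termwise from $\Pi_n$ and $\mathrm{P}$.

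Because the heavy lifting is done by the already-established functors $\Pi_n$ and $\mathrm{P}$, the construction is essentially formal; the only real content is confirming that the section and module data carried by objects of $\mathrm{OpR}^\Delta_f(n)$ satisfy exactly the hypotheses needed to lift through $\mathbf{PRelE}^\Gamma$ into $\mathbf{RelE}^\eta$. I expect the most delicate point to be the identification of module structures: one must check that the $\overline{K}_n\oplus\overline{K}_n$-action on $M=\ker p$ induced by a section of $p$ (independent of the choice since $M$ is abelian) coincides with the action on $U\overline{K}_n$ through which $\phi$ is declared an isomorphism. This is where the compatibility of the operadic construction with the cohomological framework of Chapter \ref{chap_coho} is genuinely used, and it is the step I would write out most carefully.
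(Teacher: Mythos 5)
Your proposal is correct and follows essentially the same route as the paper: the paper's proof likewise factors $\mathrm{D}_n$ through $\Pi_n$ (Proposition \ref{gen_squares}) and the functor $\mathrm{P}$ of Proposition \ref{Prele_to_rele}, using the marked section $\gamma$ to lift into $\mathbf{PRelE}^\Gamma$, noting that $M=\ker(s_{n+1}\oplus s_n)$ is abelian by construction and that morphisms preserve $\phi$ by definition. Your write-up is simply a more explicit version of the paper's (quite terse) argument, spelling out the $\mathbf{RelE}^\eta$ membership checks and the matching of Definition \ref{op_delta}'s squares with condition (\ref{PRelE_gamma}).
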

\begin{proof}
Let $((\mathcal{P}, \mathcal{M}), \gamma, \phi)\in \mathrm{OpR}^\Delta_f(n)$. Firstly, notice that $\mathrm{D}_n$ is well-defined, as $M = \ker s_{n+1} \oplus s_n \subset \mathfrak{g}_{n+1}$ is expressly abelian. By Propositions \ref{Prele_to_rele} and \ref{gen_squares}, the assignment
\begin{equation*}
    ((\mathcal{P}, \mathcal{M},t_\mathcal{P}), \gamma, \phi) \longmapsto
    \begin{tikzcd} [column sep=large]
            K_n \arrow[r, "\pi"] \arrow[d, "d_n"] & \overline{K}_n \arrow[d, "\Delta"] \arrow[l, bend right=40, swap, "\gamma"] \\
            \mathfrak{g}_{n+1} \arrow[r, "s_{n+1} \oplus s_n"] & \overline{K}_n \oplus \overline{K_n}
        \end{tikzcd} \longmapsto
         \mathcal{A}^\gamma
\end{equation*}
is functorial. Lastly, morphisms in $\mathrm{OpR}^\Delta_f(n)$ preserve the marked map $\phi$ by definition.
\end{proof}

\begin{definition}\label{def_opr_del_f}
    Let $n \geq 2$. We define the functor $\Lambda_n: \mathrm{OpR}^\Delta_f(n) \rightarrow \mathbf{GoTu}$ as the composition
    \begin{equation}
        \begin{tikzcd}
            \Lambda_n: \hspace{0.5 em}\mathrm{OpR}^\Delta_f(n) \arrow[r, "\mathrm{D}_n"] & \mathbf{RelE}^\eta \arrow[r, "\mathrm{F}^{-1}"] &\mathbf{RelCo}^\eta \arrow[r, "\mathrm{E}^{-1}"] & \mathbf{Fox}^\eta \arrow[r, "\Psi"] & \mathbf{GoTu}
        \end{tikzcd}.
    \end{equation}
\end{definition}
\begin{remark}
    The composition $(\mathrm{E}^{-1})(\mathrm{F}^{-1})\mathrm{D}_n: \mathrm{OpR}^\Delta_f(n) \rightarrow \mathbf{Fox}$ is what we called the functor $\mathrm{IC}_n$ in the Introduction.
\end{remark}
Rather than giving a convoluted formula for the bracket and cobracket operations extractable from an arbitrary object of $\mathrm{OpR}^\Delta_f(n)$, we will focus on a specific example, and compute the operations associated to Gonzalez' operad module of \textit{framed} genus $g$ chord diagrams $\mathbf{PaCD}^f_g$. Nevertheless, although the bracket and cobracket may be difficult to discern, we note that their underlying Hopf algebra is just the space $U\overline{K}_n$.

\subsection{Infinitesimal braid algebras}\label{sec_braid_alg}

The usual operad of parenthesized chord diagrams is defined in terms of the Drinfeld-Kohno Lie algebras $\mathfrak{t}_n$. The operad $\mathbf{PaCD^f}$ and the operad module $\mathbf{PaCD}^f_g$ are instead concerned with framed and higher genus analogues of $\mathfrak{t}_n$, respectively. We spend this subsection studying these infinitesimal braid algebras, denoted by $\mathfrak{t}^f_n$ and $\mathfrak{t}_{g,n}^f$. Recall how we need to have some non-trivial information regarding the endomorphism spaces of operad modules before we can extract bracket and cobracket operations from them (cf. Definition \ref{def_opr_del_f}). This section is mostly technical, in the sense that our goal is to gain this information by studying several quotients of the algebras $\mathfrak{t}_{g,n}^f$.

\begin{definition}
    Let $\mathfrak{t}_n^f$ denote the degree completion of the graded Lie algebra over $\mathbb{K}$ with generators $t_{ij}$ for $1 \leq i,j\leq n$, subject to relations
    \begin{equation} \label{FS} \tag{FS}
        t_{ij}=t_{ji}, \hspace{1em} 1\leq i,j \leq n;
    \end{equation}
    \begin{equation} \label{FL} \tag{FL}
        [t_{ij}, t_{kl}]=0 \hspace{1 em} \text{if}\, \, \{i,j\}\cap \{k,l\}=\varnothing;
    \end{equation}
    \begin{equation} \label{F4T} \tag{F4T}
        [t_{ij}, t_{ik}+t_{jk}]=0\hspace{1em } \text{if}\,\,\{i,j\}\cap \{k\}=\varnothing.
    \end{equation}
    We will call the $\mathfrak{t}_n^f$ the \textit{framed Drinfeld-Kohno Lie algebras}.
\end{definition}

\begin{remark}
    The generators $t_{ii}$ are central for all $i=1, \dots n$, so that $\mathfrak{t}_n^f$ decomposes as the direct sum of Lie algebras
    \begin{equation}
        \mathfrak{t}_n^f= \bigoplus_{i=1} ^n \mathbb{K}t_{ii} \oplus\mathfrak{t}_n,
    \end{equation}
    where $\mathfrak{t}_n$ stands for the usual Drinfeld-Kohno Lie algebra in $n$ generators.
\end{remark}

\begin{proposition}[\cite{gonz}, \S 2.2] 
    The Lie algebras $\mathfrak{t}_n^f$ furnish an operad $\mathfrak{t}^f$ in the category $\mathbf{Lie}$ of Lie algebras over $\mathbb{K}$. We write
    \begin{equation*}
        \mathfrak{t}^f \defeq \{\mathfrak{t}_n^f\}_{n \geq 0},
    \end{equation*}
    and define the partial compositions by
    \begin{equation}
        \begin{split}
            \circ_k: \hspace{1em} \mathfrak{t}_{I}^f \oplus \mathfrak{t}_{J}^f & \rightarrow \mathfrak{t}_{J \sqcup (I -\{k\})}^f\\
            (0, t_{\alpha \beta}) &\mapsto t_{\alpha \beta}\\
            (t_{ij},0) &\mapsto \begin{cases}
                t_{ij} & \text{if}\,\, k\notin\{i,j\}\\
                \sum_{p \in J} t_{pj} & \text{if}\,\, k=i, k\neq j\\
                \sum_{\{p,q\}\subset J} t_{pq} & \text{if}\,\, k=i=j
            \end{cases}
        \end{split}
    \end{equation}
    whenever $I$ and $J$ are finite sets with $k\in I$.
\end{proposition}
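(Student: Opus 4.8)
The plan is to treat the collection $\{\mathfrak{t}_n^f\}$ as an operad in $(\mathbf{Lie},\oplus)$ presented by generators and relations, so that every axiom can be verified on generators. An operad structure here amounts to the partial compositions $\circ_k$ being Lie algebra homomorphisms, together with the unit, equivariance, and (sequential and parallel) associativity axioms. Since each $\mathfrak{t}_n^f$ is defined by \eqref{FS}, \eqref{FL}, \eqref{F4T}, I would carry out the entire verification at the level of the generators $t_{ij}$, the essential algebraic input throughout being the four-term relation \eqref{F4T}.

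First I would establish that each $\circ_k\colon \mathfrak{t}_I^f \oplus \mathfrak{t}_J^f \to \mathfrak{t}_{J\sqcup(I-\{k\})}^f$ is well defined. Because $[\mathfrak{t}_I^f,\mathfrak{t}_J^f]=0$ inside the direct sum, this splits into three checks: that the restriction to $\mathfrak{t}_J^f$ is a Lie map, that the restriction to $\mathfrak{t}_I^f$ is a Lie map, and that the two images commute. The $\mathfrak{t}_J^f$-component is the inclusion induced by $J\hookrightarrow J\sqcup(I-\{k\})$ and trivially preserves all relations. For the $\mathfrak{t}_I^f$-component I would run through the relations case by case according to which indices equal $k$; the instructive case is \eqref{F4T} with $k=i$, where the insertion rule turns $[t_{kj},t_{kl}+t_{jl}]=0$ into $\sum_{p\in J}[t_{pj},t_{pl}+t_{jl}]$, a sum of \eqref{F4T} relations in the target, hence zero. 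The commutation of the two images reduces by the same mechanism to \eqref{FL} and \eqref{F4T}; in particular the image of the central generator $t_{kk}$, namely $C_J := \sum_{\{p,q\}\subset J} t_{pq}$, must commute with every $t_{\alpha\beta}$ ($\alpha,\beta\in J$), which is exactly the classical statement that $C_J$ is central in $\mathfrak{t}_J^f$ (again a consequence of \eqref{F4T}).

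Next I would verify the operad axioms. Equivariance is immediate in the set-indexed presentation, since all compositions are stated invariantly under simultaneous relabeling of the index sets, and the unit axiom reduces to the observation that inserting a one-element block at strand $k$ reproduces the original configuration. The associativity axioms I would again check on generators. The off-diagonal generators $t_{ij}$ ($i\neq j$) obey exactly the insertion rules of the classical Drinfeld–Kohno operad $\mathfrak{t}=\{\mathfrak{t}_n\}$ (using $\mathfrak{t}_n^f \cong \bigoplus_i \mathbb{K}t_{ii} \oplus \mathfrak{t}_n$), whose operadic structure is known, so for these I would invoke the classical result. The genuinely new content is the behavior of the framing generators $t_{ii}$ under iterated insertion: comparing the two sides of the sequential associativity axiom applied to $t_{kk}$ forces the convention that $\sum_{\{p,q\}\subset J} t_{pq}$ includes the diagonal terms $t_{pp}$, since only then does blowing up a strand $p\in J$ (whose diagonal generator $t_{pp}$ contributes $\sum_{\{c,d\}\subset L} t_{cd}$) reproduce the single-step formula over the refined index set $L\sqcup(J-\{p\})$.

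I expect the main obstacle to be precisely this bookkeeping with the central framing generators: pinning down the summation convention in the $t_{kk}$-rule so that all three defining relations are preserved and both associativity axioms hold simultaneously. The off-diagonal part is routine (or citable), but the interplay between diagonal and off-diagonal contributions under insertion — mediated throughout by \eqref{F4T} collapsing sums over inserted strands — is where the care is needed.
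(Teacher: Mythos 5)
Your proposal cannot be checked against an internal argument: the paper does not prove this proposition at all, but imports it from \cite[\S 2.2]{gonz} as a cited result. Judged on its own merits, your verification is correct and is the standard one for statements of this kind: well-definedness of $\circ_k$ checked relation-by-relation on generators, with a case analysis on how many indices hit $k$; commutation of the images of $\mathfrak{t}_I^f$ and $\mathfrak{t}_J^f$ (where centrality of $C_J=\sum_{\{p,q\}\subset J}t_{pq}$ is exactly what is needed for the image of $t_{kk}$); equivariance by relabelling invariance of the set-indexed formulas; and associativity verified on generators, which is legitimate since all maps in sight are Lie homomorphisms, with the off-diagonal part reducing to the classical Drinfeld--Kohno operad. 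Two refinements are worth recording. First, in the key \eqref{F4T} computation with $k=i$, the image of the relation is $[\sum_{p\in J}t_{pj},\,\sum_{q\in J}t_{ql}+t_{jl}]$, and the cross terms $[t_{pj},t_{ql}]$ with $p\neq q$ must first be killed by \eqref{FL} before the sum collapses to $\sum_{p\in J}[t_{pj},t_{pl}+t_{jl}]$; \eqref{F4T} alone does not finish this step, so make the appeal to \eqref{FL} explicit. Second, the diagonal-term convention in $\sum_{\{p,q\}\subset J}t_{pq}$ is forced already by the unit axiom: inserting the singleton $J=\{*\}$ at $k$ sends $t_{kk}$ to $0$ if diagonal terms are excluded, contradicting $a\circ_k\mathrm{id}=a$. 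This is a cheaper way to pin down the convention than your sequential-associativity comparison, though your argument is also sound, since in the two-step composite the off-diagonal pairs inside the refining set $L$ can only arise from the image of $t_{p_0p_0}$.
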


\begin{definition}
    Let $\mathfrak{t}_{g,n}^f$ denote the degree completion of the graded Lie algebra over $\mathbb{K}$ with generators $t_{ij}$, for $1 \leq i,j \leq n$; $x^a_i$, $y^a_i$ for $1 \leq i \leq n$ and $1 \leq a \leq g$, where
    \begin{equation*}
        \deg(t_{ij})=2, \hspace{1em} \deg(x^a_i)=\deg(y^a_i)=1,
    \end{equation*}
    subject to relations (\ref{FS}), (\ref{FL}), (\ref{F4T}), as well as
    \begin{equation}\label{Sg} \tag{$\mathrm{S}_g$}
        [x_i^a, y_j^b]=\delta_{ab}t_{ij} \hspace{1em} \text{for all}\,\, i \neq j,    
    \end{equation}
    \begin{equation} \label{Ng} \tag{$\mathrm{N}_g$}
        [x_i^a, x_j^b]=[y_i^a, y_j^b]=0 \hspace{1em} \text{for all}\,\, i \neq j,
    \end{equation}
    \begin{equation} \label{FTg} \tag{$\mathrm{FT}_g$}
        \sum_{a=1}^g [x_i^a, y_i^a]=-\sum_{j:j\neq i} t_{ij}-2(g-1)t_{ii},
    \end{equation}
    \begin{equation} \label{FLg} \tag{$\mathrm{FL}_g$}
        [x_k^a, t_{ij}]=[y_k^a, t_{ij}]=0 \hspace{1em} \text{if}\,\, \{i,j\}\cap \{k\}=\varnothing,
    \end{equation}
    \begin{equation} \label{F4Tg} \tag{$\mathrm{F4T_g}$}
        [x_i^a+x_j^a, t_{ij}]=[y_i^a+y_j^a, t_{ij}]=0 \hspace{1em} \text{for all}\,\, i,j.
    \end{equation}
    We call the $\mathfrak{t}_{g,n}^f$ \textit{framed Drinfeld-Kohno Lie algebras of genus $g$}.
\end{definition}

\begin{proposition} \cite[\S 3.4]{gonz}
    The Lie algebras $\mathfrak{t}_{g,n}^f$ furnish a right module over the operad of framed Drinfeld-Kohno Lie algebras, $\mathfrak{t}^f$. We write
    \begin{equation*}
        \mathfrak{t}_{g}^f \defeq \{\mathfrak{t}_{g,n}^f\}_{n \geq 0},
    \end{equation*}
    and define the partial compositions by
    \begin{equation}\label{op_tn}
        \begin{split}
            \circ_k: \hspace{1em} \mathfrak{t}_{g, I}^f \oplus \mathfrak{t}_J^f & \rightarrow \mathfrak{t}_{q, J \sqcup (I - \{k\})}^f\\
            (0, t_{\alpha \beta}) & \mapsto t_{\alpha \beta}\\
            (t_{ij},0) &\mapsto \begin{cases}
                t_{ij} & \text{if}\,\, k\notin\{i,j\}\\
                \sum_{p \in J} t_{pj} & \text{if}\,\, k=i, k\neq j\\
                \sum_{\{p,q\}\subset J} t_{pq} & \text{if}\,\, k=i=j
            \end{cases}\\
            (x_i^a,0) & \mapsto \begin{cases}
                x_i^a & \text{if}\,\, k \neq i\\
                \sum_{p \in J} x_{p}^a & \text{if}\,\, k=i
            \end{cases}\\
            (y_i^a,0) & \mapsto \begin{cases}
                y_i^a & \text{if}\,\, k \neq i\\
                \sum_{p \in J} y_{p}^a & \text{if}\,\, k=i
            \end{cases}
        \end{split}
    \end{equation}
\end{proposition}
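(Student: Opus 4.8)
The plan is to prove two things: that each partial composition $\circ_k$ in \eqref{op_tn} is a well-defined Lie algebra homomorphism out of the direct sum $\mathfrak{t}_{g,I}^f \oplus \mathfrak{t}_J^f$, and that the resulting maps satisfy the right-module axioms over the operad $\mathfrak{t}^f$, whose operad structure I take as given by the preceding proposition. Since $\mathfrak{t}_{g,I}^f \oplus \mathfrak{t}_J^f$ is a direct sum of Lie algebras, a Lie map out of it is the same datum as a pair of Lie maps $\mathfrak{t}_{g,I}^f \to \mathfrak{t}_{g,N}^f$ and $\mathfrak{t}_J^f \to \mathfrak{t}_{g,N}^f$, with $N = J \sqcup (I-\{k\})$, whose images commute. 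So I would split well-definedness into two checks: (a) each summand's defining relations are sent to identities in the target, and (b) the images of the two summands commute. The map on $\mathfrak{t}_J^f$ is the evident relabelling inclusion, so (a) for that summand is immediate, and the whole problem concentrates on the outer summand $\mathfrak{t}_{g,I}^f$ and on the cross-commutation (b).

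For (a) on $\mathfrak{t}_{g,I}^f$ I would dispatch the relations by cases, according to whether the inserted index $k$ meets the indices appearing in the relation. The relations involving only the $t_{ij}$, namely (FS), (FL) and (F4T), are preserved by exactly the computation establishing the operad structure of $\mathfrak{t}^f$, so nothing new is needed there. The genuinely new relations are those involving the generators $x_i^a, y_i^a$. For $(\mathrm{S}_g)$, $(\mathrm{N}_g)$, $(\mathrm{FL}_g)$ and $(\mathrm{F4T}_g)$ one distributes the bracket over the strand-splitting sums (e.g. $x_i^a \mapsto \sum_{p\in J} x_p^a$ when $k=i$) and checks that each term collapses, via the same relation in the target, to the image of the right-hand side; for instance $(\mathrm{S}_g)$ with $k=i$ reduces to $[\sum_p x_p^a, y_j^b] = \sum_p \delta_{ab} t_{pj} = \delta_{ab}\sum_p t_{pj}$, which is exactly the image of $\delta_{ab} t_{ij}$. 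These checks are routine.

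The hard part is $(\mathrm{FT}_g)$, the only relation coupling the genus generators to the framing generators $t_{ii}$, so that both sides transform nontrivially under $\circ_i$ and must be made to balance. Splitting strand $i$ into the bundle $J$ sends $\sum_a [x_i^a, y_i^a]$ to $\sum_a [\sum_p x_p^a, \sum_q y_q^a]$; the diagonal part $p=q$ becomes, by the target's own $(\mathrm{FT}_g)$, a combination of interaction terms $t_{pr}$ and framings $t_{pp}$, while the off-diagonal part $p\neq q$ produces, via $(\mathrm{S}_g)$ summed over the $g$ values of $a$, the terms $g\,t_{pq}$. These must reassemble into the image of $-\sum_{j\neq i}t_{ij} - 2(g-1)t_{ii}$. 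The delicate point is that the factor $g$ coming from the $a$-summation has to be reconciled with the coefficient $2(g-1)$ of the framing, and it is precisely this bookkeeping that pins down the correct framing-splitting rule for $t_{ii}$; a useful sanity check is $g=1$, where the framing term drops out and the identity reduces to the genus-zero computation already contained in the operad case. I expect essentially all the genuine content, and all the delicate signs and coefficients, to live in this single relation.

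Finally, for (b) I would check that the images of the two summands commute: for two $t$'s this is the disjointness argument using (FL) together with the $(\mathrm{F4T})$-type identity $[t_{\alpha\beta}, t_{\alpha l}+t_{\beta l}]=0$ in the target, and for an inserted $t_{\alpha\beta}$ against a split $x_i^a$ or $y_i^a$ it reduces, after the same splitting, to $[x_\alpha^a + x_\beta^a, t_{\alpha\beta}]=0$, i.e. to $(\mathrm{F4T}_g)$ (or to $(\mathrm{FL}_g)$ when the indices are disjoint). With well-definedness in hand, the right-module axioms — sequential and parallel associativity of nested insertions, unitality against the arity-one unit, and $\Sigma$-equivariance — follow the same pattern as for the operad $\mathfrak{t}^f$: each becomes a combinatorial identity comparing two ways of iterating the strand-splitting rules, which I would verify by tracking index sets. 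These are routine given the operad structure of $\mathfrak{t}^f$, so the weight of the proof rests on $(\mathrm{FT}_g)$.
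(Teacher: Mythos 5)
Your overall strategy is the right one, and in fact it is essentially the only one available: the paper itself offers no proof of this proposition, which is imported directly from \cite[\S 3.4]{gonz}, so a from-scratch verification of the kind you outline (relations preserved on each summand, cross-commutation of the two images, then the module axioms) is exactly what a proof must look like. Your handling of the routine relations, your commutation check (b) via $(\mathrm{FL})$, $(\mathrm{F4T})$ and their genus-$g$ analogues, and your remark that the module axioms reduce to index bookkeeping once well-definedness is established, are all sound.

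The gap is that you never perform the one computation you yourself identify as carrying ``essentially all the genuine content,'' and that computation does not close with the conventions as stated in the paper. Take $g=2$, $I=\{1\}$ a single strand, and $J=\{1',2'\}$, so that $(\mathrm{FT}_g)$ in the source reads $\sum_{a}[x_1^a,y_1^a]=-2t_{11}$. Applying $\circ_1$, then using $(\mathrm{S}_g)$ on the off-diagonal terms and $(\mathrm{FT}_g)$ in the target on the diagonal ones, the left side becomes $2t_{1'2'}-2t_{1'1'}-2t_{2'2'}$, whereas the right side is $-2$ times the stated image $t_{1'1'}+t_{2'2'}+t_{1'2'}$ of $t_{11}$; the two sides differ by $4t_{1'2'}$, and in general by $4(g-1)\sum_{\{p,q\}\subset J,\,p\neq q}t_{pq}$. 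So with the relations and the splitting rule exactly as printed, $(\mathrm{FT}_g)$ is \emph{not} preserved: consistency forces either the framing term in $(\mathrm{FT}_g)$ to read $+2(g-1)t_{ii}$, or the rule for $t_{ii}$ to be $\sum_{p}t_{pp}-\sum_{\{p,q\},\,p\neq q}t_{pq}$. This is almost certainly a sign-convention mismatch in the transcription of \cite{gonz} rather than a false theorem, but a proof of the proposition as literally stated must detect and resolve exactly this point, and yours cannot: you defer precisely this check, and the sanity check you propose at $g=1$ is blind to it, since the framing coefficient $2(g-1)$ vanishes there and the identity then holds for \emph{any} choice of image of $t_{ii}$.
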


    Recall the symmetric monoidal functor
        \begin{equation*}
            \hat{\mathrm{U}}: \mathbf{grLie} \rightarrow \mathbf{Cat(CoAlg)},
        \end{equation*}
    which to a positively graded Lie algebra $\mathfrak{g}$ assigns the category with a single object, and with morphisms given by the degree completion of the universal enveloping algebra of $\mathfrak{g}$. We thus define:
    \begin{itemize}
        \item The \textit{operad of framed chord diagrams}, $\mathbf{CD}^f \defeq \hat{\mathrm{U}}(\mathfrak{t}^f)$;
        \item The $\mathbf{CD}^f$-\textit{operad module of framed genus $g$ chord diagrams}, $\mathbf{CD}^f_g \defeq \hat{\mathrm{U}}(\mathfrak{t}^f_g)$.
    \end{itemize}

\begin{definition}
    Consider the operadic composition 
    \begin{equation*}
        \circ_k: \mathfrak{t}_{g,n}^f \oplus \mathfrak{t}_{\{k,k+1\}}^f \rightarrow \mathfrak{t}_{g,n+1}^f.
    \end{equation*}
    We define \textit{string splitting} maps $d_k: \mathfrak{t}_{g,n}^f \rightarrow \mathfrak{t}_{g,n+1}^f$ for $k=1, \dots, n$ by
    \begin{equation}
        d_k(a)\defeq \circ_k (a,0) \hspace{1em} \text{for all} \,\, a\in \mathfrak{t}_{g,n}^f.
    \end{equation}
\end{definition}

\begin{definition}
    Consider the operadic composition 
    \begin{equation*}
        \circ_k: \mathfrak{t}_{g,n}^f \oplus \mathfrak{t}_{g,0}^f \rightarrow \mathfrak{t_{g,n-1}^f}.
    \end{equation*}
    We define \textit{string deletion} maps $s_k: \mathfrak{t}_{g,n}^f \rightarrow \mathfrak{t}_{g,n-1}^f$ for $k=1,\dots, n$ by
    \begin{equation}
        s_k(a)\defeq \circ_k(a,0) \hspace{1em} \text{for all}\,\, a\in \mathfrak{t}_{g,n}^f.
    \end{equation}
\end{definition}

\begin{proposition}
    Define $K_{g,n}= \ker (s_n: \mathfrak{t}_{g,n}^f \rightarrow \mathfrak{t}^f_{g,n-1})$ and $H_{g,n}=\ker(s_{n-1}\circ s_n: \mathfrak{t}_{g,n}^f \rightarrow \mathfrak{t}_{g,n-2}^f)$. The following are split exact sequences:
    \begin{equation*}
            \begin{tikzcd}[column sep= large, row sep= tiny]
            K_{g, l} \arrow[r] & \mathfrak{t}_{g,l} \arrow[r, "s_{l}"] & \mathfrak{t}_{g,l}^f,\\
            H_{g, l} \arrow[r] & \mathfrak{t}_{g,l} \arrow[r, "s_{l-1}\circ s_{l}"] & \mathfrak{t}_{g,l-2}^f
            \end{tikzcd},
        \end{equation*}
\end{proposition}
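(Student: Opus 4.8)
The plan is to split each sequence by producing an explicit Lie-algebra section of its surjection, so that each becomes a semidirect-product decomposition. I read the two sequences in their evidently intended form,
\[
K_{g,l} \hookrightarrow \mathfrak{t}_{g,l}^f \xrightarrow{\,s_l\,} \mathfrak{t}_{g,l-1}^f, \qquad H_{g,l} \hookrightarrow \mathfrak{t}_{g,l}^f \xrightarrow{\,s_{l-1}\circ s_l\,} \mathfrak{t}_{g,l-2}^f,
\]
with $K_{g,l}=\ker s_l$ and $H_{g,l}=\ker(s_{l-1}\circ s_l)$ as in their definitions. The first observation is that every deletion map $s_k=\circ_k(-,0)$ and every splitting map $d_k=\circ_k(-,0)$ is a homomorphism of Lie algebras: it is the partial operadic composition $\circ_k$, which is a morphism in $\mathbf{Lie}$, precomposed with the summand inclusion $a\mapsto(a,0)$ (here the relations (FL), (F4T), $(\mathrm{FL}_g)$, $(\mathrm{F4T}_g)$ are exactly what forces the two images of $\circ_k$ to commute, so that $\circ_k$ descends from the direct sum). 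Since $K_{g,l}$ and $H_{g,l}$ are kernels, exactness at the left and middle terms is automatic, and the only substantive points left are surjectivity of the two projections and the existence of Lie-algebra splittings.

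Both points follow at once from the cosimplicial-type identities
\[
s_l \circ d_{l-1} = \mathrm{id}_{\mathfrak{t}_{g,l-1}^f}, \qquad s_{l-1}\circ d_{l-2} = \mathrm{id}_{\mathfrak{t}_{g,l-2}^f},
\]
which are the genus-$g$ analogues of equations (\ref{n_n_id}) and (\ref{n+1_n_id}). Granting them, $d_{l-1}$ is a Lie-algebra section of $s_l$, which proves the first sequence is split exact; and $d_{l-1}\circ d_{l-2}$ is a section of $s_{l-1}\circ s_l$, since
\[
(s_{l-1}\circ s_l)\circ(d_{l-1}\circ d_{l-2}) = s_{l-1}\circ(s_l\circ d_{l-1})\circ d_{l-2} = s_{l-1}\circ d_{l-2} = \mathrm{id}.
\]
In particular each projection has a right inverse and is therefore surjective, so both are genuine short exact sequences, and the sections being Lie maps makes them split.

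It remains to establish the two identities, and this is where I expect the real work to lie. I would verify them directly on the generators $t_{ij}$, $x_i^a$, $y_i^a$, reading $d_{k}$ as doubling strand $k$ into the pair $\{k,k+1\}$ (with the obvious reindexing of higher strands) and $s_{k+1}$ as deleting strand $k+1$; on every generator other than the framing generators $t_{kk}$ this is a routine check that doubling followed by deletion of one copy restores the generator, and an identical bookkeeping handles $x_i^a,y_i^a$. Alternatively one can argue abstractly from operad associativity, as in the earlier lemma, rewriting $s_{k+1}\circ d_k$ as composition at slot $k$ with $\circ_2(0,0)\in\mathfrak{t}_{g,1}^f$, arity-one insertion acting as mere relabeling. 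The main obstacle in either approach is the diagonal generators $t_{kk}$: one must check that the operadic cabling of $t_{kk}$ carries the self-chords $t_{kk}$ and $t_{(k+1)(k+1)}$ together with the cross term $t_{k,k+1}$, so that deleting strand $k+1$ returns precisely $t_{kk}$ — that is, the framed composition formula $\sum t_{pq}$ must be read with its diagonal contributions. This is the one place where the framing genuinely intervenes and must be handled with care; everything else is formal.
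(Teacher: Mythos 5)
Your proposal is correct and follows essentially the same route as the paper: the paper's (one-line) proof likewise exhibits $d_{l-1}$ as a Lie-algebra section of $s_l$ by direct computation, and the composite section $d_{l-1}\circ d_{l-2}$ for $s_{l-1}\circ s_l$ that you write down is exactly what the paper uses (in Appendix A.1) for the $H_{g,l}$ sequence. Your additional observations — that exactness at the kernel is automatic, that the section formally yields surjectivity, and that the diagonal terms in the cabling formula $\sum_{\{p,q\}\subset J}t_{pq}$ are precisely what make $s_{k+1}\circ d_k$ fix the framing generators $t_{kk}$ — are accurate and in fact more explicit than the paper's "check by direct computation."
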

\begin{proof}
    Clearly, the map $s_n$ is surjective, and we can check by direct computation that $d_{n-1}: \mathfrak{t}_{g,n-1}^f \rightarrow \mathfrak{t}_{g,n}^f$ is a section of $s_n$.
\end{proof}

It will be useful to describe $K_{g,n}$ and $H_{g,n}$ in terms of generators and relations in $\mathfrak{t}_{g,n}^f$. The proofs of the following two propositions will be relegated to Appendix \ref{F-well-def}:
\begin{proposition}\label{def_kg,n}
    Let $\mathfrak{k}_{g,n}$ denote the degree completion of the free Lie algebra in $2g +n$ generators, 
    \begin{equation*}
        L(x_n^a, y_n^a, t_{in}, \dots ,t_{nn})_{1 \leq a \leq g},    
    \end{equation*}
    subject to the relation
    \begin{equation*}
         \sum_{a=1}^g [x_n^a, y_n^a]=-\sum_{j:j\neq n} t_{nj}-2(g-1)t_{nn}
    \end{equation*}
    and the requirement that $t_{nn}$ be central. Then $\mathfrak{k}_{g,n}$ and $K_{g,n}$ are isomorphic as Lie algebras.
\end{proposition}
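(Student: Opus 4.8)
The plan is to realise $\mathfrak{k}_{g,n}$ as the fibre of the split projection $s_n$. The preceding proposition gives the split short exact sequence $K_{g,n}\to\mathfrak{t}_{g,n}^f\xrightarrow{s_n}\mathfrak{t}_{g,n-1}^f$ with section $d_{n-1}$, hence a semidirect product decomposition $\mathfrak{t}_{g,n}^f\cong\mathfrak{t}_{g,n-1}^f\ltimes K_{g,n}$. First I would define a Lie algebra map $\Phi\colon\mathfrak{k}_{g,n}\to\mathfrak{t}_{g,n}^f$ sending each generator of $\mathfrak{k}_{g,n}$ to the generator of the same name. To see this is well defined, note that every generator carrying the index $n$ is killed by $s_n$ (by the composition formula \eqref{op_tn} with $J=\varnothing$), so the images lie in $K_{g,n}$; and the two imposed relations hold in $\mathfrak{t}_{g,n}^f$: the relation $\sum_a[x_n^a,y_n^a]=-\sum_{j\ne n}t_{nj}-2(g-1)t_{nn}$ is exactly \eqref{FTg} at $i=n$, while the centrality of $t_{nn}$ follows by specialising \eqref{F4Tg} and \eqref{F4T} to $i=j=n$ (giving $[x_n^a,t_{nn}]=[y_n^a,t_{nn}]=0$ and $[t_{kn},t_{nn}]=0$).

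For surjectivity I would show that $\mathrm{im}\,\Phi$ is an ideal of $\mathfrak{t}_{g,n}^f$. Since $\mathrm{im}\,\Phi$ is the subalgebra generated by the strand-$n$ generators, and $\mathrm{ad}$ acts by derivations, it suffices to check that bracketing a strand-$n$ generator with a strand-$<n$ generator again lands in $\mathrm{im}\,\Phi$. This is where the braid relations do the work: \eqref{Sg} and \eqref{Ng} give $[x_i^a,y_n^b]=\delta_{ab}t_{in}$ and $[x_i^a,x_n^b]=0$; relation \eqref{F4Tg} rewrites $[x_i^a,t_{in}]=-[x_n^a,t_{in}]$; and \eqref{F4T} rewrites the mixed bracket $[t_{kl},t_{kn}]=[t_{kn},t_{ln}]$ as a bracket of two strand-$n$ generators (the remaining mixed brackets vanish by \eqref{FL} and \eqref{FLg}). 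Hence $\mathrm{im}\,\Phi$ is an ideal, and $\mathfrak{t}_{g,n}^f/\mathrm{im}\,\Phi$ is presented by the strand-$<n$ generators subject to exactly the relations of $\mathfrak{t}_{g,n-1}^f$ — the $t_{\cdot n}$ term of each \eqref{FTg} with $i<n$ being set to zero — so the quotient map coincides with $s_n$ and $\mathrm{im}\,\Phi=\ker s_n=K_{g,n}$.

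For injectivity I would produce a one-sided inverse. Define an action $\rho$ of $\mathfrak{t}_{g,n-1}^f$ on $\mathfrak{k}_{g,n}$ by derivations, using the bracket formulas above read inside $\mathfrak{k}_{g,n}$, form the semidirect product $\mathfrak{t}_{g,n-1}^f\ltimes_\rho\mathfrak{k}_{g,n}$, and let $\Theta$ be the map equal to $d_{n-1}$ on the first factor and $\Phi$ on the second. Then $\Theta$ is a surjective Lie homomorphism with $s_n\circ\Theta$ the projection onto $\mathfrak{t}_{g,n-1}^f$, whence $\ker\Theta=\ker\Phi$; so it is enough to exhibit the inverse $\Xi$, which on generators sends $v\mapsto\bigl(s_n v,\ v-d_{n-1}s_nv\bigr)$, the fibre component being read as a Lie word in the strand-$n$ generators, hence an element of $\mathfrak{k}_{g,n}$.

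The main obstacle is verifying that $\Xi$ respects every defining relation of $\mathfrak{t}_{g,n}^f$ (equivalently, that $\rho$ is an action and $\Theta$ is injective). The delicate relations are \eqref{FTg} and \eqref{F4Tg} with an index equal to $n-1$: because the section $d_{n-1}$ spreads the $(n-1)$-st strand across strands $n-1$ and $n$ (so that, for instance, $x_{n-1}^a\mapsto x_{n-1}^a+x_n^a$ and $t_{i,n-1}\mapsto t_{i,n-1}+t_{i,n}$), the fibre components $v-d_{n-1}s_nv$ of $x_{n-1}^a$ and $t_{i,n-1}$ are nonzero, and it is precisely the cancellation of these contributions against the genuine strand-$n$ terms that renders \eqref{FTg} for $i<n$ consistent in the semidirect product. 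This bookkeeping is routine but lengthy, which is why it is deferred to the appendix; alternatively, once surjectivity is in hand, injectivity can be obtained from a graded-dimension count, comparing $\dim K_{g,n}=\dim\mathfrak{t}_{g,n}^f-\dim\mathfrak{t}_{g,n-1}^f$ in each degree with the Hilbert series of the presented algebra $\mathfrak{k}_{g,n}$.
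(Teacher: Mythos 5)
Your high-level strategy -- realize $\mathfrak{t}_{g,n}^f$ as a semidirect product over the splitting $d_{n-1}$ of $s_n$ and identify the fibre with $\mathfrak{k}_{g,n}$ -- is the same as the paper's, which packages the construction into the general gluing statement of Proposition~\ref{main_app} and applies it with $V$ spanned by the generators of $\ima d_{n-1}$ (the \emph{diagonal} elements $x_{n-1}^a+x_n^a$, $t_{i(n-1)}+t_{in}$, etc.) and $W$ spanned by the strand-$n$ generators. Your surjectivity argument (showing $\ima\Phi$ is an ideal via the braid relations and identifying the quotient $\mathfrak{t}_{g,n}^f/\ima\Phi$ with $\mathfrak{t}_{g,n-1}^f$) is correct, and is a pleasant direct alternative to extracting surjectivity from the general lemma.

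The injectivity half, however, has genuine gaps. First, the action $\rho$ cannot be ``the bracket formulas above read inside $\mathfrak{k}_{g,n}$'': with $\Theta=d_{n-1}$ on the first factor, compatibility forces $\rho(v)=\mathrm{ad}(d_{n-1}(v))$ rather than $\mathrm{ad}(v)$. Indeed, the naive generator-to-generator map $\mathfrak{t}_{g,n-1}^f\to\mathfrak{t}_{g,n}^f$ is not even a Lie map, since \eqref{FTg} at $i<n$ acquires the extra term $-t_{in}$ in $\mathfrak{t}_{g,n}^f$; concretely, $[d_{n-1}(x_{n-1}^a),y_n^b]=\delta_{ab}t_{(n-1)n}+[x_n^a,y_n^b]$ differs from your plain formula by an inner term. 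This is why the paper takes $V$ to be the diagonal generators from the outset. Your closing remarks about ``cancellation'' show you see the issue, but the verification that the corrected $\rho$ is a well-defined action -- that each $\rho(v)$ preserves the relation ideal of $\mathfrak{k}_{g,n}$ and that $\rho$ kills the relations of $\mathfrak{t}_{g,n-1}^f$ -- is precisely the mathematical content of the proposition (it is hypothesis \eqref{assum} of Proposition~\ref{main_app}); deferring it as ``routine bookkeeping'' leaves the proof incomplete at exactly the point where the work lies. Second, the fallback via graded dimensions does not work: a presentation only bounds graded dimensions from \emph{above}, so surjectivity gives $\dim(\mathfrak{k}_{g,n})_k\geq\dim(K_{g,n})_k$, and to get the reverse inequality you would need a lower bound on $\dim(\mathfrak{t}_{g,n}^f)_k$. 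Within this paper the only available route to such a lower bound is the semidirect-product decomposition \eqref{sn_split} combined with the very identification of the fibre you are trying to prove (or an explicit basis/representation, which you do not construct), so the dimension count is circular. The first route must therefore be carried out in full, as the paper does.
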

\begin{proposition}\label{def_hg,n}
    Let $\mathfrak{h}_{g,n}$ denote the degree completion of the free Lie algebra in $2(2g+n)-1$ generators,
    \begin{equation*}
        L(x_{n-1}^a,y_{n-1}^a,x_n^a, y_n^a,t_{1(n-1)}, \dots t_{n(n-1)},t_{1n},\dots,t_{(n-2)n},t_{nn}),
    \end{equation*}
    subject to the applicable relations in $\mathfrak{t}_{g,n}^f$. Then $\mathfrak{h}_{g,n}$ and $H_{g,n}$ are isomorphic as Lie algebras.
\end{proposition}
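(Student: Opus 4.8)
The plan is to realize $H_{g,n}$ as a semidirect product of the algebras already understood in Proposition \ref{def_kg,n}, and to match that decomposition against the presentation of $\mathfrak{h}_{g,n}$.

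First I would record a split short exact sequence of Lie algebras. Since $H_{g,n}=\ker(s_{n-1}\circ s_n)$ and $K_{g,n}=\ker s_n$, restricting $s_n$ to $H_{g,n}$ gives
\[
0 \longrightarrow K_{g,n} \longrightarrow H_{g,n} \xrightarrow{\ s_n\ } K_{g,n-1} \longrightarrow 0 ,
\]
which is exact: any $h\in H_{g,n}$ has $s_n(h)\in\ker s_{n-1}=K_{g,n-1}$, the kernel of $s_n|_{H_{g,n}}$ is $K_{g,n}\subseteq H_{g,n}$, and both surjectivity and a splitting are supplied by the section $d_{n-1}$ of $s_n$ (note $d_{n-1}(K_{g,n-1})\subseteq H_{g,n}$, because $s_{n-1}s_n d_{n-1}=s_{n-1}=0$ on $K_{g,n-1}$). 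Hence $H_{g,n}\cong K_{g,n}\rtimes K_{g,n-1}$, and by Proposition \ref{def_kg,n} the two factors are $\mathfrak{k}_{g,n}$ and $\mathfrak{k}_{g,n-1}$. In particular $H_{g,n}$ and $\mathfrak{k}_{g,n}\oplus\mathfrak{k}_{g,n-1}$ agree as graded vector spaces, and the generator counts $(2g+n)+(2g+n-1)=2(2g+n)-1$ already match those of $\mathfrak{h}_{g,n}$.

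Next I would build the comparison map $\Phi\colon\mathfrak{h}_{g,n}\to H_{g,n}$ sending each listed generator to the element of $\mathfrak{t}_{g,n}^f$ of the same name. It is well defined because every defining relation of $\mathfrak{h}_{g,n}$ is, by construction, one of the relations of $\mathfrak{t}_{g,n}^f$; its image lies in $H_{g,n}$ because the composition formula (\ref{op_tn}) shows that $s_{n-1}\circ s_n$ annihilates each of $x^a_{n-1},y^a_{n-1},x^a_n,y^a_n,t_{i,n-1},t_{in},t_{nn}$. Surjectivity follows from the previous paragraph: the images of the generators contain all generators of $K_{g,n}$ together with the elements $x^a_{n-1}+x^a_n=d_{n-1}(x^a_{n-1})$, $t_{i,n-1}+t_{in}=d_{n-1}(t_{i,n-1})$, and so on, which jointly generate $H_{g,n}$. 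Moreover $\Phi$ fits into a morphism of short exact sequences, the right-hand square being $s_n\circ\Phi=p$, where $p\colon\mathfrak{h}_{g,n}\to\mathfrak{k}_{g,n-1}\cong K_{g,n-1}$ kills the index-$n$ generators (and $t_{n,n-1}$) and fixes the index-$(n-1)$ ones; one checks directly that $p$ respects the relations, the only nontrivial point being that $(\ref{FTg})$ for $i=n-1$ maps to the corresponding relation of $\mathfrak{k}_{g,n-1}$ once the term $t_{n-1,n}=t_{n,n-1}$ is sent to zero. A diagram chase then shows $\ker\Phi=\ker(\Phi|_{\ker p})$, so it remains to prove that $\Phi$ restricts to an isomorphism $\ker p\xrightarrow{\ \cong\ }K_{g,n}$.

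The heart of the argument, and the step I expect to be the main obstacle, is to show that the presentation of $\mathfrak{h}_{g,n}$ realizes it as exactly the semidirect product $\mathfrak{k}_{g,n}\rtimes\mathfrak{k}_{g,n-1}$ along the splitting $d_{n-1}$; granting this, $\ker p=\mathfrak{k}_{g,n}$ and $\Phi|_{\ker p}$ is the isomorphism of Proposition \ref{def_kg,n}, finishing the proof via the (short) five lemma. To establish the decomposition one verifies, using the universal property of the presentation, that $p$ admits a section $\sigma\colon\mathfrak{k}_{g,n-1}\to\mathfrak{h}_{g,n}$ mirroring $d_{n-1}$ (so $x^a_{n-1}\mapsto x^a_{n-1}+x^a_n$, $t_{i,n-1}\mapsto t_{i,n-1}+t_{in}$ for $i<n-1$, and the framing generator $t_{n-1,n-1}$ to the appropriate image of the self-term under $\circ_{n-1}$), and that the remaining relations express precisely the action of $\mathfrak{k}_{g,n-1}$ on the ideal generated by the index-$n$ generators. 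The delicate bookkeeping is concentrated in the framing/self-pairing generators $t_{ii}$: one must compute the operadic composition (\ref{op_tn}) on $t_{n-1,n-1}$ correctly and check its compatibility with $(\ref{FTg})$ and the mixed relations $(\ref{Sg})$, since the naive merging formula fails to respect $(\ref{FTg})$ by a multiple of $(g-1)$. Carrying out this verification degreewise (equivalently, comparing Hilbert series, which must coincide once the semidirect structure is in place) yields that $\Phi$ is injective, hence an isomorphism.
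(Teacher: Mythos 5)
Your architecture is genuinely different from the paper's, and the parts you actually carry out are correct: the split exact sequence $0 \to K_{g,n} \to H_{g,n} \xrightarrow{s_n} K_{g,n-1} \to 0$, the well-definedness and surjectivity of $\Phi$, and the reduction via the short five lemma to showing that $\Phi|_{\ker p}\colon \ker p \to K_{g,n}$ is an isomorphism all check out. Your warning about the framing generators is also a real issue: with the relations as stated in the paper, compatibility of $d_{n-1}$ with (\ref{FTg}) forces $d_{n-1}(t_{(n-1)(n-1)}) = t_{(n-1)(n-1)}+t_{nn}-t_{(n-1)n}$, the naive plus-sign version failing by $4(g-1)\,t_{(n-1)n}$. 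For comparison, the paper never decomposes $H_{g,n}$ internally and never invokes Proposition \ref{def_kg,n}: it splits $\mathfrak{t}_{g,n}^f$ itself in two ways, once as $\ima (d_{n-1}\circ d_{n-2}) \ltimes H_{g,n}$ via the section $d_{n-1}\circ d_{n-2}$ of $s_{n-1}\circ s_n$, and once as $L(V)/\langle R\rangle \ltimes \mathfrak{h}_{g,n}$ via Proposition \ref{main_app}, with $W$ spanned by all index-$(n-1)$ and index-$n$ generators and $Q$ the defining relations of $\mathfrak{h}_{g,n}$, and then identifies the two complements.

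The gap is your "heart": the claim that the presentation of $\mathfrak{h}_{g,n}$ realizes it as $\mathfrak{k}_{g,n}\rtimes\mathfrak{k}_{g,n-1}$ — equivalently, that $\ker p$ is presented by exactly the $\mathfrak{k}_{g,n}$-relations — is asserted, not proved, and it is the entire mathematical content of the proposition. A priori, the ideal of $\mathfrak{h}_{g,n}$ generated by the index-$n$ generators could satisfy relations beyond those of $\mathfrak{k}_{g,n}$, forced by combining the mixed relations with the index-$(n-1)$ relations; ruling out precisely this collapse is what Proposition \ref{main_app} and its hypothesis (\ref{assum}) exist to do. To close the gap you would need to change generators to the diagonal ones $x_{n-1}^a+x_n^a$, $y_{n-1}^a+y_n^a$, $t_{i(n-1)}+t_{in}$, $t_{(n-1)(n-1)}+t_{nn}-t_{(n-1)n}$ together with the index-$n$ ones, rewrite every relation of $\mathfrak{h}_{g,n}$ in the split form $R\sqcup Q\sqcup F$ with $F$ consisting of relations $[v,w]=f(w)$, $f(w)\in L(W)$, verify (\ref{assum}), and apply Proposition \ref{main_app} — that is, exactly the bookkeeping the paper performs, only pushed one level down from $\mathfrak{t}_{g,n}^f$ to $\mathfrak{h}_{g,n}$, so your route relocates the hard part rather than avoiding it. Finally, the closing appeal to Hilbert series is circular: the graded dimensions of $\mathfrak{h}_{g,n}$ are unknown until the semidirect decomposition is established, and once it is established the five lemma already finishes the proof with no counting needed.
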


\begin{definition}
    We define the Lie algebras
    \begin{equation*}
        \overline{\mathfrak{k}}_{g,n} \defeq \mathfrak{k}_{g,n}/\langle t_{nn} \rangle,
    \end{equation*}
    \begin{equation*}
        \overline{\mathfrak{h}}_{g,n} \defeq \mathfrak{h}_{g,n}/ \langle t_{(n-1)(n-1)} ,t_{nn}\rangle,
    \end{equation*}
    the quotients of $\mathfrak{k}_{g,n}$ and $\mathfrak{h}_{g.n}$ by their central elements, respectively.
\end{definition}

As we will see, an element of the genus $g$ Grothendieck-Teichm\"uler group $\mathrm{GRT}_g$ describes automorphisms  in a variety of Lie quotients of $\mathfrak{t}_{g,n}^f$. For the remainder of this section, our goal is to relate these braid algebras with the Lie algebras that make up the diagram $\mathcal{A}^\gamma$ from equation (\ref{D_of_op}).

\begin{definition}
    We define additional quotient algebras
    \begin{equation*}
        \mathfrak{g}\defeq \overline{\mathfrak{h}}_{g,n+3}/[\langle t_{(n+2)(n+3)}\rangle, \langle t_{(n+2)(n+3)}\rangle]_{\overline{\mathfrak{h}}_{g,n+3}},
    \end{equation*}
    \begin{equation*}
        M \defeq \langle t_{(n+2)(n+3)} \rangle_{\overline{\mathfrak{h}}_{g,n+3}} / [\langle t_{(n+2)(n+3)} \rangle, \langle t_{(n+2)(n+3)} \rangle]_{\overline{\mathfrak{h}}_{g,n+3}}.
    \end{equation*}
\end{definition}

From now on, we will denote
\begin{equation*}
    \begin{split}
        L^{n+2} & \defeq \widehat{\mathrm{Lie}}(x_{n+2}^a, y_{n+2}^a, t_{2(n+2)}, \dots t_{(n+1)(n+2)}),
    \end{split}
\end{equation*}
where $1 \leq a \leq g$. We are interested in the relative cohomology spaces $H^*(L^{n+2} \oplus L^{n+2}, L^{n+2}; M)$, computed with respect to the diagonal map $\Delta: L^{n+2} \rightarrow L^{n+2} \oplus L^{n+2}$. The $L^{n+2} \oplus L ^{n+2}$-module structure we choose for $M$ is given by
\begin{equation*}
    (v \oplus w) \cdot m = [m, \alpha'(v \oplus w)],
\end{equation*} 
where the map $\alpha': L^{n+2} \oplus L^{n+2} \rightarrow \mathfrak{g}$ is fixed by the choices
\begin{equation*}
    \begin{tikzcd} [row sep=tiny]
        x_{n+2}^a \oplus 0 \arrow[r, maps to, "\alpha '"] & x_{n+2}^a,\\
        y_{n+2}^a \oplus 0 \arrow[r, maps to] & y_{n+2}^a,\\
        t_{i(n+2)} \oplus 0 \arrow[r, maps to] & t_{i(n+2)},\\
        0 \oplus x_{n+2}^a \arrow[r, maps to] & x_{n+3}^a,\\
        0 \oplus y_{n+2}^a \arrow[r, maps to] & y_{n+3}^a,\\
        0 \oplus t_{i(n+2)} \arrow[r, maps to] & t_{i(n+3)}.\\
    \end{tikzcd}
\end{equation*}

\begin{lemma}\label{ker_sis}
    The sequence
    \begin{equation*}
        \begin{tikzcd}
            0 \arrow[r]& M \arrow[r, "\iota"]& \mathfrak{g} \arrow[r, "P"] & L^{n+2} \oplus L^{n+2}
        \end{tikzcd},
    \end{equation*}
    where $P=s_{n+3} \oplus s_{n+2}$, is short-exact.
\end{lemma}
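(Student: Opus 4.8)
The plan is to reduce the three conditions comprising short‑exactness to concrete statements about the ideal $I \defeq \langle t_{(n+2)(n+3)}\rangle_{\overline{\mathfrak h}_{g,n+3}}$, exploiting that $\mathfrak g = \overline{\mathfrak h}_{g,n+3}/[I,I]$, that $M = I/[I,I]$, and that $\iota$ is the map induced by the inclusion $I\hookrightarrow\overline{\mathfrak h}_{g,n+3}$. Injectivity of $\iota$ is then immediate: its kernel is $(I\cap[I,I])/[I,I]=[I,I]/[I,I]=0$, since $[I,I]\subseteq I$. (One records here that $M$ is abelian, being the abelianization of the ideal $I$, as is needed downstream for the module structure.) Everything else follows once I show that $P$ descends to $\mathfrak g$ and that $\ker\bigl(P\colon\overline{\mathfrak h}_{g,n+3}\to L^{n+2}\oplus L^{n+2}\bigr)=I$, where throughout I use the identification $L^{n+2}\cong\overline{\mathfrak k}_{g,n+2}$ supplied by Proposition \ref{def_kg,n} and the definition of $\overline{\mathfrak k}_{g,n+2}$.

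First I would pin down $P$ on generators. By the composition formula \eqref{op_tn} with the arity‑zero slot, string deletion $s_{n+2}$ (resp.\ $s_{n+3}$) kills every generator carrying the index $n+2$ (resp.\ $n+3$) and relabels the surviving strand $n+3\mapsto n+2$ (resp.\ fixes strand $n+2$). In particular both send $t_{(n+2)(n+3)}$ and the central generators $t_{(n+2)(n+2)},t_{(n+3)(n+3)}$ into the subalgebra that is killed in $\overline{\mathfrak k}_{g,n+2}\cong L^{n+2}$, so both descend to Lie maps $\overline{\mathfrak h}_{g,n+3}\to L^{n+2}$, and on the surviving generators one reads off $P(x_{n+2}^a)=(x_{n+2}^a,0)$, $P(x_{n+3}^a)=(0,x_{n+2}^a)$, $P(t_{i(n+2)})=(t_{i(n+2)},0)$, $P(t_{i(n+3)})=(0,t_{i(n+2)})$, and likewise for the $y$'s. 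Since $s_{n+2},s_{n+3}$ are Lie homomorphisms vanishing on $t_{(n+2)(n+3)}$, they annihilate the whole ideal $I$; hence $[I,I]\subseteq\ker P$ (so $P$ descends to $\mathfrak g$) and $\operatorname{im}\iota=I/[I,I]\subseteq\ker\bar P$, which gives exactness at $M$ and one of the two inclusions at $\mathfrak g$.

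The remaining content — surjectivity of $P$ together with $\ker P\subseteq I$ — I would obtain simultaneously by showing that $\bar P$ induces an isomorphism $\overline{\mathfrak h}_{g,n+3}/I\xrightarrow{\ \sim\ }L^{n+2}\oplus L^{n+2}$. In the quotient $\overline{\mathfrak h}_{g,n+3}/I$ one additionally kills $t_{(n+2)(n+3)}$, and the key observation is that this decouples the strand‑$(n+2)$ generators from the strand‑$(n+3)$ generators: relations \eqref{Ng} and \eqref{Sg} force $[x_{n+2}^a,x_{n+3}^b]=[y_{n+2}^a,y_{n+3}^b]=[x_{n+2}^a,y_{n+3}^b]=[x_{n+3}^a,y_{n+2}^b]=0$; relation \eqref{FLg} kills the mixed brackets $[x_{n+2}^a,t_{i(n+3)}]$, $[x_{n+3}^a,t_{i(n+2)}]$ and their $y$‑analogues; and \eqref{FL} together with \eqref{F4T} (applied to the triple $\{i,n+2,n+3\}$, whose $t_{(n+2)(n+3)}$‑term is now zero) kills $[t_{i(n+2)},t_{j(n+3)}]$ for all $i,j$. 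Meanwhile \eqref{FTg} for $i=n+2$ and $i=n+3$ collapses, after removing $t_{(n+2)(n+2)},t_{(n+3)(n+3)},t_{(n+2)(n+3)}$, to the single defining relation of $\overline{\mathfrak k}_{g,n+2}\cong L^{n+2}$ within each strand family. Since the two families then span commuting subalgebras and each $L^{n+2}$ is \emph{free} on $x_{n+2}^a,y_{n+2}^a,t_{i(n+2)}$ ($2\le i\le n+1$), I can define a Lie map $\Phi\colon L^{n+2}\oplus L^{n+2}\to\overline{\mathfrak h}_{g,n+3}/I$ sending the first (resp.\ second) copy's free generators to the strand‑$(n+2)$ (resp.\ strand‑$(n+3)$) generators. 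A verification on generators gives $P\circ\Phi=\mathrm{id}$ and $\Phi\circ\bar P=\mathrm{id}$, the only nonformal points being that the non‑free classes $t_{1(n+2)},t_{1(n+3)}$ are sent correctly, which is exactly the content of \eqref{FTg}. Hence $\bar P$ is an isomorphism, yielding both surjectivity of $P$ and $\ker P=I$, and so exactness at $\mathfrak g$.

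The main obstacle is the decoupling step: one must check that \emph{every} defining relation of $\mathfrak h_{g,n+3}$ coupling the two strand families becomes trivial modulo $I$ and the two central generators, so that no hidden relation survives to make $\overline{\mathfrak h}_{g,n+3}/I$ strictly smaller than $L^{n+2}\oplus L^{n+2}$. Packaging this as the explicit two‑sided inverse $\Phi$ — legitimate precisely because each $L^{n+2}$ is genuinely free once $t_{1(n+2)}$ is eliminated through \eqref{FTg} — is what turns the relation bookkeeping into a clean proof that $\bar P$ is an isomorphism, avoiding any graded dimension count or Hopfian argument.
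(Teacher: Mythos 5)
Your proof is correct, and while it rests on the same key phenomenon as the paper's proof --- that modulo the ideal $I=\langle t_{(n+2)(n+3)}\rangle_{\overline{\mathfrak{h}}_{g,n+3}}$ the two strand families decouple --- it executes this quite differently, and more completely. The paper treats injectivity of $\iota$ and surjectivity of $P$ as immediate, and derives $\ima \iota=\ker P$ from an \emph{asserted} vector-space decomposition of $\mathfrak{g}$ into the ideal plus the two strand subalgebras, with a footnote conceding that this hinges on intersection-triviality facts that are not proved there. You instead prove that $P$ induces an isomorphism $\overline{\mathfrak{h}}_{g,n+3}/I \cong L^{n+2}\oplus L^{n+2}$: the relation bookkeeping (via (\ref{Sg}), (\ref{Ng}), (\ref{FLg}), (\ref{FL}), and (\ref{F4T}) applied to the triples $\{i,n+2,n+3\}$) shows the two families generate commuting subalgebras in the quotient, and the freeness of $L^{n+2}$ then legitimizes your two-sided inverse $\Phi$, with the single nontrivial generator check at $t_{1(n+2)}$, $t_{1(n+3)}$ handled by (\ref{FTg}) exactly as in equation (\ref{k_is_free}). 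This is the same style of argument as the paper's own Proposition \ref{main_app}, and it buys genuine rigor: the statement the paper declares clear (that nothing outside the ideal maps to zero under $P$) is precisely what your isomorphism establishes, and exactness at $\mathfrak{g}$ and surjectivity of $P$ both drop out at once, with no hidden direct-sum claim left unverified. Two points that are implicit in your write-up and worth stating explicitly: first, a Lie homomorphism that kills $t_{(n+2)(n+3)}$ kills the entire ideal it generates (the ideal is spanned by iterated brackets against that generator), which is what makes $P$ descend both to $\mathfrak{g}$ and to $\overline{\mathfrak{h}}_{g,n+3}/I$; second, all the maps you construct are degree-preserving on generators, so they extend to the degree completions that all of these algebras actually are. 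Both are routine, and neither affects the correctness of the argument.
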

\begin{proof}
    The injectivity of $\iota$ is straightforward. Similarly, the surjectivity of $P$ follows from the fact that there are no braid relations between the elements $x_{n+2}^a$, $y_{n+2}^a$, and $t_{(j+1)(n+2)}$ in $\mathfrak{g}$, for any $1 \leq a \leq g$ and $1 \leq j \leq n$. The same is true if we substitute $n+2$ for $n+3$ in the preceding statement. The heart of the proof is then showing that $\ima \iota = \ker P$. Given that $t_{(n+2)(n+3)} \in \ker P$, we already know that $\ima \iota \subset \ker P$. For the reverse inclusion, notice the equality of vector spaces
    \begin{equation*}
        \mathfrak{g}= \langle t_{(n+2)(n+3)} \rangle_{\mathfrak{g}} \oplus ( x_{n+2}^a, y_{n+2}^a, t_{(j+1)(n+2)} )_{\mathfrak{g}} \oplus ( x_{n+3}^a, y_{n+3}^a, t_{(j+1)(n+3)} )_{\mathfrak{g}},
    \end{equation*}
    where the parentheses $( - )_{\mathfrak{g}}$ stand for the subalgebra generated in $\mathfrak{g}$ and $1 \leq a \leq g$, $1 \leq j \leq n$. It is clear no element in $\mathfrak{g}-\langle t_{(n+2)(n+3)} \rangle_{\mathfrak{g}}$ maps to zero under $P$.\footnote{Notice that this argument also hinges on the fact that $\langle t_{(n+2)(n+3)} \rangle_{\mathfrak{g}} \cap ( x_{n+2}^a, y_{n+2}^a, t_{(j+1)(n+2)} )_{\mathfrak{g}} = \varnothing$, for instance.}.
\end{proof}

Let $r: L^{n+2} \rightarrow \mathbb{K}$ be a linear map. We define a Lie morphism $d^r_{n+2}:L^{n+2} \rightarrow \mathfrak{g}$ by
\begin{equation*}
    d^r_{n+2}(x)=d_{n+2}(x) + r(x)t_{(n+1)(n+3)} \hspace{1em} \text{for all }x\in L^{n+2}.
\end{equation*}
We can check by direct computation that the following diagram is commutative:
\begin{equation*}
     \mathcal{D}^r \defeq 
    \begin{tikzcd}
         & L^{n+2} \arrow[d, "\Delta"] \arrow[dl, "d_{n+2}^r" swap]\\
         \mathfrak{g} \arrow[r, "P" swap] & L^{n+2} \oplus L^{n+2}
    \end{tikzcd}.
\end{equation*}
Since
\begin{equation*}
    M = \langle t_{(n+2)(n+3)} \rangle_{\overline{\mathfrak{h}}_{g,n+3}} / [\langle t_{(n+2)(n+3)} \rangle, \langle t_{(n+2)(n+3)} \rangle]_{\overline{\mathfrak{h}}_{g,n+3}}
\end{equation*}
is explicitly an abelian Lie algebra, it follows that $\mathcal{D}^r \in \mathrm{Ob}(\mathbf{RelE})$. 

Let $\tilde{\rho}_G \in \mathrm{Fox}(UL^{n+2})$ be the Fox pairing with non-vanishing entries given by
\begin{equation}\label{rho_G_tilde}
    \tilde{\rho}_G(x_{n+2}^i, y_{n+2}^i)=1, \hspace{1em} \tilde{\rho}_G(y_{n+2}^i, x_{n+2}^i)=-1, \hspace{1em} \tilde{\rho}_G(t_{j(n+2)},t_{j(n+2)})=-t_{j(n+2)},
\end{equation}
for $1 \leq i \leq g$ and $2 \leq j \leq n+1$. Likewise, let $q^r \in \mathrm{Qder}(-\tilde{\rho}_G)$ be the quasi-derivation uniquely specified by setting
\begin{equation}\label{q_r}
    q^r(a)=r(a) \hspace{1em}\text{for all generators }a\text{ of }L^{n+2}.
\end{equation}
Finally, let $T^r \oplus G \in Z^2(L^{n+2} \oplus L^{n+2}, L^{n+2};UL^{n+2})$ be the 2-cocycle corresponding to $q^r \oplus \tilde{\rho}_G \in Z^2(\mathcal{M}(\mathrm{id}_{L^{n+2}})) $ with respect to the equivalence of Theorem \ref{quasi-iso}. Explicitly,
\begin{equation}\label{qf_1}
    T^f\in \mathrm{Hom}_{\mathbb{K}}(L^{n+2}, UL^{n+2})\text{ and } T^r(x)=q^r(x)\text{ for all }x\in L^{n+2};
\end{equation}
\begin{equation}\label{rho_G_1}
     G \in \mathrm{Hom}_{\mathbb{K}}(\bigwedge^2 L^{n+2} \oplus L^{n+2}, UL^{n+2})\text{ and }G(a_1 \oplus a_2, b_1 \oplus b_2)=\tilde{\rho}_G(a_1, b_2)-\tilde{\rho}_G(b_1, a_2).
\end{equation}
The following proposition will be proved in Appendix \ref{iso_lemma}:
\begin{proposition} \label{Cgamma_is_C}
    The two commutative diagrams
    \begin{equation*}
        \mathcal{D}^r = 
        \begin{tikzcd}
         & L^{n+2} \arrow[d, "\Delta"] \arrow[dl, "d_{n+2}^r" swap]\\
         \mathfrak{g} \arrow[r, "P" swap] & L^{n+2} \oplus L^{n+2}
    \end{tikzcd} \hspace{1em} \text{and} \hspace{1em}
    \mathcal{C}^r=
    \begin{tikzcd}
        & L^{n+2} \arrow[d, "\Delta"] \arrow[dl, "\Delta \times T^r" swap]\\
         L^{n+2} \oplus L^{n+2} \times_G UL^{n+2} \arrow[r, "P_1" swap] & L^{n+2} \oplus L^{n+2}
    \end{tikzcd}
    \end{equation*}
    are isomorphic as objects in $\mathbf{RelE}$.
\end{proposition}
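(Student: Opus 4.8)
The plan is to realize both $\mathcal{D}^r$ and $\mathcal{C}^r$ as abelian extensions of $L^{n+2}\oplus L^{n+2}$ classified by the same relative $2$-cocycle, and then to invoke the equivalence $\mathrm{F}\colon\mathbf{RelCo}\to\mathbf{RelE}$ of Proposition \ref{cat-equiv}. By Lemma \ref{ker_sis}, $\mathcal{D}^r$ is a genuine abelian extension with kernel $M$, while $\mathcal{C}^r$ is by construction the extension $(L^{n+2}\oplus L^{n+2})\times_G UL^{n+2}$; both carry the diagonal $\Delta$ and live over the same base. It therefore suffices to produce a single Lie isomorphism $\Phi\colon (L^{n+2}\oplus L^{n+2})\times_G UL^{n+2}\to\mathfrak{g}$ covering the identity on the base, restricting to $\phi^{-1}$ on the kernel, and intertwining the marked sections $\Delta\times T^r$ and $d^r_{n+2}$. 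Equivalently, using the essential-surjectivity construction in the proof of Proposition \ref{cat-equiv}, I fix the linear section $\alpha'$ of $P=s_{n+3}\oplus s_{n+2}$ and show that the relative cocycle it produces equals $T^r\oplus G$ under the identification $\phi$; since $\mathrm{F}$ is an equivalence and $\mathcal{C}^r=\mathrm{F}(\Delta,\,UL^{n+2},\,T^r\oplus G)$, the desired isomorphism follows. This reduces the statement to three verifications.

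The first is the module identification. I must check that $\phi\colon M\to UL^{n+2}$, determined by sending the class of $t_{(n+2)(n+3)}$ to $1$, is an isomorphism of $(L^{n+2}\oplus L^{n+2})$-modules, where $M$ carries the action $m\mapsto[m,\alpha'(v\oplus w)]$ and $UL^{n+2}$ the action $a\mapsto va-aw$. Freeness of rank one is read off from Propositions \ref{def_kg,n} and \ref{def_hg,n}: the ideal $\langle t_{(n+2)(n+3)}\rangle$ modulo its own commutators is generated by a single element over the enveloping algebra of the quotient, and one checks on generators—using $(\mathrm{S}_g)$ to convert $[t_{(n+2)(n+3)},x_{n+2}^a]$-type brackets into the multiplication action—that the two module structures agree.

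The remaining two verifications are the cocycle computations. With the section $\alpha'$ the classifying cocycle is $c(v,w)=[\alpha'(v),\alpha'(w)]-\alpha'([v,w])$ and the section defect is $\omega(x)=d^r_{n+2}(x)-\alpha'(\Delta x)$, both valued in $M$. For the commutator cocycle I evaluate on generators of the two summands: the only nonzero contributions are the cross terms between the first and second copies, where $(\mathrm{S}_g)$ gives $[x_{n+2}^a,y_{n+3}^a]=t_{(n+2)(n+3)}$ together with its skew partner, while the shared-strand brackets $[t_{j(n+2)},t_{j(n+3)}]$ are rewritten via $(\mathrm{F4T}_g)$ as multiples of $t_{(n+2)(n+3)}$; comparing with $G(a_1\oplus a_2,b_1\oplus b_2)=\tilde{\rho}_G(a_1,b_2)-\tilde{\rho}_G(b_1,a_2)$ and the prescribed entries of $\tilde{\rho}_G$ shows $c=G$ under $\phi$. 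For the defect, the string-splitting identity $d_{n+2}=\alpha'\circ\Delta$ holds on every generator (it is just $x_{n+2}^a\mapsto x_{n+2}^a+x_{n+3}^a$, and likewise for the $y$ and $t$ generators), so the $d_{n+2}$-part cancels and only the correction proportional to $r$ survives; its image under $\phi$ is $r(x)\cdot 1=q^r(x)=T^r(x)$ by the definition of $q^r$ and Theorem \ref{quasi-iso}.

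The conceptual skeleton is routine once the machinery of Chapter \ref{chap_coho} is in place; the real work—and the only place where errors can enter—is the index-and-sign bookkeeping in the genus-$g$ infinitesimal braid algebra. I expect the delicate points to be: verifying that $\alpha'$ is a well-defined linear section and that $P\circ d^r_{n+2}=\Delta$, so that $\omega$ is genuinely $M$-valued; pinning down the precise effect of $s_{n+2}$ and $s_{n+3}$ with their relabellings; and, above all, computing the shared-strand brackets $[t_{j(n+2)},t_{j(n+3)}]$ in $\mathfrak{g}$ with the correct sign so that they reproduce the diagonal value $\tilde{\rho}_G(t_{j(n+2)},t_{j(n+2)})=-t_{j(n+2)}$. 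These rest on careful use of $(\mathrm{S}_g)$, $(\mathrm{FT}_g)$ and $(\mathrm{F4T}_g)$ together with the two structural propositions identifying $K_{g,n}$ and $H_{g,n}$, which is precisely why the argument is deferred to the appendix.
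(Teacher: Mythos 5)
Your reduction is legitimate in outline, and it is in fact the same computation the paper performs, repackaged through the equivalence $\mathrm{F}\colon\mathbf{RelCo}\to\mathbf{RelE}$: the paper also works with a section $\alpha$ of $P$ and a module map $\beta\colon UL^{n+2}\to\mathfrak{g}$ with $\beta(1)=t_{(n+2)(n+3)}$ (your $\phi^{-1}$), and its central identity, Proposition \ref{psiinverse}, is exactly your claim that the commutator cocycle of $\alpha$ equals $G$ under the kernel identification, while its final step is your claim that the section defect corresponds to $T^r$. So the skeleton is not the issue.

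The genuine gap is your verification (1). You assert that the $(L^{n+2}\oplus L^{n+2})$-module isomorphism $\phi\colon M\to UL^{n+2}$ can be ``read off'' from Propositions \ref{def_kg,n} and \ref{def_hg,n}. It cannot: those propositions only supply presentations, and $\overline{\mathfrak{h}}_{g,n+3}$ is not a free Lie algebra. Its relations couple $t_{(n+2)(n+3)}$ to the remaining generators --- for instance $(\mathrm{FT}_g)$ at $i=n+2$ rewrites $t_{(n+2)(n+3)}$ as $-\sum_a[x_{n+2}^a,y_{n+2}^a]-\sum_{j\le n+1}t_{j(n+2)}$ in $\overline{\mathfrak{h}}_{g,n+3}$ --- so a priori these relations could impose extra relations on $M=\langle t_{(n+2)(n+3)}\rangle/[\langle t_{(n+2)(n+3)}\rangle,\langle t_{(n+2)(n+3)}\rangle]$ and destroy the expected cyclic structure. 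Ruling that out (i.e.\ proving injectivity of $a\mapsto\beta(a)$) is where all the difficulty of the appendix proof is concentrated: the paper obtains the module statement only as Corollary \ref{M_is_free}, a \emph{consequence} of having the two-sided extension isomorphism, and the one delicate input is the well-definedness of $\varphi$ on the quotient in Proposition \ref{iso1} (the step $[[v,t_{(n+2)(n+3)}],t_{(n+2)(n+3)}]\in\ker\varphi$, resting on the decomposition $\langle x_n\rangle_{L_n}=[L_n,x_n]\oplus\mathbb{K}x_n$). Taking $\phi$ as an input therefore makes your argument circular in its hardest part. A secondary, smaller gap: you verify the cocycle identity ``on generators,'' but a Chevalley--Eilenberg $2$-cocycle such as $c(v,w)=[\alpha'(v),\alpha'(w)]-\alpha'([v,w])$ is not determined by its values on pairs of generators; the paper closes this with Lemma \ref{betaint} and the subalgebra trick of Proposition \ref{psiinverse} (the set $B$ of elements satisfying the identity against a fixed generator is a subalgebra containing all generators). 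Your write-up correctly flags the sign bookkeeping as delicate --- indeed the $[t_{j(n+2)},t_{j(n+3)}]$ computation via $(\mathrm{F4T})$ is sensitive to the sign convention chosen for $G$ --- but the proof only becomes complete once you either import Propositions \ref{iso1}, \ref{psiinverse} and Corollary \ref{M_is_free}, or give independent proofs of the module isomorphism and of the cocycle identity for arbitrary arguments.
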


\subsection{Parenthesized chord diagrams}\label{cat_int_2}

In this section we compute the brackets and cobrackets associated to the $\mathbf{PaCD}^f$-operad module $\mathbf{PaCD}^f_g$. We also define the higher genus Grothendieck-Teichm\"uller groups $\mathrm{GRT}_g$. We will only deal with $\mathrm{GRT}$ from an operadic point of view, following \cite{gonz} and \cite{as-op}. We refer the reader to these sources for more details.

\begin{definition}[\cite{gonz}, \S 1]
     Let $\mathcal{P}$ and $\mathcal{Q}$ be two operads in groupoids. Given an operad morphism $f:\mathrm{Ob}(\mathcal{P}) \rightarrow \mathrm{Ob}(Q)$, we define the fake pullback of $\mathcal{Q}$ along $f$, $f^*\mathcal{Q}$, by:
    \begin{itemize}
        \item $\mathrm{Ob}(f^*\mathcal{Q}(n))=\mathrm{Ob}(\mathcal{P}(n))$.
        \item $\mathrm{Mor}_{f^*\mathcal{Q}(n)}(p,q)= \mathrm{Mor}_{\mathcal{Q}(n)}(p,q)$. 
    \end{itemize}
    The pullback construction carries through to the category of operad modules in groupoids.
\end{definition}

\begin{definition}[\cite{gonz}, \S 2.2.2]
\label{def_PaCD}
    \textit{Operad of parenthesized framed chord diagrams.} Let $\mathbf{PaCD}^f$ be the operad in $\mathbf{Cat(CoAlg)}$
        \begin{equation*}
            \mathbf{PaCD}^f = \omega_1^* (\mathbf{CD}^f),
        \end{equation*}
        where $\omega_1: \mathrm{Ob}(\mathbf{Pa)}\rightarrow \mathrm{Ob}(\mathbf{CD}^f)$ is the obvious operad morphism.
\end{definition}

\begin{definition}[\cite{gonz}, \S 3.4.2]  
\label{PaCDfg}
    \textit{$\mathbf{PaCD}^f$-module of parenthesized framed genus $g$ chord diagrams.} Let $\mathbf{PaCD}^f_g$ be the $\mathbf{PaCD}^f$-operad module in $\mathbf{Cat(CoAlg)}$
        \begin{equation*}
            \mathbf{PaCD}^f_g = \omega_2^* (\mathbf{CD}^f_2),
        \end{equation*}
        where $\omega_2: \mathrm{Ob}(\mathbf{Pa)}\rightarrow \mathrm{Ob}(\mathbf{CD}^f_g)$ is the obvious operad module morphism.
\end{definition}

Recall the notation
\begin{equation*}
    \Upsilon_n \defeq \mathrm{End}_{\mathbf{PaCD}^f(n)}(d^{n-1}(1)), \hspace{2em} \Upsilon_n^g \defeq \mathrm{End}_{\mathbf{PaCD}^f_g(n)}(d^{n-1}(1)).
\end{equation*}
Let $1\leq i \leq n$. Last section, we defined maps $s_i : \Upsilon_n^g \rightarrow \Upsilon_{n-1}^g$, $d_i: \Upsilon_n^g \rightarrow \mathrm{End}_{\mathbf{PaCD}^f_g(n)}(d_id^{n-1}(1))$ (see equations (\ref{strand_rem}) and (\ref{strand_add}), respectively). When we are dealing with an operad instead of an operad module, we can define additional operations, $d_0$ and $d_{n+1}$:
\begin{equation*}
    \begin{split}
        d_0: \hspace{1 em} \Upsilon_n & \longrightarrow \mathrm{End}_{\mathbf{PaCD}^f(n)}(d^{n}(1))\\
        A_n & \longmapsto \mathbf{m} \circ_2 A_n;
    \end{split} \hspace{2em}
    \begin{split}
        d_{n+1}: \hspace{1 em} \Upsilon_n & \longrightarrow \mathrm{End}_{\mathbf{PaCD}^f(n)}(d_{n+1}d^{n-1}(1))\\
        A_n & \longmapsto \mathbf{m} \circ_1 A_n;
    \end{split} \hspace{2em}
\end{equation*}
where as before, $\mathbf{m} \in \mathrm{End}_{\mathbf{PaCD}^f(2)}(12)$ stands for the algebraic unit.

As is common in the literature, we can represent morphisms in $\mathbf{PaCD}^f$ and $\mathbf{PaCD}^f_g$ graphically. A few distinguished ones are:
\begin{equation*}
    T \defeq t_{11} \cdot
    \begin{tikzcd}[row sep=large]
        1 \\
        1 \arrow[u]
    \end{tikzcd} \in \mathrm{End}_{\mathbf{PaCD}^f(1)}(1); 
    \hspace{2 em}
    X \defeq 1 \cdot
    \begin{tikzcd} [row sep= large, column sep=tiny]
        \hspace{0.5 em}2 & 1 \hspace{0.5 em} \\
         \hspace{0.5 em}1 \arrow[ur] & 2 \hspace{0.5 em}\arrow[ul]
    \end{tikzcd} \in \mathrm{Mor}_{\mathbf{PaCD}^f(2)}(12, 21);
\end{equation*}
\begin{equation*}
    a^{1,2,3} \defeq 1 \cdot 
    \begin{tikzcd} [row sep=large, column sep=small]
        1\,\, \,\,\,\, & (2 \,\,\,\, 3) \\
        (1 \,\,\,\, 2) \arrow[u, xshift=-1.1 ex] \arrow[ur, xshift=1 ex, shorten <= -.3em] &\,\,\,\,\,\,\,\, 3 \arrow[u, xshift=1.5 ex]
    \end{tikzcd} \in \mathrm{Mor}_{\mathbf{PaCD}^f(3)}((12)3, 1(23));
    \hspace{2 em}
    H \defeq t_{12} \cdot 
    \begin{tikzcd} [row sep=large]
        1\,\, 2\\
        1\,\, 2 \arrow[u, xshift= 1ex] \arrow[u, xshift=-1ex]
    \end{tikzcd} \in \mathrm{End}_{\mathbf{PaCD}^f(2)}(12).
\end{equation*}
\begin{proposition}[\cite{pavol}, \S 2.2] 
\label{pav_prop}
    The elements $T$, $X$, and $a^{1,2,3}$ are generators of the $\mathbf{PaCD}^f$-module $\mathbf{PaCD}^f_g$.
\end{proposition}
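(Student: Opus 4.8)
The plan is to reduce the statement to a computation inside the framed genus-$g$ Drinfeld--Kohno Lie algebras, and then to exhibit every Lie generator as an operadic composite of $T$, $X$ and $a^{1,2,3}$. Since $\mathbf{PaCD}^f=\omega_1^*(\mathbf{CD}^f)$ and $\mathbf{PaCD}^f_g=\omega_2^*(\mathbf{CD}^f_g)$ are fake pullbacks and $\hat{\mathrm{U}}$ is symmetric monoidal, the object-level data is rigid (objects are the parenthesizations fixed by $\omega_1,\omega_2$), so the entire content lives on morphisms. Hence it suffices to show that, as $n$ ranges over all arities, the endomorphism algebras $\hat{\mathrm{U}}(\mathfrak{t}_{g,n}^f)$ are spanned by operadic composites of the three generators together with the vertical composition in the $\mathrm{CoAlg}$-enriched hom-categories. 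As each such algebra is the completed enveloping algebra of $\mathfrak{t}_{g,n}^f$ and vertical composition realizes the associative product, this in turn reduces to showing that the Lie generators $t_{ij}$, $x_i^a$, $y_i^a$ all lie in the submodule generated by $T$, $X$, $a^{1,2,3}$.

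First I would dispatch the chord generators. The element $T=t_{11}$ lives in arity one, and the composition law gives $T\circ_1^{1,2}\mathbf{m}=t_{12}=H$; iterating the string-doubling maps $d_i$ and permuting strands by $X$ produces every $t_{ij}$ in every arity, exactly as in the genus-zero Bar-Natan--Severa generation of $\mathbf{PaCD}^f$ by the twist, braiding and associator. In particular the whole genus-zero operad $\mathbf{PaCD}^f$, viewed inside $\mathbf{PaCD}^f_g$ through the canonical module map, is generated, which accounts for all morphisms not involving the handle classes.

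The crux, and the main obstacle, is to produce the handle generators $x_i^a$, $y_i^a$. These are genuinely new morphisms: they already occur in arity one inside $\mathfrak{t}_{g,1}^f$ and are \emph{not} in the image of the genus-zero operad, so the base case of an induction on arity is precisely where the difficulty concentrates (no genus-zero move can, on its face, create a loop around a handle). The mechanism I would exploit is the module composition recorded in \eqref{op_tn}: string-doubling sends a handle class to $\sum_p x_p^a$, the braiding $X$ permutes these summands, and the arity-zero compositions $s_i$ annihilate the doubled copy on the deleted strand, so that once a single handle class is present in some arity one can isolate each individual $x_i^a$ and $y_i^a$. The essential remaining point is therefore to realize one handle class from $T$, $X$, $a^{1,2,3}$ alone; here I would follow Pavol's argument in \cite[\S 2.2]{pavol}, expressing the handle morphism as the composite that geometrically glues two boundary strands of a genus-zero configuration (the operation carried by the module action on the relevant low-arity object), and checking against the presentation \eqref{Sg}--\eqref{F4Tg} that the relation $\sum_a[x_i^a,y_i^a]=-\sum_{j\neq i}t_{ij}-2(g-1)t_{ii}$ of \eqref{FTg} is respected. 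Confirming that this composite lands on $x_1^a,y_1^a\in\mathfrak{t}_{g,1}^f$ themselves, rather than merely on their symmetric combinations, is the delicate step, and is precisely where the explicit normal form for $\mathbf{CD}^f_g$ does the real work.
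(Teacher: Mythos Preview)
The paper does not prove this proposition at all: it is stated with a citation to \cite[\S 2.2]{pavol} and no argument is given. The only thing the paper extracts from it is the identity $H=d_1T-d_0T-d_2T$, which lives entirely inside the operad $\mathbf{PaCD}^f$. So there is no ``paper's own proof'' to compare against; the question is whether your sketch stands on its own.

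It does not, and the gap is exactly where you locate it. Your treatment of the chord generators $t_{ij}$ is fine and essentially the standard Bar-Natan--\v{S}evera argument for $\mathbf{PaCD}^f$. But your handling of the handle classes $x_i^a,y_i^a$ is both circular and structurally impossible. Circular, because your crucial step is ``follow Pavol's argument in \cite[\S 2.2]{pavol}'', which is precisely the reference the proposition is quoting; you have not supplied an argument, only re-cited the source. Structurally impossible, because the only compositions available in a \emph{right} $\mathbf{PaCD}^f$-module are $\mathcal{M}(n)\otimes\mathcal{P}(m)\to\mathcal{M}(n+m-1)$ together with the categorical (vertical) composition; there is no ``gluing two boundary strands to create a handle'' among these. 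Starting from identities in $\mathcal{M}$ and acting on the right by $T,X,a^{1,2,3}\in\mathbf{PaCD}^f$ only ever produces morphisms in the image of $\hat U\mathfrak t_n^f\hookrightarrow \hat U\mathfrak t_{g,n}^f$; the handle classes $x_i^a,y_i^a$ are not in that image. (Note also that the identity $H=d_1T-d_0T-d_2T$ the paper records uses the left compositions $d_0,d_{n+1}$, which the paper explicitly says are available only on the operad side, not on the module.)

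The most plausible reading is that the proposition, as phrased in this paper, is about generation of the operad $\mathbf{PaCD}^f$ (this is what \cite{pavol} actually proves and what the paper actually uses), and the appearance of ``$\mathbf{PaCD}^f_g$'' is an imprecision. Under that reading your first paragraph is the whole proof and the handle discussion should be deleted. If you insist on the literal module reading, then the statement is false as written without adding the handle morphisms as further generators, and no amount of operadic bookkeeping will close the gap.
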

Indeed, for future reference, notice that we can write
\begin{equation} \label{H_from_T}
    H=d_1 T- d_0T -d_2T.
\end{equation}

\begin{definition}\label{GRT_def}
    The genus $g$ graded Grothendieck-Teichm\"uller group $\mathrm{GRT}_g$ is the automorphism group
    \begin{equation*}
        \mathrm{GRT}_g=\mathrm{Aut}_{\mathrm{OpR}\, \mathbf{Cat(CoAlg)}}^+(\mathbf{PaCD}^f,\mathbf{PaCD}^f_g),
    \end{equation*}
    where the superscript $+$ indicates that elements of $\mathrm{GRT}_g$ are the identity on objects. 
\end{definition}

Consider the operad $\mathbf{PaCD}^f$. Notice that
\begin{equation*}
    \mathrm{End}_{\mathbf{PaCD}^f(1)}(1)=U\mathfrak{t}_1^f=\mathbb{K}t_{11}.
\end{equation*}
We can observe that $(\mathrm{PaCD}^f, \mathrm{PaCD}^f_g, t_{11}) \in \mathrm{Ob}(\mathrm{OpR}^\Delta)$. Indeed, for all $n\in \mathbb{N}$,
\begin{equation*}
    \mathrm{Ob}(\mathbf{PaCD}^f(n))=\mathrm{Ob}(\mathbf{PaCD}^f_g(n))=\mathrm{Ob}(\mathbf{Pa}(n)).
\end{equation*}
Also, in accordance with the operadic composition of (\ref{op_tn}),
\begin{equation*}
    t_{ii} \circ_1 \mathbf{u}= \sum_{\{p,q\} \subset \varnothing} t_{pq}=0.
\end{equation*}

Now take $(f,g)\in \mathrm{GRT}_g$. Since elements of $\mathrm{GRT}_g$ are the identity on objects, then necessarily
\begin{equation*}
    f(t_{11})= k_f t_{11} \hspace{1em} \text{for some}\, k_f\in \mathbb{K}. 
\end{equation*}
\begin{definition}
    We define $\mathrm{GRT}^g_1$ to be the subgroup of $\mathrm{GRT_g}$ characterized by the condition that
    \begin{equation*}
        f(t_{11})=t_{11} \hspace{1em} \text{for all }\,(f,g) \in \mathrm{GRT}_1^g.
    \end{equation*}
\end{definition}

The previous discussion implies that $\mathrm{GRT}_1^g \hookrightarrow \mathrm{OpR}\, \mathbf{Cat(CoAlg)}^\Delta$. By Proposition \ref{gen_squares}, this inclusion guarantees the existence of a map
\begin{equation*}
    \begin{tikzcd}
        \mathrm{Aut}_{\mathrm{OpR}^\Delta}(\mathrm{PaCD}^f,\mathrm{PaCD}^f_g, t_{11}) \supset \mathrm{GRT}_1^g \arrow[r, "\Pi_{n+2}"] & \mathrm{Aut}_{\mathbf{PRelE}}(\Pi_{n+2}(\mathrm{PaCD}^f, \mathrm{PaCD}^f_g))
    \end{tikzcd}.
\end{equation*}
\begin{lemma}\label{C_in_prele}
    For $n \geq 0$ and all $g$, up to isomorphism in $\mathbf{PRelE}$,
    \begin{equation*}
        \Pi_{n+2}(\mathbf{PaCD}^f, \mathbf{PaCD}^f_g,t_{11})=
        \begin{tikzcd}
            \mathfrak{k}_{g,n+2} \arrow[r, " \pi"] \arrow[d, "d_{n+2}"] & L^{n+2} \arrow[d, "\Delta"]\\
            \mathfrak{g} \arrow[r, swap, "P"] & L^{n+2} \oplus L^{n+2}
        \end{tikzcd}.
    \end{equation*}
\end{lemma}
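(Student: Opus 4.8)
The plan is to unwind the definition of the functor $\Pi_{n+2}$ from Proposition \ref{gen_squares}, applied to the object $(\mathbf{PaCD}^f, \mathbf{PaCD}^f_g, t_{11})$, and to identify each of the four Lie algebras in the resulting $\mathbf{PRelE}$ diagram with the braid-algebra quotients $\mathfrak{k}_{g,n+2}$, $L^{n+2}$, $\mathfrak{g}$, and $L^{n+2}\oplus L^{n+2}$ from Section \ref{sec_braid_alg}. The first step is to pass from the operadic description to the algebraic one: since $\mathbf{PaCD}^f_g = \omega_2^*(\mathbf{CD}^f_g)$ with $\mathbf{CD}^f_g = \hat{\mathrm{U}}(\mathfrak{t}^f_g)$ built from single-object categories, the endomorphism coalgebra $\Upsilon^g_m = \mathrm{End}_{\mathbf{PaCD}^f_g(m)}(d^{m-1}(1))$ is the enveloping algebra of $\mathfrak{t}^f_{g,m}$, and the operations $s_i$, $d_i$ of (\ref{strand_rem}) and (\ref{strand_add}) restrict on primitives to the string-deletion and string-splitting maps of Section \ref{sec_braid_alg}. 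Under the primitive-element convention of Proposition \ref{gen_squares} this gives $K_{n+2} = K_{g,n+2}$ and $H_{n+3} = H_{g,n+3}$, which by Propositions \ref{def_kg,n} and \ref{def_hg,n} are isomorphic to $\mathfrak{k}_{g,n+2}$ and $\mathfrak{h}_{g,n+3}$. The one genuinely computational input is the evaluation of the marked elements: inserting the self-chord $t_{11}\in\mathrm{End}_{\mathbf{PaCD}^f(1)}(1)$ into strand $i$ produces $t_{ii}$, so $q_i(1\otimes t_{11}) = t_{ii}$, whence the ideals by which we quotient are $\langle t_{(n+2)(n+2)}\rangle$ and $\langle t_{(n+3)(n+3)}\rangle$.

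Next I would match the quotients. By the previous step $\overline{K}_{n+2} = \mathfrak{k}_{g,n+2}/\langle t_{(n+2)(n+2)}\rangle = \overline{\mathfrak{k}}_{g,n+2}$; using the defining relation of $\mathfrak{k}_{g,n+2}$ to eliminate the generator $t_{1(n+2)}$ once $t_{(n+2)(n+2)}$ is killed identifies $\overline{\mathfrak{k}}_{g,n+2}$ with the free Lie algebra $L^{n+2}$ on $x^a_{n+2}, y^a_{n+2}, t_{2(n+2)}, \dots, t_{(n+1)(n+2)}$. Likewise $H_{n+3}/\langle t_{(n+2)(n+2)}, t_{(n+3)(n+3)}\rangle = \overline{\mathfrak{h}}_{g,n+3}$, so the algebra $\mathfrak{g}_{n+3}$ of Proposition \ref{gen_squares} is the further quotient of $\overline{\mathfrak{h}}_{g,n+3}$ by the ideal $\langle[\ker(s_{n+3}\oplus s_{n+2}), \ker(s_{n+3}\oplus s_{n+2})]\rangle$.

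The crux is then to prove that $\mathfrak{g}_{n+3}$ coincides with the algebra $\mathfrak{g}$ of Section \ref{sec_braid_alg}, which amounts to the ideal identity $\ker(s_{n+3}\oplus s_{n+2}) = \langle t_{(n+2)(n+3)}\rangle$ inside $\overline{\mathfrak{h}}_{g,n+3}$. One inclusion is immediate: since $s_{n+3}$ and $s_{n+2}$ each delete a strand incident to the chord $t_{(n+2)(n+3)}$, that element lies in the kernel, and the kernel of a Lie map is an ideal, so $\langle t_{(n+2)(n+3)}\rangle\subseteq\ker(s_{n+3}\oplus s_{n+2})$. For the reverse inclusion I would invoke Lemma \ref{ker_sis}: writing $q\colon\overline{\mathfrak{h}}_{g,n+3}\to\mathfrak{g}$ for the quotient by $[\langle t_{(n+2)(n+3)}\rangle, \langle t_{(n+2)(n+3)}\rangle]$, the map $s_{n+3}\oplus s_{n+2}$ kills $\ker q$ and so factors as $P\circ q$, giving $\ker(s_{n+3}\oplus s_{n+2}) = q^{-1}(\ker P) = q^{-1}(M)$; since $M$ is the image of $\langle t_{(n+2)(n+3)}\rangle$ and $\ker q = [\langle t_{(n+2)(n+3)}\rangle, \langle t_{(n+2)(n+3)}\rangle]\subseteq\langle t_{(n+2)(n+3)}\rangle$, this preimage is exactly $\langle t_{(n+2)(n+3)}\rangle$. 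Hence the two ideals agree and $\mathfrak{g}_{n+3} = \mathfrak{g}$.

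Finally, I would check that under these identifications the four structure maps of $\Pi_{n+2}(\mathbf{PaCD}^f, \mathbf{PaCD}^f_g, t_{11})$ become the canonical projection $\pi$, the diagonal $\Delta$, the string-splitting $d_{n+2}$, and $P = s_{n+3}\oplus s_{n+2}$, so that the commutative square is the asserted one; because all the identifications are Lie isomorphisms compatible with these maps, they assemble into an isomorphism in $\mathbf{PRelE}$. I expect the main obstacle to be the index bookkeeping around the two strand-deletion maps — in particular tracking the renumbering of strand $n+3$ to $n+2$ under $s_{n+2}$ so that both $s_{n+3}$ and $s_{n+2}$ genuinely land in the same copy $L^{n+2} = \overline{\mathfrak{k}}_{g,n+2}$ — rather than any deep difficulty, since Lemma \ref{ker_sis} already supplies the essential kernel computation.
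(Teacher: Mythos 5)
Your proposal is correct and follows essentially the same route as the paper's proof: unwind Proposition \ref{gen_squares}, compute $q_i(1\otimes t_{11})=t_{ii}$, invoke Propositions \ref{def_kg,n} and \ref{def_hg,n} to identify $K_{g,n+2}\cong\mathfrak{k}_{g,n+2}$ and $H_{g,n+3}\cong\mathfrak{h}_{g,n+3}$, pass to the quotients to get $L^{n+2}$, and use Lemma \ref{ker_sis} to match $\mathfrak{e}_{g,n+3}$ with $\mathfrak{g}$. Your factorization argument $\ker(s_{n+3}\oplus s_{n+2})=q^{-1}(\ker P)=q^{-1}(M)=\langle t_{(n+2)(n+3)}\rangle$ actually spells out a step the paper compresses into a single citation of Lemma \ref{ker_sis} (which is stated at the level of $\mathfrak{g}$ rather than $\overline{\mathfrak{h}}_{g,n+3}$), so it is a welcome refinement of the same argument rather than a different one.
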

\begin{proof}
    This proof will amount to compiling some of our previous notation. Notice that
    \begin{equation*}
        q_i ^{\mathbf{PaCD}^f}(1 \otimes t_{11})=1 \circ_{i}^{n,1} t_{11}=t_{ii}.
    \end{equation*}
    Then, by definition,
        \begin{equation*}
            \Pi_{n+2}(\mathbf{PaCD}^f, \mathbf{PaCD}^f_g, t_{11})=
        \begin{tikzcd} [column sep= large]
            K_{g,n+2} \arrow[r, "\pi"] \arrow[d, swap, "d_{n+2}"] & K_{g,n+2}/\langle t_{(n+2)(n+2)} \rangle \arrow[d, "\Delta"]\\
            \mathfrak{e}_{g,n+3} \arrow[r, swap, "s_{n+2} \circ s_{n+3}"] & (K_{g,n+2}/\langle t_{(n+2)(n+2)} \rangle)^{\oplus 2}
        \end{tikzcd},
        \end{equation*}
    where 
        \begin{equation}
            \mathfrak{e}_{g,l}=(H_{g,l}/\bigoplus_{i=l-1,l}\langle t_{ii}\rangle)/\langle [\ker s_{l} \oplus s_{l-1}, \ker s_{l} \oplus s_{l-1}] \rangle, 
        \end{equation}
        and $K_{g,l}$ and $H_{g,l}$ slot into short exact sequences:
        \begin{equation*}
            \begin{tikzcd}[column sep= large, row sep= tiny]
            K_{g, l} \arrow[r] & \mathfrak{t}_{g,l}^f \arrow[r, "s_{l}"] & \mathfrak{t}_{g,l-1}^f,\\
            H_{g, l} \arrow[r] & \mathfrak{t}_{g,l}^f \arrow[r, "s_{l-1}\circ s_{l}"] & \mathfrak{t}_{g,l-2}^f.
            \end{tikzcd}
        \end{equation*}
    Recall, in accordance with Proposition \ref{def_kg,n}, how
    \begin{equation*}
        K_{g,l} \cong \mathfrak{k}_{g,l},
    \end{equation*}
    where the former is the degree completion of the free Lie algebra in $2g+l$ generators $\mathrm{Lie}(x_n^a, y_n^a, t_{il}, \dots t_{ll})_{1 \leq a \leq g}$ subject to the relation
    \begin{equation*}
         \sum_{a=1}^g [x_l^a, y_l^a]=-\sum_{j:j\neq l} t_{lj}-2(g-1)t_{ll}.
    \end{equation*}
    It clearly follows that 
    \begin{equation}\label{k_is_free}
        K_{g,n+2}/\mathbb{K}t_{(n+2)(n+2)} \cong \widehat{\mathrm{Lie}}(x_{n+2}^a, y_{n+2}^a, t_{2(n+2)}, \dots t_{(n+1)(n+2)}) = L^{n+2}.
    \end{equation}
    Likewise, recall how Proposition (\ref{def_hg,n}) sets up the isomorphism
    \begin{equation*}
        H_{g,l} \cong \mathfrak{h}_{g,l},
    \end{equation*}
    where the latter algebra is defined in terms of generators and relations, analogously to $\mathfrak{k}_{g,l}$. Lemma \ref{ker_sis} also established that $\ker s_{n+3}\oplus s_{n+2}=\langle t_{(n-2)(n-3)}\rangle$. Comparing the definition of $\mathfrak{g}$ and $\mathfrak{e}_{g,n+3}$, we can conclude the proof:
    \begin{equation*}
        \mathfrak{g}= (\mathfrak{h}_{g,n+3}/ \bigoplus_{i=n+2,n+3}\langle t_{ii}\rangle)/[\langle t_{(n-3)(n-2)}\rangle, \langle t_{(n-3)(n-2)}\rangle].
    \end{equation*}  
\end{proof}

Given any linear map $r:L^{n+2}\rightarrow \mathbb{K}$, we can define a section $\gamma^r: \overline{K}_{g,n+2} \rightarrow K_{g,n+2}$ by
\begin{equation*}
    \gamma^r(x)=x +r(x)t_{(n+2)(n+3)}
\end{equation*}
for all $x \in L^{n+2} \cong \overline{K}_{g,n+2}$.
\begin{lemma}
    Let $n \geq 0$ be fixed. Then given any linear map $r: L^{n+2} \rightarrow \mathbb{K}$, 
    \begin{equation*}
        ( (\mathrm{PaCD}^f, \mathrm{PaCD}^f_g, t_{11}),\, \gamma^r:\overline{K}_{g,n+2} \rightarrow K_{g,n+2},\, \phi: M \rightarrow UL^{n+2}) \in \mathrm{OpR}_f^\Delta(n+2),
    \end{equation*}
    where $\phi$ is the unique $L\oplus L$-module isomorphism uniquely defined by the condition that
    \begin{equation}\label{phi_def}
            \phi(t_{(n+2)(n+3)})=1.
    \end{equation}
\end{lemma}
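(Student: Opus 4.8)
The plan is to verify, one by one, the three pieces of data required by Definition \ref{op_delta} with the index specialized to $n+2$: that the underlying triple lies in $\mathrm{OpR}^\Delta$, that $\overline{K}_{g,n+2}$ is (isomorphic to) a free Lie algebra, that $\gamma^r$ is a genuine Lie section of $\pi\colon K_{g,n+2}\to\overline{K}_{g,n+2}$, and that $\phi$ is a well-defined isomorphism of $\overline{K}_{g,n+2}\oplus\overline{K}_{g,n+2}$-modules. Two of these are immediate. The membership $(\mathbf{PaCD}^f,\mathbf{PaCD}^f_g,t_{11})\in\mathrm{OpR}^\Delta$ was already established in the discussion preceding Lemma \ref{C_in_prele}: the operad and module share the objects of $\mathbf{Pa}$, so the pair lies in $(\mathbf{Pa},\mathbf{Pa})/\mathrm{OpR}\,\mathbf{Cat}(\mathbf{CoAlg})$, and $t_{11}\in\mathrm{End}_{\mathbf{PaCD}^f(1)}(1)=\mathbb{K}t_{11}$ satisfies $t_{11}\circ_1\mathbf{u}=0$ by the composition rule (\ref{op_tn}). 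The freeness requirement is exactly (\ref{k_is_free}), which identifies $\overline{K}_{g,n+2}=K_{g,n+2}/\langle t_{(n+2)(n+2)}\rangle$ with the completed free Lie algebra $L^{n+2}$.

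I would then treat $\gamma^r$. Since $\ker\pi=\mathbb{K}\,t_{(n+2)(n+2)}$ is spanned by the central element $t_{(n+2)(n+2)}$, the map adds to each free generator a multiple of a central element, and because $\overline{K}_{g,n+2}\cong L^{n+2}$ is free it suffices to prescribe $\gamma^r$ on generators and extend multiplicatively; there are no relations to respect, so $\gamma^r$ is a well-defined Lie homomorphism. Applying $\pi$ kills the added central term, so $\pi\gamma^r=\mathrm{id}$ on generators, and as both sides are Lie homomorphisms they agree everywhere, whence $\gamma^r$ is a section of $\pi$.

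The substantive point, and the step I expect to be the main obstacle, is that $\phi$ is a well-defined module isomorphism. By Lemma \ref{ker_sis}, $M=\ker(P\colon\mathfrak{g}\to L^{n+2}\oplus L^{n+2})$ is abelian, so the adjoint action of $\mathfrak{g}$ descends to the stated $L^{n+2}\oplus L^{n+2}$-module structure, independent of the section $\alpha'$. By its very definition as $\langle t_{(n+2)(n+3)}\rangle/[\langle\cdot\rangle,\langle\cdot\rangle]$, the module $M$ is cyclic with generator the class of $t_{(n+2)(n+3)}$; likewise $UL^{n+2}$ under the action $(x\oplus y)\cdot a=xa-ay$ is cyclic with generator $1$. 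A homomorphism out of a cyclic module is determined by the image of its generator, which yields \emph{uniqueness} of $\phi$ once existence is known, so the entire content is that the assignment $t_{(n+2)(n+3)}\mapsto 1$ extends to a bijective module map, i.e. that the annihilators of the two distinguished generators coincide. I would extract this from Proposition \ref{Cgamma_is_C}: the isomorphism $\mathcal{D}^r\cong\mathcal{C}^r$ in $\mathbf{RelE}$ restricts on kernels to an isomorphism $M=\ker P\xrightarrow{\sim}\ker P_1=UL^{n+2}$ of $L^{n+2}\oplus L^{n+2}$-modules, and this identification is the one normalized so that $t_{(n+2)(n+3)}\mapsto 1$, as encoded in the definitions (\ref{rho_G_tilde}) and (\ref{phi_def}). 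The analytic heart beneath that isomorphism is the freeness of $\mathfrak{k}_{g,n+2}$ and $\mathfrak{h}_{g,n+3}$ (Propositions \ref{def_kg,n} and \ref{def_hg,n}), which forces the relation module $M$ to be precisely the cyclic module modeled on $UL^{n+2}$. Once all three conditions are checked, the triple lies in $\mathrm{OpR}^\Delta_f(n+2)$, and the uniqueness clause follows from the cyclicity observation above.
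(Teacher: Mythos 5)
Your proof is correct and takes essentially the same route as the paper: the only substantive point is that $M$ is freely generated by the class of $t_{(n+2)(n+3)}$ as an $L^{n+2}\oplus L^{n+2}$-module, which both you and the paper extract from the appendix isomorphism $\mathfrak{g}\cong (L^{n+2}\oplus L^{n+2})\times_G UL^{n+2}$ --- the paper by citing Corollary \ref{M_is_free} directly, you by restricting the $\mathbf{RelE}$-isomorphism of Proposition \ref{Cgamma_is_C} to kernels, which is exactly how that corollary is proved. Your extra verifications (membership of $(\mathbf{PaCD}^f,\mathbf{PaCD}^f_g,t_{11})$ in $\mathrm{OpR}^\Delta$ and the fact that $\gamma^r$ is a genuine Lie-algebra section) are handled by the paper in the surrounding text rather than inside the proof, so they do not constitute a different approach, only a more self-contained write-up.
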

\begin{proof}
    We already know that $\overline{K}_{g,n+2}$ is isomorphic to a free Lie algebra. For clarity, we suppress the isomorphism $K_{g,n+2}/\mathbb{K}t_{(n+2)(n+2)} \cong L^{n+2}$. Recall that
    \begin{equation*}
        M \defeq \langle t_{(n+2)(n+3)} \rangle_{\overline{\mathfrak{h}}_{g,n+3}} / [\langle t_{(n+2)(n+3)} \rangle, \langle t_{(n+2)(n+3)} \rangle]_{\overline{\mathfrak{h}}_{g,n+3}}.
    \end{equation*}
    The space $M$ is freely generated by $t_{(n+2)(n+3)}$ as an $L^{n+2} \oplus L^{n+2}$-module (cf. Corollary \ref{M_is_free}), so that condition (\ref{phi_def}) is indeed enough to define an isomorphism $\phi$.
\end{proof}

\begin{proposition}\label{PaCD--GoTU}
    Let $n\geq 0$ be fixed. Then the functor $\Lambda_{n+2}: \mathrm{OpR}^\Delta_f (n+2)\rightarrow \mathbf{GoTu}$ satisfies
        \begin{equation*}
            \begin{tikzcd}
                ( (\mathrm{PaCD}^f, \mathrm{PaCD}^f_g, t_{11}),\, \gamma^r:\overline{K}_{g,n+2} \rightarrow K_{g,n+2},\, \phi: M \rightarrow UL^{n+2}) \arrow[r, "\Lambda_{n+2}", mapsto] & (UL^{n+2}, [-,-]^{\tilde{\rho}_G}, \delta_{q^r})
            \end{tikzcd},
        \end{equation*}
        where the Fox pairing $\tilde{\rho}_G$ and quasi-derivation $q^r$ are defined in equations (\ref{rho_G_tilde}) and (\ref{q_r}), respectively.
\end{proposition}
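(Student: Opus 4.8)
The strategy is to unwind the definition of $\Lambda_{n+2}$ as the composite $\Psi \circ \mathrm{E}^{-1} \circ \mathrm{F}^{-1} \circ \mathrm{D}_{n+2}$ from Definition \ref{def_opr_del_f}, and to track the image of the given object through each of the four functors, matching it at each stage against the objects $\mathcal{D}^r$, $\mathcal{C}^r$ and the cocycle $T^r \oplus G$ already assembled in Section \ref{sec_braid_alg}. The proof is therefore essentially bookkeeping: nearly all the geometric and homological content has been packaged into the preceding results, and the task is to verify that the decorations line up through the composite.

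First I would apply $\mathrm{D}_{n+2}$. By construction this sends our object to the pair $(\mathcal{A}^{\gamma^r}, \phi)$, and I would invoke Lemma \ref{C_in_prele} to identify the underlying extension square with the one having $\overline{K}_{g,n+2} \cong L^{n+2}$ via \eqref{k_is_free} and $\mathfrak{g}_{n+3} = \mathfrak{g}$. Under these identifications $\mathcal{A}^{\gamma^r}$ becomes $\mathcal{D}^r$; the point requiring genuine verification is that the section $d_{n+2} \circ \gamma^r$ agrees, up to the identifications of Lemma \ref{C_in_prele}, with $d^r_{n+2}$. This is a direct consequence of the operadic composition rule \eqref{op_tn}: the map $d_{n+2}$ carries the generator spanning $\ker \pi$ into $M$, so the scalar correction $r(x)$ appearing in $\gamma^r$ is transported to the corresponding multiple of the generator of $M$ recorded in $d^r_{n+2}$. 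The module decoration $\phi \colon M \to UL^{n+2}$ of \eqref{phi_def} is carried along unchanged.

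Next I would pass through $\mathrm{F}^{-1}$. Here I would use Proposition \ref{Cgamma_is_C}, which gives an isomorphism $\mathcal{D}^r \cong \mathcal{C}^r$ in $\mathbf{RelE}$; the extra check needed is that this isomorphism respects the module decorations, i.e. that it intertwines $\phi$ with the identity of $UL^{n+2}$, so that it is in fact an isomorphism in the decorated category $\mathbf{RelE}^\eta$. Since $\mathcal{C}^r$ is manifestly of the form $\mathrm{F}(\Delta \colon L^{n+2} \to L^{n+2} \oplus L^{n+2},\, UL^{n+2},\, T^r \oplus G)$ by Proposition \ref{co_del=e_del}, applying $\mathrm{F}^{-1}$ returns the relative cocycle object $(\Delta,\, UL^{n+2},\, T^r \oplus G)$ with $T^r \oplus G$ as in \eqref{qf_1}--\eqref{rho_G_1}.

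Finally, $\mathrm{E}^{-1}$ and $\Psi$ complete the calculation. By the defining property of $T^r \oplus G$ --- it is precisely the cocycle corresponding to $q^r \oplus \tilde{\rho}_G$ under the equivalence $\mathrm{E}$ of Theorem \ref{quasi-iso} --- we obtain $\mathrm{E}^{-1}(\Delta,\, UL^{n+2},\, T^r \oplus G) = (L^{n+2},\, q^r \oplus \tilde{\rho}_G)$, and then $\Psi$ sends this to $(UL^{n+2},\, [-,-]^{\tilde{\rho}_G},\, \delta_{q^r})$ by Theorem \ref{RelCo--GoTu}, which is exactly the claimed image. I expect the main obstacle to be the compatibility of decorations through the chain: concretely, upgrading the bare $\mathbf{RelE}$-isomorphism of Proposition \ref{Cgamma_is_C} to an $\mathbf{RelE}^\eta$-isomorphism by verifying it is compatible with $\phi$ and the identity, together with the index-level check in the first step that $d_{n+2} \circ \gamma^r$ reproduces the section $d^r_{n+2}$ rather than a competing one.
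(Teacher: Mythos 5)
Your proposal is correct and follows essentially the same route as the paper's proof: decompose $\Lambda_{n+2}$ along the chain $\Psi \circ \mathrm{E}^{-1} \circ \mathrm{F}^{-1} \circ \mathrm{D}_{n+2}$, use Lemma \ref{C_in_prele} to identify the extension square, verify $d^r_{n+2} = d_{n+2} \circ \gamma^r$, invoke Proposition \ref{Cgamma_is_C} to pass to $\mathcal{C}^r$, and conclude via the defining property of $T^r \oplus G$ under Theorem \ref{quasi-iso} together with Theorem \ref{RelCo--GoTu}. If anything, your explicit flagging of the decoration compatibility (that the $\mathbf{RelE}$-isomorphism of Proposition \ref{Cgamma_is_C} intertwines $\phi$ with $\mathrm{id}_{UL^{n+2}}$, upgrading it to $\mathbf{RelE}^\eta$) is a point the paper leaves implicit, so your write-up is slightly more careful on that step.
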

\begin{proof}
    Recall that the functor $\Lambda_{n+2}$ consists of the composition
    \begin{equation*}
        \begin{tikzcd}[row sep=small]
            \mathrm{OpR}^\Delta_f(n+2) \arrow[r, "\mathrm{D}_{n+2}"]   & \mathbf{RelE}^\eta \arrow[r, "\mathrm{F}^{-1}", "\cong"'] & \mathbf{RelCo}^\eta \arrow[r, "\mathrm{E}^{-1}", "\cong"'] & \mathbf{Fox}^\eta \arrow[r, "\Psi"] & \mathbf{GoTu},
        \end{tikzcd}
    \end{equation*}
    so we can break down the proof along this chain. The argument will only seemingly be complicated by some intervening isomorphisms.
    
    Consider the object of $\mathbf{PRelE}^\Gamma$
    \begin{equation*}
        \mathcal{C}^\Gamma \defeq
        \begin{tikzcd}
            \mathfrak{k}_{g,n+2} \arrow[r, "\pi"] \arrow[d, "d_{n+2}"] & L^{n+2} \arrow[d, "\Delta"] \\
            \mathfrak{g} \arrow[r, swap, "P"] & L^{n+2} \oplus L^{n+2}
        \end{tikzcd}.
    \end{equation*}
    By Lemma \ref{C_in_prele}, we know $\Pi_{n+2}(\mathrm{PaCD}^f, \mathrm{PaCD}^f_g, t_{11})=\mathcal{C}^\Gamma$. Now we notice that $d^r_{n+2}=d_{n+2} \circ \gamma^r$, which implies
    \begin{equation*}
        \mathcal{D}^r= \mathrm{P} \left( 
        \begin{tikzcd}
            \mathfrak{k}_{g,n+2} \arrow[r, "\pi"] \arrow[d, "d_{n+2}"] & L^{n+2} \arrow[d, "\Delta"] \arrow[l, bend right=40, swap, "\gamma^r"]\\
            \mathfrak{g} \arrow[r, swap, "P"] & L^{n+2} \oplus L^{n+2}
        \end{tikzcd} \right).
    \end{equation*}
    This in turn means $\mathrm{D}_{n+2}((\mathcal{P}, \mathcal{M}, t_{11}),\gamma^r, \phi)=(\mathcal{D}^r, \phi)$. According to Proposition \ref{Cgamma_is_C}, $\mathcal{D}^r$ and the commutative diagram
    \begin{equation*}
        \mathcal{C}^r=
    \begin{tikzcd}
        & L^{n+2} \arrow[d, "\Delta"] \arrow[dl, "\Delta \times T^r" swap]\\
         L^{n+2} \oplus L^{n+2} \times_G UL^{n+2} \arrow[r, "P_1" swap] & L^{n+2} \oplus L^{n+2}
    \end{tikzcd}
    \end{equation*}
    are isomorphic in $\mathbf{RelE}$. The object in $\mathbf{RelCo}^\eta$ corresponding to $(\mathcal{C}^r, \phi)$ under the equivalence of categories of Proposition \ref{cat-equiv} is
    \begin{equation*}
        \mathcal{C}^r=(\Delta: L^{n+2} \rightarrow L^{n+2} \oplus L^{n+2}, UL^{n+2},  T^f \oplus G \in Z^2(L^{n+2}\oplus L^{n+2}, L^{n+2}; UL^{n+2})) \in \mathrm{Ob}(\mathbf{RelCo}^\eta),
    \end{equation*}
    which we abusively denote by the same symbol as its diagram counterpart. Now notice that
    \begin{equation*}
        \mathrm{E}(L^{n+2}, q^r \oplus \tilde{\rho}_G)=(\Delta:L^{n+2} \oplus L^{n+2} \rightarrow L^{n+2}, UL^{n+2}, T^r \oplus G).
    \end{equation*}
    This implies the desired result:
    \begin{equation*}
        \Lambda_{n+2}((\mathrm{PaCD}^r, \mathrm{PaCD^f}_g),\gamma^r, \phi)=(UL^{n+2}, [-,-]^{\tilde{\rho}_G}, \delta_{q^{r}}).
    \end{equation*}
\end{proof}

\section{Higher genus Kashiwara-Vergne groups} \label{krv_chap}

The higher genus Kashiwara-Vergne ($\mathrm{KV}$) problems were introduced in \cite{Flor1}. The higher genus graded Kashiwara-Vergne groups $\mathrm{KRV}_g$ act freely and transitively on the set of solutions of the corresponding KV problem (cf. \cite[Theorem 8.4]{Flor1}). The $\mathrm{KRV}_g$ groups are topological in nature. Given a framing 
\begin{equation*}
    \begin{tikzcd}
    f:\, T\Sigma_{g,n+1} \arrow[r, "\cong"] & \Sigma_{g,n+1} \times \mathbb{R}^2
\end{tikzcd}
\end{equation*}
on a compact oriented surface of genus $g$ with $n+1$ boundary components, the group $\mathrm{KRV^f_{g,n+1}}$ can be recovered from the automorphism group of the Goldman-Turaev Lie bialgebra of $\Sigma_{g,n+1}$ (cf. Theorem \ref{special}). This topological description is the only treatment we give the groups $\mathrm{KRV}_g$ in this text; we refer the reader to \cite{Flor1} for more details. Our main goal for this chapter is to prove that $(UL^{n+2}, [-,-]^{\rho_G}, \delta_{q^r})$ is isomorphic to the associated graded of the Goldman-Turaev Lie bialgebra on the surface $\Sigma_{g,n+1}$ (with respect to a framing dependent on the linear map $r: L^{n+2} \rightarrow \mathbb{K}$). This will indirectly yield a morphism between (a subgroup of) $\mathrm{GRT}_g$ and $\mathrm{KRV}_{g,n+1}$.

\subsection{The Goldman-Turaev bialgebra} \label{GT_bialg}

As previously mentioned, $\mathrm{KRV}_{g,n+1}^f$ is intimately tied to the automorphism group of the Goldman-Turaev Lie bialgebra on $\Sigma_{g,n+1}$. In this subsection, we review this in some detail. We also pinpoint the Fox pairing and quasi-derivation that induce the relevant Goldman bracket and Turaev cobracket. We closely follow \cite{Flor1}.

We fix a labeling of the boundary components of $\Sigma_{g,n+1}$, 
\begin{equation*}
    \partial \Sigma_{g,n+1}= \bigsqcup_{j=0}^n \partial_j \Sigma_{g,n+1},
\end{equation*}
and mark a point $*\in \partial_0 \Sigma_{g,n+1}$. We call the collection $\{\alpha_i, \beta_i, \gamma_j\}$ for $i=1, \dots g$, $j=1, \dots ,n$ of elements of $\pi \defeq \pi_1(\Sigma_{g,n+1}, *)$ a \textit{generating system} if each loop $\gamma_j$ is freely homotopic to the $j$th boundary component, and also
\begin{equation*}
    \gamma_0=\prod_{i=1}^g \alpha_i \beta_i \alpha_i^{-1} \beta_i^{-1}\prod_{j=1}^n\gamma_j.
\end{equation*}

The Goldman bracket $[-,-]$ is a Lie bracket defined on the space of cyclic words on the group algebra $\mathbb{K}\pi$. It is induced by a double bracket
\begin{equation*}
    \kappa: \mathbb{K}\pi \otimes \mathbb{K}\pi \rightarrow \mathbb{K}\pi \otimes \mathbb{K}\pi
\end{equation*}
in the sense of Proposition \ref{brac_from_double}. The Turaev cobracket $\delta^f$ is a Lie cobracket on $|\mathbb{K}\pi|$. Its definition depends on a choice of framing $f$ on $\Sigma_{g,n+1}$; that is, it depends on a choice of trivialization of the tangent bundle of $\Sigma_{g,n+1}$,
\begin{equation*}
    \begin{tikzcd}
        f:\, T\Sigma_{g,n+1} \arrow[r, maps to, "\cong"] & \Sigma_{g,n+1} \times \mathbb{R}^2
    \end{tikzcd}.
\end{equation*}
The Goldman bracket and Turaev cobracket endow the space $(|\mathbb{K}\pi|, [-,-], \delta^f$) with the structure of a Lie bialgebra.

It is preferable to deal with the double bracket $\kappa$ as opposed to $[-,-]$, since the former satisfies multiplicative relations on $\mathbb{K}\pi$. Likewise, we will mostly handle the operations
\begin{equation*}
    \mu_r^f: \mathbb{K}\pi \rightarrow |\mathbb{K}\pi| \otimes \mathbb{K}\pi, \hspace{1em} \mu_l^f:\mathbb{K}\pi \rightarrow \mathbb{K}\pi \otimes |\mathbb{K}\pi|,
\end{equation*}
which are specified entirely by their values on the generating system, but which nevertheless characterize $\delta^f$ by
\begin{equation*}
    \delta^f=(\mathrm{id}\otimes|\cdot|)\mu_r^f+(|\cdot| \otimes \mathrm{id})\mu_l^f.
\end{equation*}

Given a generating system $\alpha_i,\beta_i$ for $i=1,\dots g$, and $\gamma_j$ for $j=1, \dots n$, we denote the corresponding homology classes in $H \defeq H_1(\pi, \mathbb{K})$ by
\begin{equation*}
    [\alpha_i]=x_i, \hspace{1em} [\beta_i]=y_i, \hspace{1em} [\gamma_j]=z_j.
\end{equation*}

\begin{definition}[\cite{Flor1}, Definition 3.6]
\label{wght_pi} 
We define a weight filtration on $\mathbb{K}\pi$ by the assignment
\begin{equation*}
    \mathrm{wt}(\alpha_i-1)=\mathrm{wt}(\beta_i-1)=1, \hspace{ 1em} \mathrm{wt}(\gamma_j-1)=2,
\end{equation*}
for $i=1,\dots, g$ and $j=1,\dots n$.    
\end{definition}
These choices also induce a weight filtration on $H$, where now
\begin{equation*}
    \mathrm{deg}(x_i)=\mathrm{deg}(y_i)=1, \hspace{1em} \mathrm{deg}(z_j)=2.
\end{equation*}
Despite the way it is defined, the weight filtration on $\mathbb{K}\pi$ is independent of the choice of generating system of $\pi$ (cf. \cite[Proposition 3.9]{Flor1}).

The tensor algebra $T(\mathrm{gr^{wt}}H)$ is a graded Hopf algebra over $\mathbb{K}$ with respect to the operations specified by
\begin{equation*}
    \Delta(a)=a \otimes 1 + 1 \otimes a, \hspace{1em} \varepsilon(a)=0, \hspace{1em} S(a)=-a
\end{equation*}
for all $a \in H$. Let $L=L_{2g +n}$ denote the degree completion of the free Lie algebra with generators $x_i$, $y_i$ for $i=1, \dots g$, and $z_j$ for $j=1, \dots n$. The completion
\begin{equation*}
    \hat{T}(\mathrm{gr^wt} H)\defeq \prod_{m \geq 0}(\mathrm{gr^wt}H)^{\otimes m}
\end{equation*}
is naturally isomorphic to the the algebra of power series $A \defeq \mathbb{K}\langle \langle x_i, y_i, z_j \rangle \rangle \cong UL.$

\begin{proposition}[\cite{Flor1}, Proposition 3.5] \label{A_is_grad}
    There is a unique isomorphism of complete graded Hopf algebras
    \begin{equation*}
        \mathrm{gr^{wt}}(\mathbb{K}\pi) \cong\ \hat{T}(\mathrm{gr^{wt}} H) \cong A,
    \end{equation*}
    which maps $\mathrm{gr}(\gamma_j)$ to $z_j$.
\end{proposition}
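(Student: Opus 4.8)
The plan is to exploit that $\pi = \pi_1(\Sigma_{g,n+1})$ is a \emph{free} group. Since $\Sigma_{g,n+1}$ has nonempty boundary, it deformation retracts onto a wedge of $2g+n$ circles, so $\pi$ is free of rank $2g+n$, freely generated by $\alpha_1,\dots,\alpha_g,\beta_1,\dots,\beta_g,\gamma_1,\dots,\gamma_n$ (the loop $\gamma_0$ being determined by the boundary relation, hence not an independent generator). First I would fix the completed free associative algebra $R \defeq \mathbb{K}\langle\!\langle X_i, Y_i, Z_j \rangle\!\rangle$ on primitive generators $X_i, Y_i$ (for $1\le i\le g$) and $Z_j$ (for $1\le j\le n$), and define a group-like Magnus expansion $\theta\colon \pi \to R^{\times}$ by $\theta(\alpha_i)=\exp(X_i)$, $\theta(\beta_i)=\exp(Y_i)$, $\theta(\gamma_j)=\exp(Z_j)$. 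Because $\pi$ is free this extends uniquely to a group homomorphism, and since the exponential of a primitive is group-like, the linear extension $\mathbb{K}\pi \to R$ is a morphism of Hopf algebras. The classical Magnus theorem then yields that the induced map on completions $\hat\theta\colon \widehat{\mathbb{K}\pi}\to R$ is an isomorphism of complete Hopf algebras.

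Next I would grade $R$ by declaring $\deg X_i = \deg Y_i = 1$ and $\deg Z_j = 2$, and equip it with the associated descending filtration. The heart of the argument is to show that $\hat\theta$ intertwines this with the weight filtration on $\mathbb{K}\pi$ of Definition \ref{wght_pi}. One inclusion is immediate: $\theta(\alpha_i)-1 = X_i + \tfrac12 X_i^2 + \cdots$ and $\theta(\gamma_j)-1 = Z_j + \tfrac12 Z_j^2 + \cdots$ have leading weights $1$ and $2$ respectively, so $\hat\theta$ is weight-decreasing. For the reverse inclusion I would use that $\hat\theta^{-1}(X_i)=\log\alpha_i$ and $\hat\theta^{-1}(Z_j)=\log\gamma_j$ lie in weights $\ge 1$ and $\ge 2$, which follows from $\mathrm{wt}(\alpha_i-1)=1$ and $\mathrm{wt}(\gamma_j-1)=2$ together with the fact that $\log(1+u)=u-\tfrac12 u^2+\cdots$ preserves the lower bound on weight. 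Hence $\hat\theta$ is a filtered isomorphism for the weight filtrations and induces an isomorphism on associated graded. Since the associated graded of a free associative algebra for a filtration by positive generator-degrees is again the free graded associative algebra on the same generators, this gives
\[
    \mathrm{gr}^{\mathrm{wt}}(\mathbb{K}\pi)\;\cong\;\mathrm{gr}^{\mathrm{wt}}R\;\cong\;\hat{T}(\mathrm{gr}^{\mathrm{wt}}H)\;\cong\;A,
\]
where the middle identification uses that $\mathrm{gr}^{\mathrm{wt}}H$ is spanned by $x_i,y_i$ in weight $1$ and $z_j$ in weight $2$, and the last is the identification recorded just before the statement.

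Finally, the isomorphism is one of Hopf algebras because $\hat\theta$ was built from a group-like expansion, so it intertwines the coproducts under which $x_i,y_i,z_j$ are primitive; this places $H$ in the primitives and identifies $A$ with $UL$ for $L$ the free Lie algebra on these generators. For uniqueness I would argue that a graded Hopf isomorphism is determined by its restriction to the generators: the weight-$1$ part forces $x_i,y_i$ through the canonical identification $\mathrm{gr}_1^{\mathrm{wt}}H = H_1$, while the stated normalization $\mathrm{gr}(\gamma_j)\mapsto z_j$ pins down the remaining weight-$2$ generators, and freeness propagates this to all degrees. I expect the main obstacle to be the filtration-matching step, specifically verifying that $\hat\theta^{-1}$ is weight-decreasing, which is precisely where the nonstandard weight $2$ of $\gamma_j$ must be reconciled with the expansion; this in turn relies on the well-definedness of the weight filtration independently of the generating system (\cite[Proposition 3.9]{Flor1}), so that the computation is canonical rather than an artifact of the chosen generators.
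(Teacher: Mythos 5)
This proposition is not proved in the paper at all: it is imported verbatim from \cite{Flor1} (Proposition 3.5), so there is no internal proof to compare against. Your argument is correct and is essentially the standard one used in that reference: since $\pi$ is free on $\alpha_i,\beta_i,\gamma_j$, the group-like exponential expansion $\alpha_i\mapsto\exp(X_i)$, $\beta_i\mapsto\exp(Y_i)$, $\gamma_j\mapsto\exp(Z_j)$ gives a Hopf isomorphism on completions, and checking that it and its inverse are filtered for the weight filtration (the only place the nonstandard weight $2$ of $\gamma_j$ enters, handled exactly as you do via $\log\gamma_j$) yields the isomorphism on associated graded, with uniqueness pinned down on generators as you indicate.
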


By exploiting this isomorphism, we may view the associated graded of the Goldman bracket and Turaev cobracket as maps
\begin{equation*}
    [-,-]_{\mathrm{gr}}: |A| \otimes |A| \rightarrow |A|, \hspace{1em} \delta_{\mathrm{gr}}^f:|A| \rightarrow |A| \otimes|A|.
\end{equation*}
As in the non-graded case, these maps are uniquely characterized by $\kappa_{\mathrm{gr}}$ and $\mu_{r,\mathrm{gr}}^f$, $\mu_{l,\mathrm{gr}}^f$.
\begin{lemma}[\cite{Flor1}, Lemma 3.14]
    The non-vanishing generator entries of the graded double bracket are
        \begin{equation*}
            \kappa_{\mathrm{gr}}(x_i,y_i)=-\kappa_{\mathrm{gr}}(y_i, x_i)=1 \otimes 1, \hspace{1em} \kappa_{\mathrm{gr}}(z_j, z_j)=z_j \otimes 1 -1 \otimes z_j.
        \end{equation*}
\end{lemma}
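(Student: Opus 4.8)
The plan is to reduce the statement to a computation on the algebra generators and then carry out that computation geometrically. Since $\kappa_{\mathrm{gr}}$ is the associated graded of the Goldman double bracket $\kappa$, it is again a double bracket on $A\cong UL$ in the sense of the Definition preceding Proposition \ref{brac_from_double}; in particular it obeys the two Leibniz identities (\ref{double}) in each slot. As $A$ is generated as an algebra by the $x_i=\mathrm{gr}(\alpha_i)$, $y_i=\mathrm{gr}(\beta_i)$, $z_j=\mathrm{gr}(\gamma_j)$, the double bracket $\kappa_{\mathrm{gr}}$ is determined by its values on pairs of generators, so it suffices to evaluate those. Moreover, with respect to the weight filtration of Definition \ref{wght_pi}, each transverse intersection point contributes a term whose two tensor factors are each shortened by one unit, so $\kappa$ lowers total weight by exactly $2$; hence $\kappa_{\mathrm{gr}}(u,v)$ lands in weight $\mathrm{wt}(u)+\mathrm{wt}(v)-2$. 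For $(u,v)=(x_i,y_i)$ this forces the output into weight $0$, i.e. a scalar multiple of $1\otimes1$, and for $(u,v)=(z_j,z_j)$ into weight $2$.

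For the handle entries I would fix a standard model of $\Sigma_{g,n+1}$ in which $\alpha_i$ and $\beta_i$ meet transversally in a single point $p_i$ with sign $+1$, distinct handle curves are disjoint, and each $\gamma_j$ is isotoped into a collar of $\partial_j\Sigma_{g,n+1}$ so as to be disjoint from all handle curves and from the other $\gamma_k$. Substituting these representatives into the topological (homotopy intersection form) definition of $\kappa$, the only surviving handle/handle or handle/boundary contribution is $\kappa(\alpha_i,\beta_i)$, coming from $p_i$; its leading, weight-$0$ part is $+\,1\otimes1$ once all arc decorations collapse to $1$ under $\mathrm{gr}$. Antisymmetry of the intersection sign then gives $\kappa_{\mathrm{gr}}(y_i,x_i)=-\,1\otimes1$, and disjointness kills $\kappa_{\mathrm{gr}}$ on every remaining handle/handle, handle/boundary, and distinct-boundary pair.

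The delicate point, and the step I expect to be the main obstacle, is the self-entry $\kappa_{\mathrm{gr}}(z_j,z_j)$. Although $\gamma_j$ is simple and disjoint from its boundary pushoff, its self double bracket is nonzero: the based representatives of the two copies must both leave from and return to the marked point $*$, so they necessarily cross inside any collar of $*$, and this crossing is invisible to a naive transversality argument on the interior. Resolving it in the two possible ways produces, before passing to $\mathrm{gr}$, the difference $\gamma_j\otimes1-1\otimes\gamma_j$, with the full loop recorded on one side of the crossing and the empty arc on the other; taking leading terms and using $\mathrm{gr}(\gamma_j)=z_j$ yields $z_j\otimes1-1\otimes z_j$. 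The genuine work here is bookkeeping the basepoint decorations and the sign through this non-generic configuration; as a sanity check the induced Goldman bracket of this term is $|z_j|-|z_j|=0$, consistent with the fact that boundary classes are central in the Goldman Lie algebra.

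A cleaner alternative, which sidesteps re-deriving the intersection formula, is simply to quote the explicit Massuyeau--Turaev double bracket evaluated on the standard generating system and then pass to the associated graded; this is the route taken in \cite[Lemma 3.14]{Flor1}, the only additional content being the weight bookkeeping described above. Either way, the degree constraint from the first paragraph pins down the shape of each entry, and the geometry supplies the two nonzero coefficients.
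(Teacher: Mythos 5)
The first thing to note is that the paper does not prove this lemma at all: it is imported verbatim from \cite{Flor1} (Lemma 3.14), so the paper's ``proof'' is exactly your fallback option of quoting that reference and passing to the associated graded. In that sense your final paragraph coincides with the paper's treatment, and nothing more is demanded by the text you were asked to match.

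Read as an independent argument, your geometric sketch has the right skeleton --- reduce to generators via the double-bracket Leibniz rules, use that $\kappa$ has degree $-2$ for the weight filtration of Definition \ref{wght_pi} to pin down the shape of each entry, then compute intersections --- and this is essentially how \cite{Flor1} obtains the formula. But it has two genuine gaps. First, the vanishing of the remaining graded entries does not follow from disjointness of free representatives: $\kappa$ is defined on based loops, and based representatives necessarily interact near the marked point $*$, so the homotopy intersection form carries basepoint terms even for freely disjoint curves. One must check that, e.g., $\kappa(\gamma_j,\gamma_k)$ for $j\neq k$ lies in weight at least $3$ (so that its class in weight $2$ vanishes); this is a computation with the basepoint contributions, not a transversality statement. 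Second, the self-entry $\kappa_{\mathrm{gr}}(z_j,z_j)=z_j\otimes 1-1\otimes z_j$ is where all the content sits, and you explicitly defer the ``bookkeeping of basepoint decorations and signs'' rather than carrying it out; asserting that the resolution of the basepoint crossing yields $\gamma_j\otimes 1-1\otimes\gamma_j$ before grading is precisely the claim that needs proof. (Note also that you cannot appeal to Proposition \ref{rhoG_gives_brac} here: that proposition takes the present lemma as input and verifies that $\rho_G$ reproduces its entries, so using it would be circular.) So your sketch correctly identifies the obstacles but does not overcome them; a complete argument would require the explicit Massuyeau--Turaev intersection-form computation, at which point one is reproving, rather than citing, \cite{Flor1}.
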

Given a framing $f$ of $\Sigma$ we define a linear map $r^f: L \rightarrow \mathbb{K}$ by setting
\begin{equation*}
    r^f(x_i)=r^f(y_i)=0, \hspace{1em} r^f(z_j)=\mathrm{rot}^f(\gamma_j)+1
\end{equation*}
for the generators of $L$. Following \cite{Flor1}, we define the \textit{adapted framing} $f^{\mathrm{adp}}$ of $\Sigma$ as the framing with respect to which 
\begin{equation*}
    \mathrm{rot}^{f^{\mathrm{adp}}}(\alpha_i)=\mathrm{rot}^{f^{\mathrm{adp}}}(\beta_i)=0, \quad \mathrm{rot}^{f^{\mathrm{adp}}}(\gamma_j)=-1,
\end{equation*}
for all $i=1, \dots, a$ and $j=1, \dots n$. Notice that $r^{f^{\mathrm{adp}}}$ is identically zero.
\begin{lemma}[\cite{Flor1}, Proposition 3.22] \label{lem_mu_gr}
    Let $a=a_i \cdots a_m \in A$, with each $a_i \in L$. Then
    \begin{equation*}
        \begin{split}
            \mu_{r,\mathrm{gr}}^f(a)=& (|\cdot| \otimes \mathrm{id})\Biggl(\sum_{i=1}^m r^f(a_i)1 \otimes a_1 \cdots a_{i-1}a_{i+1} \cdots a_m\\
            &+\sum_{k=1}^m \kappa_{\mathrm{gr}}(a_1 \cdots a_{k-1}, a_k)(1 \otimes a_{k+1} \cdots a_m) \Biggr),
        \end{split}
    \end{equation*}
    and $\mu_{l,\mathrm{gr}}^f$ is specified by the condition
    \begin{equation}\label{mu_skew}
        \mu_{l,\mathrm{gr}}^f = - P(\mu_{l, \mathrm{gr}}^{f}),
    \end{equation}
    where $P(a \otimes b)=b\otimes a$ for all $a,b \in A$.
\end{lemma}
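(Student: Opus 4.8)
The plan is to prove the closed formula for $\mu_{r,\mathrm{gr}}^f$ by induction on the word length $m$, using a Leibniz-type recursion that $\mu_r^f$ satisfies under concatenation, and then to obtain the statement about $\mu_{l,\mathrm{gr}}^f$ from the skew-symmetry of the Turaev cobracket. The key observation is that the asserted formula is characterized by two pieces of data: its behaviour when one appends a single Lie generator on the right, and its values on short words; so it suffices to check that $\mu_{r,\mathrm{gr}}^f$ obeys the same recursion and base case.

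First I would record the ungraded recursion. Geometrically, $\mu_r^f$ measures based self-intersections of a loop relative to the framing $f$. When a class is presented as a product $u\cdot g$, the self-intersections of $u g$ organize into those internal to $u$, a contribution internal to the appended factor $g$ governed by its rotation number $\mathrm{rot}^f$, and the transverse intersections between $u$ and $g$, which are precisely recorded by the homotopy intersection pairing underlying the double bracket $\kappa$. This yields a relation of the schematic shape
\begin{equation*}
    \mu_r^f(u\cdot g) = \mu_r^f(u)\cdot g + (\text{framing term in } g) + (|\cdot| \otimes \mathrm{id})\,\kappa(u, g),
\end{equation*}
where $\cdot\, g$ acts on the $\mathbb{K}\pi$-factor; identifying the correction term with $\kappa$ is the essential topological input taken from \cite{Flor1}. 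Passing to the associated graded, the weight filtration of Definition \ref{wght_pi} is multiplicative and, by the preceding lemma of \cite{Flor1}, $\mathrm{gr}\,\kappa=\kappa_{\mathrm{gr}}$ with the stated generator values; hence the recursion descends to $A\cong\mathrm{gr}^{\mathrm{wt}}(\mathbb{K}\pi)$ via Proposition \ref{A_is_grad}. For a single Lie generator the framing term becomes exactly $r^f(\cdot)\,|1|\otimes(\cdots)$, by the very definition of $r^f$ through rotation numbers. Concretely, writing $a=a_1\cdots a_m$ with each $a_i\in L$ primitive, one obtains
\begin{equation*}
    \mu_{r,\mathrm{gr}}^f(a_1\cdots a_m) = \mu_{r,\mathrm{gr}}^f(a_1\cdots a_{m-1})\cdot a_m + r^f(a_m)\,|1|\otimes a_1\cdots a_{m-1} + (|\cdot|\otimes\mathrm{id})\,\kappa_{\mathrm{gr}}(a_1\cdots a_{m-1}, a_m).
\end{equation*}

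The induction is then a telescoping check. Assuming the closed formula for words of length $m-1$ and right-multiplying the second tensor factor by $a_m$, one recovers precisely the terms of the asserted formula with $i<m$ and $k<m$; the recursion above supplies exactly the missing $i=m$ summand $r^f(a_m)\,|1|\otimes a_1\cdots a_{m-1}$ and the $k=m$ summand $(|\cdot|\otimes\mathrm{id})\kappa_{\mathrm{gr}}(a_1\cdots a_{m-1},a_m)$ (whose suffix $a_{m+1}\cdots a_m$ is empty). The base cases $m\leq 1$ follow directly from the generator values of $\mu_{r,\mathrm{gr}}^f$, completing the induction. For the final assertion about $\mu_{l,\mathrm{gr}}^f$, I would invoke that $\delta^f$ is a genuine Lie cobracket, hence skew, $\delta^f_{\mathrm{gr}}=-P\delta^f_{\mathrm{gr}}$, together with the decomposition $\delta^f_{\mathrm{gr}}=(\mathrm{id}\otimes|\cdot|)\mu_{r,\mathrm{gr}}^f+(|\cdot|\otimes\mathrm{id})\mu_{l,\mathrm{gr}}^f$ and the structural symmetry relating the left and right self-intersection maps. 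Using $P(\mathrm{id}\otimes|\cdot|)=(|\cdot|\otimes\mathrm{id})P$, this forces $\mu_{l,\mathrm{gr}}^f=-P(\mu_{r,\mathrm{gr}}^f)$, so that $\delta^f_{\mathrm{gr}}=(1-P)(\mathrm{id}\otimes|\cdot|)\mu_{r,\mathrm{gr}}^f$ is manifestly skew, which is the content of equation (\ref{mu_skew}).

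I expect the main obstacle to lie in the second step rather than the combinatorial induction: one must justify the precise form of the concatenation recursion for $\mu_r^f$ from the geometry of self-intersections, identify the cross term with the double bracket $\kappa$ on the nose, and — most delicately — control the filtration degrees so that upon passing to $\mathrm{gr}^{\mathrm{wt}}$ only the displayed leading contributions survive. It is exactly here that the weight assignments $\mathrm{wt}(\alpha_i-1)=\mathrm{wt}(\beta_i-1)=1$ and $\mathrm{wt}(\gamma_j-1)=2$ are used, ensuring that the framing term and the $\kappa_{\mathrm{gr}}$-term land in the correct graded pieces and that no lower-order corrections contaminate the formula.
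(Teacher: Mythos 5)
First, a point of order: the paper does not prove this lemma at all --- it is imported verbatim from \cite{Flor1} (Proposition 3.22), so there is no internal argument to compare your proposal against. Judged on its own terms, the inductive part of your proposal is sound bookkeeping: granting the concatenation recursion $\mu_{r,\mathrm{gr}}^f(a_1\cdots a_m) = \mu_{r,\mathrm{gr}}^f(a_1\cdots a_{m-1})\cdot a_m + r^f(a_m)\,|1|\otimes a_1\cdots a_{m-1} + (|\cdot|\otimes\mathrm{id})\,\kappa_{\mathrm{gr}}(a_1\cdots a_{m-1},a_m)$ together with the generator values, the telescoping does reproduce the closed formula, and this route (a product formula for $\mu$, its values on generators, then passage to the associated graded) is indeed how the result is established in \cite{Flor1}. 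The difficulty is that you take the recursion itself --- the identification of the cross term with $\kappa$ and the control of weight-filtration degrees --- as ``essential topological input taken from \cite{Flor1}''. That input is precisely the substance of the proposition being proven, so as a self-contained argument your proposal largely reduces the cited result to the paper that is being cited.

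The genuine flaw is in your last step. Skew-symmetry of the cobracket cannot yield $\mu_{l,\mathrm{gr}}^f = -P(\mu_{r,\mathrm{gr}}^f)$: the identity $\delta_{\mathrm{gr}}^f = -P\delta_{\mathrm{gr}}^f$ constrains only the sum $(\mathrm{id}\otimes|\cdot|)\mu_{r,\mathrm{gr}}^f + (|\cdot|\otimes\mathrm{id})\mu_{l,\mathrm{gr}}^f$, and many pairs $(\mu_r,\mu_l)$ produce the same skew $\delta$ (in particular the projections $|\cdot|$ are far from injective, so one cannot separate the two summands). The implication runs the other way: the relation between $\mu_l$ and $\mu_r$, which in \cite{Flor1} comes from the geometric definitions of the two self-intersection maps (reading intersections at the terminal rather than the initial endpoint of the loop), is what makes $\delta_{\mathrm{gr}}^f$ manifestly skew. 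Your appeal to ``the structural symmetry relating the left and right self-intersection maps'' is exactly the statement to be proven, so this step is circular. Incidentally, equation (\ref{mu_skew}) as printed in the paper reads $\mu_{l,\mathrm{gr}}^f = -P(\mu_{l,\mathrm{gr}}^f)$, which is a typo; you correctly interpreted the intended relation $\mu_{l,\mathrm{gr}}^f = -P(\mu_{r,\mathrm{gr}}^f)$, which is the form used later in the paper's proof that $(\mathrm{id}\times|\cdot|)Pd_{(q^f)^t}=\mu_{l,\mathrm{gr}}^f$.
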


\begin{definition}\label{special}
    We denote by $\mathrm{tAut^+(L)}$ the set of \textit{tangential Lie automorphisms} of L with positive grading. That is, for each $F \in \mathrm{tAut^+(L)}$, there exist $f_j\in \exp (L)$, $j=1, \dots n$, such that
    \begin{equation}\label{tangential}
        F(z_j)= f_j^{-1}z_jf_j\,\, \text{for all}\,\,j.
    \end{equation}
\end{definition}

\begin{theorem}[\cite{Flor1}, Theorem 8.21] \label{AutGT=KRV}
    Let $\mathrm{tAut}^+(L, \kappa_{\mathrm{gr}}, \delta_{\mathrm{gr}}^f)$ denote the subgroup of $\mathrm{tAut}^+(L)$ which preserves the operations $\kappa_{\mathrm{gr}}$ and $\delta_{\mathrm{gr}}^f$. There exists a natural isomorphism
    \begin{equation*}
        \mathrm{tAut}^+(L, \kappa_{\mathrm{gr}}, \delta_{\mathrm{gr}}^f) \cong \mathrm{KRV}^{f}_{(g,n+1)}.
    \end{equation*}
\end{theorem}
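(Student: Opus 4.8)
The statement is quoted from \cite{Flor1}, so the plan is to reconstruct the identification between the two groups, both of which live inside $\mathrm{tAut}^+(L)$ (Definition \ref{special}). Under the isomorphism $\mathrm{gr}^{\mathrm{wt}}(\mathbb{K}\pi)\cong A\cong UL$ of Proposition \ref{A_is_grad}, the associated graded Goldman bracket and Turaev cobracket become the operations $\kappa_{\mathrm{gr}}$ and $\delta^f_{\mathrm{gr}}$ on $|A|$. I would start by unwinding the definition of $\mathrm{KRV}^f_{g,n+1}$ from \cite{Flor1}: it is the subgroup of $\mathrm{tAut}^+(L)$ cut out by two families of equations, a bracket (symplectic) equation I will call $(\mathrm{KV}1)$ and a framing-dependent divergence equation $(\mathrm{KV}2)$. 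The whole proof then reduces to showing that $(\mathrm{KV}1)$ is equivalent to preservation of $\kappa_{\mathrm{gr}}$ and $(\mathrm{KV}2)$ to preservation of $\delta^f_{\mathrm{gr}}$; the asserted isomorphism is the identity inclusion of one subgroup into the other, and its naturality is then automatic.

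For the bracket side, I would use that every $F\in\mathrm{tAut}^+(L)$ is in particular a Hopf algebra automorphism of $A=UL$, so it makes sense to ask that $F$ preserve the double bracket, i.e.\ $\kappa_{\mathrm{gr}}(Fa,Fb)=(F\otimes F)\kappa_{\mathrm{gr}}(a,b)$ for all $a,b$. Since $\kappa_{\mathrm{gr}}$ is a double bracket it satisfies the Leibniz rules of equation (\ref{double}) and is therefore determined by its values on generators; feeding in $\kappa_{\mathrm{gr}}(x_i,y_i)=1\otimes 1$ and $\kappa_{\mathrm{gr}}(z_j,z_j)=z_j\otimes 1-1\otimes z_j$ together with tangentiality $F(z_j)=f_j^{-1}z_jf_j$, the preservation condition collapses to a finite system of equations on the tangential data $\{f_j\}$ and on the images $F(x_i),F(y_i)$. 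Comparing with \cite{Flor1}, this system is exactly $(\mathrm{KV}1)$.

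The cobracket side is the crux. Writing $\delta^f_{\mathrm{gr}}=(\mathrm{id}\otimes|\cdot|)\mu^f_{r,\mathrm{gr}}+(|\cdot|\otimes\mathrm{id})\mu^f_{l,\mathrm{gr}}$ and using the explicit formula of Lemma \ref{lem_mu_gr}, I would compute the difference $\delta^f_{\mathrm{gr}}\circ F-(F\otimes F)\circ\delta^f_{\mathrm{gr}}$. The summand of $\mu^f_{r,\mathrm{gr}}$ built from $\kappa_{\mathrm{gr}}$ should, after invoking the bracket step, reassemble into a term that vanishes precisely when $\kappa_{\mathrm{gr}}$ is already preserved; the remaining two contributions are the $r^f$-weighted sum, which produces the framing term of $(\mathrm{KV}2)$, and a purely group-theoretic piece that one recognizes as the noncommutative (Alekseev--Torossian) divergence cocycle $\mathrm{div}(F)\in|A|$. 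Thus preservation of $\delta^f_{\mathrm{gr}}$ becomes the equation equating $\mathrm{div}(F)$ with the framing term built from $r^f$, which is the definition of $(\mathrm{KV}2)$. The main obstacle is exactly this bookkeeping: one must check that the $r^f$-dependent pieces and the divergence cocycle separate cleanly, so that the framing enters $\delta^f_{\mathrm{gr}}$ only through the additive correction $r^f$ and not through the $\kappa_{\mathrm{gr}}$-part, which is what makes $(\mathrm{KV}1)$ and $(\mathrm{KV}2)$ decouple.

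Combining the two steps, $F$ preserves both $\kappa_{\mathrm{gr}}$ and $\delta^f_{\mathrm{gr}}$ if and only if $F$ satisfies both $(\mathrm{KV}1)$ and $(\mathrm{KV}2)$, i.e.\ if and only if $F\in\mathrm{KRV}^f_{g,n+1}$. Since both sides are subgroups of $\mathrm{tAut}^+(L)$ and composition of automorphisms respects both preservation conditions, the identity map yields a group isomorphism $\mathrm{tAut}^+(L,\kappa_{\mathrm{gr}},\delta^f_{\mathrm{gr}})\cong\mathrm{KRV}^f_{g,n+1}$. Naturality follows because relabelling the generating system or changing the framing acts compatibly on the defining equations and on the bialgebra operations alike, so the identification commutes with these changes.
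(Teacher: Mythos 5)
There is an important mismatch of expectations here: the paper does not prove this statement at all. Theorem \ref{AutGT=KRV} is quoted verbatim from \cite{Flor1} (Theorem 8.21), and the surrounding text makes clear that this topological description is taken as a black box --- indeed, the paper explicitly says that this "is the only treatment we give the groups $\mathrm{KRV}_g$ in this text" and effectively uses the statement as a working definition of $\mathrm{KRV}^f_{g,n+1}$. So there is no internal proof to compare your attempt against; what you have written is an attempted reconstruction of the proof in the cited reference.

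Judged on its own terms, your outline captures the right architecture of the argument in \cite{Flor1} --- split the defining equations of $\mathrm{KRV}$ into a bracket-type condition and a divergence-type condition, and match them respectively with preservation of $\kappa_{\mathrm{gr}}$ and of $\delta^f_{\mathrm{gr}}$ --- but the two steps where you defer to "comparing with \cite{Flor1}" are precisely where all the content lies, so as a proof this has genuine gaps. First, on the bracket side, your claim that the preservation condition "collapses to a finite system of equations on the tangential data" determined by values on generators is not automatic: $\kappa_{\mathrm{gr}}$ obeys the Leibniz rules of a double bracket, but the images $F(x_i)$, $F(y_i)$ are infinite series, and identifying the resulting condition with the symplectic condition (essentially $F(\omega)=\omega$ up to the tangential normalization, cf.\ the notion of special automorphism and Proposition \ref{spec_fix_rho} in this paper) requires an argument, not just bookkeeping. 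Second, and more seriously, the equivalence between preservation of $\delta^f_{\mathrm{gr}}$ and the divergence equation is the central theorem of the Goldman--Turaev formality program: it requires introducing the noncommutative divergence cocycle, proving it is a group cocycle on $\mathrm{tAut}$, and showing that the leading term of the Turaev cobracket is governed by it. Your phrase "one recognizes as the noncommutative (Alekseev--Torossian) divergence cocycle" names the needed object but does not supply this identification, and the "clean separation" of the $r^f$-weighted term from the $\kappa_{\mathrm{gr}}$-part is exactly the delicate point (note also that $\delta^f_{\mathrm{gr}}$ is only defined on cyclic words $|A|$, so preservation cannot be checked generator-by-generator in the naive way your sketch suggests). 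In short: the skeleton is faithful to the cited proof, but the flesh is missing, and for the purposes of this paper the honest move is what the author does --- cite \cite{Flor1} rather than reprove it.
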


\begin{proposition}\label{rhoG_gives_brac}
    Let $\rho_G:A \times A \rightarrow A$ be the Fox pairing with non-vanishing generator entries given by:
    \begin{equation*}
        \rho_G(x_i,y_i)= 1, \hspace{1 em} \rho_G(y_i, x_i)=-1, \hspace{1em} \rho_G(z_j, z_j)=-z_j.
    \end{equation*}
    Then $\dbl -, - \dbr^{\rho_G}$ is the associated graded of the Goldman double bracket on the surface $\Sigma_{g, n+1}$, $\kappa_{\mathrm{gr}}$. 
\end{proposition}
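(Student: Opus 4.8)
The plan is to compare $\dbl-,-\dbr^{\rho_G}$ and $\kappa_{\mathrm{gr}}$ directly as double brackets on the completed free algebra $A\cong UL$, exploiting the principle that a double bracket is determined by its values on pairs of Lie generators. The verification then reduces to a short generator-by-generator computation against the values of $\kappa_{\mathrm{gr}}$ recalled above.

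First I would check that $\rho_G$ is skew-symmetric, so that the Massuyeau--Turaev construction recalled after equation (\ref{masbr}) indeed produces a \emph{double bracket} $\dbl-,-\dbr^{\rho_G}$. Both $\rho_G$ and $-\rho_G^t$ are Fox pairings, and a Fox pairing is determined by its values on generators: it is a left Fox derivative in the first slot and a right Fox derivative in the second, and each such derivative is fixed by its values on $L$ through the Poincar\'e--Birkhoff--Witt theorem, exactly as in the proof of Proposition \ref{def_E}. Hence it suffices to verify $\rho_G=-\rho_G^t$ on generators. Since $x_i,y_i,z_j$ are primitive, $S$ acts by $-\mathrm{id}$ on them, so $\rho_G^t(u,v)=S\rho_G(S(v),S(u))=S\rho_G(v,u)$; evaluating on the three nonzero entries and using $S(1)=1$, $S(z_j)=-z_j$ returns exactly $-\rho_G(u,v)$. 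On the other side, $\kappa_{\mathrm{gr}}$ is a double bracket, being the associated graded of the double bracket $\kappa$ that induces the Goldman bracket.

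Next I would record the uniqueness step. The difference $D\defeq\dbl-,-\dbr^{\rho_G}-\kappa_{\mathrm{gr}}$ is again a double bracket, since the defining identities (\ref{double}) are linear. These identities force $D(a,1)=D(1,b)=0$, and then a double induction on word length---expanding the second entry via the first identity and the first entry via the second---shows that $D$ vanishes identically once it vanishes on all pairs of generators; continuity extends this from the topological generators to all of $A$.

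It then remains to compute $\dbl u,v\dbr^{\rho_G}$ for generators $u,v$. Because $\rho(a,1)=\rho(1,b)=0$ for any Fox pairing and the generators are primitive, every term of (\ref{masbr}) with $a''=1$ or $b''=1$ vanishes, leaving the compact formula $\dbl u,v\dbr^{\rho_G}=(S\otimes\mathrm{id})\Delta\big(\rho_G(u,v)\big)$. This yields $\dbl x_i,y_i\dbr^{\rho_G}=1\otimes 1$, $\dbl y_i,x_i\dbr^{\rho_G}=-1\otimes 1$, and, from $\rho_G(z_j,z_j)=-z_j$, $(S\otimes\mathrm{id})\Delta(-z_j)=-S(z_j)\otimes 1-S(1)\otimes z_j=z_j\otimes 1-1\otimes z_j$, with all other generator pairs giving $0$. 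These agree term by term with the values of $\kappa_{\mathrm{gr}}$ recorded above (Lemma 3.14 of \cite{Flor1}), so $D$ vanishes on generators and hence $\dbl-,-\dbr^{\rho_G}=\kappa_{\mathrm{gr}}$. The only delicate point is the sign bookkeeping through the antipode in the $z_j$ case; the remaining verifications are mechanical.
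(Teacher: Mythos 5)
Your proposal is correct and takes essentially the same route as the paper: the paper's proof consists precisely of the generator-by-generator computation $\dbl x_i,y_i\dbr^{\rho_G}=1\otimes 1$, $\dbl y_i,x_i\dbr^{\rho_G}=-1\otimes 1$, $\dbl z_j,z_j\dbr^{\rho_G}=z_j\otimes 1-1\otimes z_j$, compared against the generator entries of $\kappa_{\mathrm{gr}}$ quoted from \cite{Flor1}. You additionally spell out what the paper leaves implicit --- the skew-symmetry of $\rho_G$ (which the paper only records after the proposition), the reduction of formula (\ref{masbr}) to $(S\otimes\mathrm{id})\Delta\bigl(\rho_G(u,v)\bigr)$ on primitive inputs, and the fact that a double bracket is determined by its values on pairs of generators --- but this is elaboration of the same argument rather than a different one.
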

\begin{proof}
    We are implicitly regarding $\{x_i, y_i, z_j\}$ as standard generators for $\hat{T}(\mathrm{gr^{wt}}\, H)$. By direct computation, we can verify that the non-zero entries of $\dbl - , - \dbr^{\rho_G}$ match those of $\kappa_\mathrm{gr}$:
    \begin{equation*}
        \dbl x_i, y_i \dbr^{\rho_G}=1 \otimes 1, \hspace{1em} \dbl y_i, x_i \dbr^{\rho_G}=-1 \otimes 1, \hspace{1em} \dbl z_j, z_j \dbr^{\rho_G}=z_j \otimes1-1\otimes z_j.
    \end{equation*}
\end{proof}
\begin{corollary} \label{rho_G=gold}
    The bracket $[-,-]^{\rho_G}: |A| \otimes|A| \rightarrow |A|$, induced from the Fox pairing $\rho_G$ in the sense of Proposition \ref{brac_from_fox}, is the graded Goldman bracket on the surface $\Sigma_{g,n+1}$. In symbols,
    \begin{equation*}
        [-,-]^{\rho_{G}}=[-,-]_{\mathrm{gr}}.
    \end{equation*}
\end{corollary}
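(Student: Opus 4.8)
The plan is to reduce the statement to a direct comparison of the two bracket-construction recipes already established in Section \ref{fox}. The essential observation is that the bracket $[-,-]^{\rho_G}$ attached to the Fox pairing $\rho_G$ via Proposition \ref{brac_from_fox} coincides, on the nose, with the bracket induced by the double bracket $\dbl -,- \dbr^{\rho_G}$ via Proposition \ref{brac_from_double}. To see this I would expand the Massuyeau--Turaev double bracket of equation (\ref{masbr}) into its Sweedler components,
\[
    \dbl a, b \dbr^{\rho_G,(1)} = b'S(\rho_G(a'', b'')')a', \qquad \dbl a, b \dbr^{\rho_G,(2)} = \rho_G(a'',b'')'',
\]
and feed them into the recipe of Proposition \ref{brac_from_double}. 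This yields
\[
    \left| \dbl a, b \dbr^{\rho_G,(1)}\, \dbl a, b \dbr^{\rho_G,(2)} \right| = \left| b'S(\rho_G(a'', b'')')a'\, \rho_G(a'',b'')'' \right|,
\]
which is verbatim the formula defining $[|a|,|b|]^{\rho_G}$ in Proposition \ref{brac_from_fox}. Thus the assignment $\rho \mapsto [-,-]^{\rho}$ factors, by construction, through $\rho \mapsto \dbl -,- \dbr^{\rho}$ followed by the induced-bracket functor of Proposition \ref{brac_from_double}.

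With that identity in hand, the proof is a short chain. By Proposition \ref{rhoG_gives_brac}, the double bracket $\dbl -,- \dbr^{\rho_G}$ equals the associated graded Goldman double bracket $\kappa_{\mathrm{gr}}$. Applying the same $|\cdot|$-construction of Proposition \ref{brac_from_double} to both sides of this equality of double brackets gives
\[
    [-,-]^{\rho_G} = \left(\text{bracket induced by } \dbl -,- \dbr^{\rho_G}\right) = \left(\text{bracket induced by } \kappa_{\mathrm{gr}}\right).
\]
Finally, by the discussion of Section \ref{GT_bialg}, the graded Goldman bracket $[-,-]_{\mathrm{gr}}$ is \emph{defined} as the bracket induced from $\kappa_{\mathrm{gr}}$ in precisely the sense of Proposition \ref{brac_from_double}. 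Combining the two displays gives $[-,-]^{\rho_G} = [-,-]_{\mathrm{gr}}$, as claimed.

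There is essentially no genuine obstacle here, since the hard analytic content lives in Proposition \ref{rhoG_gives_brac}; the corollary is a bookkeeping matter of matching formulas. The only points deserving care are, first, that the equality of Proposition \ref{rhoG_gives_brac} is an equality of \emph{double} brackets, so that passing to cyclic words via the \emph{same} map legitimately produces equal brackets; and second, that no skew-symmetry hypothesis on $\rho_G$ is required, because Proposition \ref{brac_from_fox} already defines $[-,-]^{\rho}$ for an arbitrary Fox pairing and guarantees it descends to cyclic words. I would therefore phrase the argument as the commutativity of the evident triangle relating the Fox-pairing bracket, the double-bracket construction, and the Goldman bracket, invoking Proposition \ref{rhoG_gives_brac} as the only nontrivial input.
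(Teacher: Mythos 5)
Your proposal is correct and is exactly the reasoning the paper leaves implicit: the formula of Proposition \ref{brac_from_fox} is by construction the cyclic-word bracket induced (via Proposition \ref{brac_from_double}) by the Massuyeau--Turaev double bracket of equation (\ref{masbr}), so the identity $\dbl -,- \dbr^{\rho_G}=\kappa_{\mathrm{gr}}$ of Proposition \ref{rhoG_gives_brac} immediately yields $[-,-]^{\rho_G}=[-,-]_{\mathrm{gr}}$, since $[-,-]_{\mathrm{gr}}$ is defined as the bracket induced by $\kappa_{\mathrm{gr}}$. Your two cautionary remarks (equality at the level of double brackets, and well-definedness on cyclic words without invoking skew-symmetry) are both apt and consistent with the paper's treatment.
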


The Fox pairing $\rho_G$ is skew-symmetric in the sense of Definition \ref{fox-skew-def}. It follows that any quasi-derivation $q \in \mathrm{Qder}(\rho_G)$ that satisfies $q(a_k)=-q^t(a_k)$ for all generators $a_k$ of $A$ will be skew-symmetric in the sense of Definition \ref{qder-skew-def}.
\begin{proposition}
    Let $q^f:A \rightarrow A$ be the quasi-derivation associated to the Fox pairing $-\rho_G$, and whose values on the generators of $A$ are specified by setting $q^f(a)=r^f(a)$ for all $a \in L$. Then
    \begin{equation*}
        (|\cdot| \times \mathrm{id})d_{q^f}= \mu_{r, \mathrm{gr}}^{f} \hspace{1em} \text{and} \hspace{1em} (\mathrm{id} \times |\cdot|)Pd_{\overline{q}^f}=\mu_{l,\mathrm{gr}}^f.
    \end{equation*}
\end{proposition}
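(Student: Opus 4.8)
The plan is to prove the first identity $(|\cdot|\otimes\mathrm{id})\,d_{q^f}=\mu^{f}_{r,\mathrm{gr}}$ by induction on the length $m$ of a word $a=a_1\cdots a_m$ with each $a_i\in L$, and then to obtain the second identity from the first by exploiting the skew-symmetry of $q^f$. Since both sides are $\mathbb{K}$-linear and $\mu^{f}_{r,\mathrm{gr}}$ is given on such words by the explicit formula of Lemma \ref{lem_mu_gr}, it is enough to reproduce that formula starting from $d_{q^f}(a)=a'\,S(q^f(a'')')\otimes q^f(a'')''$. For the base cases $m\le 1$ one uses that $q^f(1)=0$ and that for a generator $a$ the coproduct is $\Delta(a)=a\otimes 1+1\otimes a$ with $q^f(a)=r^f(a)\cdot 1$ a scalar; a direct expansion then gives $d_{q^f}(a)=r^f(a)\,(1\otimes 1)$, which agrees with $\mu^{f}_{r,\mathrm{gr}}(a)$ once one notes $\kappa_{\mathrm{gr}}(1,a)=0$.

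For the inductive step I would set $a=wb$ with $w=a_1\cdots a_{m-1}$ and $b=a_m$, and expand $d_{q^f}(wb)=w'b'\,S(q^f(w''b'')')\otimes q^f(w''b'')''$ by inserting the quasi-derivation identity (valid because $q^f\in\mathrm{Qder}(-\rho_G)$)
\[
q^f(w''b'')=q^f(w'')\,b''+w''\,q^f(b'')+\rho_G(w'',b'').
\]
This produces three groups. Using cocommutativity and the antipode collapses $\sum b'S(b'')\otimes b'''=1\otimes b$ and $\sum w'S(w'')\otimes w'''=1\otimes w$, I expect the $q^f(w'')b''$ group to reassemble as $(\mathrm{id}\otimes R_b)\,d_{q^f}(w)$, i.e.\ $d_{q^f}(w)$ with $b$ right-multiplied into its second tensor factor; the $w''q^f(b'')$ group to collapse (only $b''=b$ contributes, with $q^f(b)=r^f(b)\cdot 1$) to $r^f(b)\,(1\otimes w)$; and the $\rho_G(w'',b'')$ group to equal $w'b'\,S(\rho_G(w'',b'')')\otimes\rho_G(w'',b'')''$, which modulo commutators in the first tensor factor coincides with $\dbl w,b\dbr^{\rho_G}$ by the single cyclic rearrangement $|w'\,b'\,S(\rho_G(w'',b'')')|=|b'\,S(\rho_G(w'',b'')')\,w'|$, hence with $\kappa_{\mathrm{gr}}(w,b)$ by Proposition \ref{rhoG_gives_brac}.

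Applying $(|\cdot|\otimes\mathrm{id})$ and invoking the inductive hypothesis for $w$, the first group supplies precisely the $i<m$ and $k<m$ summands of $\mu^{f}_{r,\mathrm{gr}}(a)$ in Lemma \ref{lem_mu_gr} (the right multiplication by $b$ extends each second factor $a_1\cdots\widehat{a_i}\cdots a_{m-1}$ and $a_{k+1}\cdots a_{m-1}$ to include $a_m$), the second group supplies the missing $i=m$ term $r^f(a_m)\,(1\otimes a_1\cdots a_{m-1})$, and the third group supplies the missing $k=m$ term $\kappa_{\mathrm{gr}}(a_1\cdots a_{m-1},a_m)$; summing recovers the formula and closes the induction. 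For the second identity I would note that $q^f$ is skew-symmetric: on generators $(q^f)^t(a)=S\,q^f(S a)=-r^f(a)=-q^f(a)$, and since a quasi-derivation is determined by its associated Fox pairing together with its values on generators, $(q^f)^t=-q^f$ on all of $A$. Thus $\overline{q}^f=(q^f)^t$ gives $d_{\overline q^f}=-d_{q^f}$, and combining the intertwining $P(|\cdot|\otimes\mathrm{id})=(\mathrm{id}\otimes|\cdot|)P$ with the first identity and the relation $\mu^{f}_{l,\mathrm{gr}}=-P(\mu^{f}_{r,\mathrm{gr}})$ of equation (\ref{mu_skew}) yields $(\mathrm{id}\otimes|\cdot|)P\,d_{\overline q^f}=-P\,\mu^{f}_{r,\mathrm{gr}}=\mu^{f}_{l,\mathrm{gr}}$.

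The step I expect to be the main obstacle is the Hopf-algebraic bookkeeping in the three-group decomposition of $d_{q^f}(wb)$: justifying the antipode collapses underneath the Sweedler sums (which rely on cocommutativity) and, crucially, recognizing that the $\rho_G$-group matches the double bracket $\dbl w,b\dbr^{\rho_G}$ only after projecting the first tensor factor to cyclic words. Once this recursion is in place, the remaining identification against Lemma \ref{lem_mu_gr} and the deduction of the $\mu_l$-identity are routine.
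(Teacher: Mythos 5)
Your proof is correct. The paper establishes the first identity by the additive decomposition $q^f = q_1 + q_2$, where $q_1 \in \mathrm{Qder}(0)$ is the derivation carrying the generator values $r^f$ and $q_2 \in \mathrm{Qder}(-\rho_G)$ vanishes on generators; it then writes $d_{q_1}(a)$ and $d_{q_2}(a)$ in closed form on a word $a = a_1\cdots a_m$ and matches the two resulting families of terms against the two sums in Lemma \ref{lem_mu_gr}. Your induction on word length via the recursion $q^f(wb) = q^f(w)b + wq^f(b) + \rho_G(w,b)$ performs the same underlying computation, but unrolled one letter at a time: your Group 1 reproduces the $i<m$ and $k<m$ summands (the paper's contributions from $d_{q_1}$ and $d_{q_2}$ restricted to the first $m-1$ letters, with $a_m$ appended on the second tensor factor), Group 2 gives the $i=m$ term, and Group 3 gives the $k=m$ term. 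What the paper's split buys is brevity, but its closed formula for $d_{q_2}$ on an arbitrary word is itself an implicit iteration of the Fox-derivative property; your argument makes that iteration explicit, along with the Hopf-algebraic collapses $\sum b'S(b'')\otimes b''' = 1 \otimes b$ and the cyclic rearrangement needed to identify the $\rho_G$-group with $\kappa_{\mathrm{gr}}$ via Proposition \ref{rhoG_gives_brac}. For the second identity both arguments rest on skew-symmetry of $q^f$ together with relation (\ref{mu_skew}) (which, as you correctly read it, relates $\mu^f_{l,\mathrm{gr}}$ to $-P(\mu^f_{r,\mathrm{gr}})$ despite the typo in the displayed equation); your version is actually more complete than the paper's one-line remark, since you justify $(q^f)^t = -q^f$ by observing that the difference of two quasi-derivations with the same associated Fox pairing is a derivation, hence is determined by its (vanishing) values on generators.
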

\begin{proof}
    Given any quasi derivation $q$ in $A$, recall we defined $d_q: A \rightarrow A \otimes A$ by
    \begin{equation*}
        d_q(a)=a'S(q(a'')')\otimes q(a'')''.
    \end{equation*}
    We specify two additional quasi-derivations of $A$:
    \begin{gather*}
        q_1\in \mathrm{Qder(0)}, \, q_1(a)=r^f(a)\,\, \text{for all }a\in L;\\
        q_2\in \mathrm{Qder}(-\rho_g), \, q_2(a)=0\,\, \text{for all }a\in L.
    \end{gather*}
    We do this with the intention of splitting up $d_{q^f}$ into the sum $d_{q^f}=d_{q_1}+d_{q_2}$, to deal with each term separately.
    
    Let $a=a_1 \cdots a_m\in A$ be a product of standard generators. Notice we can write
    \begin{equation*}
        d_{q_2}(a)=\sum_{k=1}^{m} a_1' \cdots a_m'S(\rho_G(a_1'' \cdots a_{k-1}'', a_k'')'a_{k+1}'' \cdots a_m'')\otimes \rho_{G}(a_1'' \cdots a_{k-1}'', a_k'')''a_{k+1}''' \cdots a_m''',
    \end{equation*}
    where we have only used the fact that $\rho_G$ is a right Fox derivative with respect to the second variable. It follows that
    \begin{equation*} 
        \begin{split}
            d_{q_2}(a)&=\sum_{i=1}^m a_1' \dots a_{k-1}''\varepsilon(a_{k+1}' \cdots a_m') S(\rho_G(a_1'' \cdots a_{k-1}'', a_k'')')\otimes \rho_G(a_1'' \cdots a_{k-1}'', a_k'')''a_{k+1}'''\cdots a_m'''\\
            &=\sum_{i=1}^m a_1' \dots a_{k-1}''\ S(\rho_G(a_1'' \cdots a_{k-1}'', a_k'')')\otimes \rho_G(a_1'' \cdots a_{k-1}'', a_k'')''a_{k+1}\cdots a_m.\\
        \end{split}
    \end{equation*}
    On the other hand, 
    \begin{equation*}
        \begin{split}
            d_{q_1}(a)&=\sum_{i=1}^m a_1' \cdots a_m'S(r^f(a_i'')a_1'' \cdots a_{i-1}''a_{i+1}''\cdots a_m'') \otimes a_1'' \cdots a_{i-1}'' a_{i+1}'' \cdots a_m''\\
            &=\sum_{i=1}^m r^f(a_i) \otimes a_1 \cdots a_{i-1}a_{i+1} \cdots a_m.
        \end{split}
    \end{equation*}
    In accordance with Lemma \ref{lem_mu_gr}, 
    \begin{equation*}
        \begin{split}
            \mu_{r, \mathrm{gr}}^{f}(a)
            &=\sum_{i=1}^m(|\cdot| \otimes \mathrm{id}) r^f(a_i) \otimes a_1 \cdots a_{i-1}a_{i+1} \cdots a_m\\&+\sum_{k=1}^m (|\cdot| \otimes \mathrm{id})a_k' S(\rho_G(a_1'' \cdots a_{k-1}'',a_k'')')a_1' \cdots a_{k-1}' \otimes \rho_G(a_1' \cdots a_{k-1}'', a_k'')a_{k+1} \cdots a_n'',
        \end{split}
    \end{equation*}
    where we have used the fact that $\kappa_\mathrm{gr}= \dbl -,- \dbr^{\rho_G}$. Comparing each summand, we can see how the first and second terms match the contributions of $d_{q_1}$ and $d_{q_2}$, respectively. It follows that $(|\cdot| \times \mathrm{id})d_{q^f}= \mu_{r, \mathrm{gr}}^{f}$. To prove the other half of our claim, recall how
    \begin{equation}
        \mu_{l,\mathrm{gr}}^f = - P(\mu_{l, \mathrm{gr}}^{f}),
    \end{equation}
    where $P(a \otimes b)=b\otimes a$ for all $a,b \in A$. It suffices to show that $d_{q^f}$ and $Pd_{(q^f)^t}$ satisfy an analogous relation to (\ref{mu_skew}), and this follows directly from the skew-symmetry of $q^f$.
\end{proof}
\begin{corollary}\label{qf=Tur}
    The cobracket $\delta_{q^f}: |A| \rightarrow |A| \otimes |A|$ induced by the quasi derivation $q^f$ in the sense of Proposition \ref{cobracyc}, coincides with the graded Turaev cobracket of the surface $\Sigma_{g,n+1}$ associated to the framing $f$. In symbols,
    \begin{equation*}
        \delta_{q^f}=\delta_{\mathrm{gr}}^f.
    \end{equation*}
\end{corollary}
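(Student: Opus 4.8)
The plan is to deduce the corollary formally from the preceding proposition, which has already matched the two ``halves'' of $d_{q^f}$ with the maps $\mu_{r,\mathrm{gr}}^f$ and $\mu_{l,\mathrm{gr}}^f$. Recall from Section~\ref{GT_bialg} that, exactly as in the non-graded case, the graded Turaev cobracket is characterized by
\[
    \delta_{\mathrm{gr}}^f = (\mathrm{id} \otimes |\cdot|)\,\mu_{r,\mathrm{gr}}^f + (|\cdot| \otimes \mathrm{id})\,\mu_{l,\mathrm{gr}}^f,
\]
where $\mu_{r,\mathrm{gr}}^f$ takes values in $|A| \otimes A$ and $\mu_{l,\mathrm{gr}}^f$ in $A \otimes |A|$. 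On the other hand, by equation~(\ref{delta_def}) together with Proposition~\ref{cobracyc}, the cobracket $\delta_{q^f}\colon |A| \to |A| \otimes |A|$ is the descent to cyclic words of
\[
    \delta_{q^f} = (|\cdot| \otimes |\cdot|)\bigl(d_{q^f} + P\,d_{(q^f)^t}\bigr),
\]
the transpose $(q^f)^t$ being the map denoted $\overline{q}^f$ in the preceding proposition.

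The key step is to factor the double projection $(|\cdot| \otimes |\cdot|)$ in the way adapted to each summand. For the first summand I would write $(|\cdot| \otimes |\cdot|) = (\mathrm{id} \otimes |\cdot|) \circ (|\cdot| \otimes \mathrm{id})$ and invoke the identity $(|\cdot| \otimes \mathrm{id})\,d_{q^f} = \mu_{r,\mathrm{gr}}^f$, giving
\[
    (|\cdot| \otimes |\cdot|)\,d_{q^f} = (\mathrm{id} \otimes |\cdot|)\,\mu_{r,\mathrm{gr}}^f.
\]
For the second summand I would instead write $(|\cdot| \otimes |\cdot|) = (|\cdot| \otimes \mathrm{id}) \circ (\mathrm{id} \otimes |\cdot|)$ and invoke $(\mathrm{id} \otimes |\cdot|)\,P\,d_{(q^f)^t} = \mu_{l,\mathrm{gr}}^f$, giving
\[
    (|\cdot| \otimes |\cdot|)\,P\,d_{(q^f)^t} = (|\cdot| \otimes \mathrm{id})\,\mu_{l,\mathrm{gr}}^f.
\]
Summing the two contributions reproduces precisely the characterizing formula for $\delta_{\mathrm{gr}}^f$ above, so that $\delta_{q^f} = \delta_{\mathrm{gr}}^f$.

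There is no genuine obstacle here; the substantive content was already carried out in the preceding proposition, and what remains is bookkeeping. The only two points that warrant care are the type-matching of the residual single projections (one checks that $(\mathrm{id} \otimes |\cdot|)$ and $(|\cdot| \otimes \mathrm{id})$ are the correct factors to apply to $\mu_{r,\mathrm{gr}}^f$ and $\mu_{l,\mathrm{gr}}^f$ so as to land in $|A| \otimes |A|$), and the well-definedness of $\delta_{q^f}$ on $|A|$, which is supplied by Proposition~\ref{cobracyc}. Since both sides descend to cyclic words, the equality of their lifts to $A$ is enough to conclude.
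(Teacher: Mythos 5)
Your proposal is correct and matches the paper's (implicit) argument: the paper states this corollary without further proof precisely because it follows from the preceding proposition by the bookkeeping you carry out, namely factoring the projection $(|\cdot|\otimes|\cdot|)$ through $(|\cdot|\otimes\mathrm{id})$ on the $d_{q^f}$ summand and through $(\mathrm{id}\otimes|\cdot|)$ on the $P\,d_{(q^f)^t}$ summand, then invoking the characterization $\delta_{\mathrm{gr}}^f=(\mathrm{id}\otimes|\cdot|)\mu_{r,\mathrm{gr}}^f+(|\cdot|\otimes\mathrm{id})\mu_{l,\mathrm{gr}}^f$ from Section \ref{GT_bialg}. Your identification of $\overline{q}^f$ with the transpose $(q^f)^t$ and your appeal to Proposition \ref{cobracyc} for well-definedness on cyclic words are both exactly as intended.
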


\subsection{$\mathbf{GRT}$ and $\mathbf{KRV}$}

In this section, we finally recover the Goldman-Turaev Lie bialgebras on higher genus surfaces from the $\mathbf{PaCD}^f$-module $\mathbf{PaCD}^f_g$. As a corollary, we obtain a group map between the corresponding automorphism groups of these objects, the higher genus analogues of $\mathrm{GRT}$ and $\mathrm{KRV}$.

Let $f:T\Sigma_{g,n+1} \rightarrow \Sigma_{g,n+1} \times \mathbb{R}^2$ be a framing on $\Sigma_{g,n+1}$. In all that follows, we will mainly be concerned with a slight extension of the group $\mathrm{KRV}_{(g, n+1)}^f$.
\begin{definition}\label{KRV_def}
    We set
    \begin{equation*}
        \overline{\mathrm{KRV}}_{(g,n+1)}^f \defeq \mathrm{Aut}^+(L, [-,-]_{\mathrm{gr}}, \delta_{\mathrm{gr}}^f).
    \end{equation*}
    Notice that Lie morphisms in $\overline{\mathrm{KRV}}_{(g,n+1)}^f$ only need to preserve the bracket $[-,-]_{\mathrm{gr}}$, a slightly weaker condition than fixing $\kappa_{\mathrm{gr}}$.
\end{definition}
The group $\overline{\mathrm{KRV}}^f_{g,n+1}$ is a semi-direct product of the original $\mathrm{KRV}^f_{g,n+1}$. To see this, we take a brief foray into \textit{special} automorphisms.

\begin{definition}
    Recall that a tangential automorphism is characterized by the equation (\ref{tangential}). A tangential automorphism $f$ is \textit{special} if also $f(\omega)=\omega$, where 
    \begin{equation*}
        \omega \defeq \sum_{i=1}^g[x_i, y_i]+\sum_{j=1}^nz_j.
    \end{equation*}
\end{definition}

In \cite[Theorem 6.11]{Flor1}, the authors prove that any special automorphism $g$ defines a commutative diagram
\begin{equation}\label{spec_rho_phi}
    \begin{tikzcd}
        A \otimes A \arrow[r, "\rho_G + \rho_\phi"] \arrow[d, "g \otimes g" swap] & A \arrow[d, "g"]\\
        A \otimes A \arrow[r, "\rho_g + \rho_\phi" swap] & A
    \end{tikzcd},
\end{equation}
where $\rho_\phi$ is the inner Fox pairing in $A$ in given by $\rho_\phi(a,b)=D(a)\phi D(b)$, and
\begin{equation*}
    \phi= \phi(\omega) \defeq \frac{1}{e^{\omega}-1}-\frac{1}{\omega}.
\end{equation*}
The element $\phi \in A$ is also expressible in terms of Bernoulli numbers as
\begin{equation*}
    \phi=\sum_{k=1}^{\infty} \frac{B_k}{K}\omega^{k-1}.
\end{equation*}
By virtue of the condition $g(\omega)=\omega$, we immediately deduce that
\begin{equation*}
    g^{-1}\rho_\phi(g \otimes g)=\rho_\phi
\end{equation*}
for any special automorphism $g$. This lets us rephrase diagram (\ref{spec_rho_phi}) into a statement more useful for our purposes:

\begin{proposition}[\cite{Flor1, MasTu}] \label{spec_fix_rho}
Any special automorphism $g$ preserves the Fox pairing $\rho_G$, in the sense that the following diagram is commutative:
\begin{equation} \label{spec_rho_G}
    \begin{tikzcd}
        A \otimes A \arrow[r, "\rho_G "] \arrow[d, "g \otimes g" swap] & A \arrow[d, "g"]\\
        A \otimes A \arrow[r, "\rho_G"] & A
    \end{tikzcd}.
\end{equation}
\end{proposition}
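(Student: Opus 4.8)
The plan is to deduce the proposition directly from the cited result of \cite{Flor1} (the commutativity of diagram (\ref{spec_rho_phi})) together with the observation, already anticipated in the text, that a special automorphism preserves the inner Fox pairing $\rho_\phi$ on its own. Once both facts are in hand, the conclusion follows by linearity of the conjugation operation $\rho \mapsto g^{-1}\rho(g\otimes g)$ on the space of Fox pairings.

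First I would establish that $g$ preserves $\rho_\phi$, i.e. that $g^{-1}\rho_\phi(g\otimes g)=\rho_\phi$. The key inputs are: (i) $g$ is a Hopf algebra automorphism, hence $g\circ D=D\circ g$ since $g$ preserves the augmentation (recall $D(a)=a-\varepsilon(a)$); (ii) $g^{-1}$ is again an algebra map, so it distributes over the product $D(a)\,\phi\,D(b)$; and (iii) $g(\phi)=\phi$. For (iii), since $\phi=\phi(\omega)$ is a power series in $\omega$ and $g$ is a filtration-preserving algebra endomorphism with $g(\omega)=\omega$ (the defining property of a special automorphism), $g$ fixes every power of $\omega$ and hence fixes $\phi$; consequently $g^{-1}(\phi)=\phi$ as well. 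Combining these, for all $a,b\in A$ one computes
\[
g^{-1}\bigl(\rho_\phi(g(a),g(b))\bigr)=g^{-1}\bigl(D(g(a))\,\phi\,D(g(b))\bigr)=D(a)\,g^{-1}(\phi)\,D(b)=D(a)\,\phi\,D(b)=\rho_\phi(a,b),
\]
where the second equality uses $D(g(a))=g(D(a))$ together with the multiplicativity of $g^{-1}$, and the third uses $g^{-1}(\phi)=\phi$.

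Next I would invoke the commutativity of diagram (\ref{spec_rho_phi}), which expresses that $g$ preserves the \emph{combined} Fox pairing $\rho_G+\rho_\phi$, that is $g^{-1}(\rho_G+\rho_\phi)(g\otimes g)=\rho_G+\rho_\phi$. Since the assignment $\rho\mapsto g^{-1}\rho(g\otimes g)$ is linear, this reads $g^{-1}\rho_G(g\otimes g)+g^{-1}\rho_\phi(g\otimes g)=\rho_G+\rho_\phi$. Subtracting the identity $g^{-1}\rho_\phi(g\otimes g)=\rho_\phi$ from the previous step leaves $g^{-1}\rho_G(g\otimes g)=\rho_G$, and applying $g$ on the left yields $\rho_G(g\otimes g)=g\,\rho_G$, which is precisely the commutativity of diagram (\ref{spec_rho_G}).

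The only genuine subtlety — and the step I would treat most carefully — is the justification of $g(\phi)=\phi$, which relies on $g$ being compatible with the degree completion so that it acts on the Bernoulli expansion $\phi=\sum_{k\ge 1}\frac{B_k}{k}\omega^{k-1}$ term by term; this is where the hypothesis $g\in\mathrm{tAut}^+(L)$ (hence filtration-preserving) is genuinely used. Everything else is formal manipulation of algebra maps and the bilinearity of the conjugation operation, so I do not anticipate any serious obstacle beyond this bookkeeping.
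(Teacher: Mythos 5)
Your proposal is correct and follows essentially the same route as the paper: the text derives Proposition \ref{spec_fix_rho} exactly by citing \cite[Theorem 6.11]{Flor1} for the preservation of $\rho_G+\rho_\phi$ (diagram (\ref{spec_rho_phi})), observing that $g(\omega)=\omega$ implies $g^{-1}\rho_\phi(g\otimes g)=\rho_\phi$, and subtracting. The only difference is that you spell out the details (commutation of $g$ with $D$, multiplicativity of $g^{-1}$, and the filtration argument for $g(\phi)=\phi$) that the paper dismisses as immediate.
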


Per \cite[Theorem 2.7]{Flor3}, any $g\in \mathrm{Aut}(L, [-,-]_\mathrm{gr})$ is conjugate to a special automorphism. That is, there exist a group-like $x\in A$ and some special automorphism $g_0$ such that
\begin{equation*}
    g(a)=x^{-1}g_0(a)x\quad \text{for all }a\in A.
\end{equation*}

\begin{proposition}\label{acc_ext}
    Let $\langle \exp{\omega} \rangle_L$ denote the normal subgroup generated by $\exp{\omega}$ in $\exp(L)$, the set of group-like elements in $A$. Then
    \begin{equation*}
        \overline{\mathrm{KRV}}_{(g,n+1)}^f = \left( \exp(L)/\mathbb{K}\langle \exp{\omega}\rangle_L \right) \rtimes \mathrm{KRV}_{(g,n+1)}^f.
    \end{equation*}
\end{proposition}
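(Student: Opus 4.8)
The plan is to realize $\overline{\mathrm{KRV}}_{(g,n+1)}^f$ as generated by two explicit subgroups whose interaction produces the stated decomposition: the group of inner automorphisms coming from conjugation by group-like elements, and the subgroup $\mathrm{KRV}_{(g,n+1)}^f$ of special tangential automorphisms. For $x\in\exp(L)$ write $c_x$ for the Lie automorphism $a\mapsto x^{-1}ax$ of $L$. The first observation is that $c_x$ acts as the identity on cyclic words, since $|x^{-1}ax|=|a|$ in $|A|$; as both $[-,-]_{\mathrm{gr}}$ and $\delta_{\mathrm{gr}}^f$ are operations on $|A|$, every $c_x$ preserves them and so lies in $\overline{\mathrm{KRV}}_{(g,n+1)}^f$. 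Because the free Lie algebra $L$ has trivial centre, $x\mapsto c_{x^{-1}}$ is an injective homomorphism identifying $\mathrm{Inn}\defeq\{c_x\mid x\in\exp(L)\}$ with $\exp(L)$, and the identity $g\,c_x\,g^{-1}=c_{g(x)}$ (valid for any algebra automorphism $g$, with $g(x)\in\exp(L)$) shows that $\mathrm{Inn}$ is normal in $\overline{\mathrm{KRV}}_{(g,n+1)}^f$.

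Next I would verify $\mathrm{KRV}_{(g,n+1)}^f\subseteq\overline{\mathrm{KRV}}_{(g,n+1)}^f$: under Theorem \ref{AutGT=KRV} its elements preserve $\kappa_{\mathrm{gr}}$ and $\delta_{\mathrm{gr}}^f$, and preserving the double bracket $\kappa_{\mathrm{gr}}$ forces preservation of the induced bracket of Proposition \ref{brac_from_double}, which is $[-,-]_{\mathrm{gr}}$ by Corollary \ref{rho_G=gold}. Generation, that is $\overline{\mathrm{KRV}}=\mathrm{Inn}\cdot\mathrm{KRV}$, is exactly where \cite[Theorem 2.7]{Flor3} enters: any $g\in\overline{\mathrm{KRV}}$ factors as $g=c_x\circ g_0$ with $g_0$ special. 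Since $c_x$ is trivial on $|A|$, $g_0$ agrees with $g$ on cyclic words and hence still preserves $\delta_{\mathrm{gr}}^f$; being special it preserves $\rho_G$ by Proposition \ref{spec_fix_rho}, and therefore preserves $\kappa_{\mathrm{gr}}=\dbl -,-\dbr^{\rho_G}$ by Proposition \ref{rhoG_gives_brac}; it is also tangential. Thus $g_0\in\mathrm{KRV}$ and $g\in\mathrm{Inn}\cdot\mathrm{KRV}$.

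It then remains to compute the overlap and package the result. A conjugation $c_x$ is automatically tangential and trivial on $|A|$, so $c_x\in\mathrm{KRV}$ precisely when it preserves the full double bracket $\kappa_{\mathrm{gr}}$; the plan is to show this happens exactly for $x\in\exp(\mathbb{K}\omega)$, the flow of $\omega$. Concretely I would reduce "$c_x$ preserves $\kappa_{\mathrm{gr}}=\dbl -,-\dbr^{\rho_G}$" to the condition $[\log x,\omega]=0$, and then use that $\omega$ is homogeneous of weight $2$ together with the centraliser theorem for free Lie algebras to conclude $\log x\in\mathbb{K}\omega$. One checks moreover that $c_{\exp\omega}$ is central in $\mathrm{KRV}$, since every special $h$ fixes $\omega$ and hence $h\,c_{\exp\omega}\,h^{-1}=c_{h(\exp\omega)}=c_{\exp\omega}$. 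With this, the homomorphism $\Phi\colon\exp(L)\rtimes\mathrm{KRV}\to\overline{\mathrm{KRV}}$, $(x,h)\mapsto c_{x^{-1}}\circ h$ with $\mathrm{KRV}$ acting by $h\cdot x=h(x)$, is surjective by the generation step and has central kernel the diagonal copy of $\exp(\mathbb{K}\omega)=\mathbb{K}\langle\exp\omega\rangle_L$; quotienting, and noting that the $\mathrm{KRV}$-action descends to $\exp(L)/\mathbb{K}\langle\exp\omega\rangle_L$ because special automorphisms fix $\omega$ and hence preserve $\mathbb{K}\langle\exp\omega\rangle_L$, yields the stated isomorphism $\overline{\mathrm{KRV}}_{(g,n+1)}^f\cong(\exp(L)/\mathbb{K}\langle\exp\omega\rangle_L)\rtimes\mathrm{KRV}_{(g,n+1)}^f$.

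The main obstacle is the overlap computation: determining precisely which conjugations preserve the full double bracket $\kappa_{\mathrm{gr}}$ rather than merely its trace $[-,-]_{\mathrm{gr}}$. This is the exact point where the double bracket carries strictly more information than the bracket, and it is what forces the element $\omega$ to appear; I expect the cleanest route to be through the skew-symmetry of $\rho_G$ and the explicit formula for $\dbl -,-\dbr^{\rho_G}$, reducing invariance to the centraliser condition on $\omega$ and then invoking the free Lie algebra centraliser result. A secondary subtlety, responsible for the word "roughly" in the Introduction, is the bookkeeping that reconciles the one-parameter overlap $\exp(\mathbb{K}\omega)$ with the normal subgroup $\mathbb{K}\langle\exp\omega\rangle_L$ in the quotient; this relies on $c_{\exp\omega}$ being central in $\mathrm{KRV}$, so that the diagonal kernel of $\Phi$ may be absorbed without disturbing the complementary $\mathrm{KRV}$-factor.
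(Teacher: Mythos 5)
Your proposal is correct and, at its core, is the paper's own argument: the same comparison map $(x,h)\mapsto \mathrm{Ad}(x)\circ h$, surjectivity obtained from the factorization $g=\mathrm{Ad}(\exp x)\circ g_0$ of \cite[Theorem 2.7]{Flor3} together with Proposition \ref{spec_fix_rho}, and the identification of the failure of injectivity with the centralizer of $\omega$.

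The one step where you genuinely deviate is the overlap computation, and your route is arguably the more robust one. The paper disposes of this step by asserting that the $\mathrm{KRV}$-component of a kernel element may be taken to be special ``without loss of generality,'' after which $\mathrm{Ad}(\exp x)(\omega)=\omega$ is immediate from the definition of specialness; you instead propose to show directly that a conjugation $c_x$ preserving $\kappa_{\mathrm{gr}}$ must centralize $\omega$. That reduction does go through: the cleanest implementation is the moment-map identity $\dbl \omega, b\dbr^{\rho_G}=b\otimes 1-1\otimes b$, valid for every $b\in A$ (a short Sweedler computation from $\rho_G(\omega,-)=-D$), which forces \emph{any} automorphism preserving $\kappa_{\mathrm{gr}}$ to fix $\omega$ exactly; homogeneity of $\omega$ and the centralizer theorem then give $\log x\in\mathbb{K}\omega$, as you planned. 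This identity also repairs the one small slip in your write-up: you justify centrality of $c_{\exp\omega}$ by ``every special $h$ fixes $\omega$,'' but the elements of $\mathrm{KRV}=\mathrm{tAut}^+(L,\kappa_{\mathrm{gr}},\delta_{\mathrm{gr}}^f)$ from Theorem \ref{AutGT=KRV} are not special by definition, only conjugate to special automorphisms; that they nonetheless fix $\omega$ on the nose is exactly what the moment-map argument supplies, so your centrality claim (and hence the normality of the kernel of $\Phi$) is legitimate once that argument is in place. Finally, your packaging---the external product $\exp(L)\rtimes\mathrm{KRV}$ modulo the central diagonal copy of the overlap---is more careful than the paper's map $\xi$, which is written directly on the quotient even though $\mathrm{Ad}(z)$ is a nontrivial automorphism of $L$ for $z$ in the normal closure of $\exp\omega$, so representative-independence is never actually checked; and your closing remark correctly isolates the remaining bookkeeping, reconciling the one-parameter overlap $\exp(\mathbb{K}\omega)$ with the normal subgroup $\mathbb{K}\langle\exp\omega\rangle_L$, as the imprecision that the paper's Introduction flags with the word ``roughly.''
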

\begin{proof}
    We will check the claim directly. We write a map
    \begin{equation*}
        \begin{split}
            \xi: \left( \exp(L)/\mathbb{K}\langle \exp{\omega}\rangle_L \right) \rtimes \mathrm{KRV}_{(g,n+1)}^f & \longrightarrow \overline{\mathrm{KRV}}_{(g,n+1)}^f\\
            (x, g) & \longmapsto \mathrm{Ad}(x) \circ g,
        \end{split}
    \end{equation*}
    where $\mathrm{Ad}(x): L \rightarrow L$ denotes conjugation by $x$. Preserving the double bracket $\kappa_{\mathrm{gr}}$ implies the preservation of the bracket $[-,-]_{\mathrm{gr}}$. Note also that conjugation maps induce the identity map on $|A|$. This means that $\xi$ is well defined.

    Let $g \in \overline{\mathrm{KRV}}_{(g,n+1)}^f$. To prove surjectivity, recall that $g$ is expressible as $g = \mathrm{Ad}(\exp{x}) \circ g_0$, where $g_0$ is a special automorphism and $x\in L$. Since all special automorphisms fix the Fox pairing $\rho_G$ exactly, it follows that $g_0 \in \mathrm{KRV}_{(g,n+1)}^f$, and that $\xi(\mathrm{Ad}(\exp x),g_0)=g$.

    Now let us suppose that $\mathrm{Ad}((\exp{x})^{-1}) \circ g=\mathrm{id}$. Without loss of generality, we may assume that $g$ is a special automorphism. Our assumption then translates into the statement that the conjugation $\mathrm{Ad}(\exp x)$ is special. In particular,
    \begin{equation*}
        (\exp{x})^{-1}\omega\exp{x}=\omega,
    \end{equation*}
    implying that $\exp{x} \in \mathbb{K}\langle \! \langle \omega \rangle \! \rangle$, so necessarily $\exp{x}\in \mathbb{K}\langle \exp{\omega}\rangle_L$
\end{proof}

We are all but ready to state the main theorem in this paper. The last prerequisite we need to address is the dependence of the Goldman-Turaev bialgebra on a choice of framing. 
\begin{lemma}\label{sec_from_frames}
    There is a one-to-one correspondence between sections $\gamma: \overline{K}_{g,n+2} \rightarrow K_{g,n+2}$ of the projection $ \pi: K_{g,n+2} \rightarrow \overline{K}_{g,n+2}$ and framings of the surface $\Sigma_{g,n+1}$.
\end{lemma}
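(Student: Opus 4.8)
The plan is to reduce the statement to a linear-algebra bijection: both the sections of $\pi$ and the framings will be identified with the space of $\mathbb{K}$-valued functionals on the first homology $H$, after which the correspondence is just the composite of these two identifications.

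First I would determine $\ker\pi$. By Proposition \ref{def_kg,n} and \eqref{k_is_free} we have $K_{g,n+2}\cong\mathfrak{k}_{g,n+2}$ and $\overline{K}_{g,n+2}\cong L^{n+2}$, with $\pi$ the quotient by the ideal $\langle t_{(n+2)(n+2)}\rangle$. Since $t_{(n+2)(n+2)}$ is central, this ideal is the one-dimensional line $\mathbb{K}\,t_{(n+2)(n+2)}$, so $\pi$ exhibits $K_{g,n+2}$ as a central extension of the free Lie algebra $L^{n+2}$ by $\mathbb{K}$. As $L^{n+2}$ is free (its second Lie cohomology vanishes), the extension splits, and I fix the canonical splitting $\iota\colon L^{n+2}\hookrightarrow K_{g,n+2}$ carrying each free generator to the corresponding $x_{n+2}^a,y_{n+2}^a,t_{j(n+2)}$.

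Next I would parametrize the sections. Given any Lie section $\gamma$, the difference $\gamma-\iota$ lands in $\ker\pi=\mathbb{K}\,t_{(n+2)(n+2)}$, so $\gamma=\iota+r$ for a linear map $r\colon L^{n+2}\to\mathbb{K}$. Comparing brackets and using that $t_{(n+2)(n+2)}$ is central forces $r$ to vanish on $[L^{n+2},L^{n+2}]$; thus $r$ descends to an element of $\mathrm{Hom}_\mathbb{K}(H_1(L^{n+2}),\mathbb{K})$, and conversely every such functional produces the section $\gamma^r(x)=x+r(x)\,t_{(n+2)(n+2)}$ already introduced above. Under the generator dictionary $x_{n+2}^a\leftrightarrow x_a$, $y_{n+2}^a\leftrightarrow y_a$, $t_{(j+1)(n+2)}\leftrightarrow z_j$ of \eqref{k_is_free} we have $H_1(L^{n+2})\cong H$, so sections of $\pi$ are in canonical bijection with $\mathrm{Hom}_\mathbb{K}(H,\mathbb{K})$. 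Finally I would match framings with the same space: following \cite{Flor1}, homotopy classes of trivializations of $T\Sigma_{g,n+1}$ form a torsor over $H^1(\Sigma_{g,n+1})\cong\mathrm{Hom}_\mathbb{K}(H,\mathbb{K})$, detected by the rotation numbers $\mathrm{rot}^f$ along the generating system $\{\alpha_i,\beta_i,\gamma_j\}$. Taking the adapted framing $f^{\mathrm{adp}}$ as origin (so that $r^{f^{\mathrm{adp}}}\equiv 0$), the rotation-number class of $f$ relative to $f^{\mathrm{adp}}$ gives an affine isomorphism onto $\mathrm{Hom}_\mathbb{K}(H,\mathbb{K})$, and composing with the previous identification assigns to each framing a section and yields the claimed correspondence.

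The main obstacle is this last step. One must verify that the full rotation-number data along the generating system determines, and is determined by, the homotopy class of the framing — so that the assignment is an affine isomorphism onto $\mathrm{Hom}_\mathbb{K}(H,\mathbb{K})$ rather than onto the proper subspace recorded by the boundary-only functional $r^f$ of the previous subsection — and that the generating system read against is exactly the one matching the free generators of $L^{n+2}$ under \eqref{k_is_free}. The algebraic half reduces to routine bookkeeping with the central-extension splitting; all the genuine content is concentrated in identifying the topological framing with the functional $r$.
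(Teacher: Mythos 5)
Your algebraic half is correct, and is in fact more careful than anything the paper records: $\ker\pi=\mathbb{K}\,t_{(n+2)(n+2)}$ is central, the extension of the free Lie algebra $L^{n+2}$ by $\mathbb{K}$ splits, and the sections of $\pi$ form an affine space over the functionals $r\in\mathrm{Hom}_\mathbb{K}(H,\mathbb{K})$ (linear maps vanishing on brackets). The genuine gap is in your topological half. Homotopy classes of trivializations of $T\Sigma_{g,n+1}$ form a torsor over $H^1(\Sigma_{g,n+1};\mathbb{Z})\cong\mathbb{Z}^{2g+n}$, \emph{not} over $H^1(\Sigma_{g,n+1};\mathbb{K})$: rotation numbers are integers. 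So your concluding claim that the rotation-number class relative to $f^{\mathrm{adp}}$ gives "an affine isomorphism onto $\mathrm{Hom}_\mathbb{K}(H,\mathbb{K})$" fails — the image is only the integral lattice (shifted by the normalization $+1$ on the $z_j$), and for $\mathbb{K}=\mathbb{R}$ or $\mathbb{C}$ there is not even a set-theoretic bijection. This obstruction is not "routine bookkeeping"; it is intrinsic to reading the lemma's "one-to-one correspondence" literally, and no choice of generating system removes it.

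For comparison, the paper's own proof does not attempt to establish bijectivity at all: it only constructs the map in one direction, sending a framing $f$ to the section $\gamma^f(a)=a+r^f(a)\,t_{(n+2)(n+2)}$, where $r^f$ is the \emph{boundary-only} functional $r^f(x_i)=r^f(y_i)=0$, $r^f(z_j)=\mathrm{rot}^f(\gamma_j)+1$, transported through the isomorphism (\ref{exp_iso}); the lemma is effectively notation-setting for $\gamma^f$. Note also that your proposed correspondence differs from the paper's even where both are defined: you use the full rotation data along $\alpha_i,\beta_i,\gamma_j$, whereas the paper deliberately kills the values on $x_i,y_i$, because only the weight-two generators $z_j$ can contribute to the associated graded cobracket (cf. Lemma \ref{lem_mu_gr}). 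This matters downstream: Theorem \ref{PaCD_is_GT} needs precisely the section whose functional vanishes on $x_i,y_i$ in order to recover $\delta^f_{\mathrm{gr}}$, and under your assignment a framing with $\mathrm{rot}^f(\alpha_i)\neq 0$ would be sent to a section whose induced quasi-derivation no longer matches the graded Turaev cobracket.
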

\begin{proof}
    Let $f: T\Sigma_{g,n+1} \rightarrow \Sigma_{g,n+1} \times \mathbb{R}^2$ be a framing on $\Sigma_{g,n+1}$. Recall the definition of the $\mathbb{K}$-linear map $r^f: \mathrm{gr}^\mathrm{wt}H \rightarrow \mathbb{K}$ associated to $f$; it is induced by the choices
    \begin{equation*}
        r^f(x_i)=r^f(y_1)=0, \hspace{1em} r^f(z_j)=\mathrm{rot}^f(\gamma_j)+1.
    \end{equation*}
    Now, to any framing $f$, we may associate the section $\gamma^f: \overline{K}_{g,n+2} \rightarrow K_{g,n+2}$,
    \begin{equation*}
        \gamma^f(a)=a + r^f(a)t_{(n+2)(n+2)},
    \end{equation*}
    where we are implicitly identifying the Lie algebras $L^{n+2} \cong \overline{K}_{g,n+2}$ with $L$ via the isomorphism
    \begin{equation}\label{exp_iso}
    \begin{tikzcd}[row sep=tiny]
        L \arrow[r, "\cong"] & L^{n+2}\\
        x_i \arrow[r, maps to] & x_{n+2}^i\\
        y_i \arrow[r, maps to] & y^i_{n+2}\\
        z_j \arrow[r, maps to] & t_{(j+1)(n+2)}
    \end{tikzcd},
\end{equation}
where $1 \leq i \leq g$, $1 \leq j \leq n$.
\end{proof}

\begin{theorem}\label{PaCD_is_GT}
    Let $n \geq 0$ be fixed. Given any framing $f$ on $\Sigma_{g,n+1}$, let us abbreviate
    \begin{equation*}
        \mathcal{N}^f \defeq( (\mathrm{PaCD}^f, \mathrm{PaCD}^f_g, t_{11}),\, \gamma^f:\overline{K}_{g,n+2} \rightarrow K_{g,n+2},\, \phi: M \rightarrow UL^{n+2}) \in \mathrm{OpR}^\Delta_f(n+2).
    \end{equation*}
    Then $\Lambda_{n+2}(\mathcal{N}^f)$ is isomorphic to $(A, [-,-]_{\mathrm{gr}}, \delta_{\mathrm{gr}}^f)$, where the latter is the associated graded of the Goldman-Turaev bialgebra in $\Sigma_{g,n+1}$ with respect to $f$.
\end{theorem}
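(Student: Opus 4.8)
The plan is to assemble the statement from Proposition \ref{PaCD--GoTU}, the framing dictionary of Lemma \ref{sec_from_frames}, and the Goldman--Turaev identifications of Corollaries \ref{rho_G=gold} and \ref{qf=Tur}; essentially all of the computational work has already been done, and what remains is to match the framing-dependent data across the index-shifting isomorphism (\ref{exp_iso}). First I would note that the section $\gamma^f$ attached to $f$ is literally of the form $\gamma^r$ with $r=r^f$: by Lemma \ref{sec_from_frames} we have $\gamma^f(a)=a+r^f(a)\,t_{(n+2)(n+2)}$, where $r^f\colon L\cong L^{n+2}\to\mathbb{K}$ is the framing map, so $\mathcal{N}^f=((\mathrm{PaCD}^f,\mathrm{PaCD}^f_g,t_{11}),\gamma^{r^f},\phi)$. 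Applying Proposition \ref{PaCD--GoTU} with $r=r^f$ then gives
\[
\Lambda_{n+2}(\mathcal{N}^f)=\bigl(UL^{n+2},\,[-,-]^{\tilde{\rho}_G},\,\delta_{q^{r^f}}\bigr).
\]

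Next I would transport this object to $A=UL$ along the isomorphism (\ref{exp_iso}). Its inverse sends $x_{n+2}^i\mapsto x_i$, $y_{n+2}^i\mapsto y_i$, and $t_{j(n+2)}\mapsto z_{j-1}$, and extends uniquely to a Hopf algebra isomorphism $\iota\colon A\xrightarrow{\cong}UL^{n+2}$. Comparing generator values, one checks directly that $\iota$ carries the Goldman Fox pairing $\rho_G$ of Proposition \ref{rhoG_gives_brac} to the Fox pairing $\tilde{\rho}_G$ of (\ref{rho_G_tilde}): the entries $\rho_G(x_i,y_i)=1$, $\rho_G(y_i,x_i)=-1$, $\rho_G(z_j,z_j)=-z_j$ become exactly $\tilde{\rho}_G(x_{n+2}^i,y_{n+2}^i)=1$, $\tilde{\rho}_G(y_{n+2}^i,x_{n+2}^i)=-1$, $\tilde{\rho}_G(t_{j(n+2)},t_{j(n+2)})=-t_{j(n+2)}$, and Fox pairings are determined by their generator values. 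Likewise, since $q^f$ is the quasi-derivation associated to $-\rho_G$ with $q^f=r^f$ on generators, and $q^{r^f}$ is the quasi-derivation associated to $-\tilde{\rho}_G$ with $q^{r^f}=r^f$ on generators (via the same transported framing map), $\iota$ intertwines $q^f$ and $q^{r^f}$. Thus $\iota$, equipped with vanishing Fox-derivative correction terms, satisfies constraint (\ref{mor_fox}) and is an isomorphism in $\mathbf{Fox}^\eta$ between $(L,q^f\oplus\rho_G)$ and $(L^{n+2},q^{r^f}\oplus\tilde{\rho}_G)$.

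Applying the functor $\Psi$ of Theorem \ref{RelCo--GoTu}, which preserves isomorphisms, produces an isomorphism in $\mathbf{GoTu}$ between $\Psi(L,q^f\oplus\rho_G)=(A,[-,-]^{\rho_G},\delta_{q^f})$ and $\Psi(L^{n+2},q^{r^f}\oplus\tilde{\rho}_G)=(UL^{n+2},[-,-]^{\tilde{\rho}_G},\delta_{q^{r^f}})=\Lambda_{n+2}(\mathcal{N}^f)$. Finally, Corollary \ref{rho_G=gold} identifies $[-,-]^{\rho_G}$ with the graded Goldman bracket $[-,-]_{\mathrm{gr}}$ and Corollary \ref{qf=Tur} identifies $\delta_{q^f}$ with the graded Turaev cobracket $\delta_{\mathrm{gr}}^f$, so that $(A,[-,-]^{\rho_G},\delta_{q^f})=(A,[-,-]_{\mathrm{gr}},\delta_{\mathrm{gr}}^f)$. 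Chaining the two isomorphisms gives $\Lambda_{n+2}(\mathcal{N}^f)\cong(A,[-,-]_{\mathrm{gr}},\delta_{\mathrm{gr}}^f)$. The proof is largely bookkeeping; the one place demanding care is the consistent transport of $r^f$ and of the generator-entries of $\tilde{\rho}_G$ and $q^{r^f}$ through the index shift $t_{j(n+2)}\leftrightarrow z_{j-1}$ in (\ref{exp_iso}), which is where a sign or indexing slip would most easily creep in.
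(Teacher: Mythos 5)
Your proposal is correct and follows essentially the same route as the paper's own proof: apply Proposition \ref{PaCD--GoTU} with $r=r^f$, transport $\tilde{\rho}_G$ and $q^{r^f}$ to $\rho_G$ and $q^f$ along the isomorphism (\ref{exp_iso}), and conclude via Corollaries \ref{rho_G=gold} and \ref{qf=Tur}. The only difference is that you spell out the bookkeeping the paper leaves implicit (that $\gamma^f=\gamma^{r^f}$, that Fox pairings and associated quasi-derivations are determined by generator values, and that $\Psi$ carries the resulting $\mathbf{Fox}^\eta$-isomorphism to one in $\mathbf{GoTu}$), which is a faithful elaboration rather than a different argument.
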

\begin{proof}
    Per Proposition \ref{PaCD--GoTU}, we know that $\Lambda_{n+2}(\mathcal{N}^f)= (UL^{n+2}, [-,-]^{\tilde{\rho}_G}, \delta_{q^{r^{f}}})$, where $\tilde{\rho}_G$ and $q^{r^f}$ are the Fox pairing and quasi-derivation defined in equations (\ref{qf_1}) and (\ref{rho_G_1}), respectively. But notice that $\tilde{\rho}_G \in \mathrm{Fox}(UL^{n+1})$ and $\rho_G \in \mathrm{Fox}(UL)$ correspond to one another under the isomorphism of equation (\ref{exp_iso}), and similarly for $q^{r^f}\in \mathrm{Qder}(UL^{n+2})$ and $q^f\in \mathrm{Qder}(UL)$. We finish this proof by referencing Corollaries \ref{rho_G=gold} and \ref{qf=Tur}, which state, respectively, that
\begin{equation*}
    [-,-]^{\rho_G}=[-,-]_{\mathrm{gr}} \hspace{1em} \text{and} \hspace{1em} \delta_{q^f}=\delta^f_{\mathrm{gr}}.
\end{equation*}
\end{proof}
The reader could reasonably be annoyed at all the extra data we had to tack onto operad modules to derive brackets and cobrackets out of them. To an extent, Theorem \ref{PaCD_is_GT} justifies these additions. The marking of a section $\gamma^f$ is necessary, as it corresponds to the dependence of the Turaev cobracket on a framing $f$. Although not topologically motivated, the choice of an isomorphism $\phi: M \rightarrow UL$ is very limited. The image of the generator $t_{(n+2)(n+3)}$ can only possibly be a scalar. The choice amounts to rescaling the (co)bracket operation by the corresponding number.

\begin{lemma}\label{phi_is_fix}
    Let $(f,g) \in \mathrm{GRT}^g_1$. For $n \geq 0$, the morphisms induced by $g$, $g_{n+3}:H_{g,n+3} \rightarrow H_{g,n+3}$, satisfy that $g_{n+3}(t_{(n+2)(n+3)})=t_{(n+2)(n+3)}$.
\end{lemma}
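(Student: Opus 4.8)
The plan is to reduce the off-diagonal generator $t_{(n+2)(n+3)}$ to diagonal generators $t_{ii}$, which $g$ is forced to fix, by means of a strand-doubling face map that commutes with $g$. First I would record the two structural facts that drive everything. Since $(f,g) \in \mathrm{GRT}_1^g$ we have $f(t_{11}) = t_{11}$, and the same operadic insertion computed in the proof of Lemma \ref{C_in_prele} gives $q_i^{\mathbf{PaCD}^f_g}(1 \otimes t_{11}) = t_{ii}$ in $\Upsilon_m^g$; feeding this into the naturality square (\ref{q_ik_f}) yields $g_m(t_{ii}) = t_{ii}$ for every $m$ and every $1 \le i \le m$, so $g$ fixes all diagonal generators. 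Secondly, each face map $d_i$ of (\ref{strand_add}) is operadic insertion of the structural unit $\mathbf{m} \in \mathrm{End}_{\mathbf{PaCD}^f(2)}(12)$; since $g$ is a module map over the operad map $f$, and $f$ as a Hopf-algebra map fixes $\mathbf{m}$, we get $g_{m+1} \circ d_m = d_m \circ g_m$. The same remark applied to the degeneracies $s_i$ shows $g$ preserves $H_{g,n+3} = \ker(s_{n+2} \circ s_{n+3})$, and one checks directly that $t_{(n+2)(n+3)}$ together with $t_{(n+2)(n+2)}$ and $t_{(n+3)(n+3)}$ all lie in this kernel.

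The computational heart is the doubling identity. Applying $d_{n+2}$ to the diagonal generator $t_{(n+2)(n+2)} \in \Upsilon_{n+2}^g$ splits strand $n+2$ into the two strands $n+2, n+3$; by the composition rule (\ref{op_tn}) in the case $k=i=j$ with $J = \{n+2, n+3\}$, this should give
\begin{equation*}
    d_{n+2}(t_{(n+2)(n+2)}) = t_{(n+2)(n+2)} + t_{(n+2)(n+3)} + t_{(n+3)(n+3)}.
\end{equation*}
I would pin down the interpretation of that rule by cross-checking against the known relation (\ref{H_from_T}), $H = d_1 T - d_0 T - d_2 T$ with $T = t_{11}$ and $H = t_{12}$: there $d_1 T = t_{11} + t_{12} + t_{22}$, $d_0 T = t_{22}$ and $d_2 T = t_{11}$, and the cancellation reproducing $t_{12}$ confirms that the strand-doubling sum includes the two new diagonal terms. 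This is precisely the point where a mis-reading of the composition formula would derail the argument, so it is the main obstacle to verify carefully.

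Finally I would apply $g_{n+3}$ to both sides of the doubling identity. On the left, commutation with $d_{n+2}$ together with $g_{n+2}(t_{(n+2)(n+2)}) = t_{(n+2)(n+2)}$ give $g_{n+3}\big(d_{n+2}(t_{(n+2)(n+2)})\big) = d_{n+2}(t_{(n+2)(n+2)})$, so the whole right-hand side is fixed. On the right, $g_{n+3}$ already fixes the two diagonal summands $t_{(n+2)(n+2)}$ and $t_{(n+3)(n+3)}$. Subtracting these and using linearity of $g_{n+3}$ forces $g_{n+3}(t_{(n+2)(n+3)}) = t_{(n+2)(n+3)}$, which is the claim.
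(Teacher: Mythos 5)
Your proof is correct, and each step checks out against the paper's conventions: the diagonal generators are indeed fixed by the naturality square (\ref{q_ik_f}) (this is exactly how the paper itself exploits the inclusion $\mathrm{GRT}_1^g \subset \mathrm{Aut}_{\mathrm{OpR}^\Delta}(\mathbf{PaCD}^f,\mathbf{PaCD}^f_g,t_{11})$), the face maps commute with $g$ because they are insertions of the structural units $\mathbf{m}$ and $\mathbf{u}$, and the diagonal-inclusive reading of $\sum_{\{p,q\}\subset J} t_{pq}$ --- the one point you rightly flag as the potential failure mode --- is the correct one, as your cross-check against (\ref{H_from_T}) establishes.

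Your route differs from the paper's in the bookkeeping, though the underlying computation is the same. The paper stays in arity $2$: it invokes (\ref{H_from_T}) once to conclude that $g$ fixes $H$, and then transports this up to arity $n+3$ by writing $t_{(n+2)(n+3)}\cdot\sigma = d^{n+1}H$, i.e., by iterating the face map $d_0$. You instead work directly in arity $n+3$ of the module: you first establish that all diagonals $t_{ii}$ are fixed (an extra input, obtained from (\ref{q_ik_f})), and then apply the strand-doubling computation --- which is precisely the content of (\ref{H_from_T}) transported to strands $n+2$, $n+3$ --- to isolate the off-diagonal generator as $d_{n+2}(t_{(n+2)(n+2)}) - t_{(n+2)(n+2)} - t_{(n+3)(n+3)}$. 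What your version buys: it uses only the module-side structure maps $d_i$, $s_i$, $q_i$ of $\mathbf{PaCD}^f_g$, so it needs neither the operad-only maps $d_0$, $d_{n+1}$ nor the transfer of a statement about operad endomorphisms to the module endomorphism algebras $\Upsilon^g_{n+3}$, a step the paper's terse proof leaves tacit. What the paper's version buys: it is a one-line deduction once (\ref{H_from_T}) is recorded, with no need to separately prove that the higher-arity diagonal generators are fixed.
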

\begin{proof}
    We abbreviate
    \begin{equation*}
    \sigma \defeq 1 \cdot
        \begin{tikzcd}[column sep=tiny, line width=0.6 pt]
            (\cdots(1 \hspace{7em} (2 \hspace{4em} (3 & \cdots & (n+2 \hspace{0.2 em} n+3) \cdots)\\
            & & \\
            (\cdots(1 \hspace{7em} (2 \hspace{4em} (3  \arrow[uu, xshift=-5.3 em] \arrow[uu, xshift=2.55 em] \arrow[uu, xshift= 7.45 em] &  \cdots & (n+2 \hspace{0.2 em} n+3) \cdots) \arrow[uu, xshift=-2.1 em] \arrow[uu, xshift=0.4 em]
        \end{tikzcd}
        \in \mathrm{End}_{\mathbf{PaCD}^f}(d^{n+2}(1)).
    \end{equation*}
    Notice that we can write $t_{(n+2)(n+3)} \cdot \sigma = d^{n+1}H$. But equality (\ref{H_from_T}) implies that $g$ fixes the morphism $H$, so that
    \begin{equation*}
        g(t_{(n+2)(n+3)}\cdot \sigma)= g_{n+3}(t_{(n+2)(n+3)}) \cdot \sigma=t_{(n+2)(n+3)} \cdot \sigma.
    \end{equation*}
\end{proof}

\begin{theorem}\label{GRT_in_KRV}
    Given any framing $f$ of $\Sigma_{g,n+1}$, let $\mathrm{GRT}_1^{g,f}$ denote the subgroup of $\mathrm{GRT}^g_1$ that preserves the section $\gamma^{f}$, in the sense that
    \begin{equation*}
        G \circ \gamma^{f}= \gamma^{f} \circ G \hspace{1em} \text{for all}\,\ (F,G)\in \mathrm{GRT}_1^{g,{f}}.
    \end{equation*}
    The map
    \begin{equation}\label{main_mor}
        \begin{split}
            \mathrm{GRT}_1^{g,f} &\longrightarrow \overline{\mathrm{KRV}}_{(g,n+1)}^f\\
            (F,G) & \longmapsto \{G_{n+2}: L \rightarrow L \}
        \end{split}
    \end{equation}
    is a group morphism for all $n \geq 0$.
\end{theorem}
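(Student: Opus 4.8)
The plan is to realize the assignment $(F,G)\mapsto G_{n+2}$ as the restriction of the functor $\Lambda_{n+2}$ to the automorphism group of the single object $\mathcal{N}^f\in\mathrm{OpR}^\Delta_f(n+2)$, followed by the identification of Theorem \ref{PaCD_is_GT}. Once this is in place, the group-homomorphism property is immediate from functoriality: a functor preserves identities and composition, so its restriction to $\mathrm{Aut}_{\mathrm{OpR}^\Delta_f(n+2)}(\mathcal{N}^f)$ is automatically a group homomorphism into $\mathrm{Aut}_{\mathbf{GoTu}}(\Lambda_{n+2}(\mathcal{N}^f))$. Thus the real content is purely in checking that $\mathrm{GRT}_1^{g,f}$ maps into the source automorphism group and in identifying the target.

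First I would verify that every $(F,G)\in\mathrm{GRT}_1^{g,f}$ genuinely defines an automorphism of $\mathcal{N}^f$ in $\mathrm{OpR}^\Delta_f(n+2)$. Since $\mathrm{GRT}_1^{g,f}\subset\mathrm{GRT}_1^g\subset\mathrm{GRT}_g=\mathrm{Aut}^+_{\mathrm{OpR}\,\mathbf{Cat(CoAlg)}}(\mathbf{PaCD}^f,\mathbf{PaCD}^f_g)$, the pair $(F,G)$ is already an automorphism of the underlying operad module fixing objects. There are then three markings to check, each handled by an earlier definition or result: the element $t_{11}$ is fixed because $(F,G)\in\mathrm{GRT}_1^g$; the section $\gamma^f$ is preserved by the very definition of the subgroup $\mathrm{GRT}_1^{g,f}$; and the module isomorphism $\phi$ is preserved because Lemma \ref{phi_is_fix} gives $G(t_{(n+2)(n+3)})=t_{(n+2)(n+3)}$, so that $\phi\circ G$ and $G\circ\phi$ both send the generator $t_{(n+2)(n+3)}$ to $1$, and since $M$ is generated by this element as an $L^{n+2}\oplus L^{n+2}$-module (Corollary \ref{M_is_free}) the two module maps agree. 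Because membership in $\mathrm{GRT}_1^{g,f}$ is closed under inverses, $(F,G)^{-1}$ preserves the same data, so $(F,G)$ is invertible in $\mathrm{OpR}^\Delta_f(n+2)$, i.e.\ an automorphism of $\mathcal{N}^f$.

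Next I would apply $\Lambda_{n+2}$. By Proposition \ref{PaCD--GoTU} we have $\Lambda_{n+2}(\mathcal{N}^f)=(UL^{n+2},[-,-]^{\tilde{\rho}_G},\delta_{q^{r^f}})$, and unwinding the definition of $\Lambda_{n+2}$—in particular that $\Psi$ records only the underlying Hopf-algebra map of a $\mathbf{Fox}$-morphism—shows that the image of $(F,G)$ is precisely the Hopf-algebra automorphism of $UL^{n+2}$ induced by $G$, whose restriction to primitives is the Lie automorphism $G_{n+2}\colon L^{n+2}\cong L\to L$. Functoriality then yields that
\[
    \mathrm{GRT}_1^{g,f}\longrightarrow\mathrm{Aut}_{\mathbf{GoTu}}\big(\Lambda_{n+2}(\mathcal{N}^f)\big),\qquad (F,G)\mapsto G_{n+2},
\]
is a group homomorphism. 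Finally, Theorem \ref{PaCD_is_GT} provides an isomorphism $\Lambda_{n+2}(\mathcal{N}^f)\cong(A,[-,-]_{\mathrm{gr}},\delta_{\mathrm{gr}}^f)$ in $\mathbf{GoTu}$, inducing a group isomorphism of the corresponding automorphism groups; and a $\mathbf{GoTu}$-automorphism of $(A,[-,-]_{\mathrm{gr}},\delta_{\mathrm{gr}}^f)$ is the same datum as a Lie automorphism of $L$ preserving $[-,-]_{\mathrm{gr}}$ and $\delta_{\mathrm{gr}}^f$, that is, an element of $\overline{\mathrm{KRV}}_{(g,n+1)}^f=\mathrm{Aut}^+(L,[-,-]_{\mathrm{gr}},\delta_{\mathrm{gr}}^f)$, once the positivity of the grading is confirmed.

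I expect the only genuinely delicate points to be the two I have flagged. The preservation of $\phi$—equivalently Lemma \ref{phi_is_fix}—is the technical heart: it rests on the graphical identity $H=d_1T-d_0T-d_2T$ of equation (\ref{H_from_T}) together with the fact that an element of $\mathrm{GRT}_g$ fixes the generator $H$, forcing $G_{n+3}$ to fix $t_{(n+2)(n+3)}$. The second point is the matching of the grading condition: one must confirm that $G_{n+2}$ lies in the \emph{positively graded} part $\mathrm{Aut}^+(L)$, which follows because $(F,G)$ is a filtered automorphism of the operad module acting as the identity at lowest order, so the induced map on $L$ has the form $\mathrm{id}+(\text{higher degree})$. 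Everything else is the formal bookkeeping of transporting a single automorphism through the chain $\mathrm{D}_{n+2}$, $\mathrm{F}^{-1}$, $\mathrm{E}^{-1}$, $\Psi$ that constitutes $\Lambda_{n+2}$.
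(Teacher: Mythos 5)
Your proposal is correct and follows essentially the same route as the paper: the paper's proof is precisely the observation that $\mathrm{GRT}^{g,f}_1 \subset \mathrm{Aut}_{\mathrm{OpR}^\Delta_f(n+2)}(\mathcal{N}^f)$ — checked via fixing $t_{11}$, preserving $\gamma^f$ by definition, and preserving $\phi$ via Lemma \ref{phi_is_fix} — with the group-homomorphism property then following from functoriality of $\Lambda_{n+2}$ and the identification of Theorem \ref{PaCD_is_GT}. Your write-up is simply more explicit (closure under inverses, tracing the image through $\Psi$, and the positivity-of-grading check, which the paper leaves tacit), but the underlying argument is identical.
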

\begin{proof}
    We just need to note that $\mathrm{GRT}^{g,f}_1 \subset \mathrm{Aut}_{\mathrm{OpR}^\Delta_f(n)}(\mathcal{N}^f)$.  Given $(F,G) \in \mathrm{GRT}^{g,f}_1$, $G$ will preserve the section $\gamma^f$ by definition. Lastly, by Lemma \ref{phi_is_fix}, $G$ defines a commutative diagram
    \begin{equation*}
        \begin{tikzcd}
            M \arrow[r, "\phi"] \arrow[d, "G"] & UL^{n+2} \arrow[d, "G"]\\
            M \arrow[r, "\phi"] & UL^{n+2}
        \end{tikzcd},
    \end{equation*}
    so that indeed $(F,G) \in \mathrm{Aut}_{\mathrm{OpR}^\Delta_f(n)}(\mathcal{N}^f)$.
\end{proof}

We have succeeded in our goal of extracting the Goldman-Turaev Lie bialgebra in higher genus surfaces from the $\mathbf{PaCD}^f$-module $\mathbf{PaCD}^f_g$. To an extent, this implies an analogous result for genu zero surfaces, but this involves interpreting $\mathbf{PaCD}^f$ as the limiting case $\mathbf{PaCD}^f_{g=0}$, and is somewhat nebulous. For clarity, we devote Appendix \ref{app_KRV} to the genus zero case.

Lastly, we have neglected the category $\mathbf{Fox}$ in our statement of Theorem \ref{PaCD_is_GT}, even though the morphism (\ref{main_mor}) factors through it. We can correct this with the following theorem:
\begin{theorem}
    Given a framing $f$ of $\Sigma_{g,n+1}$, let us denote $\mathcal{C}^f_{g,n+1} \defeq (L, q^{f} \oplus \rho_G)\in \mathrm{Ob}(\mathbf{Fox})$. Then
    \begin{equation*}
        \mathrm{Aut}_{\mathrm{Fox}}(\mathcal{C}^f_{g,n+1})/ \mathbb{K} \cong \overline{\mathrm{KRV}}^f_{g,n+1}, 
    \end{equation*}
    where $\mathbb{K}$ stands for scalar multiplication.
\end{theorem}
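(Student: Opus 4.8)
The plan is to realize the claimed isomorphism as the map on automorphism groups induced by the functor $\Psi \colon \mathbf{Fox} \to \mathbf{GoTu}$ of Theorem \ref{RelCo--GoTu}. By Corollaries \ref{rho_G=gold} and \ref{qf=Tur} we have $\Psi(\mathcal{C}^f_{g,n+1}) = (A, [-,-]_{\mathrm{gr}}, \delta^f_{\mathrm{gr}})$ with $A = UL$. Since $\Psi$ is a functor, it restricts to a group homomorphism
\[
    \Phi \colon \mathrm{Aut}_{\mathbf{Fox}}(\mathcal{C}^f_{g,n+1}) \longrightarrow \mathrm{Aut}_{\mathbf{GoTu}}\!\left(A, [-,-]_{\mathrm{gr}}, \delta^f_{\mathrm{gr}}\right), \qquad (g, \partial_L \oplus \partial_R) \longmapsto g .
\]
A first routine step identifies the target with $\overline{\mathrm{KRV}}^f_{g,n+1}$: a Hopf automorphism of the enveloping algebra $A = UL$ is the same datum as a (filtered, positively graded) Lie automorphism of its primitives $L$, and preservation of the operations on $|A|$ is, by Definition \ref{KRV_def}, preservation of the graded Goldman bracket and Turaev cobracket. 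Thus $\Phi$ is a well-defined homomorphism into $\overline{\mathrm{KRV}}^f_{g,n+1} = \mathrm{Aut}^+(L, [-,-]_{\mathrm{gr}}, \delta^f_{\mathrm{gr}})$, and it remains to show $\ker \Phi = \mathbb{K}$ and that $\Phi$ is surjective.

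For the kernel, I would take a morphism of the form $(\mathrm{id}_L, \partial_L \oplus \partial_R)$. The defining constraint (\ref{mor_fox}) with $f = \mathrm{id}$ collapses to $\mu(\partial_L \oplus \partial_R) = \partial_L + \partial_R = 0$ and $\tau(\partial_L \oplus \partial_R) = 0$. The first equation gives $\partial_R = -\partial_L$, so $\partial_L$ is simultaneously a left and a right Fox derivative; since $UL$ is, by the Poincar\'e--Birkhoff--Witt theorem, the free associative algebra on the $2g + n \geq 2$ generators of $L$, Lemma \ref{LcapR=D} forces $\partial_L = \lambda D$ for a scalar $\lambda$, and one checks these indeed satisfy $\tau(\lambda D \oplus -\lambda D) = 0$. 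The composition law $\circ_{\mathbf{Fox}}$ then identifies $\ker\Phi$ with the additive group $(\mathbb{K}, +)$ sitting centrally, so $\Phi$ descends to an injection $\mathrm{Aut}_{\mathbf{Fox}}(\mathcal{C}^f_{g,n+1})/\mathbb{K} \hookrightarrow \overline{\mathrm{KRV}}^f_{g,n+1}$.

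Surjectivity is where the cohomological machinery of Chapter \ref{chap_coho} does the work. Under the equivalence $\mathrm{E} \colon \mathbf{Fox}^\eta \xrightarrow{\ \sim\ } \mathbf{RelCo}^\eta$ of Theorem \ref{quasi-iso}, the object $\mathcal{C}^f_{g,n+1}$ corresponds to the relative $2$-cocycle $\omega_{q^f} \oplus c_{\rho_G}$, i.e. to a class in $H^2(L \oplus L, L; UL)$. A given $g \in \overline{\mathrm{KRV}}^f_{g,n+1}$ lifts to an automorphism of $\mathcal{C}^f_{g,n+1}$ in $\mathbf{Fox}$ precisely when the deviation $(g\,\omega_{q^f} - \omega_{q^f}\, g) \oplus (g\, c_{\rho_G} - c_{\rho_G}(g \otimes g))$ is a coboundary $\delta r$; the cochain $r$ then furnishes, through the bijection of Proposition \ref{def_E}, the required pair $\partial_L \oplus \partial_R$. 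Equivalently, $g$ lifts if and only if $g$ fixes the cohomology class of $\omega_{q^f} \oplus c_{\rho_G}$. Because exact quasi-derivations and exact Fox pairings induce the null cobracket and bracket (Propositions \ref{big2} and \ref{FP_exact=null}), the bracket and cobracket factor through $H^2$, so fixing the class forces preservation of $[-,-]_{\mathrm{gr}}$ and $\delta^f_{\mathrm{gr}}$.

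The main obstacle is the converse implication needed for surjectivity: that preservation of both $[-,-]_{\mathrm{gr}}$ and $\delta^f_{\mathrm{gr}}$ already forces $g$ to fix the class, equivalently that the map from $H^2(L\oplus L, L; UL)$ to pairs $(\text{bracket}, \text{cobracket})$ is injective for the free Lie algebra $L$ (a converse to Propositions \ref{FP_exact=null} and \ref{big2}). I would approach this either directly, via the naturality of $\Psi$ established in the proof of Theorem \ref{RelCo--GoTu} — bracket preservation yields $[-,-]^{\rho_G} = [-,-]^{\,g^{-1}\rho_G(g\otimes g)}$ and cobracket preservation the analogous identity for $q^f$, reducing the deviations to Fox pairings and quasi-derivations with null associated operations — or, more concretely, by invoking the decomposition $\overline{\mathrm{KRV}}^f_{g,n+1} = (\exp(L)/\mathbb{K}\langle \exp \omega\rangle_L) \rtimes \mathrm{KRV}^f_{g,n+1}$ of Proposition \ref{acc_ext}: special automorphisms fix $\rho_G$ on the nose by Proposition \ref{spec_fix_rho}, while the conjugation part contributes only inner Fox pairings, which are exact by the remark following Definition \ref{inner}. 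The delicate point throughout is that a \emph{single} pair $\partial_L \oplus \partial_R$ must simultaneously account for both the Fox-pairing and the quasi-derivation deviations; it is exactly this compatibility that the relative $2$-cocycle packages, which is why the argument is most naturally phrased in $\mathbf{RelCo}^\eta$ rather than directly in $\mathbf{Fox}$.
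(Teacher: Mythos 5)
Your skeleton matches the paper's: the homomorphism is the one induced by $\Psi$, and your kernel computation (the constraint collapses to $\mu(r)=0$ and $\tau(r)=0$, hence $\partial_R=-\partial_L$ with $\partial_L\in \mathrm{LFox}(A)\cap\mathrm{RFox}(A)=\mathbb{K}D$ by Lemma \ref{LcapR=D}) is exactly the paper's. The genuine gap is surjectivity, which you correctly single out as the main obstacle but do not close. Your approach (a) requires the converse of Propositions \ref{FP_exact=null} and \ref{big2} --- that a Fox pairing with null bracket, resp.\ a quasi-derivation with null cobracket, must be exact --- and this converse is neither proved in the paper nor obviously true; the paper deliberately routes around it. Your approach (b) is indeed the paper's route (write $g=h\circ g_0$ with $g_0$ special and $h$ conjugation by a group-like element, per \cite{Flor3}), but you only treat the Fox-pairing deviations. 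What is missing, and is the technical heart of Appendix \ref{aut_in_relco}, is the quasi-derivation side: (i) for the conjugation $h$, one must verify that the deviation $q_h=h^{-1}q^f h-q^f$ is an exact quasi-derivation (a computation, but one that has to be done); and (ii) for the special part $g_0$, since $g_0$ fixes $\rho_G$ the deviation $\psi_{g_0}=g_0^{-1}q^f g_0-q^f$ is a derivation, and one must prove it is \emph{inner}, i.e.\ $\psi_{g_0}=[v_{g_0},-]$ for a single element $v_{g_0}\in A$. This step uses the hypothesis that $g_0$ preserves $\delta^f_{\mathrm{gr}}$ in an essential way: from $\delta_{\psi_{g_0}}=0$ one evaluates on powers $\alpha_i^N$ and on mixed monomials $\alpha_1^N\alpha_j^M$ and invokes the cyclic-word lemma (Lemma \ref{cyc_aux}, imported from \cite{Flor2}) to extract a common $v_{g_0}$ with $\psi_{g_0}(\alpha_i)=[v_{g_0},\alpha_i]$ for every generator. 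Nothing in your proposal substitutes for this argument, and without it the lift of $g_0$ cannot be constructed.

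Relatedly, the ``delicate point'' you flag --- that a \emph{single} pair $\partial_L\oplus\partial_R$ must realize both deviations simultaneously under $\mu\oplus\tau$ --- is not discharged by observing that the relative cocycle packages it; the paper resolves it by exhibiting explicit Fox derivatives: for the conjugation part, $q_h\oplus\rho_h=(\mu\oplus\tau)\bigl[(\partial_L^{x}+\partial_L^{h})\oplus(\partial_R^{x}+\partial_R^{h})\bigr]$ with $\partial_L^{h}(a)=\rho_G(a,x^{-1})$ and $\partial_R^{h}(a)=\rho_G(x,a)$, and for the inner-derivation part, $\psi_{g_0}\oplus 0=(\mu\oplus\tau)\bigl[\partial_L^{v}\oplus\partial_R^{v}\bigr]$ with $\partial_L^{v}(a)=D(a)v_{g_0}$, $\partial_R^{v}(a)=v_{g_0}D(a)$. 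Acknowledging the compatibility question without producing such formulas (or an abstract reason they must exist) leaves surjectivity, and hence the isomorphism, unproved.
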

The proof of this theorem is slightly technical and involves a closer look at special automorphisms, so we address it in Appendix \ref{aut_in_relco}.

\section*{Closing remarks}

In \cite{AlekToro}, Alekseev and Torossian give a proof of the Kashiwara-Vergne conjecture by producing a solution out of a Drinfeld associator. Their proof describes an inclusion of bitorsors
\begin{equation*}
    \begin{tikzcd}[cells={nodes={}}]
        \arrow[loop left, distance=3em, start anchor={[yshift=-1ex]west}, end anchor={[yshift=1ex]west}]{}{\mathrm{GT}}  \mathrm{Assoc} \arrow[loop right, distance=3em, start anchor={[yshift=1ex]east}, end anchor={[yshift=-1ex]east}]{}{\mathrm{GRT}} \arrow[d] \\ 
        \arrow[loop left, distance=3em, start anchor={[yshift=-1ex]west}, end anchor={[yshift=1ex]west}]{}{\mathrm{KV}} \mathrm{Sol(KV)} \arrow[loop right, distance=3em, start anchor={[yshift=1ex]east}, end anchor={[yshift=-1ex]east}]{}{\mathrm{KRV}} 
    \end{tikzcd}
\end{equation*}
from the set of Drinfeld associators into the set of solutions to the KV conjecture. The inclusion of groups
\begin{equation*}
    \begin{tikzcd}
        \mathrm{GT} \arrow[r, hook] & \mathrm{KV}
    \end{tikzcd}
\end{equation*}
can be regarded as a topological counterpart to that of the groups $\mathrm{GRT}$ and $\mathrm{KRV}$. On this final section, we will outline how the machinery we have set up could be applied to the topological setting as well.

The group $\mathrm{KV}$ acts freely and transitively on $\mathrm{Sol(KV)}$ from the left. Analogously to $\mathrm{KRV}$, its higher genus generalizations admit a description as automorphism groups. Recall the notation $\pi \defeq \pi_1(\Sigma_{g,n+1},*)$ for the fundamental group of a surface of genus zero $g$ with $n+1$ boundary components. Let $\widehat{\mathbb{K}\pi}$ denote the completion of the group algebra $\mathbb{K}\pi$ with respect to the weight filtration of Definition \ref{wght_pi}.

\begin{theorem}[\cite{Flor1}, Theorem 8.21] \label{KV_def}
    Given any framing $f$ on $\Sigma_{g,n+1}$, let $\mathrm{tAut^+}(\widehat{\mathbb{K}\pi}, \kappa, \delta^f)$ denote the set of tangential Hopf algebra automorphisms of $\widehat{\mathbb{K}\pi}$ that preserve the Turaev cobracket
    \begin{equation*}
        \delta^f:|\widehat{\mathbb{K}\pi}| \rightarrow |\widehat{\mathbb{K}\pi}| \otimes|\widehat{\mathbb{K}\pi}|
    \end{equation*}
    and the double bracket
    \begin{equation*}
        \kappa:\widehat{\mathbb{K}\pi} \otimes\widehat{\mathbb{K}\pi} \rightarrow\widehat{\mathbb{K}\pi}\otimes \widehat{\mathbb{K}\pi}
    \end{equation*}
    inducing the Goldman bracket. There exists a natural isomorphism
    \begin{equation*}
        \mathrm{tAut^+}(\widehat{\mathbb{K}\pi}, \kappa, \delta^f) \cong \mathrm{KV}^f_{g,n+1}.
    \end{equation*}
\end{theorem}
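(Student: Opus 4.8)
The plan is to mirror the graded statement of Theorem \ref{AutGT=KRV}, transposing its argument from the associated graded $L$ to the filtered group algebra $\widehat{\mathbb{K}\pi}$. The group $\mathrm{KV}^f_{g,n+1}$ is defined through its free and transitive action on solutions of the higher-genus Kashiwara-Vergne problem; such a solution is a tangential Hopf automorphism of $\widehat{\mathbb{K}\pi}$ subject to two families of equations — a ``bracket'' equation governing the Campbell-Hausdorff/Goldman data, and a ``divergence'' equation depending explicitly on the framing $f$ through the rotation numbers $\mathrm{rot}^f$. The strategy is therefore to show that these two families of equations are respectively equivalent to the preservation of the double bracket $\kappa$ and of the Turaev cobracket $\delta^f$, so that the defining conditions of $\mathrm{KV}^f_{g,n+1}$ match verbatim the membership conditions of $\mathrm{tAut}^+(\widehat{\mathbb{K}\pi}, \kappa, \delta^f)$.

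First I would treat the bracket half. Because $\kappa$ is a double bracket in the sense of Proposition \ref{brac_from_double}, and because (by the filtered analogue of Corollary \ref{rho_G=gold}) it is governed by a Fox pairing of the same shape as in the graded case, preservation of $\kappa$ by a tangential automorphism $F$ is equivalent to $F$ intertwining that Fox pairing on the nose. Using that $F$ is tangential — conjugating each boundary loop $\gamma_j$ and fixing the total boundary word $\gamma_0$ — one checks that this intertwining condition is exactly the first KV equation. Here the main point is to verify compatibility with the multiplicative (double-bracket) structure rather than merely with the induced Lie bracket on $|\widehat{\mathbb{K}\pi}|$; the double bracket carries strictly more information, and tangentiality is what upgrades bracket-preservation to $\kappa$-preservation.

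Next I would treat the cobracket half, which I expect to be the main obstacle. The Turaev cobracket $\delta^f$ is induced by a quasi-derivation $q^f$ whose generator values encode the framing via $r^f(z_j) = \mathrm{rot}^f(\gamma_j)+1$ (compare Corollary \ref{qf=Tur}). The content here is to identify preservation of $\delta^f$ with the framing-dependent divergence (second KV) equation: one must show that the defect $F \circ \delta^f - \delta^f \circ F$ is measured by a noncommutative divergence cocycle, and that this defect vanishes precisely when $F$ solves the second KV equation. The delicate step is that, unlike in the graded setting where $q^f$ is pinned down by its values on generators, in the filtered setting the cobracket must be controlled on all of $\widehat{\mathbb{K}\pi}$; one controls it via the derivation property of $d_{q^f}$ together with the skew-symmetry $\delta^f = -P\delta^f$, reducing the verification to group-like and primitive elements and then extending by the Leibniz-type identities established in Section \ref{quasisec}.

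Finally I would assemble the equivalence. The free and transitive action of $\mathrm{KV}^f_{g,n+1}$ on solutions gives a canonical bijection once a base solution is fixed; composing with the two equivalences above sends each KV element to the unique tangential automorphism preserving both $\kappa$ and $\delta^f$, and conversely. It then remains to check that this assignment is a group homomorphism — it respects composition because both the KV product and the composition of tangential automorphisms are given by the same (opposite-order) composition law — and that it is natural. Naturality with respect to change of framing follows from the functoriality already built into the rotation-number data $r^f$ and the framing-to-section correspondence of Lemma \ref{sec_from_frames}, so that the isomorphism is compatible with the passage to associated graded that recovers Theorem \ref{AutGT=KRV}.
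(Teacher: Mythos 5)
The paper does not prove this statement at all: it is quoted from \cite{Flor1} (their Theorem 8.21), and immediately after stating it the author writes that it will be treated ``as a working definition of the group $\mathrm{KV}^f_{g,n+1}$'' --- exactly parallel to how Theorem \ref{AutGT=KRV} is imported for $\mathrm{KRV}^f_{g,n+1}$. So there is no internal proof to compare your attempt against; within this paper the theorem is a black box supplied by the literature.

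Judged on its own terms, your sketch gestures at the strategy used in the cited source (identify the ``bracket'' KV equation with preservation of the double bracket $\kappa$, the framing-dependent ``divergence'' equation with preservation of $\delta^f$, then assemble via the torsor structure), but it cannot be carried out inside this paper's framework, because the KV equations are never formulated here: $\mathrm{KV}^f_{g,n+1}$ is never defined by equations in this text, only through the very statement you are trying to prove. As a result, each of your two claimed ``equivalences'' has nothing concrete on the KV side, and the argument is close to circular --- asserting that the first KV equation \emph{is} $\kappa$-preservation and the second \emph{is} $\delta^f$-preservation is essentially the content of the cited theorem, not a step toward it. Two further points would need real work even with the definitions in hand: your claim that tangentiality ``upgrades'' preservation of the induced bracket on $|\widehat{\mathbb{K}\pi}|$ to preservation of the double bracket $\kappa$ is exactly the kind of statement that requires the special/conjugation analysis the paper only develops in the graded setting (Proposition \ref{spec_fix_rho} and Appendix \ref{aut_in_relco}), and your cobracket step silently assumes a filtered analogue of Corollary \ref{qf=Tur}, which the paper never establishes (its Fox-pairing and quasi-derivation computations are carried out for the associated graded $A$, not for $\widehat{\mathbb{K}\pi}$). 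The honest options are either to import the definitions and divergence-cocycle machinery from \cite{Flor1} and redo their argument, or to do what the author does: cite the result.
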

Just like before, we will treat Theorem \ref{KV_def} as a working definition of the group $\mathrm{KV}^f_{g,n+1}$. On the other hand, in \cite{gonz}, Gonzalez defined higher genus analogues of the group $\mathrm{KV}$; we will use the same notation as them.

\begin{definition}
    Let $\Sigma$ be an oriented surface. Denote by $\pi_\Sigma:F\Sigma \rightarrow \Sigma$ the $\mathrm{GL}(2)$-principal bundle of frames on $\Sigma$. The \textit{framed configuration space of $n$ points on $\Sigma$} is the space
    \begin{equation*}
        \mathrm{Conf}^f(\Sigma, n) \defeq \{(x, f_1, f_1) \in \Sigma\times\mathrm{SO}(2)^{\times2}\, |\, \pi_\Sigma(f_i)=x \}. 
    \end{equation*}
     When $\Sigma=\mathbb{C}$, we define the \textit{reduced framed configuration space of $n$ points on $\mathbb{C}$} by 
    \begin{equation*}
        C^f(\mathbb{C}, n) \defeq \mathrm{Conf}^f(\mathbb{C}, n)/(\mathbb{C} \ltimes \mathbb{R}_{>0}).
    \end{equation*}
\end{definition}

\begin{definition}[\cite{gonz}, \S 2.1]
The \textit{operad of parenthesized framed braids} $\mathbf{PaB}^f$ is the operad in groupoids having $\mathbf{Pa}$ as an operad of objects
\begin{equation*}
    \mathbf{PaB}^f(n)\defeq \pi_1(\overline{C}^f(\mathbb{C},n),\mathbf{Pa}),
\end{equation*}
where $\overline{C}^f(\mathbb{C},n)$ denotes the Axelrod-Singer-Fulton-MacPherson (ASFM) compactification of $C^f(\mathbb{C},n)$. 
\end{definition}

\begin{definition}[\cite{gonz}, \S 3.1]
    The \textit{$\mathbf{PaB}^f$-operad module of parenthesized braids of genus $g$} is the operad module in groupoids having $\mathbf{Pa}$ as an operad of objects
    \begin{equation*}
        \mathbf{PaB}^f_g(n) \defeq \pi_1(\overline{\mathrm{Conf}}^f(\Sigma_g, n),\mathbf{Pa}),
    \end{equation*}
    where $\overline{\mathrm{Conf}}^f(\Sigma_g,n)$ is the ASFM compactification of $\mathrm{Conf}^f(\Sigma_g,n)$, with $\Sigma_g$ a compact oriented surface of genus $g$.
\end{definition}

We use the notation
\begin{equation*}
    \widehat{\square}:\mathbf{Grpds} \rightarrow\mathbf{Cat}(\mathbf{CoAlg)}
\end{equation*}
for the prounipotent completion functor. Concretely, given a group $\Gamma$, it endows the group algebra $\mathbb{K}\Gamma$ with the Hopf algebra structure
\begin{equation*}
    \Delta(g)=g \otimes g, \hspace{1em} S(g)=g^{-1}, \hspace{1em} \varepsilon(g)=1
\end{equation*}
for all $g\in \Gamma$, and completes $\mathbb{K}\Gamma$ with respect to the filtration defined by powers of the augmentation ideal.

Let $F_1 \in \mathrm{End}_{\widehat{\mathbf{PaB}}(1)}(1)$ denote the rotation of the framing by $360^\circ$. In the notation of Section \ref{operad_sec}, we propose that
\begin{conjecture}\label{conj}
    Let $n\geq 0$, and let $f$ be a framing on the surface $\Sigma_{g,n+1}$. Then there exists a section $\gamma^f:\overline{K}_{n+2} \rightarrow K_{n+2}$, and an isomorphism of $\overline{K}_n \oplus\overline{K}_n$-modules $\varphi: M \rightarrow U\overline{K}_{n+2}$ such that
    \begin{equation}
        ((\widehat{\mathbf{PaB}}^f, \widehat{\mathbf{PaB}}^f_g, F_1), \gamma^f, \varphi) \in \mathrm{OpR}^\Delta_f(n+2)
    \end{equation}
    and 
    \begin{equation*}
        \begin{tikzcd}
            \{(\widehat{\mathbf{PaB}}^f, \widehat{\mathbf{PaB}}^f_g, F_1), \gamma^f, \varphi\} \arrow[r, "\Lambda_{n+2}", mapsto]& (|\widehat{\mathbb{K}\pi}|, [-,-], \delta^f),
        \end{tikzcd}
    \end{equation*}
    where the right-hand side stands stands for the Goldman-Turaev bialgebra on $\Sigma_{g,n+1}$ associated to $f$.
\end{conjecture}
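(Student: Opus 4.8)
The plan is to mirror the proof of Theorem \ref{PaCD_is_GT} essentially verbatim, but at the non-graded (prounipotent) level, replacing the infinitesimal braid data $\mathfrak{t}_{g,n}^f$ by the completed group algebras of framed configuration spaces. The crucial observation is that the functors $\Lambda_{n+2}$, $\mathrm{D}_n$, $\Pi_n$ and the equivalences $\mathrm{F}$, $\mathrm{E}$, $\Psi$ constructed in Chapters 2--4 are purely algebraic and never invoke the grading; hence the whole apparatus applies unchanged to the triple $(\widehat{\mathbf{PaB}}^f,\widehat{\mathbf{PaB}}^f_g, \log F_1)$, provided this triple is first shown to be an object of $\mathrm{OpR}^\Delta_f(n+2)$. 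The work therefore splits into setting up the object and then recomputing the analogue of Proposition \ref{PaCD--GoTU}.

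First I would identify the endomorphism Hopf algebras. The algebra $\Upsilon_n^{\widehat{\mathbf{PaB}}^f_g}=\mathrm{End}_{\widehat{\mathbf{PaB}}^f_g(n)}(d^{n-1}(1))$ is the prounipotent completion of the pure framed genus-$g$ surface braid group $\pi_1(\mathrm{Conf}^f(\Sigma_g,n))$, with $s_i$, $d_i$ and $q_i$ realized geometrically by forgetting a point, doubling a point, and rotating the framing at a point. Here the marked element should be taken to be $\log F_1$ rather than $F_1$ itself: the group-like $F_1$ is not primitive, whereas Definition \ref{delta_cat} demands $t_\mathcal{P}\circ_1^{1,0}\mathbf{u}=0$, a condition that holds after passing to the logarithm since deletion sends $F_1\mapsto 1$ and hence $\log F_1\mapsto 0$. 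The structural input replacing Proposition \ref{def_kg,n} is that $K_{n+2}=\ker(s_{n+2})$ is the completed group algebra of the fibre of the last-point projection, namely $\pi_1$ of the genus-$g$ surface with $n+1$ punctures, which is free; after killing the central framing loop one obtains $\overline{K}_{n+2}$ whose Malcev Lie algebra is free and satisfies $U\overline{K}_{n+2}\cong\widehat{\mathbb{K}\pi}$. This is the non-graded counterpart of Proposition \ref{A_is_grad} and secures the freeness hypothesis in Definition \ref{op_delta}.

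Second I would prove the non-graded analogue of Lemma \ref{C_in_prele}, computing $\Pi_{n+2}(\widehat{\mathbf{PaB}}^f,\widehat{\mathbf{PaB}}^f_g,\log F_1)$ explicitly as a diagram of completed group algebras in $\mathbf{PRelE}$, with module $M$ the completed algebra of the loop linking the last two points. Passing through $\mathrm{D}_{n+2}$, $\mathrm{F}^{-1}$, $\mathrm{E}^{-1}$ and $\Psi$ exactly as in Proposition \ref{PaCD--GoTU} then yields a triple $(\widehat{\mathbb{K}\pi},[-,-]^{\rho},\delta_q)$ for some Fox pairing $\rho$ and quasi-derivation $q\in\mathrm{Qder}(-\rho)$ determined by the surface-braid relations and by the framing section. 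That section $\gamma^f$ is built as in Lemma \ref{sec_from_frames}, now encoding the rotation numbers $\mathrm{rot}^f(\gamma_j)$ through the splitting of the completed last-strand algebra, while $\varphi$ is the normalization sending the linking generator to $1$.

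The final and genuinely hard step is to identify $\rho$ with the Goldman Fox pairing and $q$ with the Turaev quasi-derivation $q^f$ at the non-graded level. Part of the needed input is already available: Massuyeau--Turaev \cite{MasTu} and Massuyeau \cite{Mass} show that the Goldman double bracket $\kappa$ and the framed Turaev cobracket $\delta^f$ on $\widehat{\mathbb{K}\pi}$ are themselves induced by a Fox pairing and a quasi-derivation, so it suffices to match these intrinsic operations with the ones produced operadically. Unlike the graded case, however, the relations in $\pi_1(\mathrm{Conf}^f(\Sigma_g,n))$ are the full surface-braid relations, which lack the transparent closed form of the relations $(\mathrm{S}_g)$--$(\mathrm{F4T}_g)$ defining $\mathfrak{t}_{g,n}^f$; consequently the \emph{non-vanishing entries} computation that made Proposition \ref{rhoG_gives_brac} and its cobracket companion routine becomes a genuine geometric statement about transverse and self-intersections of curves on $\Sigma_{g,n+1}$. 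I expect this matching --- effectively the assertion that the operadic point-doubling composition realizes the Goldman intersection pairing while the framing rotation $\log F_1$ realizes the Turaev self-intersection correction --- to be the main obstacle. It is exactly this geometric content, the string-topology interpretation alluded to through \cite{3flav}, that keeps the statement at the level of a conjecture rather than a theorem.
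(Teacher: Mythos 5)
You were asked to prove a statement that the paper itself does not prove: it appears as Conjecture~\ref{conj}, and the author offers only indirect evidence for it (Taniguchi's construction in \cite{tani} of higher genus KV solutions from Gonzalez' associators, and the analogy with Massuyeau's work \cite{Mass}). There is therefore no proof in the paper to compare yours against, and the honest bottom line is that your submission is not a proof either --- by your own admission in the final paragraph.

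That said, your roadmap is the natural one and is faithful to how the graded analogue is actually established in the paper (Lemma~\ref{C_in_prele}, Proposition~\ref{PaCD--GoTU}, Theorem~\ref{PaCD_is_GT}): identify the endomorphism Hopf algebras of $\widehat{\mathbf{PaB}}^f_g$, verify the hypotheses of Definition~\ref{op_delta} (your remark that the group-like $F_1$ must be replaced by a primitive element such as $\log F_1$ to satisfy the kernel condition of Definition~\ref{delta_cat} is a legitimate correction of the conjecture's phrasing), build $\gamma^f$ from rotation numbers as in Lemma~\ref{sec_from_frames}, and push through $\Pi_{n+2}$, $\mathrm{D}_{n+2}$, $\mathrm{F}^{-1}$, $\mathrm{E}^{-1}$, $\Psi$. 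The genuine gap is exactly where you locate it: in the graded case, the explicit presentation of $\mathfrak{t}^f_{g,n}$ by the relations $(\mathrm{S}_g)$--$(\mathrm{F4T}_g)$ is what makes the analogues of Propositions~\ref{def_kg,n} and \ref{rhoG_gives_brac} computable, and what allows the kernel algebras $K_{g,n}$, $H_{g,n}$ and the module $M$ to be matched against the Fox pairing $\tilde{\rho}_G$ and quasi-derivation $q^r$ entry by entry. No such presentation-level computation is available for the prounipotent surface braid groups, so the identification of the operadically extracted operations with the intrinsic Goldman double bracket and Turaev cobracket on $\widehat{\mathbb{K}\pi}$ (which do exist as a Fox pairing and quasi-derivation by \cite{MasTu, Mass}) remains unestablished. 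Since that identification is the entire content of the statement, your proposal correctly explains why the claim is a conjecture but does not settle it.
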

Note that this conjecture is reminiscent of the work of Massuyeau in \cite{Mass}, where the author derives Fox pairings and quasi-derivations directly from the fundamental groups of punctured discs. Conjecture \ref{conj} is backed by \cite{tani}, where Taniguchi constructs solutions of the higher genus KV problems starting from Gonzalez' higher genus associators.
\begin{definition}[\cite{gonz}, Definition 3.16]
The set of genus $g$ associators is the set of isomorphisms in $\mathrm{OpR}\, \mathbf{Cat}(\mathbf{CoAlg})$
\begin{equation*}
    \mathrm{Assoc}_g \defeq \mathrm{Iso}_{\mathrm{OpR}\, \mathbf{Cat}(\mathbf{CoAlg})}^+((\widehat{\mathbf{PaB}}^f, \widehat{\mathbf{PaB}}^f_g),(\mathbf{PaCD}^f, \mathbf{PaCD}^f_g))
\end{equation*}
which are the identity on objects.
\end{definition}
If Conjecture \ref{conj} were true, then starting from a genus $g$ associator $\mu$, one could produce an isomorphism of Lie bialgebras
\begin{equation*}
    (|\widehat{\mathbb{K}\pi}|, [-.-], \delta^f) \rightarrow (|A|, [-,-]_{\mathrm{gr}}, \delta^f_{\mathrm{gr}})
\end{equation*}
between the Goldman-Turaev bialgebra and its associated graded. But these isomorphisms are essentially understood as solutions to a higher genus Kashiwara-Vergne problem. We refer the reader to \cite[\S 6.4]{Flor1} for more details.

\appendix
\addcontentsline{toc}{section}{Appendices}
\section*{Appendices}
\section{Auxiliary results}

\subsection{On quotients of free Lie algebras}\label{F-well-def}

Throughout this section, if $\mathfrak{a}$ is a Lie algebra and $A \subset \mathfrak{a}$, we write $\langle A \rangle_{\mathfrak{a}}$ for the ideal of $\mathfrak{a}$ generated by $A$. We will omit the subscript $\mathfrak{a}$ whenever it can be inferred from context.

Let $V$ denote the vector space over $\mathbb{K}$ with basis $\{x_i\}_{i\in I}$, $V= \mathrm{Vec(\{x_i\}_{i\in I}})$. Similarly, we write $W=\mathrm{Vec}(\{y_j\}_{j\in J})$. Lastly, let $L(V)$ and $L(W)$ denote the degree completion of the free Lie algebras over the corresponding vector spaces.

\begin{lemma}
    Suppose we are given subsets
    \begin{equation*}
        R=\{r_i\}_{i\in K} \subset L(V), \hspace{1em} Q=\{q_j\}_{j\in L} \subset L(W), \hspace{1em} F=\{[x_i,y_j]-f_{ij}\, | \, i\in I, j\in J\} \subset L(V \oplus W),
    \end{equation*}
    where $f_{ij}\in L(w)$ for all $i$ and $j$. We abbreviate $\mathfrak{g}=L(V\oplus W)/\langle F \rangle_{L(V \oplus W)}$. If the inclusion\footnote{We are implicitly identifying $W$ with is image under the obvious injection $L(W) \rightarrow L(V \oplus W)/\langle F \rangle_{L(V \oplus W)}$. We do this repeatedly throughout this appendix.}
    \begin{equation}\label{assum}
        [R, L(W)]_{\mathfrak{g}}+[L(V), Q]_\mathfrak{g} \subset \langle Q \rangle_{L(W)}
    \end{equation}
    holds, then the map $\begin{tikzcd}
        L(V) \arrow[r, "\rho"] & \mathrm{L(W)}
    \end{tikzcd}$ induced by the choices
    \begin{equation}
        \rho(x_i)y_j=f_{ij},\, \text{for all}\,\, i\in I, j\in J, 
    \end{equation}
    descends to define a semi-direct product $L(V)/\langle R \rangle_{L(V)} \ltimes L(W)/\langle Q \rangle_{L(W)}$.
    \begin{proof}
        Let $i\in I$. To show that $\rho(x_i)$ preserves the ideal $\langle Q \rangle_{L(W)}$, notice that, for all $j \in L$,
        \begin{equation*}
            \rho(x_i)q_j=[x_i,q_j]_{\mathfrak{g}} \in [R, L(W)]_\mathfrak{g} \subset \langle Q \rangle_{L(W)},
        \end{equation*}
        where we used our assumption (\ref{assum}) for the last inclusion.
        To show that $\rho$ descends to the quotient $L(V)/\langle R \rangle_{L(V)}$, notice now
        \begin{equation*}
            \rho(r_i)y_j= [r_i, y_j]_{\mathfrak{g}}\in [R, L(W)]_{\mathfrak{g}} \subset \langle Q \rangle_{L(W)}
        \end{equation*}
        for all $i\in K$ and $j\in J$.
    \end{proof}

\begin{proposition} \label{main_app}
    The map
    \begin{equation*}
        \begin{split}
            \psi: \hspace{1em} L(V)/\langle R \rangle_{L(V)} \ltimes L(W)/\langle Q \rangle_{L(W)} & \rightarrow L(V \oplus W)/\langle R,Q,F \rangle_{L(V\oplus W)}\\
            x_i & \mapsto x_i\\
            y_j &\mapsto y_j
        \end{split}
    \end{equation*}
    is a Lie algebra isomorphism.
\end{proposition}
\begin{proof}
    Let $x_a, x_b \in V$, $y_a, y_b \in W$. The map $\psi$ is a Lie homomorphism by definition of the semi-direct product:
    \begin{equation*}
        \begin{split}
            \psi([x_a + y_a, x_b + y_b])&= \psi([x_a, x_b]_{L(V)/\langle R \rangle}+\rho(x_a)y_b - \rho(x_b)y_a + [y_a, y_b]_{L(W)/\langle Q \rangle})\\
            &=[x_a, x_b]_{L(V \oplus W)/\langle R, Q, F \rangle} + f_{ab}-f_{ba}+ [y_a, y_b]_{L(V \oplus W)/\langle R, Q, F \rangle}\\
            &=[\psi(x_a + y_a), \psi(x_b + y_b)].
        \end{split}
    \end{equation*}
    It is clear that $\psi$ is surjective. To prove injectivity, we propose a left inverse for $\psi$:
    \begin{equation*}
        \begin{split}
            \phi: \hspace{1em} L(V \oplus W)/\langle R,Q,F \rangle_{L(V\oplus W)} & \rightarrow  L(V)/\langle R \rangle_{L(V)} \ltimes L(W)/\langle Q \rangle_{L(W)}\\
            x_i & \mapsto x_i\\
            y_j & \mapsto y_j
        \end{split}
    \end{equation*}
    To show $\phi$ is well-defined, write $\langle R, Q, F \rangle_{L(V \oplus W)}=\langle R \rangle_{L(V \oplus W)}+ \langle Q \rangle_{L(V \oplus W)}+\langle F \rangle_{L(V \oplus W)}$. The ideals $\langle R \rangle_{L(V \oplus W)}$ and $ \langle Q \rangle_{L(V \oplus W)}$ are clearly nullified under $\phi$. As for $\langle F \rangle_{L(V \oplus W}$, notice
    \begin{equation*}
        \phi([x_i, y_j]-f_{ij})=\rho(x_i)y_j-f_{ij}=0
    \end{equation*}
    for all $i \in I$ and $j\in J$.
\end{proof}
\end{lemma}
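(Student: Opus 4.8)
The plan is to exhibit an explicit retraction of $\psi$, so the work splits into three routine verifications: (i) that $\psi$ is a well-defined Lie homomorphism, (ii) that a candidate inverse $\phi$ is well-defined, and (iii) that $\phi$ is a one-sided inverse on generators. Since the semidirect-product structure of the domain was already arranged in the preceding lemma, I would invoke the universal property of $L(V)/\langle R\rangle \ltimes L(W)/\langle Q\rangle$: a Lie map out of it is the same datum as a pair of Lie maps out of $L(V)/\langle R\rangle$ and $L(W)/\langle Q\rangle$ which are compatible with the action $\rho$, in the sense that the bracket of an element from the first factor with one from the second is computed by $\rho$.

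First I would check that $\psi$ is a homomorphism. The two component maps $x_i \mapsto x_i$ and $y_j \mapsto y_j$ into $L(V\oplus W)/\langle R, Q, F\rangle$ are well-defined because $R$ and $Q$ sit among the defining relations of the target, so the ideals $\langle R\rangle$ and $\langle Q\rangle$ are killed. The only compatibility left to verify is $[\psi(x_i), \psi(y_j)] = \psi(\rho(x_i)y_j)$; but $[x_i, y_j] = f_{ij}$ in the target (since $F$ has been quotiented out) and $\rho(x_i)y_j = f_{ij}$ by construction, while $f_{ij}\in L(W)$ maps to itself, so the two sides agree. Surjectivity of $\psi$ is immediate, as its image already contains all the generators $x_i, y_j$.

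The crux is injectivity, which I would obtain by building a retraction. Define $\phi$ on the free Lie algebra $L(V\oplus W)$ by $x_i \mapsto x_i$ and $y_j \mapsto y_j$, landing in $L(V)/\langle R\rangle \ltimes L(W)/\langle Q\rangle$; this is legitimate since a Lie map out of a free Lie algebra is prescribed freely on generators. To see $\phi$ descends to the quotient $L(V\oplus W)/\langle R, Q, F\rangle$, I would write $\langle R, Q, F\rangle = \langle R\rangle + \langle Q\rangle + \langle F\rangle$ and annihilate each piece: elements of $R$ and $Q$ vanish because the corresponding factors of the semidirect product are already quotiented by them, while for $F$ one computes $\phi([x_i, y_j] - f_{ij}) = \rho(x_i)y_j - f_{ij} = 0$, again by the defining relation of $\rho$. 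Since $\phi\psi$ fixes every generator $x_i, y_j$ of the domain, $\phi\psi = \mathrm{id}$, so $\psi$ is injective; combined with the surjectivity noted above, this proves $\psi$ is an isomorphism.

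I expect the only delicate point to be the well-definedness of $\phi$ on $\langle F\rangle$: this is precisely where the bracket $[x_i, y_j]$ of the target must be reproduced by the action $\rho(x_i)y_j$ inside the semidirect product, and it rests on the same identity $\rho(x_i)y_j = f_{ij}$ that powered the preceding lemma. Everything else is formal manipulation with free Lie algebras and their ideals; beyond observing that all the maps in sight are filtered and hence extend continuously to the degree completions, no completion subtleties should arise.
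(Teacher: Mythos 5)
Your proposal is correct and follows essentially the same route as the paper's proof: surjectivity on generators, plus the explicit retraction $\phi$ whose well-definedness hinges on annihilating $\langle R\rangle$, $\langle Q\rangle$, and $\langle F\rangle$ via the identity $\rho(x_i)y_j = f_{ij}$. The only cosmetic difference is that you package the homomorphism check through the universal property of the semidirect product, whereas the paper verifies the bracket identity directly on generators; these are the same computation.
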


\begin{proposition}
    Recall how we set $K_{g,n}=\ker (s_n: \mathfrak{t}_{g,n}^f \rightarrow t_{g,n-1}^f)$. The Lie algebra $\mathfrak{k}_{g,n}$, defined in Proposition \ref{def_kg,n}, is isomorphic to $K_{g,n}$.
\end{proposition}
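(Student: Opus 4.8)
The plan is to realize $\mathfrak{t}_{g,n}^f$ as a semidirect product by isolating the generators carrying the index $n$, and then to invoke Proposition \ref{main_app}. I partition the generators of $\mathfrak{t}_{g,n}^f$ into \emph{old} generators, those with all indices in $\{1,\dots,n-1\}$ (namely $t_{ij}$, $x_i^a$, $y_i^a$ for $i,j\le n-1$), and \emph{new} generators, those carrying the index $n$ (namely $t_{in}$ for $1\le i\le n$, together with $x_n^a$, $y_n^a$). I set $V$ to be the span of the old generators and $W$ the span of the new ones; take $R$ to be the defining relations of $\mathfrak{t}_{g,n-1}^f$, which are exactly the Drinfeld--Kohno relations involving only old generators; and take $Q$ to consist of the single relation $\sum_{a}[x_n^a,y_n^a]+\sum_{j\neq n}t_{nj}+2(g-1)t_{nn}$ coming from (\ref{FTg}) at index $n$, together with the centrality relations for $t_{nn}$ (which are themselves the purely-new instances of (\ref{F4T}) and (\ref{F4Tg}) with a repeated index). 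With these choices $L(V)/\langle R\rangle_{L(V)}\cong\mathfrak{t}_{g,n-1}^f$ and $L(W)/\langle Q\rangle_{L(W)}\cong\mathfrak{k}_{g,n}$ by definition.

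The first thing to check is that every relation mixing an old and a new generator can be written in the clean form $[\text{old},\text{new}]=f$ with $f\in L(W)$; this produces the set $F$. Running through the relations one obtains $[t_{kl},t_{in}]=0$ unless $i\in\{k,l\}$, in which case it equals $\pm[t_{kn},t_{ln}]$ via (\ref{F4T}) on the triple $\{k,l,n\}$; $[x_k^a,y_n^b]=\delta_{ab}t_{kn}$ and $[y_k^a,x_n^b]=-\delta_{ab}t_{kn}$ via (\ref{Sg}); $[x_k^a,t_{kn}]=-[x_n^a,t_{kn}]$ and likewise for $y$ via (\ref{F4Tg}); and all remaining mixed brackets, including every bracket with $t_{nn}$, vanish by (\ref{FL}), (\ref{Ng}) and (\ref{FLg}). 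Since these identities are consequences of the Drinfeld--Kohno relations, $F$ lies in the defining ideal; conversely the mixed relations such as (\ref{F4T}) with $c=n$ follow from $F$ together with $R$ and $Q$. Hence $\langle R,Q,F\rangle_{L(V\oplus W)}$ is precisely the full relation ideal, so $L(V\oplus W)/\langle R,Q,F\rangle\cong\mathfrak{t}_{g,n}^f$.

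The hard part will be verifying the hypothesis (\ref{assum}) of Proposition \ref{main_app}: that the adjoint action of the old generators preserves $\langle Q\rangle_{L(W)}$ and that the old relations act trivially on $W$, i.e. $[L(V),Q]_{\mathfrak{g}}\subset\langle Q\rangle$ and $[R,L(W)]_{\mathfrak{g}}\subset\langle Q\rangle$. The crucial computation is that each old generator annihilates the cocycle $R_n\defeq\sum_a[x_n^a,y_n^a]+\sum_{j\neq n}t_{nj}+2(g-1)t_{nn}$: expanding with the Jacobi identity and the bracket table above gives, for instance,
\[
[x_k^a,R_n]=[x_n^a,t_{kn}]-[x_n^a,t_{kn}]=0,
\]
and similarly $[t_{kl},R_n]=0$, the cancellations being exactly the consistency of the genus-$g$ four-term relations. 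Centrality of $t_{nn}$ is preserved since $[\text{old},t_{nn}]=0$, whence $[\text{old},[t_{nn},w]]=[t_{nn},[\text{old},w]]\in\langle Q\rangle$ by the Leibniz rule; and $[R,L(W)]\subset\langle Q\rangle$ follows from the same table, as each old relation acts by a derivation whose value on $W$ already lies in $\langle Q\rangle$.

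With (\ref{assum}) established, Proposition \ref{main_app} yields an isomorphism
\[
\mathfrak{t}_{g,n}^f\;\cong\;\mathfrak{t}_{g,n-1}^f\ltimes\mathfrak{k}_{g,n}
\]
carrying each generator to itself. Under this isomorphism the projection onto the first factor fixes the old generators and kills the new ones, which is precisely the string-deletion map $s_n$ (cf. (\ref{op_tn})). Therefore $K_{g,n}=\ker s_n$ is identified with the second factor, giving $K_{g,n}\cong\mathfrak{k}_{g,n}$, as claimed; the resulting splitting is the one already recorded via the section $d_{n-1}$.
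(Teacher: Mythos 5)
Your bracket table and your explicit verification of hypothesis (\ref{assum}) are welcome (you check the hypothesis of Proposition \ref{main_app} more carefully than the paper itself does), but the argument breaks at an earlier step: for your choice of $V$ and $R$, the claim that $\langle R,Q,F\rangle_{L(V\oplus W)}$ is the full relation ideal of $\mathfrak{t}_{g,n}^f$ is false. The culprit is the genus relation (\ref{FTg}) at an index $i\leq n-1$. In $\mathfrak{t}_{g,n}^f$ it reads
\[
\sum_{a=1}^g[x_i^a,y_i^a]=-\sum_{j:j\neq i}t_{ij}-2(g-1)t_{ii},
\]
where the sum on the right runs over \emph{all} $j\leq n$, so the new generator $t_{in}$ appears linearly. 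This relation is neither purely old (so it is not in your $R$), nor purely new, nor of the bracket form $[\text{old},\text{new}]=f\in L(W)$ required for membership in $F$; your case analysis of the mixed relations omits it entirely. Worse, your $R$ contains the truncated relation $\sum_a[x_i^a,y_i^a]=-\sum_{j\neq i,\,j\leq n-1}t_{ij}-2(g-1)t_{ii}$ inherited from $\mathfrak{t}_{g,n-1}^f$, and the difference between the true relation and the truncated one is exactly $t_{in}$; if both held in a common quotient, $t_{in}$ would vanish there, which is false in $\mathfrak{t}_{g,n}^f$. Hence $L(V\oplus W)/\langle R,Q,F\rangle\not\cong\mathfrak{t}_{g,n}^f$ via the identity on generators. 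Equivalently, at the level of maps: ``fix the old generators'' is \emph{not} a Lie algebra section of $s_n$, precisely because it fails to respect (\ref{FTg}); so even granting a semidirect decomposition, your identification of the first-factor projection with $s_n$ would not stand.

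This is exactly where the paper's proof diverges from yours. There, $V$ is taken to be a basis of the image of the operadic section $d_{n-1}$: the generators with indices $\leq n-2$, together with the doubled elements $x_{n-1}^a+x_n^a$, $y_{n-1}^a+y_n^a$, $t_{i(n-1)}+t_{in}$, and $t_{(n-1)(n-1)}+t_{nn}+t_{(n-1)n}$. With this choice the troublesome relation (\ref{FTg}) at $i\leq n-2$ lies in $L(V)$, since the combination $t_{i(n-1)}+t_{in}$ is a single basis vector of $V$, and $R$ consists of the braid relations pushed through the Lie map $d_{n-1}$, which genuinely do hold among these elements of $\mathfrak{t}_{g,n}^f$. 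Proposition \ref{main_app} then gives $\mathfrak{t}_{g,n}^f\cong\ima d_{n-1}\ltimes\mathfrak{k}_{g,n}$, which is compared with the splitting $\mathfrak{t}_{g,n}^f=\ima d_{n-1}\ltimes K_{g,n}$ furnished by the section $d_{n-1}$ to conclude $K_{g,n}\cong\mathfrak{k}_{g,n}$. Note that your strategy is the classical one for the unframed Drinfeld--Kohno algebras $\mathfrak{t}_n$, where it works because no relation of type (\ref{FTg}) exists; in the framed genus-$g$ setting the genus relation forces the diagonal choice of $V$. To repair your argument you would have to replace your $V$ and $R$ by the paper's, and then redo the verification of (\ref{assum}) for the doubled generators.
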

\begin{proof}
    The map $s_n$ has a section, $d_{n-1}: \mathfrak{t}_{g,n-1}^f \rightarrow \mathfrak{t}_{g,n}^f$, so we already know
    \begin{equation} \label{sn_split}
        \mathfrak{t}_{g,n}^f=\ima d_{n-1} \ltimes K_{g,n}.
    \end{equation}
    Let $V$ be the vector space over $\mathbb{K}$ with basis the sets
    \begin{equation*}
        \{ x_i^a, y_i^a,t_{jk}\}_{1 \leq i,j,k \leq n-2;1\leq a \leq g},
    \end{equation*}
    and 
    \begin{equation*}
        \{x_{n-1}^a+x_{n}^a, y_{n-1}^a+y_n^a, t_{i(n-1)}+t_{in}, t_{(n-1)(n-1)}+ t_{nn}+t_{(n-1)n} \}_{1\leq i \leq n-2; 1\leq a \leq g}.
    \end{equation*}
    Notice that we may identify $\ima d_{n-1}$ with the Lie algebra $L(V)/\langle R\rangle_{L(V)}$, where $R$ is the set of braid relations on $\mathfrak{t}_{g,n}^f$.

    Let $W$ be the vector space over $\mathbb{K}$ with basis
    \begin{equation*}
        \{x_n^a, y_n^a, t_{in} \}_{1\leq i \leq n; 1 \leq a \leq g},
    \end{equation*}
    and $Q \subset L(W)$ the set of relations
    \begin{equation*}
        \sum_{a=1}^g [x_n^a, y_n^a]=-\sum_{j: j \neq n}t_{jn}-2(g-1)t_{nn},
    \end{equation*}
    \begin{equation*}
        [t_{nn}, v]=0 \hspace{1 em} \text{for all}\,\,v\in V.
    \end{equation*}
    By definition, $\mathfrak{k}_{g,n}=L(W)/\langle Q \rangle_{L(W)}$.
    
    Let $F \subset L(V \oplus W)$ be the set of relations
    \begin{gather*}
        [x_i^a, y_n^a]=\delta_{ab}t_{in},\,\, [y_i, x_n^a]=-\delta_{ab}t_{in} \hspace{1em} \text{for}\,\, {1\leq i\leq n-1},\\
        [x_i^a, x_n^b]=[y_i^a, y_n^b]=0 \hspace{1em} \text{for all}\,\, 1 \leq i \leq n-1,\\
        [x_k^a, t_{in}]=[y_k^a, t_{in}]=0 \hspace{1 em} \text{for}\,\, k \neq i,\\
        [x_i^a, t_{in}]=-[x_n^a, t_{in}],\,\, [y_i^a, t_{in}]=-[y_n^a, t_{in}] \hspace{1em} \text{for}\,\, 1\leq i \leq n-1,\\
        [t_{ij},t_{kn}]=0\hspace{1em} \text{if}\,\, \{i,j\}\cap \{k,n\}=\varnothing,\\
        [t_{ik}, t_{in}]=-[t_{kn},t_{in}]\ \hspace{1 em} \text{if}\,\, \{i,n\}\cap\{k\}=\varnothing.
    \end{gather*}
    Notice that every relation in $F$ can be written in the form
    \begin{equation*}
        [v_i, w_i]=f_{ij}
    \end{equation*}
    for basis elements $v_i \in L(V)$ and $w_i \in L(W)$, and some $f_{ij}\in L(W)$. Clearly $ \mathfrak{t}_{g,n}^f \cong L(V \oplus W)/\langle R, Q, F \rangle_{L(V \oplus W)}$. At the same time, Proposition \ref{main_app} gives the isomorphism
    \begin{equation*}
        \mathfrak{t}_{g,n}^f \cong \ima d_{n-1} \oplus \mathfrak{k}_{g,n}.
    \end{equation*}
   This last equation in conjunction with equation (\ref{sn_split}) implies the desired result.
\end{proof}

\begin{proposition}
    Recall how we set $H_{g,n}= \ker (s_{n-1}\circ s_n : \mathfrak{t}_{g,n}^f \rightarrow t_{g,n-2}^f)$. The Lie algebra $\mathfrak{h}_{g,n}$, defined in Proposition \ref{def_hg,n}, is isomorphic to $H_{g,n}$.
\end{proposition}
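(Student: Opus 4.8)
The plan is to follow the template of the preceding proposition on $K_{g,n}$ verbatim, replacing the single string-deletion map $s_n$ by the composite $s_{n-1}\circ s_n$ and amputating two strands at once instead of one. First I would record that $s_{n-1}\circ s_n$ admits the section $d_{n-1}\circ d_{n-2}$: since $s_n\circ d_{n-1}=\mathrm{id}$ and $s_{n-1}\circ d_{n-2}=\mathrm{id}$ (each $d_{k-1}$ is a section of $s_k$), we get
\[
    s_{n-1}\circ s_n\circ d_{n-1}\circ d_{n-2}=\mathrm{id}_{\mathfrak{t}_{g,n-2}^f}.
\]
A split surjection of Lie algebras decomposes the source as a semidirect product with its kernel, so exactly as in equation (\ref{sn_split}) this yields
\[
    \mathfrak{t}_{g,n}^f=\operatorname{im}(d_{n-1}\circ d_{n-2})\ltimes H_{g,n},
\]
where $\operatorname{im}(d_{n-1}\circ d_{n-2})\cong\mathfrak{t}_{g,n-2}^f$ because sections are injective. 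The map $d_{n-1}\circ d_{n-2}$ triples the last strand of $\mathfrak{t}_{g,n-2}^f$ into strands $n-2,n-1,n$.

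Next I would build the presentation $\mathfrak{t}_{g,n}^f\cong L(V\oplus W)/\langle R,Q,F\rangle_{L(V\oplus W)}$ adapted to this splitting. I would take $V$ to have basis the untouched generators $\{x_i^a,y_i^a,t_{jk}\}_{1\le i,j,k\le n-3}$ together with the fully merged generators
\[
    x_{n-2}^a+x_{n-1}^a+x_n^a,\quad y_{n-2}^a+y_{n-1}^a+y_n^a,\quad t_{i(n-2)}+t_{i(n-1)}+t_{in},\quad \sum_{\{p,q\}\subseteq\{n-2,n-1,n\}}t_{pq},
\]
with $R$ the braid relations, so that $L(V)/\langle R\rangle\cong\operatorname{im}(d_{n-1}\circ d_{n-2})$. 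I would let $W$ carry as basis precisely the generators of $\mathfrak{h}_{g,n}$ from Proposition \ref{def_hg,n} — all $x^a,y^a$ on strands $n-1,n$ and all $t_{ij}$ with at least one index in $\{n-1,n\}$ — and let $Q$ be the applicable relations, so that $L(W)/\langle Q\rangle=\mathfrak{h}_{g,n}$. The remaining (mixed) relations of $\mathfrak{t}_{g,n}^f$ form $F$, and I would rewrite each in the required shape $[v,w]-f_{ij}$ with $v$ a $V$-generator, $w$ a $W$-generator, and $f_{ij}\in L(W)$, using $x_{n-1}^a=(x_{n-1}^a+x_n^a+x_{n-2}^a)-x_n^a-x_{n-2}^a$ and the analogous decompositions of the original generators. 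Applying Proposition \ref{main_app} then gives $\mathfrak{t}_{g,n}^f\cong\operatorname{im}(d_{n-1}\circ d_{n-2})\ltimes\mathfrak{h}_{g,n}$; comparing this with the split decomposition above (both complements being the kernel of the same map $s_{n-1}\circ s_n$) forces $H_{g,n}\cong\mathfrak{h}_{g,n}$, exactly as Proposition \ref{def_kg,n} was concluded.

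The main obstacle is bookkeeping rather than conceptual: I must verify that every mixed bracket $[v,w]$ indeed lands in $L(W)$, i.e. that hypothesis (\ref{assum}) of the lemma preceding Proposition \ref{main_app} holds. This is delicate precisely for the relations that couple strands globally — the framed tangency relation $(\mathrm{FT}_g)$ and the relations $(\mathrm{F4T}_g)$, $(\mathrm{FL}_g)$ — where I need to confirm that after substituting the merged generators, the cross terms either cancel or reassemble into generators $t_{i(n-1)},t_{in},t_{(n-1)n},t_{nn}$ that already belong to $W$. Handling the triple-split index combinatorics carefully (and checking that the self-interaction term $\sum_{\{p,q\}}t_{pq}$ distributes correctly under the bracket) is the one place requiring genuine attention; once that is dispatched, the argument is mechanical.
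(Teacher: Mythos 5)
Your proposal is correct and follows essentially the same route as the paper: the same section $d_{n-1}\circ d_{n-2}$ of $s_{n-1}\circ s_n$ giving the semidirect decomposition, the same choice of $V$ (untouched generators plus the triple-merged ones), the same $W$ presenting $\mathfrak{h}_{g,n}$, and the same appeal to Proposition \ref{main_app} followed by comparison of the two splittings. Your explicit flagging of the rewriting $x_{n-1}^a=(x_{n-2}^a+x_{n-1}^a+x_n^a)-x_{n-2}^a-x_n^a$ and of hypothesis (\ref{assum}) makes precise the bookkeeping that the paper's proof leaves implicit.
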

\begin{proof}
    The proof unfolds analogously to the preceding proposition. The map $s_{n-1} \circ s_{n}$ has the section $d_{n-1} \circ d_{n-2}: \mathfrak{t}_{g,n-2}^f \rightarrow \mathfrak{t}_{g,n}^f$, implying that
    \begin{equation} \label{sn-1_split}
        \mathfrak{t}_{g,n}^f \cong \ima d_{n-1} \circ d_{n-2} \ltimes H_{g,n}.
    \end{equation}
    Now we define $V$ to be the vector space over $\mathbb{K}$ with basis the sets
    \begin{equation*}
        \{ x_i^a, y_i^a,t_{jk}\}_{1 \leq i,j,k \leq n-3;1\leq a \leq g},
    \end{equation*}
    and
    \begin{multline*}
        \{x_{n-2}^a +x_{n-1}^a+x_{n}^a, y_{n-2}^a+ y_{n-1}^a+y_n^a, t_{i(n-2)}+ t_{i(n-1)}+t_{in}, \\ t_{(n-2)(n-2)}+ t_{(n-1)(n-1)}+ t_{nn}+t_{(n-2)(n-1)}+t_{(n-2)n} +t_{(n-1)n} \}_{1\leq i \leq n-3; 1\leq a \leq g}.
    \end{multline*}
    Notice that we may identify $\ima d_{n-1} \circ d_{n-2}$ with the Lie algebra $L(V)/\langle R \rangle_{L(V)}$, where $R$ is the set of braid relations on $\mathfrak{t}_{g,n}^f$.

    Let $W$ now be the vector space over $\mathbb{K}$ with basis
    \begin{equation*}
        \{x_{n-1}^a, y_{n-1}^a, t_{i(n-1)}, x_{n}^a, y_{n}^a, t_{in} \}_{1 \leq i \leq n; 1 \leq q \leq g},
    \end{equation*}
    and $Q \subset L(W)$ the set of relations
    \begin{gather*}
        \sum_{a=1}^g [x_i^a, y_i^a]=-\sum_{j: j \neq i}t_{ji}-2(g-1)t_{ii}, \hspace{1em} i=n,n-1;\\
        [x_n^a, y_{n-1}^b]=[x_{n-1}^a, y_{n}^b]=\delta_{ab}t_{(n-1)n};\\
        [x_n^a, x_{n-1}^b]=[y_n^a, y_{n-1}^b]=0;\\
        [x_n^a+x_{n-1}^a, t_{(n-1)n}]=[y_n^a+y_{n-1}^a, t_{(n-1)n}]=0;\\
        [t_{ij}, t_{kl}]=0 \hspace{1em} \text{if}\,\, \{i,j\} \cap \{k,l\}= \varnothing;\\
        [t_{(n-1)n}, t_{(n-1)k}+t_{nk}]=0 \hspace{1em} \text{if}\,\, k\neq n-1,n;\\
        [t_{(n-1)(n-1)},v]=[t_{nn},v]=0 \hspace{1em} \text{for all}\,\, v\in V.
    \end{gather*}
    By definition, $\mathfrak{h}_{g,n}=L(w)/\langle Q \rangle_{L(W)}$.

    Let $F \subset L(V \oplus W)$ be the set of relations
    \begin{gather*}
        [x_i^a, y_j^a]=\delta_{ab}t_{ij},\,\, [y_i, x_j^a]=-\delta_{ab}t_{ij} \hspace{1em} \text{for}\,\, {1\leq i\leq n-2}, j=n-1,n;\\
        [x_i^a, x_j^b]=[y_i^a, y_j^b]=0 \hspace{1em} \text{for all}\,\, 1 \leq i \leq n-2, j=n-1,n;\\
        [x_k^a, t_{ij}]=[y_k^a, t_{ij}]=0 \hspace{1 em} \text{for}\,\, k \neq i, j=n-1,n;\\
        [x_i^a, t_{ij}]=-[x_j^a, t_{ij}],\,\, [y_i^a, t_{ij}]=-[y_j^a, t_{ij}] \hspace{1em} \text{for}\,\, 1\leq i \leq n-2, j=n-1,n;\\
        [t_{ij},t_{k(n-1)}]=0\hspace{1em} \text{if}\,\, \{i,j\}\cap \{k,n-1\}=\varnothing;\\
        [t_{ik}, t_{i(n-1)}]=-[t_{k(n-1)},t_{i(n-1)}]\ \hspace{1 em} \text{if}\,\, \{i,n-1\}\cap\{k\}=\varnothing;\\
        [t_{ij},t_{kn}]=0\hspace{1em} \text{if}\,\, \{i,j\}\cap \{k,n\}=\varnothing;\\
        [t_{ik}, t_{in}]=-[t_{kn},t_{in}]\ \hspace{1 em} \text{if}\,\, \{i,n\}\cap\{k\}=\varnothing.
    \end{gather*}
    Notice that every relation in $F$ can be written in the form
    \begin{equation*}
        [v_i, w_i]=f_{ij}
    \end{equation*}
    for basis elements $v_i \in L(V)$ and $w_i \in L(W)$, and some $f_{ij}\in L(W)$. Clearly $ \mathfrak{t}_{g,n}^f \cong L(V \oplus W)/\langle R, Q, F \rangle_{L(V \oplus W)}$. At the same time, Proposition \ref{main_app} gives the isomorphism
    \begin{equation*}
        \mathfrak{t}_{g,n}^f \cong \ima d_{n-1} \circ d_{n-2} \oplus \mathfrak{h}_{g,n}.
    \end{equation*}
   This last equation in conjunction with (\ref{sn-1_split}) implies the desired result.
\end{proof}

\subsection{On braid Lie algebras} \label{iso_lemma}

We will assume familiarity with the notation of Section \ref{sec_braid_alg}. Our goal is to prove the result:
\begin{proposition}
    The two commutative diagrams
    \begin{equation*}
        \begin{tikzcd}
         & L^{n+2} \arrow[d, "\Delta"] \arrow[dl, "d_{n+2}^r" swap]\\
         \mathfrak{g} \arrow[r, "P" swap] & L^{n+2} \oplus L^{n+2}
    \end{tikzcd} \hspace{1em} \text{and} \hspace{1em}
    \begin{tikzcd}
        & L^{n+2} \arrow[d, "\Delta"] \arrow[dl, "\Delta \times T^r" swap]\\
         L^{n+2} \oplus L^{n+2} \times_G A \arrow[r, "P_1" swap] & L^{n+2} \oplus L^{n+2}
    \end{tikzcd}
    \end{equation*}
    are isomorphic as objects in $\mathbf{RelE}$.
\end{proposition}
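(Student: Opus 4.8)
The plan is to exhibit a morphism in $\mathbf{RelE}$ whose apex component $L^{n+2}\to L^{n+2}$ and base component $L^{n+2}\oplus L^{n+2}\to L^{n+2}\oplus L^{n+2}$ are both the identity, and whose middle component is an isomorphism $E\colon \mathfrak{g}\xrightarrow{\sim}(L^{n+2}\oplus L^{n+2})\times_G A$. Since the two diagrams share the same apex, the same base, and the same map $\Delta$, the whole problem is to match the two relative abelian extensions of $L^{n+2}\oplus L^{n+2}$ by $A=UL^{n+2}$. I would build $E$ from the linear section $\alpha'\colon L^{n+2}\oplus L^{n+2}\to\mathfrak{g}$ that already appears in the definition of the module structure on $M$. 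First one checks, from the explicit deletion maps, that $P\circ\alpha'=\mathrm{id}$, so $\alpha'$ is a genuine section of the projection $P$ from Lemma \ref{ker_sis}. Then set
\[
    E(m)=\bigl(P(m),\,\phi(m-\alpha' P(m))\bigr),
\]
which makes sense because $m-\alpha' P(m)\in\ker P=M$, and which is linear with two-sided inverse $(\xi,a)\mapsto \alpha'(\xi)+\phi^{-1}(a)$; hence $E$ is a linear bijection.

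Next I would verify the two commuting squares. Compatibility with the projections, $P_1\circ E=P$, is immediate from the formula. Compatibility with the sections reduces to the identity $\phi\bigl(d^r_{n+2}(h)-\alpha'(\Delta(h))\bigr)=T^r(h)$. On generators $d_{n+2}$ and $\alpha'\circ\Delta$ agree (both double the strand $n+2$), so the difference is exactly the correction $r(h)\,t_{(n+2)(n+3)}$, which $\phi$ sends to $r(h)=q^r(h)=T^r(h)$ by (\ref{phi_def}), (\ref{qf_1}) and (\ref{q_r}). At this point one also records that $d^r_{n+2}$ really is a Lie map: the generator $t_{(n+2)(n+3)}$ commutes with the image of $d_{n+2}$ by (\ref{F4Tg}) and (\ref{F4T}), and $r$ annihilates brackets, so the correction term is central along $\operatorname{im}d_{n+2}$.

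The technical heart is that $E$ is a \emph{Lie} homomorphism, which holds precisely when the $2$-cocycle $c(\xi,\eta)=[\alpha'\xi,\alpha'\eta]-\alpha'[\xi,\eta]$ of $\mathfrak{g}$ relative to $\alpha'$ is carried by $\phi$ to the cocycle $G$ of (\ref{rho_G_1}); the module actions automatically match, since the $(L^{n+2}\oplus L^{n+2})$-action on $M$ is defined through $\alpha'$ as in (\ref{g-mod_act}) and $\phi$ is a module map. I would evaluate $c$ on pairs of generators: for $x^a_{n+2}\oplus 0$ and $0\oplus y^a_{n+2}$ relation (\ref{Sg}) gives $[x^a_{n+2},y^a_{n+3}]=t_{(n+2)(n+3)}$; for $t_{j(n+2)}\oplus 0$ and $0\oplus t_{j(n+2)}$ the $4$T relation rewrites $[t_{j(n+2)},t_{j(n+3)}]$ as a multiple of $[t_{j(n+2)},t_{(n+2)(n+3)}]$, i.e.\ a module element; all remaining pairs vanish by (\ref{Ng}) and (\ref{FLg}). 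Comparing with $G(\xi,\eta)=\tilde\rho_G(\xi_1,\eta_2)-\tilde\rho_G(\eta_1,\xi_2)$ and the entries (\ref{rho_G_tilde}) of $\tilde\rho_G$ shows the two cocycles coincide. I expect the \emph{main obstacle} to be the sign and antipode bookkeeping in this last comparison: the signs produced by the $4$T relation must be reconciled with the antipode-twisted identification $\mathrm{Hom}_{U\mathfrak{f}\otimes U\mathfrak{f}}(K\otimes K,U\mathfrak{f})\cong\mathrm{Fox}(U\mathfrak{f})$ of Theorem \ref{quasi-iso} and with the ``$-$'' in the right action $(x\oplus y)\cdot a=xa-ay$. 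Once the cocycles are matched, $E$ is an isomorphism in $\mathbf{RelE}$; equivalently, one phrases the argument as saying that $E$ transports the relative cocycle extracted from $\mathcal{D}^r$ onto $T^r\oplus G$, so that $\mathcal{D}^r$ and $\mathcal{C}^r$ become isomorphic under the equivalence $\mathrm{F}$ of Proposition \ref{cat-equiv}.
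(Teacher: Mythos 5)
Your overall strategy---trivialize the extension $\mathfrak{g}$ by means of the linear section $\alpha'$ and an identification of its kernel, then match the resulting $2$-cocycle with $G$---is the right shape of argument, and the cocycle comparison you sketch is essentially the computation the paper itself performs. The problem is where you get the identification of the kernel. Your map $E(m)=\bigl(P(m),\,\phi(m-\alpha'P(m))\bigr)$ is a bijection only because $\phi\colon M\to UL^{n+2}$ is assumed to be an isomorphism of $(L^{n+2}\oplus L^{n+2})$-modules with $\phi(t_{(n+2)(n+3)})=1$. But at the point where this proposition sits, no such $\phi$ is available: equation (\ref{phi_def}) defines $\phi$ only in the subsequent lemma of Section \ref{cat_int_2}, and that lemma's proof invokes Corollary \ref{M_is_free} (that $M$ is freely generated by $t_{(n+2)(n+3)}$), which the paper deduces from the isomorphism $\psi\colon (L^{n+2}\oplus L^{n+2})\times_G UL^{n+2}\to\mathfrak{g}$ constructed in the proof of this very proposition, via diagram (\ref{g-iso}). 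So your argument is circular. Worse, the existence of $\phi$ is not a side condition but the heart of the matter: a priori $M$ is an opaque quotient of an ideal in $\overline{\mathfrak{h}}_{g,n+3}$, and showing that it is cyclic over $U(L^{n+2}\oplus L^{n+2})$ with exactly the annihilator of $1\in UL^{n+2}$ (no extra collapsing, nothing missing) is what the paper's two-sided construction accomplishes: well-definedness of $\varphi$ on the quotient (using the decomposition $\langle x_n\rangle_{L_n}=[L_n,x_n]\oplus\mathbb{K}x_n$), and well-definedness of the module map $\beta$ with $\beta(1)=t_{(n+2)(n+3)}$ (Lemma \ref{betaint}). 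To repair your proof you would have to establish this module identification first, which amounts to redoing the paper's work.

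A second, smaller gap: you verify $\phi\circ c=G$ only on pairs of generators. A Chevalley-Eilenberg $2$-cochain is not determined by its values on generator pairs; what saves you is that both $c$ and $G$ vanish on pairs from $L^{n+2}\oplus 0$ and on pairs from $0\oplus L^{n+2}$ (because $\alpha'$ is a Lie map on each summand), so only the mixed values $c((\xi_1,0),(0,\eta_2))=[\alpha'(\xi_1,0),\alpha'(0,\eta_2)]$ matter, and these satisfy a recursion in the Lie-word length of $\xi_1$ and $\eta_2$ coming from the Jacobi identity and the module action, with the matching recursion for $G$ coming from the Fox-pairing axioms for $\tilde{\rho}_G$. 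That induction must actually be carried out; it is precisely the paper's argument that the set $B=\{y\mid \beta(\tilde{\rho}_G(y,a^i))=[\alpha(y\oplus 0),\alpha(0\oplus a^i)]\}$ is a subalgebra containing the generators (Proposition \ref{psiinverse}). Your instinct that signs are the delicate point is well-founded---the paper itself states $G$ with opposite signs in Section \ref{sec_braid_alg} and in Appendix \ref{iso_lemma}---but the missing induction and, above all, the circular use of $\phi$ are the substantive gaps.
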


We begin by showing that $\mathfrak{g}$ and $(L^{n+2} \oplus L^{n+2}) \times_G UL^{n+2}$ are isomorphic as Lie algebras. Explicitly, the map $G: (L^{n+2} \oplus L^{n+2})^{\oplus2} \rightarrow UL^{n+2}$ is defined by
\begin{equation*}
    \begin{split}
        G(v_1 \oplus v_2, w_1 \oplus w_2)= \tilde{\rho}_G(w_1, v_2)-\tilde{\rho}_G(v_1, w_2),
    \end{split}
\end{equation*}
for $v=v_1 \oplus v_2, \, w=w_1 \oplus w_2 \in L^{n+2} \oplus L^{n+2}$. One can check that $G$ does indeed satisfy the cocycle condition.

\begin{proposition}\label{iso1}
    The map $\varphi: \mathfrak{g} \rightarrow (L^{n+2} \oplus L^{n+2}) \times_G UL^{n+2}$ induced by
    \begin{equation}
        \begin{split}
            x_{n+2}^i & \mapsto (x^i_{n+2} \oplus 0,0) \hspace{1em} \text{for}\,\, 1\leq i \leq g;\\
            y_{n+2}^i &\mapsto (y^i_{n+2} \oplus 0, 0) \hspace{1em} \text{for}\,\, 1 \leq i \leq g;\\
            t_{1(n+2)} & \mapsto - \sum_{i=1}^{g} ([x^i_{n+2}, y^i_{n+2}]\oplus 0, 0)-\sum_{j: j \neq 1,n+3} (t_{j(n+2)} \oplus 0, 0)-(0 \oplus 0,1);\\
            t_{j(n+2)}& \mapsto (t_{j(n+2)} \oplus0,0) \hspace{1em} \text{for}\,\, 1 < j <n+3;\\
            t_{(n+2)(n+3)} & \mapsto (0 \oplus 0, 1);\\
            x_{n+3}^i & \mapsto (0 \oplus x^i_{n+2},0) \hspace{1em} \text{for}\,\, 1\leq i \leq g;\\
            y_{n+3}^i &\mapsto (0 \oplus y^i_{n+2}, 0) \hspace{1em} \text{for}\,\, 1 \leq i \leq g;\\
            t_{1(n+3)} & \mapsto - \sum_{i=1}^{g} (0\oplus [x^i_{n+2}, y^i_{n+2}], 0)-\sum_{j: j \neq 1,n+3} (0 \oplus t_{j(n+2)}, 0)-(0 \oplus 0,1);\\
            t_{j(n+3)}& \mapsto (0 \oplus t_{j(n+2)},0) \hspace{1em} \text{for}\,\, 1 < j <n+2;\\
        \end{split}
    \end{equation}
    is a well defined Lie morphism.
\end{proposition}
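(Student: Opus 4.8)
The plan is to exploit the fact that $\mathfrak{g}$ comes with an explicit presentation. Recall that $\mathfrak{g}=\overline{\mathfrak{h}}_{g,n+3}/[\langle t_{(n+2)(n+3)}\rangle,\langle t_{(n+2)(n+3)}\rangle]$ is a quotient of the free Lie algebra on the generators $x_{n+2}^a,y_{n+2}^a,t_{i(n+2)},x_{n+3}^a,y_{n+3}^a,t_{i(n+3)},t_{(n+2)(n+3)}$ by the ideal generated by the applicable braid relations together with the extra commutator relation. Since any assignment on generators extends uniquely to a Lie homomorphism out of the free Lie algebra, the only thing to prove is that $\varphi$ kills every defining relation; once this is shown, $\varphi$ is automatically a Lie morphism. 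Throughout I would use that the bracket on the twisted product is
\[
    [(v,m),(w,p)]=([v,w],\, w\cdot m-v\cdot p+G(v,w)),
\]
where $(v_1\oplus v_2)\cdot a=v_1a-av_2$ and $G$ is assembled from $\tilde{\rho}_G$, so that a bracket of two elements of $L^{n+2}\oplus L^{n+2}$ acquires a correction in the module direction $M=UL^{n+2}$ governed entirely by the cocycle.

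First I would split the relations into three families: those among the strand-$(n+2)$ generators, those among the strand-$(n+3)$ generators, and the \emph{mixed} relations coupling the two strands. For the first two families the images lie in a single copy $L^{n+2}\oplus 0$ (resp.\ $0\oplus L^{n+2}$), on which $G$ evaluates trivially on like-copy pairs; consequently these relations reduce to relations of $L^{n+2}$. Since $\overline{K}_{g,n+2}\cong L^{n+2}$ is free after using \eqref{FTg} to eliminate $t_{1(n+2)}$ (cf.\ \eqref{k_is_free}), the only genuine single-strand relation is \eqref{FTg} itself, and the formula defining $\varphi(t_{1(n+2)})$ is precisely engineered to make it hold. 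The verification here is a short bookkeeping check that the central $t_{(n+2)(n+3)}\mapsto(0,1)$ contributions coming from the summand $-\sum_a[x_{n+2}^a,y_{n+2}^a]$ and the eliminated term cancel on the nose; the strand-$(n+3)$ family is handled symmetrically.

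The heart of the argument is the mixed family, where $G$ does real work. For each mixed relation the image of the left-hand bracket lands in the module direction and equals a value of $G$, which must match the image of the right-hand side. For instance \eqref{Sg} forces $G(x_{n+2}^a\oplus 0,\,0\oplus y_{n+2}^b)=\delta_{ab}$, which is exactly the entry $\tilde\rho_G(x_{n+2}^a,y_{n+2}^b)=\delta_{ab}$ of \eqref{rho_G_tilde}; relations of type \eqref{F4Tg} coupling $t_{j(n+2)}$ and $t_{j(n+3)}$ force the non-scalar entry $\tilde\rho_G(t_{j(n+2)},t_{j(n+2)})=-t_{j(n+2)}$, now combined with the module action $v\cdot 1=v_1-v_2$ applied to $\varphi(t_{(n+2)(n+3)})=(0,1)$; and \eqref{Ng}, \eqref{FL}, \eqref{FLg} demand that the remaining cocycle values vanish. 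Thus the whole content of well-definedness is that the entries of $\tilde\rho_G$ in \eqref{rho_G_tilde} are calibrated to reproduce the mixed braid relations. I expect the main obstacle to be precisely this step: the sign and module-action bookkeeping that pins down the correct normalization of $G$ relative to $\tilde\rho_G$ and of the action of $L^{n+2}\oplus L^{n+2}$ on $M$, since an inconsistent choice of sign satisfies one family of mixed relations at the cost of another. Finally, the extra commutator quotient defining $\mathfrak{g}$ is automatically respected: the image of $\langle t_{(n+2)(n+3)}\rangle$ is contained in the abelian summand $\{0\}\times UL^{n+2}=M$ (brackets with $(0,1)$ stay in the module direction), whose self-bracket vanishes, so $[\langle t_{(n+2)(n+3)}\rangle,\langle t_{(n+2)(n+3)}\rangle]$ maps to $0$.
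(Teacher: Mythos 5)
Your proposal is correct in outline and, at the step the paper itself regards as the non-trivial one, follows a genuinely different and cleaner route. On the braid relations you and the paper are at the same level of rigor: the paper disposes of them with ``we can check by direct computation,'' while you organize the check into single-strand and mixed families and identify what each forces --- the single-strand family reduces to (\ref{FTg}), which the formula for $\varphi(t_{1(n+2)})$ is engineered to satisfy, and the mixed family is precisely the statement that the entries (\ref{rho_G_tilde}) of $\tilde{\rho}_G$ reproduce the cross-strand relations through the cocycle $G$. Your warning that the sign calibration is the real danger is well founded: the paper defines $G$ with opposite signs in equation (\ref{rho_G_1}) and in Appendix \ref{iso_lemma}, and checking the images of the (\ref{Sg})-type and (\ref{F4T})-type mixed relations against these two conventions exhibits exactly the trade-off you predicted, so this is a bookkeeping inconsistency on the paper's side rather than a gap in your argument. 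The genuine divergence is the well-definedness modulo $[\langle t_{(n+2)(n+3)}\rangle,\langle t_{(n+2)(n+3)}\rangle]$. The paper proves an auxiliary decomposition lemma ($\langle x_n\rangle_{L_n}=[L_n,x_n]\oplus\mathbb{K}x_n$ in a free Lie algebra), uses it to reduce the claim to the vanishing of $\varphi([[v,t_{(n+2)(n+3)}],t_{(n+2)(n+3)}])$ for $v$ in an auxiliary free Lie algebra $\tilde{L}$, and verifies this by showing that the set of such $v$ is a subalgebra containing all generators. You instead observe that $\varphi(t_{(n+2)(n+3)})=(0\oplus 0,1)$ lies in $\ker P_1=\{0\}\times UL^{n+2}$, which is an ideal of the twisted product (it is the kernel of the Lie morphism $P_1$) and is abelian by construction of $\times_G$; hence, once the braid relations are checked so that $\varphi$ is a Lie morphism out of $\overline{\mathfrak{h}}_{g,n+3}$ (or, equivalently, arguing directly on the free Lie algebra on all the generators), the image of the entire ideal $\langle t_{(n+2)(n+3)}\rangle$ is contained in this abelian ideal, and the image of its self-commutator vanishes for free. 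This is valid and strictly shorter: it eliminates both the decomposition lemma and the generator-by-generator subalgebra verification, and the only thing the paper's more computational route yields in exchange is an explicit spanning description of $\langle t_{(n+2)(n+3)}\rangle$, which is not needed elsewhere in the proof.
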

We can check by direct computation that $\varphi$ preserves the braid relations on $\mathfrak{g}$. The non-trivial part of the proof is to ensure $\varphi$ is well defined on the quotient. Before we proceed, we must observe:
\begin{lemma}
    Denote by $L_n = \mathrm{Lie}(x_1, \dots, x_n)$, the free Lie algebra in generators $x_i$, $1 \leq i \leq n$. As vectors spaces, $\langle x_n \rangle_{L_n} = [L_n, x_n] \oplus \mathbb{K}x_n$.
\end{lemma}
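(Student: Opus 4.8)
The plan is to verify the two defining features of a direct sum: that $\mathbb{K}x_n$ and the complementary summand meet only in $0$, and that together they exhaust $\langle x_n\rangle_{L_n}$. I will work with the grading of $L_n$ by word length, in which every generator $x_i$ is homogeneous of degree $1$, so that $L_n=\bigoplus_{d\geq1}(L_n)_d$ and the bracket raises degree additively; in particular every nonzero bracket $[a,b]$ with $a,b\in L_n$ has degree at least $2$. The summand to be paired with $\mathbb{K}x_n$ is the image of the adjoint action of $L_n$ on the ideal, and the first task is to pin down exactly which brackets it must contain (see the obstacle below).

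For the directness I would argue by degree alone. Every element of $[L_n,\langle x_n\rangle_{L_n}]$ is a sum of brackets, hence lies in $\bigoplus_{d\geq2}(L_n)_d$, whereas $\mathbb{K}x_n\subseteq(L_n)_1$. Therefore $\mathbb{K}x_n\cap[L_n,\langle x_n\rangle_{L_n}]\subseteq(L_n)_1\cap\bigoplus_{d\geq2}(L_n)_d=0$, so the sum is direct; in particular $x_n$ is not a bracket.

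For the spanning I would show that $W\defeq\mathbb{K}x_n+[L_n,\langle x_n\rangle_{L_n}]$ is itself an ideal of $L_n$. Since $x_n\in W$ and $W\subseteq\langle x_n\rangle_{L_n}$, this identifies $W$ with the smallest ideal containing $x_n$, i.e. $W=\langle x_n\rangle_{L_n}$, which is the desired equality. To see $W$ is an ideal one checks $[L_n,W]\subseteq W$: the bracket $[L_n,\mathbb{K}x_n]=[L_n,x_n]$ already lies in $[L_n,\langle x_n\rangle_{L_n}]$, and for a bracket $[a,[b,z]]$ with $a,b\in L_n$ and $z\in\langle x_n\rangle_{L_n}$ the Jacobi identity gives $[a,[b,z]]=[[a,b],z]+[b,[a,z]]$; both summands have their rightmost entry in the ideal $\langle x_n\rangle_{L_n}$, hence both lie in $[L_n,\langle x_n\rangle_{L_n}]\subseteq W$.

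The step I expect to be the crux is the correct reading of the first summand. It must be taken as $[L_n,\langle x_n\rangle_{L_n}]$, the adjoint image of $L_n$ on the whole ideal, equivalently the part of $\langle x_n\rangle_{L_n}$ in degrees at least $2$; this is strictly larger than the span of the single brackets $[a,x_n]$. For instance, in $L_2$ the element $[x_1,[x_1,x_2]]$ lies in $\langle x_2\rangle_{L_2}$, but in degree $3$ the single brackets $[a,x_2]$ span only $\mathbb{K}[[x_1,x_2],x_2]$, so $[x_1,[x_1,x_2]]$ is not of the form $[a,x_2]$. Only once the summand is understood as the full adjoint image of the ideal does the Jacobi computation above close up and yield the spanning; with that reading the decomposition holds and supplies the splitting of $\langle x_n\rangle_{L_n}$ used in the proof of Proposition~\ref{iso1}.
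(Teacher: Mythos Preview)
Your diagnosis and repair are both correct. Read literally as the span of single brackets $[a,x_n]$, the summand $[L_n,x_n]$ does not exhaust the ideal minus its degree-one piece: your counterexample $[x_1,[x_1,x_2]]\in\langle x_2\rangle_{L_2}$ is genuine, since in degree $3$ one has $(L_2)_2=\mathbb{K}[x_1,x_2]$ and hence $[L_2,x_2]\cap(L_2)_3=\mathbb{K}[[x_1,x_2],x_2]$, whereas $(\langle x_2\rangle_{L_2})_3=(L_2)_3$ is two-dimensional. With the summand corrected to $[L_n,\langle x_n\rangle_{L_n}]$ your argument is complete: the degree count forces the intersection with $\mathbb{K}x_n$ to vanish, and the Jacobi identity shows that $W=\mathbb{K}x_n+[L_n,\langle x_n\rangle_{L_n}]$ is an ideal containing $x_n$, hence equal to $\langle x_n\rangle_{L_n}$.

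The paper argues along a different line: it asserts that $L_{n-1}\oplus[L_n,x_n]\oplus\mathbb{K}x_n$ is a Lie subalgebra containing all generators, hence all of $L_n$, and then intersects with the ideal. Under the literal reading this subalgebra claim runs into the very obstruction you isolated, since bracketing $x_1\in L_{n-1}$ against $[x_1,x_n]\in[L_n,x_n]$ produces $[x_1,[x_1,x_n]]$, which lies in none of the three summands. Your reinterpretation repairs the decomposition and your proof is self-contained; note, however, that the application in the proof of Proposition~\ref{iso1} invokes the literal form (it reduces to elements of the shape $[[v,t_{(n+2)(n+3)}],t_{(n+2)(n+3)}]$), so that argument would also need to be adapted to the corrected summand.
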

\begin{proof}
    The subspace $L_{n-1} \oplus [L_n, x_n] \oplus \mathbb{K}x_n$ is a subalgebra of $L_n$ that contains all the generators. Clearly, $\langle x_n \rangle_{L_n} \cap L_{n-1}=\varnothing$, so it must be $\langle x_n \rangle_{L_n} \subset [L_n, x_n] \oplus \mathbb{K}x_n$. The reverse inclusion is straightforward.
\end{proof}

\begin{proof}[Proof (of Proposition \ref{iso1}).]
    Note that $\langle t_{(n+2)(n+3)} \rangle_{\mathfrak{g}} \subset \tilde{L} \defeq \widehat{\mathrm{Lie}}(x_{n+2}^a, y_{n+2}^a, t_{2(n+2)}, \dots t_{(n-1)(n+2)}, t_{(n+2)(n+3)})$, since the latter is an ideal in $\mathfrak{g}$. Therefore,
    \begin{equation*}
        \langle t_{(n+2)(n+3)} \rangle_{\mathfrak{g}}=[\tilde{L}, t_{(n+2)(n++3)}] \oplus \mathbb{K}t_{(n+2)(n+3)}. 
    \end{equation*}
    We will be done if we can show $[[v, t_{(n+2)(n+3)}], t_{(n+2)(n+3)}] \in \ker \varphi$ for all $v\in \tilde{L}$. For this purpose, we can check that the set
    \[
        \{v \in \tilde{L} \,| \, \varphi([[v, t_{(n+2)(n+3)}], t_{(n+2)(n+3)}])=0\}
    \]
    is a subalgebra of the free Lie algebra that contains all its generators.
\end{proof}

Next we will show that $\varphi: \mathfrak{g} \rightarrow (L^{n+2} \oplus L^{n+2}) \times_G UL^{n+2}$ is in fact an isomorphism by exhibiting its inverse. First, notice that we have a short exact sequence
\begin{equation*}
    \begin{tikzcd}
        0 \arrow{r}& \langle t_{(n+2)(n+3)} \rangle_{\mathfrak{g}} / [\langle t_{(n+2)(n+3)} \rangle, \langle t_{(n+2)(n+3)} \rangle]_{\mathfrak{g}} \arrow[r] & \mathfrak{g} \arrow [r, "P_1 \circ \varphi"] & L^{n+2} \oplus L^{n+2} \arrow[r] & 0,
    \end{tikzcd}
\end{equation*}
where $P_1$ is the projection $P_1:(L^{n+2} \oplus L^{n+2}) \times_G UL^{n+2} \rightarrow L^{n+2} \oplus L^{n+2}$. The composition $P_1 \circ \varphi$ has a section $\alpha: L \oplus L \rightarrow \mathfrak{g}$, induced by the choices
\begin{equation}
        \begin{tikzcd} [row sep=tiny]
            x^i_{n+2} \oplus 0 \arrow[r, maps to, "\alpha"] & x_{n+2}^i,\\
            y^i_{n+2} \oplus 0 \arrow[r, maps to]& y_{n+2}^i,\\
            t_{j(n+2)} \oplus 0 \arrow[r, maps to]& t_{j(n+2)},\\
            0 \oplus x^i_{n+2} \arrow[r, maps to] & x_{n+3}^i,\\
            0 \oplus y^i_{n+2} \arrow[r, maps to]& y_{n+3}^i,\\
            0 \oplus t_{j(n+2)} \arrow[r, maps to]& t_{j(n+3)},\\
        \end{tikzcd}
    \end{equation}
    where $1\leq i \leq g$ and $2 \leq j \leq n+1$. The algebra $\mathfrak{g}$ is an $(L^{n+2} \oplus L^{n+2})$-module with respect to the action
\begin{equation}\label{l2action}
    (v_1 \oplus v_2) \cdot g= [g, \alpha(v_1 \oplus v_2)].
\end{equation}

\begin{proposition} \label{psiinverse}
    Define a map $\psi: (L^{n+2} \oplus L^{n+2}) \times_G UL^{n+2} \rightarrow \mathfrak{g}$ by, $\psi(v, a)= \alpha(v) + \beta(a)$, where $\beta: UL^{n+2} \rightarrow \mathfrak{g}$ is the unique $(L^{n+2} \oplus L^{n+2})$-module map satisfying $\beta(1)=t_{(n+2)(n+3)}$. Then $\psi$ is a Lie algebra homomorphism.
\end{proposition}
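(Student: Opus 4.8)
The plan is to verify directly that $\psi$ respects brackets, reducing the whole statement to a single cocycle comparison that is checked on generators. First I would record the two structural facts about $\beta$ that do the heavy lifting: its image lies in the abelian kernel $M$, and it is a morphism of $(L^{n+2}\oplus L^{n+2})$-modules from $(UL^{n+2},\,(x\oplus y)\cdot a = xa-ay)$ to $(\mathfrak g,\,(v)\cdot g = [g,\alpha(v)])$, the latter being the action (\ref{l2action}). Both follow from the defining property $\beta(1)=t_{(n+2)(n+3)}$ together with the facts that $1$ generates $UL^{n+2}$ as a module and that $M$ is freely generated by $t_{(n+2)(n+3)}$ (Corollary \ref{M_is_free}); in particular $\beta$ is the inverse of the module isomorphism, which I will denote $\phi\colon M\to UL^{n+2}$.

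Next I would expand $[\psi(v,a),\psi(w,b)] = [\alpha(v)+\beta(a),\,\alpha(w)+\beta(b)]$ into four brackets. The term $[\beta(a),\beta(b)]$ vanishes because $M$ is abelian. The two mixed terms simplify using the module-map property: $[\beta(a),\alpha(w)] = w\cdot\beta(a) = \beta(w\cdot a)$ and $[\alpha(v),\beta(b)] = -\,v\cdot\beta(b) = -\beta(v\cdot b)$. Comparing this with $\psi$ applied to the extension bracket $[(v,a),(w,b)] = ([v,w],\,w\cdot a - v\cdot b + G(v,w))$, every contribution cancels and one is left needing exactly
\begin{equation*}
[\alpha(v),\alpha(w)]_{\mathfrak g} - \alpha([v,w]) = \beta\bigl(G(v,w)\bigr) \qquad \text{for all } v,w\in L^{n+2}\oplus L^{n+2}. \tag{$\star$}
\end{equation*}
This is the cocycle comparison, and it is the crux of the argument.

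To prove $(\star)$ I would exploit that the section $\alpha$ is of the form $\alpha(v_1\oplus v_2)=\alpha_1(v_1)+\alpha_2(v_2)$, where $\alpha_1,\alpha_2\colon L^{n+2}\to\mathfrak g$ are the Lie homomorphisms determined by the stated generator images; it is only the cross-terms that spoil homomorphy. Writing $v=v_1\oplus v_2$, $w=w_1\oplus w_2$ and expanding, the diagonal contributions cancel against $\alpha([v,w])$, leaving
\[
[\alpha(v),\alpha(w)] - \alpha([v,w]) = \{v_1,w_2\} - \{w_1,v_2\}, \qquad \{p,q\}:=[\alpha_1(p),\alpha_2(q)],
\]
so $(\star)$ reduces to matching the bilinear cross-pairing $\{-,-\}\colon L^{n+2}\times L^{n+2}\to M$ against $G$, i.e. to showing $\phi\circ\{-,-\}=\tilde\rho_G$ up to the sign dictated by the conventions. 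Here I would observe that $\phi\circ\{-,-\}$ is a Fox pairing: using the Jacobi identity in $\mathfrak g$ together with $\phi$ being a module isomorphism, one checks it is a (left) Fox derivative in the first variable and a (right) Fox derivative in the second, exactly as for $\tilde\rho_G$. Since a Fox pairing is determined by its values on the generators of $L^{n+2}$, it then suffices to compare on generators, where equality holds by construction: the nonzero brackets $[\alpha_1(x^a_{n+2}),\alpha_2(y^a_{n+2})]=[x^a_{n+2},y^a_{n+3}]=t_{(n+2)(n+3)}$, $[\alpha_1(t_{j(n+2)}),\alpha_2(t_{j(n+2)})]$, and so on, transport under $\phi$ precisely to the entries $\tilde\rho_G(x^a_{n+2},y^a_{n+2})=1$ and $\tilde\rho_G(t_{j(n+2)},t_{j(n+2)})=-t_{j(n+2)}$ of (\ref{rho_G_tilde}).

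I expect the main obstacle to be this reduction $(\star)$, and in particular the recognition that the defect cocycle is governed by the single cross-pairing $\{-,-\}$ and coincides with $\tilde\rho_G$. The bookkeeping of signs — the module-action sign in (\ref{l2action}), the sign in the definition of $G$, and the two Fox-derivative recursions — is delicate and must be tracked consistently; but the rigidity of Fox pairings (being determined by their generator values) collapses all of it onto the finite generator check, which holds by the very definition of $\tilde\rho_G$. Once $(\star)$ is in hand, the bracket expansion of the second paragraph gives $\psi([(v,a),(w,b)])=[\psi(v,a),\psi(w,b)]$, so $\psi$ is a Lie algebra homomorphism.
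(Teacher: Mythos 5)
Your proposal follows essentially the same route as the paper's own proof: expand $[\psi(v,a),\psi(w,b)]$, kill $[\beta(a),\beta(b)]$ and absorb the mixed terms using that $\beta$ takes values in the abelian kernel and is a module map, reduce to the cocycle comparison $(\star)$, and then verify the resulting cross-pairing identity $\beta(\tilde{\rho}_G(y,z))=\pm[\alpha(y\oplus 0),\alpha(0\oplus z)]$ by a recursion-plus-generators argument. Your packaging of that last step (the cross-pairing obeys the two Fox-derivative recursions, and such pairings are rigid, i.e. determined by their values on generators) is the same computation the paper runs when it shows the set $B=\{y\mid\beta(\tilde{\rho}_G(y,a^i))=[\alpha(y\oplus 0),\alpha(0\oplus a^i)]\}$ is a subalgebra containing all generators: Jacobi in $\mathfrak{g}$, the module-map property of $\beta$, and the left-Fox recursion in the first variable. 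If anything your version is slightly more complete, since you track the recursion in both variables, while the paper only writes out the first; and your caution about signs is warranted (the paper itself is not internally consistent about them, e.g. $G$ is defined with opposite signs in equation (\ref{rho_G_1}) and in the appendix).

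There is, however, one genuine flaw as written: you justify the structural facts about $\beta$ by Corollary \ref{M_is_free}, and that is circular. In the paper, Corollary \ref{M_is_free} is deduced from diagram (\ref{g-iso}), which requires $\psi$ to be an isomorphism (Corollary \ref{Comp_coro}), which in turn rests on Propositions \ref{iso1} and \ref{psiinverse} --- the very statement you are proving; the module isomorphism $\phi$ you invoke is likewise only constructed downstream. Fortunately, neither fact you need actually requires it: the module-map property of $\beta$ is part of the hypothesis (it is how $\beta$ is defined in the statement), and the containment $\ima\beta\subseteq M$ has a short direct proof --- this is exactly the paper's Lemma \ref{betaint}, which evaluates $\beta$ on monomials as iterated brackets $[\cdots[t_{(n+2)(n+3)},\alpha(0\oplus a^{i_1})]\cdots,\alpha(0\oplus a^{i_n})]$; alternatively, since the image of $\langle t_{(n+2)(n+3)}\rangle$ in $\mathfrak{g}$ is an ideal, the action (\ref{l2action}) preserves $M$, and $1$ generates $UL^{n+2}$ as a module, one gets $\ima\beta\subseteq M$ by induction on monomial length. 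You should replace the appeal to Corollary \ref{M_is_free} by such an argument, and phrase the final comparison directly as an identity of $M$-valued pairings (both $(y,z)\mapsto\beta(\tilde{\rho}_G(y,z))$ and $(y,z)\mapsto[\alpha(y\oplus 0),\alpha(0\oplus z)]$ satisfy the same bracket recursions and agree on generators), which never invokes $\phi$ or the freeness of $M$.
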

We may abbreviate $\psi(x)=\alpha(x) + \beta(x)$, and understand that we are extending the maps $\alpha$ and $\beta$ by zero on the corresponding components of $(L^{n+2} \oplus L^{n+2}) \times_G UL^{n+2}$. Apriori, the action (\ref{l2action}) depends on the choice of section, but it restricts to an $(L^{n+2} \oplus L^{n+2})$-action on
\begin{equation*}
    M = \langle t_{(n+2)(n+3)} \rangle_{\mathfrak{g}} / [\langle t_{(n+2)(n+3)} \rangle, \langle t_{(n+2)(n+3)} \rangle]_{\mathfrak{g}}
\end{equation*}
which is independent from that choice. We will need the fact:
\begin{lemma} \label{betaint}
    For all $a\in UL^{n+2}$, $\beta(a) \in M$.
\end{lemma}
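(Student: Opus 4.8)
The plan is to show that the image of $\beta$ is forced into $M$ by combining two module-theoretic facts: that $M$ is an $(L^{n+2}\oplus L^{n+2})$-submodule of $\mathfrak{g}$, and that $UL^{n+2}$ is generated by the unit $1$ as a module over $L^{n+2}\oplus L^{n+2}$. Since $\beta$ is a module map with $\beta(1)=t_{(n+2)(n+3)}\in M$, these two observations together pin the whole image inside $M$.

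First I would verify that $M$ is a submodule. Recall that in $\mathfrak{g}=\overline{\mathfrak{h}}_{g,n+3}/[\langle t_{(n+2)(n+3)}\rangle,\langle t_{(n+2)(n+3)}\rangle]$ the self-bracket of $\langle t_{(n+2)(n+3)}\rangle$ has been killed, so $M$ is precisely the (now abelian) ideal $\langle t_{(n+2)(n+3)}\rangle_{\mathfrak{g}}$. The module action on $\mathfrak{g}$ is $(v_1\oplus v_2)\cdot g=[g,\alpha(v_1\oplus v_2)]$; since $M$ is an ideal and $\alpha(v_1\oplus v_2)\in\mathfrak{g}$, we obtain $(v_1\oplus v_2)\cdot m=[m,\alpha(v_1\oplus v_2)]\in M$ for every $m\in M$, so $M$ is closed under the action. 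This is consistent with the remark already recorded above that the restriction of the action to $M$ is independent of the choice of section, which is what makes the statement well posed.

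Next I would establish cyclic generation. The module action of $L^{n+2}\oplus L^{n+2}$ on $UL^{n+2}$ is $(x\oplus y)\cdot a=xa-ay$, so in particular $(x\oplus 0)\cdot a=xa$ is left multiplication by $x\in L^{n+2}$. Iterating, $(x_1\oplus 0)(x_2\oplus 0)\cdots(x_k\oplus 0)\cdot 1=x_1x_2\cdots x_k$ for arbitrary generators $x_i$ of $L^{n+2}$; by the Poincar\'e-Birkhoff-Witt theorem such products span $UL^{n+2}$, whence $U(L^{n+2}\oplus L^{n+2})\cdot 1=UL^{n+2}$. Thus $1$ generates $UL^{n+2}$ as a module.

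Finally, combining the two: since $\beta$ is a module homomorphism and $1$ generates, we have $\beta(UL^{n+2})=\beta\bigl(U(L^{n+2}\oplus L^{n+2})\cdot 1\bigr)=U(L^{n+2}\oplus L^{n+2})\cdot\beta(1)$, and because $\beta(1)=t_{(n+2)(n+3)}\in M$ with $M$ a submodule, this is contained in $M$. The two module-theoretic verifications are routine; the only point deserving care is confirming that $M$ genuinely coincides with the full ideal $\langle t_{(n+2)(n+3)}\rangle_{\mathfrak{g}}$, so that the ideal property supplies closure under bracketing. This is exactly where the passage to the double-commutator quotient defining $\mathfrak{g}$ is used, and I expect no genuine obstacle beyond keeping the two module conventions consistent.
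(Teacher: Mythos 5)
Your proof is correct and is essentially the paper's own argument in cleaner packaging: the paper likewise reduces to monomials, writes $a^{i_1}\cdots a^{i_n}$ as an iterated action of the elements $0\oplus a^{i_j}$ on $1$, and uses the equivariance of $\beta$ to express $\beta(a^{i_1}\cdots a^{i_n})$ as an iterated bracket of $t_{(n+2)(n+3)}$ with the $\alpha(0\oplus a^{i_j})$, which therefore lies in the ideal $M$. Your two observations --- that $M$ is a submodule (being the abelian ideal $\langle t_{(n+2)(n+3)}\rangle_{\mathfrak{g}}$) and that $1$ cyclically generates $UL^{n+2}$ --- are exactly the two facts driving that computation, so there is no substantive difference in approach.
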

\begin{proof}
    It will be enough to show the statement holds for all monomials in $UL^{n+2}$. Let $a^{i_1}\dots a^{i_n}$ be a product of generators. Notice that
    \begin{equation*}
        \begin{split}
            \beta(a^{i_1} \dots a^{i_n})&= (-1)^n\beta((0 \oplus a^{i_n})\cdot( \dots (0 \oplus a^{i_1}) \cdot 1) \dots )\\
            &=(-1)^n[\dots [t_{(n+2)(n+3)}, \alpha(0 \oplus a^{i_1}) ] \dots , \alpha(0 \oplus a^{i_n})].
        \end{split}
    \end{equation*}
    Since this representative of $\beta(a^{i_1} \dots a^{i_n})$ belongs to $\mathrm{Lie}(x_{n+3}^a, y_{n+3}^a, t_{2(n+3)}, \dots t_{(n-1)(n+3)}, t_{(n+2)(n+3)})$, there is no way to suppress the $t_{(n+2)(n+3)}$ term.
\end{proof}

\begin{proof}[Proof (of Proposition \ref{psiinverse})]
    Let $(v, a), (w, b) \in (L^{n+2} \oplus L^{n+2}) \times_G UL^{n+2}$. Then
    \begin{equation*}
        \begin{split}
            \psi( [(v,a), (w, b)]) &= \psi (([v,w], w\cdot a - v \cdot b + c(v,w))\\
            &=\alpha([v,w]) + w\cdot \beta(a) - v \cdot \beta(b) + \beta(c(v,w)),
        \end{split}
    \end{equation*}
    while
    \begin{equation*}
        \begin{split}
            [\psi (v,a), \psi (w,b) ] &=[\alpha(v) + \beta(a), \alpha(\omega) + \beta(b)]\\
            &=[\alpha(v), \alpha(\omega)] + [\alpha(v), \beta(b)]+ [\beta(a), \alpha(w)],
        \end{split}
    \end{equation*}
    where on the last line we used Lemma (\ref{betaint}). Comparing with the definition of the action (\ref{l2action}), all we need to show is that
    \begin{equation}
        \beta(c(v, w))=[\alpha(v), \alpha(w)]-\alpha([v,w]).
    \end{equation}
    Substituting $v=(v_1 \oplus v_2)$, $w=(w_1 \oplus w_2)$, the preceding equation becomes
    \begin{equation*}
        \beta(\rho_G(w_1, v_2)-\rho_G(v_1, w_2))=[\alpha(v_1 \oplus 0), \alpha(w_1 \oplus 0)] + [\alpha(v_1 \oplus 0), \alpha(0 \oplus w_2)].
    \end{equation*}
    We will be done if we can show that
    \begin{equation*}
        \beta(\rho_G(y,z))=[\alpha(y \oplus 0), \alpha(0 \oplus z)].\\
    \end{equation*}
    To show the latter, we prove that the set $B \defeq \{y \in L \, | \, \beta(\tilde{\rho}_G(y, a^i))= [\alpha(y \oplus 0), \alpha(0 \oplus a^i)] \}$, defined for a fixed generator $a^i$, is a subalgebra of $L^{n+2}$ that contains all generators. Indeed, if $y, z \in UL^{n+2}$, then
    \begin{equation*}
        \begin{split}
            \beta(\tilde{\rho}_G([y,z]),a^i)=& (y \oplus 0) \cdot \beta(\tilde{\rho}_G(z, a^i))- (z \oplus 0) \cdot \beta(\tilde{\rho}_G(y, a^i))\\
            &=[[\alpha(z\oplus 0), \alpha(0 \oplus a^i)],\alpha(y \oplus 0)]-[[\alpha(y \oplus 0), \alpha(0 \oplus a^i)], \alpha(z \oplus 0)]\\
            &=-[[\alpha(y \oplus 0), \alpha(z \oplus 0)], \alpha(0 \oplus a^i)]\\
            &=-[\alpha([y, z] \oplus 0), \alpha(0 \oplus a^i)],
        \end{split}
    \end{equation*}
    so that $[y, z] \in B$. We can also check by direct computation that all the generators of $L^{n+2}$ are indeed in $B$.
\end{proof}

We can directly verify that
\begin{corollary} \label{Comp_coro}
    The map $\varphi: \mathfrak{g} \rightarrow (L^{n+2} \oplus L^{n+2}) \times_G UL^{n+2}$ is an isomorphism of Lie algebras, with inverse $\psi: (L^{n+2} \oplus L^{n+2}) \times_G UL^{n+2} \rightarrow \mathfrak{g}$.
\end{corollary}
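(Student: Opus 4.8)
The plan is to leverage the fact that both maps are already known to be Lie algebra homomorphisms, so that showing they are mutually inverse reduces to a check on generators. By Proposition \ref{iso1}, $\varphi$ is a well-defined Lie morphism, and by Proposition \ref{psiinverse}, $\psi$ is a Lie morphism; consequently $\psi\circ\varphi$ and $\varphi\circ\psi$ are Lie algebra endomorphisms of $\mathfrak{g}$ and of $(L^{n+2}\oplus L^{n+2})\times_G UL^{n+2}$, respectively. Since a Lie endomorphism fixing a generating set must be the identity, it suffices to verify that each composite acts as the identity on generators.

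First I would treat $\psi\circ\varphi=\mathrm{id}_{\mathfrak{g}}$. On the generators $x_{n+2}^i,y_{n+2}^i,x_{n+3}^i,y_{n+3}^i$, on $t_{j(n+2)},t_{j(n+3)}$ for $1<j<n+3$, and on $t_{(n+2)(n+3)}$, this is immediate from the definition of $\varphi$ together with $\psi(v,a)=\alpha(v)+\beta(a)$, using that $\alpha$ is a section and $\beta(1)=t_{(n+2)(n+3)}$; for instance $\psi\varphi(t_{(n+2)(n+3)})=\psi(0\oplus0,1)=\beta(1)=t_{(n+2)(n+3)}$. The only generators requiring genuine work are $t_{1(n+2)}$ and $t_{1(n+3)}$, whose images under $\varphi$ carry the additional $-(0\oplus0,1)$ term. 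Applying $\psi$, and using that $([x_{n+2}^i,y_{n+2}^i]\oplus0,0)$ is the bracket in the extension of $(x_{n+2}^i\oplus0,0)$ with $(y_{n+2}^i\oplus0,0)$ (the cocycle $G$ vanishing on this pair), one obtains $-\sum_i[x_{n+2}^i,y_{n+2}^i]-\sum_{j\neq1,n+3}t_{j(n+2)}-t_{(n+2)(n+3)}$, which equals $t_{1(n+2)}$ precisely by relation \eqref{FTg} with $i=n+2$, after recording that $t_{(n+2)(n+2)}$ is set to zero in $\mathfrak{g}$ (it is among the central generators killed in passing to $\overline{\mathfrak{h}}_{g,n+3}$). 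The computation for $t_{1(n+3)}$ is identical with $n+2$ replaced by $n+3$.

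Next I would check $\varphi\circ\psi=\mathrm{id}$, after first recording that the target extension is generated as a Lie algebra by the elements $(v\oplus w,0)$ with $v,w$ generators of $L^{n+2}$, together with $(0\oplus0,1)$: indeed $P_1$ sends the first family onto a generating set of $L^{n+2}\oplus L^{n+2}$, while repeatedly bracketing $(0\oplus0,1)$ against these realizes the module action inside the bracket and thus produces all of $0\oplus UL^{n+2}$ (cf. Lemma \ref{betaint}). On each such generator the identity is immediate: $\varphi\psi(v,0)=\varphi(\alpha(v))=(v,0)$ by construction of $\alpha$, and $\varphi\psi(0,1)=\varphi(\beta(1))=\varphi(t_{(n+2)(n+3)})=(0\oplus0,1)$. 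The main obstacle throughout is the single nontrivial verification on $t_{1(n+2)},t_{1(n+3)}$, where one must confirm that the cocycle contribution $\beta(1)=t_{(n+2)(n+3)}$ exactly absorbs the failure of $\alpha$ to be a Lie homomorphism and that this matches \eqref{FTg} once the central generators $t_{(n+2)(n+2)},t_{(n+3)(n+3)}$ are quotiented away. With both composites equal to the identity on generators, they are the identity by the homomorphism property, so $\varphi$ and $\psi$ are mutually inverse isomorphisms of Lie algebras, as claimed.
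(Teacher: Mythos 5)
Your proposal is correct and is essentially the paper's own (implicit) argument: the paper disposes of this corollary with "we can directly verify", leaning on Propositions \ref{iso1} and \ref{psiinverse} exactly as you do, and your generator-by-generator verification of $\psi\circ\varphi=\mathrm{id}$ and $\varphi\circ\psi=\mathrm{id}$ --- including the observation that the only non-trivial checks are $t_{1(n+2)}$, $t_{1(n+3)}$, settled by relation (\ref{FTg}) once the central elements $t_{(n+2)(n+2)}$, $t_{(n+3)(n+3)}$ are killed in $\mathfrak{g}$ --- is precisely that verification. One wording correction: the generating family for the target extension should be the elements $(v\oplus 0,0)$ and $(0\oplus w,0)$ with $v,w$ generators of $L^{n+2}$, together with $(0\oplus 0,1)$, rather than pairs $(v\oplus w,0)$ with \emph{both} entries generators, since the latter span only a codimension-one subspace of the degree-one part of $L^{n+2}\oplus L^{n+2}$ and so do not project onto a generating set; your subsequent checks via $\varphi\psi(v,0)=\varphi(\alpha(v))=(v,0)$ are in fact carried out on the correct family, so nothing else in the argument changes.
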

More importantly, the commutativity of the diagram 
\begin{equation} \label{g-iso}
    \begin{tikzcd}
        0 \arrow[r]& UL^{n+2} \arrow[r] \arrow[d, "g"] & (L^{n+2} \oplus L^{n+2}) \times_G UL^{n+2} \arrow[r, "P_1"] \arrow[d, "\psi"] & L^{n+2} \oplus L^{n+2} \arrow[r] \arrow[d, "\mathrm{id}"] & 0\\
        0 \arrow[r] & M \arrow[r] & \mathfrak{g} \arrow[r, "P_1 \circ \varphi"] & L^{n+2} \oplus L^{n+2} \arrow[r] & 0,
    \end{tikzcd}
\end{equation}
defines an invertible linear map between abelian Lie algebras, $g: UL^{n+2} \rightarrow M$.
\begin{corollary}\label{M_is_free}
    The spaces $UL^{n+2}$ and $M$ are isomorphic as $(L^{n+2} \oplus L^{n+2})$-modules.
\end{corollary}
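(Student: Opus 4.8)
The plan is to promote the linear isomorphism $g\colon UL^{n+2}\to M$ produced by diagram (\ref{g-iso}) to an isomorphism of $(L^{n+2}\oplus L^{n+2})$-modules. The vertical maps in (\ref{g-iso}) fit $\psi$ into a morphism of short exact sequences, so (identifying $UL^{n+2}$ and $M$ with the kernels of $P_1$ and $P_1\circ\varphi$, respectively) the map $g$ is simply the restriction $\psi|_{UL^{n+2}}$. The only remaining point is equivariance: I must check that $g$ intertwines the action $(x\oplus y)\cdot a=xa-ay$ on $UL^{n+2}$ with the action $(x\oplus y)\cdot m=[m,\alpha(x\oplus y)]$ on $M$ coming, as in (\ref{g-mod_act}), from a section $\alpha$ of $P_1\circ\varphi$.

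First I would recall that, since $UL^{n+2}=\ker P_1$ and $M=\ker(P_1\circ\varphi)$ are abelian, each of the two module actions is independent of the section used to define it. I fix the section $u(v)=(v,0)$ of $P_1$; unwinding the bracket of the cocycle extension $(L^{n+2}\oplus L^{n+2})\times_G UL^{n+2}$ on kernel elements shows that $[a,u(x\oplus y)]=xa-ay$, so the prescribed module structure on $UL^{n+2}$ is exactly $v\cdot a=[a,u(v)]$ computed inside the total Lie algebra.

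The key observation is that $\psi=\varphi^{-1}$ is a Lie algebra isomorphism that commutes with the projections to $L^{n+2}\oplus L^{n+2}$ (the right-hand square of (\ref{g-iso})). Hence $\alpha:=\psi\circ u$ is a section of $P_1\circ\varphi$, and by the section-independence just noted I may compute the $M$-action with this $\alpha$. Then, for $v\in L^{n+2}\oplus L^{n+2}$ and $a\in UL^{n+2}$, using that $\psi$ is a Lie homomorphism,
\[
    g(v\cdot a)=\psi\big([a,u(v)]\big)=[\psi(a),\psi(u(v))]=[g(a),\alpha(v)]=v\cdot g(a).
\]
This is precisely the desired equivariance, and since $g$ is already a linear bijection it is an isomorphism of $(L^{n+2}\oplus L^{n+2})$-modules.

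I do not anticipate a genuine obstacle once Corollary (\ref{Comp_coro}) supplies the Lie isomorphism $\psi$: the argument is the module-level analogue of the fact that an isomorphism of abelian extensions restricts to an isomorphism on kernels. The only care required is bookkeeping of the sign convention in (\ref{g-mod_act}) and the verification that the kernel action in the cocycle extension agrees with the intended $(x\oplus y)\cdot a=xa-ay$; both are short direct computations.
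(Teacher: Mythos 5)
Your proof is correct and follows essentially the same route as the paper: both identify $g$ with the restriction of the Lie isomorphism $\psi$ to the kernels in diagram (\ref{g-iso}), and deduce equivariance from the fact that $\psi$ intertwines the bracket-with-section descriptions of the two module actions (the paper displays this computation for the generator $1\mapsto t_{(n+2)(n+3)}$, while you carry it out for arbitrary $a$, which is the same calculation). Your explicit check that the kernel action in the cocycle extension agrees with $(x\oplus y)\cdot a = xa-ay$ is a welcome piece of bookkeeping that the paper leaves implicit.
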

\begin{proof}
    Recall that if $\gamma: M_1 \rightarrow M_2$ is an invertible module map, then $\gamma^{-1}: M_2 \rightarrow M_1$ is automatically a module map. Therefore, all we need to do is verify that $g: L^{n+2} \rightarrow M$ is a module map, but this follows from the commutativity of (\ref{g-iso}). To check this, notice that, for any $v, w \in L^{n+2}$, $(0, (v \oplus w)\cdot 1)=[(0,1), (v \oplus w,0)]$, then
    \begin{equation*}
        \begin{tikzcd}
            (v \oplus w) \cdot 1 \arrow[r, mapsto] \arrow[d, mapsto, "g"] & {[(0,1), (v \oplus w, 0)]} \arrow[d, mapsto, "\psi"]\\
            (v \oplus w) \cdot t_{(n+2)(n+3)} \arrow[r, mapsto] & {[t_{(n+2)(n+3)}, \alpha(v \oplus w)]}
        \end{tikzcd}
    \end{equation*}
\end{proof}

We are finally ready to prove our main result:
\begin{proposition}
    The two commutative diagrams
    \begin{equation*}
        \begin{tikzcd}
         & L^{n+2} \arrow[d, "\Delta"] \arrow[dl, "d_{n+2}^r" swap]\\
         \mathfrak{g} \arrow[r, "P" swap] & L^{n+2} \oplus L^{n+2}
    \end{tikzcd} \hspace{1em} \text{and} \hspace{1em}
    \begin{tikzcd}
        & L^{n+2} \arrow[d, "\Delta"] \arrow[dl, "\Delta \times T^r" swap]\\
         L^{n+2} \oplus L^{n+2} \times_G UL^{n+2} \arrow[r, "P_1 \circ \varphi" swap] & L^{n+2} \oplus L^{n+2}
    \end{tikzcd}
    \end{equation*}
    are isomorphic as objects in $\mathbf{RelE}$.
\end{proposition}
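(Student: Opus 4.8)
The plan is to exhibit the required isomorphism in $\mathbf{RelE}$ directly, taking the identity on the two ``outer'' Lie algebras and the map $\varphi$ on the ``middle'' one. Concretely, I would propose the triple
\[
    (H, G, E) = (\mathrm{id}_{L^{n+2}},\ \mathrm{id}_{L^{n+2}\oplus L^{n+2}},\ \varphi),
\]
where $\varphi\colon \mathfrak{g} \to (L^{n+2}\oplus L^{n+2})\times_G UL^{n+2}$ is the Lie algebra isomorphism of Corollary \ref{Comp_coro} (with inverse $\psi$). Since $\varphi$ is already known to be an isomorphism and $H,G$ are identities, the only thing left to check is that this triple defines a morphism in $\mathbf{RelE}$, i.e.\ that it makes the three defining squares of Definition \ref{def_relE} commute; invertibility of the morphism is then automatic.

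First I would dispatch the two easy squares. The square involving the vertical maps $f = \Delta$ commutes trivially, since $G\circ \Delta = \Delta = \Delta \circ H$ with $G,H$ the identity. For the projection square I must verify $P = P_1\circ\varphi$, where $P = s_{n+3}\oplus s_{n+2}$ and $P_1$ is the projection of $(L^{n+2}\oplus L^{n+2})\times_G UL^{n+2}$ onto $L^{n+2}\oplus L^{n+2}$. This is exactly the commutativity of the right-hand square of the short exact sequence diagram (\ref{g-iso}); alternatively it is checked on generators, e.g.\ $P(x_{n+2}^i) = x_{n+2}^i\oplus 0 = P_1(x_{n+2}^i\oplus 0, 0) = (P_1\circ\varphi)(x_{n+2}^i)$, and similarly for the remaining generators using the explicit formulas of Proposition \ref{iso1}. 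As both $P$ and $P_1\circ\varphi$ are Lie morphisms that agree on generators, they coincide.

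The main step is the section square, namely $\varphi\circ d_{n+2}^r = \Delta\times T^r$. Both sides are Lie morphisms $L^{n+2}\to (L^{n+2}\oplus L^{n+2})\times_G UL^{n+2}$ --- the left-hand composite because $d_{n+2}^r$ and $\varphi$ are, and the right-hand map because $\Delta\times T^r$ is the section $f\times\omega$ attached to the relative cocycle $T^r\oplus G$ under the functor $\mathrm{F}$ of Proposition \ref{cat-equiv} --- so it suffices to compare them on the generators of $L^{n+2}$. Here I would use that $d_{n+2}$ is the strand-doubling map, so on a generator $a\in\{x_{n+2}^i, y_{n+2}^i, t_{j(n+2)}\}$ it produces $a + a'$, with $a'$ the corresponding $(n+3)$-indexed generator; applying $\varphi$ via the formulas of Proposition \ref{iso1} then gives $\varphi(d_{n+2}(a)) = (a\oplus a,\, 0) = (\Delta(a),0)$. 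The $r$-twist is handled by $d_{n+2}^r = d_{n+2}\circ\gamma^r$ together with $d_{n+2}(t_{(n+2)(n+2)}) = t_{(n+2)(n+3)}$ and the identity $\varphi(t_{(n+2)(n+3)}) = (0\oplus 0,\, 1)$, so the correction term contributes precisely $(0, r(a))$; summing yields $\varphi(d_{n+2}^r(a)) = (\Delta(a),\, r(a)) = (\Delta\times T^r)(a)$, since $T^r(a)=q^r(a)=r(a)$ on generators.

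Once all three squares commute, the triple $(\mathrm{id}, \mathrm{id}, \varphi)$ is a morphism in $\mathbf{RelE}$, and because $\varphi$ is invertible (with inverse $\psi$) while the outer maps are identities, it is an isomorphism, completing the proof. The only genuine subtlety is the section square: it is where the precise form of the strand-doubling map and, crucially, the identification $\varphi(t_{(n+2)(n+3)}) = (0,1)$ --- which encodes the $r$-dependent framing twist inside the abelian kernel --- must be matched against the definition of $T^r$. Everything else is bookkeeping resting on Corollary \ref{Comp_coro}.
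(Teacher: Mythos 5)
Your proposal is correct and takes essentially the same route as the paper: the paper likewise reduces the claim to the commutativity of three squares with identities on the outer Lie algebras, dismisses the first two as straightforward, and settles the section square by a generator computation using $T^r(a)=q^r(a)=r(a)$ together with the identification of $t_{(n+2)(n+3)}$ with the unit of the abelian kernel. The only cosmetic difference is that the paper runs the isomorphism in the opposite direction, using $\psi$ (verifying $\psi\circ(\Delta\times T^r)=d^r_{n+2}$ via $\alpha\Delta(a_i)=d_{n+2}(a_i)$ and $\beta(1)=t_{(n+2)(n+3)}$) where you use $\varphi$.
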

\begin{proof}
    Our statement is equivalent to the commutativity of the diagrams
    \small
    \begin{equation*}
        \begin{tikzcd}
            L^{n+2} \arrow[r, "\Delta"] \arrow[d, "\mathrm{id}"] & L^{n+2} \oplus L^{n+2} \arrow[d, "\mathrm{id}"]\\
            L^{n+2} \arrow[r, "\Delta"] & L^{n+2} \oplus L^{n+2}
        \end{tikzcd} \hspace{1em}
        \begin{tikzcd}
            (L^{n+2} \oplus L^{n+2}) \times_G UL^{n+2} \arrow[r, "P_1"] \arrow[d, "\psi"] & L^{n+2} \oplus L^{n+2} \arrow[d, "\mathrm{id}"]\\
            \mathfrak{g} \arrow[r, "P"] & L^{n+2} \oplus L^{n+2} 
        \end{tikzcd} \hspace{1em}
        \begin{tikzcd}
            L^{n+2} \arrow[r, "\Delta \times T^r"] \arrow[d, "\mathrm{id}"] & (L^{n+2} \oplus L^{n+2}) \times_G UL^{n+2} \arrow[d, "\psi" ]\\
        L^{n+2} \arrow[r, "d_{n+2}^r"]& \mathfrak{g}
        \end{tikzcd}
    \end{equation*}
    \normalsize
    since the vertical arrows that comprise them are invertible. The commutativity of the first two follows from straightforward computations. For the third diagram, note that, by definition, $T^r:L^{n+2} \rightarrow UL^{n+2}$ satisfies
    \begin{equation*}
        T^r(x)=q^r(x) \hspace{1em} \text{for all }x\in L^{n+2}.
    \end{equation*}
    Let $a_i$ be a generator of $L^{n+2}$, then
    \begin{equation*}
        \begin{split}
            \psi(\Delta \times T^r)(a_i) &=\alpha\Delta(a_i)+\beta(r(a_i))\\
            &=\alpha\Delta(a_i) + r(a_i)\beta(1)\\
            &=\alpha\Delta(a_i) + r(a_i)t_{(n+2)(n+3)}.
        \end{split}
    \end{equation*}
    On the other hand,
    \begin{equation*}
        d_{n+2}^r(a_i)=d_{n+2}(a_i)+r(a_i)t_{(n+2)(n+3)}.
    \end{equation*}
    We can check how each generator $a_i$ of $L^{n+2}$ satisfies the equality $\alpha\Delta(a_i)=d_{n+2}(a_i)$.
\end{proof}

\subsection{On the equivalence of $\mathbf{RelCo}$ and $\mathbf{RelE}$}

We aim to complete the proof of the following proposition:
\begin{proposition} \label{cat_appendix}
    The functor $\mathrm{F}: \mathbf{RelCo} \rightarrow \mathbf{RelE}$, defined on objects by
    \begin{equation*}
        ( \begin{tikzcd}
            \mathfrak{h} \arrow[r,"f"]& \mathfrak{g}
        \end{tikzcd},\, M,\, \omega \oplus c \in Z^2(\mathfrak{g}, \mathfrak{h}; M) ) \mapsto \begin{tikzcd}
            & \mathfrak{h} \arrow[d, "f"] \arrow[dl, "f \times \omega" swap]\\
            \mathfrak{g}\times_c M \arrow[r] & \mathfrak{g}
        \end{tikzcd},
    \end{equation*}
    and on morphisms by
    \begin{equation*}
        \left( \begin{tikzcd}
            \mathfrak{h} \arrow[r, "f"] \arrow[d, "H"] & \mathfrak{g} \arrow[d, "G"]\\
            \mathfrak{h}' \arrow[r, "f'"]& \mathfrak{g}'
        \end{tikzcd},\, 
        \begin{tikzcd}
            M \arrow[d, "\alpha"]\\
            M'
        \end{tikzcd},\,
        r \in C^1(\mathfrak{g}, \mathfrak{h}; M)
        \right) \mapsto
        \left( \begin{tikzcd}
            \mathfrak{h} \arrow[r, "f"] \arrow[d, "H"] & \mathfrak{g} \arrow[d, "G"]\\
            \mathfrak{h}' \arrow[r, "f'"]& \mathfrak{g}'
        \end{tikzcd},\, 
        \begin{tikzcd}
            \mathfrak{h} \arrow[r, "f \times \omega"] \arrow[d, "H"] & \mathfrak{g}\times_c M \arrow[d, "\sigma"]\\
            \mathfrak{h}' \arrow[r, "f' \times \omega'"]  &\mathfrak{g}' \times_{c'} M'
        \end{tikzcd},\,
        \begin{tikzcd}
            \mathfrak{g} \times_c M \arrow[r] \arrow[d, "\sigma"]& \mathfrak{g} \arrow[d, "G"] \\
            \mathfrak{g}' \times_{c'} M' \arrow[r] & \mathfrak{g}'
        \end{tikzcd}
        \right),
    \end{equation*}
    where $\sigma(x,m)= (G(x), \alpha (m) +r(x))$ for all $(x,m)\in \mathfrak{g}\times_c M$, is an equivalence of categories.
\end{proposition}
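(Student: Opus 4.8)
The plan is to establish that $\mathrm{F}$ is well defined on objects and morphisms and that it is functorial; essential surjectivity and full faithfulness were already proved in the main text (Proposition \ref{cat-equiv}), so combining these facts yields the equivalence.

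First I would check that $\mathrm{F}$ sends objects to objects. Given $(f\colon\mathfrak{h}\to\mathfrak{g},\,M,\,\omega\oplus c\in Z^2(\mathfrak{g},\mathfrak{h};M))$, the cocycle condition $\delta(\omega\oplus c)=0$ unpacks, through the truncated total complex, into two identities: the $\mathrm{Hom}_{\mathbb{K}}(\bigwedge^2\mathfrak{g},M)$-component gives $dc=0$, which is exactly the Chevalley--Eilenberg condition making the twisted bracket on $\mathfrak{g}\times_c M$ satisfy the Jacobi identity (cf. Theorem \ref{ext_coho}); the $\mathrm{Hom}_{\mathbb{K}}(\mathfrak{h},M)$-component gives $d\omega=f^{*}c$, which is precisely what makes $f\times\omega\colon h\mapsto(f(h),\omega(h))$ a Lie homomorphism. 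Surjectivity of the projection $\mathfrak{g}\times_c M\to\mathfrak{g}$, abelianness of its kernel $M$, and commutativity of the triangle are immediate, so $\mathrm{F}$ of an object lies in $\mathbf{RelE}$.

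Next I would verify well-definedness on morphisms. The governing identity is $\delta r=(\alpha\omega-\omega'H)\oplus(\alpha c-c'(G\otimes G))$. Splitting $\delta r$ into its two graded components (up to the sign conventions of the mapping cone), the $\mathrm{Hom}_{\mathbb{K}}(\bigwedge^2\mathfrak{g},M')$-part $dr=\alpha c-c'(G\otimes G)$ is exactly the identity forcing $\sigma(x,m)=(G(x),\alpha(m)+r(x))$ to respect the twisted brackets, once one uses the $\mathbf{RelCo}$-equivariance $\alpha(x\cdot m)=G(x)\cdot\alpha(m)$ to cancel the module terms; hence $\sigma$ is a Lie homomorphism. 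The $\mathrm{Hom}_{\mathbb{K}}(\mathfrak{h},M')$-part, together with $Gf=f'H$, yields $\omega'H=\alpha\omega+rf$, which is precisely the commutativity of the square relating $f\times\omega$, $f'\times\omega'$, $H$ and $\sigma$. The remaining two squares (the $f$--$G$ square and the projection square) are inherited directly from the morphism data of $\mathbf{RelCo}$ and from the formula for $\sigma$.

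Finally I would confirm functoriality: preservation of identities is clear, and for composition one checks that the $\sigma$ attached to $r'\circ_{\mathbf{RelCo}}r=\alpha'r+r'G$ coincides with the composite of the two individual $\sigma$-maps, a short direct computation. The main obstacle is bookkeeping rather than conceptual: one must track the sign conventions carefully, in particular the module action $g\cdot m=[m,u(g)]$ coming from a choice of section (equation (\ref{g-mod_act})), and correctly decompose the single mapping-cone identities $\delta(\omega\oplus c)=0$ and $\delta r=\dots$ into their two components so that each lands on the right Jacobi identity, homomorphism condition, or commuting square. Once the total-complex differential is written out explicitly, every remaining verification reduces to a routine manipulation, and the combination with Proposition \ref{cat-equiv} shows $\mathrm{F}$ is an equivalence of categories.
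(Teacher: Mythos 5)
Your proposal is correct and follows essentially the same route as the paper's own proof: you split $\delta(\omega\oplus c)=0$ and $\delta r=(\alpha\omega-\omega'H)\oplus(\alpha c-c'(G\otimes G))$ into their two components to show that $f\times\omega$ and $\sigma$ are Lie homomorphisms (using $\alpha(x\cdot m)=G(x)\cdot\alpha(m)$ and $Gf=f'H$) and that the relevant squares commute, you check compatibility with $\circ_{\mathbf{RelCo}}$, and you invoke the main-text argument of Proposition \ref{cat-equiv} for full faithfulness and essential surjectivity, exactly as the appendix does. The only discrepancies are signs (in the paper's conventions the component identities read $-dr=\alpha c-c'(G\otimes G)$ and $-f^*r=\alpha\omega-\omega'H$), which you rightly flag as convention-dependent bookkeeping.
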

In the body of the article we proved that $\mathrm{F}$ is faithful and essentially surjective. We will now prove that $\mathrm{F}$ is well-defined.

Keeping with the notation of Proposition \ref{cat_appendix}, we need to check that:
\begin{enumerate}
    \item \textbf{The map $f \times \omega$ is a Lie morphism}: Let $h_1, h_2 \in \mathfrak{h}$, then
    \begin{equation*}
        \begin{split}
            [(f(h_1),\omega(h_1)), (f(h_2),\omega(h_2))]&=( [f(h_1), f(h_2)],f(h_2) \cdot \omega(h_1) -f(h_1) \cdot \omega(h_2)+c(f(h_1), f(h_2)))\\
            &=(f[h_1, h_2], \omega[h_1, h_2]),
        \end{split}
    \end{equation*}
    where for the second equality we used that $\omega \oplus c$ is a closed cocycle, so that $d\omega=-f^*c$.
    \item \textbf{The map $\sigma$ is a Lie morphism:} Recall how we imposed the requirement
    \begin{equation} \label{delta_r}
        \delta r=(\alpha \omega - \omega' H) \oplus(\alpha c- c' (G \otimes G)), 
    \end{equation}
    so in particular 
    \begin{equation} \label{d_r}
        -dr =\alpha c-c'(G \otimes G).
    \end{equation}
    Let $(g_1,m_1), (g_2, m_2)\in \mathfrak{g} \times_c M$, then
    \begin{equation*}
        \begin{split}
            \sigma [(g_1, m_1), (g_2, m_2)]&= \sigma([g_1, g_2], g_2 \cdot m_1- g_1 \cdot m_2+c(g_1, g_2))\\
            &=( G[g_1, g_2], \alpha(g_2 \cdot m_1 - g_1 \cdot m_2+ c(g_1, g_2))+r[g_1, g_2])\\
            &= ( G[g_1, g_2], G(h_2) \cdot \alpha(m_1)- G(g_1) \cdot \alpha(m_2)+ \alpha c(g_1, g_2)+r[g_1, g_2])\\
            &= ( G[g_1, g_2], G(g_2) \cdot \alpha(m_1)\\
            &- G(g_1) \cdot \alpha(m_2)+ G(h_2) \cdot r(g_1)-G(g_1)\cdot r(g_2)+c'(G(g_1), G(g_2))\\
            &=[(G(g_1), \alpha(m_1) + r(g_1)), (G(g_1), \alpha(m_1) + r(g_1))]\\
            &=[\sigma(g_1, m_1), \sigma(g_2, m_2)],
        \end{split}
    \end{equation*}
    where on the third equality we used the requirement that $\alpha(g \cdot m)=G(g) \cdot \alpha(m)$ for all $g\in \mathfrak{g}$ and $m \in M$, and we used equation (\ref{d_r}) for the fourth equality.
    \item \textbf{The diagrams }
    \begin{equation*}
            \mathcal{A} \defeq
            \begin{tikzcd}
                \mathfrak{h} \arrow[r, "f \times \omega"] \arrow[d, "H"] & \mathfrak{g}\times_c M \arrow[d, "\sigma"]\\
                \mathfrak{h}' \arrow[r, "f' \times \omega'"]  &\mathfrak{g}' \times_{c'} M'
            \end{tikzcd}, \hspace{1 em}
            \mathcal{B} \defeq
            \begin{tikzcd}
                \mathfrak{g} \times_c M \arrow[r] \arrow[d, "\sigma"]& \mathfrak{g} \arrow[d, "G"] \\
                \mathfrak{g}' \times_{c'} M' \arrow[r] & \mathfrak{g}'
            \end{tikzcd},
    \end{equation*}
    \textbf{are commutative:} In particular, equation (\ref{delta_r}) also implies that
    \begin{equation}\label{f_r}
        -f^*r=\alpha\omega-\omega'H.
    \end{equation}
    Let $h \in \mathfrak{h}$. To check the commutativity of $\mathcal{A}$, notice that
    \begin{equation*}
        \begin{split}
            \sigma(f(h), \omega(h))&=(Gf(h), \alpha\omega(h)+rf(h))\\
            &=(f'H(h), \omega'H(h)),
        \end{split}
    \end{equation*}
    where on the second equality we used the condition $Gf=f'H$, as well as equation (\ref{f_r}). The commutativity of $\mathcal{B}$ is straightforward.
    \item \textbf{The map $\mathrm{F}: \mathbf{RelCo} \rightarrow \mathbf{RelE}$ respects composition:} Suppose we have two morphisms $\mathcal{X}, \mathcal{Y}$ in $\mathbf{RelCo}$:
    \begin{equation*}
        \begin{tikzcd}
            \left(
        \mathfrak{h} \xrightarrow{f} \mathfrak{g}, M, \omega \oplus c
        \right) \arrow[r, "\mathcal{X}"]& \left(
        \mathfrak{h}' \xrightarrow{f'} \mathfrak{g}', M', \omega' \oplus c'
        \right) \arrow[r, "\mathcal{Y}"] & \left(
        \mathfrak{h}'' \xrightarrow{f''} \mathfrak{g}'', M'', \omega'' \oplus c''
        \right)
        \end{tikzcd}.
    \end{equation*}
    To verify that $\mathrm{F}(\mathcal{X} \circ_{\mathbf{RelCo}}\mathcal{Y})=\mathrm{F}(\mathcal{X})\circ_\mathbf{RelE} \mathrm{F}(\mathcal{Y})$, we essentially only need to check that 
    \begin{equation*}
        \sigma_{\mathrm{F}(\mathcal{X})} \circ \sigma_{\mathrm{F}(\mathcal{Y})} = \sigma_{\mathrm{F}(\mathcal{X}\circ_{\mathbf{RelCo}} \mathcal{Y} )}.
    \end{equation*}
    Indeed, let us abbreviate $\sigma =\sigma_{\mathrm{F}(\mathcal{X})}$ and $\sigma'=\sigma_{\mathrm{F}(\mathcal{Y})}$. Then given any $(g, m) \in \mathfrak{g} \times_c M$,
    \begin{equation*}
        \begin{split}
            \sigma' \circ \sigma (g,m)&=(G(g), \alpha(m)+r(g))\\
            &=(G'  G(g), \alpha' \alpha (m)+\alpha' r(g)+r'G(g))\\
            &= (G'  G(g), \alpha' \alpha (m)+r' \circ_{\mathbf{RelCo}}r(g)).
        \end{split}
    \end{equation*}
\end{enumerate}

\section{Automorphisms in $\mathbf{Fox}^\eta$}\label{aut_in_relco}

In this subsection, we prove Theorem \ref{intro_KRV}, offering a different characterization of the higher genus $\mathrm{KRV}$ groups. 

Consider the following commutative diagram of Lie algebras $\mathcal{C}^f$:
\begin{equation} \label{maindiag}
    \begin{tikzcd}
    & L \arrow[ld, "\Delta \times T^f" swap] \arrow[d, "\Delta"]\\
    (L \oplus L) \times_G A \arrow[r, "p"]& L \oplus L
    \end{tikzcd}
\end{equation}
Recall that $L$ is the degree completion of the free Lie algebra in generators $\{x_i, y_i, z_j\}$, where $1\leq i \leq g$ and $1 \leq j \leq n$. We are regarding $A \defeq UL$ as a commutative Lie algebra. As a vector space, $(L \oplus L) \times_G A  = (L \oplus L) \oplus A $, and the bracket in the former is given by
\[
    [(x, a), (y,b)]= ([x,y], y \cdot a - x \cdot b + G(x,y)).
\]
The map $G: \bigwedge^2 L \oplus L \rightarrow A$ is the Chevalley-Eilenberg cochain in $Z^2(L \oplus L, A)$ that corresponds to the Fox pairing $\rho_G$ under the equivalence from Theorem \ref{quasi-iso}. Likewise, $T^f: L \rightarrow A$ is the Chevalley-Eilenberg 1-cochain uniquely determined by the requirement that $T^f \oplus G$ correspond to the quasi-derivation $q^f$. In formulas,
\begin{gather*}
    G(v,w)=\rho_G(v_1, w_2)-\rho_G(w_1, v_2) \hspace{1em} \text{for all }v,w\in L \oplus L;\\
    T^f(v)=q^f(v)\hspace{1 em} \text{for all }v\in L.
\end{gather*}
Lastly, the projection $p$ is given by
\begin{gather}
    p(x, a)=x\, \text{for all }(x,a)\in (L \oplus L) \times_c A;
\end{gather}
we can identify its kernel with the abelian Lie algebra $A$.

Diagram $\mathcal{C}^f$ is an extension of $L \oplus L$ by $A$ relative to the diagonal map $\Delta: L \rightarrow L \oplus L$. The object that $\mathcal{C}^f$ defines in the category $\mathbf{RelE}$ corresponds to
\begin{equation*}
    \begin{split}
        \mathcal{C}^f &= ( \Delta: L \rightarrow L \oplus L,\, A,\, T^f \oplus G \in \mathcal{Z}^2(L \oplus L, L; A)) \in \mathrm{Ob}(\mathbf{RelCo})\\
        &=(L,\, \rho_G \oplus q^f \in Z^2(\mathcal{M}(\mathrm{id}_L))) \in \mathrm{Ob}(\mathbf{Fox}),
    \end{split}
\end{equation*}
with respect to the categorical equivalences of Proposition \ref{cat-equiv} and \ref{def_E}, where we have abusively denoted all these objects by the same symbol. Our goal is to prove the result:
\begin{theorem}\label{biiig_theo}
    Let $\mathrm{Aut}(L, [-,-]_{\mathrm{gr}}, \delta_{\mathrm{gr}}^f)$ denote the set of Lie automorphisms of $L$ preserving the associated graded to the Goldman bracket and the Turaev cobracket on $\Sigma_{g,n+1}$. Then the following is a split exact sequence:
    \begin{equation}
        \begin{tikzcd}
            \mathrm{LFox}(A) \cap \mathrm{RFox}(A) \arrow[r] & \mathrm{Aut}_{\mathbf{Fox}}(\mathcal{C}^{f}) \arrow[r, "\Psi"] & \mathrm{Aut}(L, [-,-]_{\mathrm{gr}}, \delta_{\mathrm{gr}}^{f})
        \end{tikzcd}.
    \end{equation}
\end{theorem}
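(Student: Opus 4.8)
The plan is to analyze the two maps of the sequence separately, disposing of the identification of $\ker\Psi$ and the injectivity of the left-hand map first, and reserving the surjectivity-with-splitting of $\Psi$ for last, since that is the substantial point.

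First I would record that $\Psi$ is a well-defined group homomorphism. By Corollaries \ref{rho_G=gold} and \ref{qf=Tur} we have $\Psi(\mathcal{C}^f)=(A,[-,-]^{\rho_G},\delta_{q^f})=(A,[-,-]_{\mathrm{gr}},\delta^f_{\mathrm{gr}})$, and by the functoriality established in Theorem \ref{RelCo--GoTu} any automorphism $(h,\partial_L\oplus\partial_R)$ of $\mathcal{C}^f$ is sent to the Hopf-algebra automorphism $h$, which therefore intertwines $[-,-]_{\mathrm{gr}}$ and $\delta^f_{\mathrm{gr}}$; identifying $h$ with the underlying Lie automorphism of $L$ places its image in $\mathrm{Aut}(L,[-,-]_{\mathrm{gr}},\delta^f_{\mathrm{gr}})$. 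That $\Psi$ respects composition is immediate from the composition law in $\mathbf{Fox}$, which reads off the $h$-component. Next I would compute $\ker\Psi$: an automorphism lying over $\mathrm{id}_L$ is a datum $(\mathrm{id},\partial_L\oplus\partial_R)$ subject to the constraint (\ref{mor_fox}), which here becomes $\mu(\partial_L\oplus\partial_R)\oplus\tau(\partial_L\oplus\partial_R)=0\oplus0$. The vanishing of $\mu$ forces $\partial_R=-\partial_L$, so $\partial_L$ is simultaneously a left and a right Fox derivative; by Lemma \ref{LcapR=D} this means $\partial_L\in\mathbb{K}\{D\}=\mathrm{LFox}(A)\cap\mathrm{RFox}(A)$, and for such $\partial_L$ the identity $\tau(\partial_L\oplus(-\partial_L))=0$ holds automatically. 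Conversely each such $\partial_L$ yields a morphism, invertible with inverse $(\mathrm{id},(-\partial_L)\oplus\partial_L)$, and the composition law shows that $\partial_L\mapsto(\mathrm{id},\partial_L\oplus(-\partial_L))$ is an injective homomorphism from $(\mathrm{LFox}(A)\cap\mathrm{RFox}(A),+)$ whose image is exactly $\ker\Psi$. This gives exactness at the middle term.

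The remaining and hardest point is the surjectivity of $\Psi$ together with a splitting homomorphism. Here I would transport the problem to $\mathbf{RelCo}^\eta$ via the equivalence of Theorem \ref{quasi-iso}: lifting $h\in\mathrm{Aut}(L,[-,-]_{\mathrm{gr}},\delta^f_{\mathrm{gr}})$ amounts to trivializing the relative $2$-cocycle $(hT^f-T^f h)\oplus(hG-G(h\otimes h))$ by a $1$-cochain $r$. To produce such an $r$ I would exploit the decomposition $\overline{\mathrm{KRV}}^f_{(g,n+1)}=(\exp(L)/\mathbb{K}\langle\exp{\omega}\rangle_L)\rtimes\mathrm{KRV}^f_{(g,n+1)}$ of Proposition \ref{acc_ext}, treating the two factors in turn. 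For a special automorphism $g_0\in\mathrm{KRV}^f_{(g,n+1)}$, Proposition \ref{spec_fix_rho} gives $g_0\rho_G=\rho_G(g_0\otimes g_0)$, so the $G$-component of the discrepancy vanishes on the nose, and one is left only with the quasi-derivation discrepancy $q^f g_0-g_0 q^f$; a direct computation, using that $g_0$ fixes $\rho_G$, shows this is a skew-symmetric twisted derivation, and preservation of $\delta^f_{\mathrm{gr}}$ forces its induced cobracket to vanish. For an inner automorphism $\mathrm{Ad}(x)$ I would instead write the Fox-derivative data explicitly, the discrepancy being (up to exact terms) an inner Fox pairing of the form $D(a)eD(b)$, which is exact and hence invisible to the bracket and cobracket by Propositions \ref{FP_exact=null} and \ref{big2} (cf. the role of $\rho_\phi$ in diagram (\ref{spec_rho_phi})).

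Assembling these two families of lifts into a single group homomorphism — not merely a set-theoretic section — is where I expect the real work to lie: the ambiguity in the trivializing cochain is precisely the kernel $\mathbb{K}\{D\}$, so a section exists only after a coherent choice, for which I would use the contracting homotopy $\Psi_*$ from the proof of Theorem \ref{quasi-iso} to pin down $r_h$ canonically and then verify multiplicativity $r_{h_1h_2}=h_1^{*}r_{h_2}+r_{h_1}$ against the $\mathbf{Fox}$ composition law. Underlying the surjectivity statement is the assertion that the discrepancy quasi-derivation attached to a special automorphism is genuinely \emph{exact}, and not merely cobracket-null; I expect this to be the main obstacle, reducing to the injectivity — for the free graded algebra $A=UL$ equipped with $\rho_G$ — of the period map from $H^2(L\oplus L,L;A)$ to its associated bracket and cobracket, where the positive grading of the automorphisms and the skew-symmetry of $q^f$ must be used to rule out the null but non-exact classes that exist in degenerate cases.
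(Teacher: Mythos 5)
Your treatment of the two easy parts coincides with the paper's: well-definedness of $\Psi$ follows from Theorem \ref{RelCo--GoTu} together with Corollaries \ref{rho_G=gold} and \ref{qf=Tur}, and the kernel is computed exactly as you do ($\mu(r)=0$ forces $\partial_R=-\partial_L$, Lemma \ref{LcapR=D} identifies the intersection with $\mathbb{K}\{D\}$, and $\tau$ then vanishes automatically). Your reduction of surjectivity via the factorization $g=\mathrm{Ad}(x)\circ g_0$ with $g_0$ special is also the paper's route (it quotes \cite[Theorem 2.7]{Flor3}; Proposition \ref{acc_ext} encodes the same decomposition), and your handling of the conjugation factor --- its discrepancy is inner, hence exact, Fox data --- matches the paper's lemmas. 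The genuine gap is the step you yourself flag and then defer: for a special automorphism $g_0$ preserving $\delta_{\mathrm{gr}}^f$, you must show that the derivation $\psi_{g_0}=g_0^{-1}q^fg_0-q^f$ is \emph{exact}, i.e. of the form $\mu(\partial_L\oplus\partial_R)$ with $\tau(\partial_L\oplus\partial_R)=0$, and not merely that $\delta_{\psi_{g_0}}=0$. Propositions \ref{FP_exact=null} and \ref{big2} only give the implication exact $\Rightarrow$ null; the converse ``period-map injectivity'' you appeal to is proved nowhere in the paper and is precisely the assertion that needs an argument, so your surjectivity proof is incomplete at its decisive point.

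The paper closes this gap by proving that $\psi_{g_0}$ is an \emph{inner} derivation, $\psi_{g_0}=[v_{g_0},-]$, which is then exact: taking $\partial_L(a)=-D(a)v_{g_0}$ and $\partial_R(a)=v_{g_0}D(a)$ one gets $\mu(\partial_L\oplus\partial_R)=[v_{g_0},-]$ while $\tau(\partial_L\oplus\partial_R)=0$. Innerness is where the real work happens (the lemma containing diagram (\ref{spec_q_f})): since $\psi_{g_0}$ is skew-symmetric (inherited from $q^f$) and its cobracket vanishes, applying $(\varepsilon\otimes\mathrm{id})$ gives $|\psi_{g_0}(a)-S\psi_{g_0}(a)|=0$ for all $a$; evaluating at powers $\alpha_i^N$ of generators and invoking the cyclic-word Lemma \ref{cyc_aux} yields $\psi_{g_0}(\alpha_i)=[a_i,\alpha_i]$, and evaluating at mixed monomials $\alpha_1^N\alpha_j^M$ together with cyclic Fox-derivative manipulations forces all the $a_i$ to coincide. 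Nothing in your proposal substitutes for this combinatorial argument, and your suggestion that positivity of grading plus skew-symmetry ``rules out null but non-exact classes'' is exactly the unproved claim restated. Two smaller points: your concern about exhibiting a \emph{multiplicative} section is fair given the word ``split,'' but the paper's proof establishes only exactness and surjectivity (which is all that Theorem \ref{diff_char} uses), and your proposed normalization by the contracting homotopy of Theorem \ref{quasi-iso} carries no evident multiplicativity either; and for the conjugation factor remember that the quasi-derivation is also perturbed (by the exact term $q_h$), not only the Fox pairing, so both discrepancies must be absorbed, as the paper checks.
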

To accomplish this, we need to understand how special automorphisms act on Fox pairings and quasi-derivations. 

\subsection{Special automorphisms}

\begin{definition}
    Recall an automorphism $f\in \mathrm{Aut}_{\mathbf{Lie}}(L)$ of positive grading is special if there exist $f_j \in \exp(L)$, for $j=1, \cdots,n$, such that
    \begin{equation*}
        f(z_j)=f_j^{-1}z_j f_j
    \end{equation*}
    for all $j$, and also $f(\omega)=\omega$, where
    \begin{equation*}
        \omega \defeq \sum_{i=1}^g[x_i, y_i]+\sum_{j=1}^nz_j.
    \end{equation*}
\end{definition}

\begin{proposition}[\cite{Flor1, MasTu}] \label{spec_fix_rho}
Any special automorphism $g$ preserves the Fox pairing $\rho_G$, in the sense that the following diagram is commutative:
\begin{equation} \label{spec_rho_G}
    \begin{tikzcd}
        A \otimes A \arrow[r, "\rho_G "] \arrow[d, "g \otimes g" swap] & A \arrow[d, "g"]\\
        A \otimes A \arrow[r, "\rho_G"] & A
    \end{tikzcd},
\end{equation}
\end{proposition}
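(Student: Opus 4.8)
The plan is to reduce the claim to the commutativity diagram (\ref{spec_rho_phi}), established in \cite{Flor1}, which asserts that every special automorphism $g$ intertwines the \emph{shifted} Fox pairing $\rho_G + \rho_\phi$ with itself, that is
\[
    g \circ (\rho_G + \rho_\phi) = (\rho_G + \rho_\phi) \circ (g \otimes g).
\]
Since the map $\rho \mapsto g \circ \rho - \rho \circ (g \otimes g)$ is linear in $\rho$, it suffices to prove separately that $g$ preserves the inner Fox pairing $\rho_\phi$; subtracting the two relations then yields the desired identity for $\rho_G$ alone.

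First I would verify that $g$ preserves $\rho_\phi$. Recall $\rho_\phi(a, b) = D(a)\,\phi\, D(b)$, where $\phi = \phi(\omega)$ is a power series in $\omega = \sum_{i} [x_i, y_i] + \sum_j z_j$. Because $g$ is induced by a Lie automorphism of $L$, its extension to $A = UL$ is a Hopf algebra map; in particular it fixes the augmentation, so it commutes with $D(a) = a - \varepsilon(a)$, giving $g(D(a)) = D(g(a))$. Moreover, since $g$ is an algebra map and $\phi$ is a power series in the single element $\omega$, we have $g(\phi(\omega)) = \phi(g(\omega))$, and the defining condition $g(\omega) = \omega$ of a special automorphism forces $g(\phi) = \phi$. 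Combining these,
\[
    g(\rho_\phi(a, b)) = g(D(a))\, g(\phi)\, g(D(b)) = D(g(a))\, \phi\, D(g(b)) = \rho_\phi(g(a), g(b)),
\]
which is exactly the relation $g \circ \rho_\phi = \rho_\phi \circ (g \otimes g)$, equivalently $g^{-1} \rho_\phi (g \otimes g) = \rho_\phi$ as stated in the discussion preceding the proposition.

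Finally I would subtract: from the commutativity of (\ref{spec_rho_phi}) and the identity just established for $\rho_\phi$, linearity gives $g \circ \rho_G = \rho_G \circ (g \otimes g)$, which is precisely the commutativity of (\ref{spec_rho_G}). The only genuinely delicate point is the middle step, where one must know that $\phi$ depends on $g$ only through $\omega$, so that invariance of $\omega$ propagates to invariance of $\phi$; this is where the full strength of the \emph{special} (as opposed to merely tangential) hypothesis is used, since a tangential automorphism need not fix $\omega$ and hence need not fix $\phi$. Everything else is formal manipulation with the Hopf algebra structure of $A$.
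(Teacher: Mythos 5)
Your proposal is correct and follows exactly the route the paper takes: the paper likewise quotes \cite[Theorem 6.11]{Flor1} for the commutativity of diagram (\ref{spec_rho_phi}), deduces $g^{-1}\rho_\phi(g\otimes g)=\rho_\phi$ from the special condition $g(\omega)=\omega$, and subtracts to isolate $\rho_G$. Your write-up merely makes explicit the intermediate Hopf-algebra facts (that $g$ commutes with $D$ and that $g(\phi(\omega))=\phi(g(\omega))$) which the paper treats as immediate.
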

Moreover, any $g\in \mathrm{Aut}(L, [-,-]_\mathrm{gr})$ is conjugate to a special automorphism \cite[Theorem 2.7]{Flor3}. That is, there exist a group-like $x\in A$ and some special automorphism $g_0$ such that
\begin{equation*}
    g(a)=x^{-1}g_0(a)x\quad \text{for all }a\in A.
\end{equation*}
This prompts us to understand how conjugation automorphisms act on Fox pairings:
\begin{lemma}
Let $x \in A$ be a group-like element, and let $h:A \rightarrow A$ denote conjugation by $x$, so that $h(a)=x^{-1}ax$ for all $a\in A$. Then any Fox pairing $\rho: A \otimes A \rightarrow A$ fits in a commutative diagram
\begin{equation} \label{conj_on_fox}
    \begin{tikzcd}
        A \otimes A \arrow[r, "\rho"] & A \\
        A \otimes A \arrow[r, "\rho + \rho_h" swap]\arrow[u, "h\otimes h"] & A \arrow[u, "h" swap]
    \end{tikzcd},
\end{equation}
where $\rho_h$ is an exact Fox pairing. 
\end{lemma}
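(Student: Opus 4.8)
The statement to prove is that conjugation by a group-like element perturbs a Fox pairing only by an exact term. The plan is to read off what the asserted commutativity means, package the error as a candidate Fox pairing, and then exhibit the Fox derivatives witnessing its exactness. Commutativity of diagram (\ref{conj_on_fox}) is precisely the identity $\rho\circ(h\otimes h)=h\circ(\rho+\rho_h)$, so the diagram \emph{defines} the correction: we must set $\rho_h\defeq h^{-1}\circ\rho\circ(h\otimes h)-\rho$ and prove this is an exact Fox pairing. Since $x$ is group-like we have $\varepsilon(x)=1$, $S(x)=x^{-1}$, and $\Delta(x^{-1})=x^{-1}\otimes x^{-1}$, so $h=\mathrm{Ad}(x)$ is a Hopf algebra automorphism with inverse $\mathrm{Ad}(x^{-1})$. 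A short axiom-chase (of the same shape as the proof of Lemma \ref{Fox-transpose}) shows that the conjugate $h^{-1}\circ\rho\circ(h\otimes h)$ of a Fox pairing by a Hopf automorphism is again a Fox pairing; as Fox pairings form a $\mathbb{K}$-linear space, $\rho_h$ is automatically a Fox pairing and only its \emph{exactness} requires work.

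To compute $\rho_h$ explicitly I would expand $\rho(x^{-1}ax,\,x^{-1}bx)$ using the defining identities of a Fox pairing: first the left Fox derivative property in the first variable to break up $x^{-1}ax=(x^{-1}a)x$, then the right Fox derivative property in the second variable to break up $x^{-1}bx=x^{-1}(bx)$. Applying $h^{-1}=\mathrm{Ad}(x^{-1})$ and subtracting the principal term $x^{-1}\rho(a,b)x=h(\rho(a,b))$ leaves a collection of terms. These simplify dramatically once one records the group-like relations obtained from $\rho(a,xx^{-1})=0$ and $\rho(x x^{-1},b)=0$, namely
\begin{equation*}
    \rho(a,x^{-1})=-\rho(a,x)x^{-1},\qquad \rho(x^{-1},b)=-x^{-1}\rho(x,b),
\end{equation*}
together with the analogous formulas for $\rho(x,x^{-1})$, $\rho(x^{-1},x)$, $\rho(x^{-1},x^{-1})$ in terms of $e\defeq\rho(x,x)$. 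Collecting the literal-$b$ terms against the $\varepsilon(b)$ terms produces the combination $D(b)=b-\varepsilon(b)$, and the surviving $\rho(x,b)$ terms assemble against $D(a)=a-\varepsilon(a)$, yielding
\begin{equation*}
    \rho_h(a,b)=-\big(\rho(a,x)+D(a)\,e\big)x^{-1}D(b)+D(a)\,\rho(x,b).
\end{equation*}

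It remains to recognize this as $\tau(\partial_L\oplus\partial_R)$ in the sense of Definition \ref{FoxP_exact}. The term $-\rho(a,x)x^{-1}D(b)$ contributes $\partial_L^{(1)}(a)\,D(b)$ with $\partial_L^{(1)}(a)=-\rho(a,x)x^{-1}$, which is a left Fox derivative because $a\mapsto\rho(a,x)$ is one and right multiplication by the fixed element $x^{-1}$ preserves this property; symmetrically $D(a)\,\rho(x,b)$ contributes $D(a)\,\partial_R^{(1)}(b)$ with the right Fox derivative $\partial_R^{(1)}(b)=\rho(x,b)$. The remaining middle term $-D(a)\,ex^{-1}\,D(b)$ is an \emph{inner} Fox pairing in the sense of Definition \ref{inner} (with $e'=-ex^{-1}$), and the splitting $D(a)e'D(b)=\big(\tfrac12 D(a)e'\big)D(b)+D(a)\big(\tfrac12 e'D(b)\big)$ recorded just after that definition shows it too lies in the image of $\tau$. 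Adding these gives $\rho_h=\tau(\partial_L\oplus\partial_R)$ with
\begin{equation*}
    \partial_L(a)=-\rho(a,x)x^{-1}-\tfrac12 D(a)\,e\,x^{-1},\qquad
    \partial_R(b)=\rho(x,b)-\tfrac12 e\,x^{-1}D(b),
\end{equation*}
so $\rho_h$ is exact. I expect the only genuine obstacle to be the bookkeeping in the expansion above: keeping track of which terms carry a factor $x^{-1}$ and on which side, so that the group-like identities recombine everything into the clean $D(a)$/$D(b)$ form rather than a tangle of individual monomials.
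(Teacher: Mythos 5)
Your proof is correct and takes essentially the same approach as the paper's: compute the conjugation defect $\rho_h=h^{-1}\circ\rho\circ(h\otimes h)-\rho$ explicitly and split it into a left Fox derivative term, a right Fox derivative term, and an inner pairing, then use the $\tfrac12$-splitting of inner pairings to conclude exactness. The only difference is cosmetic: the paper evaluates on generators and leaves the coefficients as $\rho(a,x^{-1})$, $\rho(x,b)$, $\rho(x,x^{-1})$, whereas you work with general elements and normalize via the group-like identities $\rho(a,x^{-1})=-\rho(a,x)x^{-1}$ and $\rho(x,x^{-1})=-\rho(x,x)x^{-1}$, under which your formula for $\rho_h$ coincides with the paper's.
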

\begin{proof}
    Let $\alpha_i, \alpha_j$ be any two generators in A. By direct computation we may verify that
    \begin{equation*}
        \rho \circ (h\otimes h)(\alpha_i, \alpha_j)=h(\alpha_i\rho(x, x^{-1})\alpha_j+\alpha_i\rho(x, \alpha_j)+\rho(\alpha_i, x^{-1})\alpha_j + \rho(\alpha_i, \alpha_j)).
    \end{equation*}
    It follows that diagram (\ref{conj_on_fox}) is commutative if we define the Fox pairing $\rho_h$ as
    \begin{equation}\label{rho_f}
        \rho_h(a,b)=D(a)\rho(x, x^{-1})D(b)+D(a)\partial^h_R(b)+\partial^h_L(a)D(b),
    \end{equation}
    where $\partial^h_L$ and $\partial^h_R$ are left and right Fox derivatives, respectively, defined by
    \begin{equation}\label{conj_Fox_der}
        \partial_L^{h}(a)=\rho(a,x^{-1}), \quad \partial_R^h(a)=\rho(x, a)
    \end{equation}
    for all $a\in A$. The first summand in equation (\ref{rho_f}) is an inner Fox pairing. Since all inner Fox pairings are exact (cf. Definition \ref{inner}), the Fox pairing $\rho_h$ is so too, expressly.
\end{proof}

We now turn our attention to quasi-derivations. We only need to be concerned with special automorphisms that also preserve the cobracket $\delta_{\mathrm{gr}}^f$. Before we properly begin, we will need the following lemma:
\begin{lemma}[\cite{Flor2}, Proposition A.2] \label{cyc_aux}
    Let $\alpha_i$ be a generator of $A$, and let $a \in A$ be a homogeneous element of degree $m$ such that $|\alpha_i^Na|=0$ for $N \geq m-1$. Then $a \in [\alpha_i, A]$.
\end{lemma}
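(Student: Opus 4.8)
The plan is to reduce the lemma to an elementary combinatorial statement about the free associative algebra $A=UL=\mathbb{K}\langle\!\langle x_i,y_i,z_j\rangle\!\rangle$ and its space of cyclic words $|A|$, whose distinguished basis consists of necklaces (cyclic equivalence classes of monomials). Write $\alpha\defeq\alpha_i$. Since $a$ is homogeneous of degree $m$, the conclusion $a\in[\alpha,A]$ is equivalent to $a\in[\alpha,A_{m-1}]$, where $A_{m-1}$ is the degree $(m-1)$ part. I would first record the (easy) necessity direction: for any $b$, cyclic invariance gives $|\alpha^N[\alpha,b]|=|\alpha^{N+1}b|-|\alpha^N b\,\alpha|=0$, so $[\alpha,A_{m-1}]$ lies in the kernel of $a\mapsto|\alpha^N a|$ for every $N\ge 0$. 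All of the work is in the converse, and in fact I expect a single value $N=m-1$ to suffice (the hypothesis supplies all $N\ge m-1$, which is only more than enough).

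The second step is to produce a spanning set of $A_m$ modulo $[\alpha,A_{m-1}]$. Let $\mathcal{B}$ be the set of degree $m$ monomials that do not begin with $\alpha$, together with the single monomial $\alpha^m$. The relation $\alpha u\equiv u\alpha\pmod{[\alpha,A_{m-1}]}$, valid for every word $u$ of degree $m-1$, moves a leading $\alpha$ of any monomial to its tail; iterating this cyclic rotation carries any monomial either to $\alpha^m$ or to a rotation of itself that no longer begins with $\alpha$. Hence $\mathcal{B}$ spans $A_m$ modulo $[\alpha,A_{m-1}]$. I do not need $\mathcal{B}$ to be a basis, only to span.

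The third step is the crux: for fixed $N\ge m-1$, show that $w\mapsto|\alpha^N w|$ sends the elements of $\mathcal{B}$ to pairwise distinct necklaces. Given $w\in\mathcal{B}$ with $w\neq\alpha^m$, write $w=w_0\alpha^t$ with $w_0$ both beginning and ending in a letter $\neq\alpha$ and $t\ge 0$; note $\deg w_0=m-t\ge 1$ since $w$ begins with a non-$\alpha$ letter. In the necklace $\alpha^N w=\alpha^N w_0\alpha^t$ the block $\alpha^{N+t}$ straddling the seam is a maximal $\alpha$-run, and since every internal $\alpha$-run of $w_0$ (if any) has length at most $m-t-2<N+t$, this straddling run is the unique longest one. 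Rooting the necklace immediately after it recovers the canonical representative $w_0\alpha^{N+t}$, from which both $t$ (the longest-run length minus $N$) and $w_0$, hence $w$ itself, can be read off; and $\alpha^m\mapsto|\alpha^{N+m}|$ is the all-$\alpha$ necklace, distinct from all the others. Since distinct necklaces are linearly independent in $|A|$, the images $\{|\alpha^N w|:w\in\mathcal{B}\}$ are linearly independent.

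To finish, expand $a\equiv\sum_{w\in\mathcal{B}}\mu_w w\pmod{[\alpha,A_{m-1}]}$ using the spanning property; then $0=|\alpha^{m-1}a|=\sum_w\mu_w|\alpha^{m-1}w|$ forces every $\mu_w=0$ by the linear independence of the third step, so $a\in[\alpha,A_{m-1}]\subseteq[\alpha,A]$. The main obstacle is the third step, specifically the verification that the straddling $\alpha$-run is strictly longer than every internal run precisely when $N\ge m-1$; this is where the numerical hypothesis enters and where the bookkeeping of leading and trailing $\alpha$-runs must be handled carefully.
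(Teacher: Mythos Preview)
The paper does not supply its own proof of this lemma; it is quoted directly from \cite{Flor2}, Proposition A.2, with no argument given. Your combinatorial argument is correct and self-contained: the spanning set $\mathcal{B}$ for $A_m/[\alpha,A_{m-1}]$ is obtained exactly as you say by cyclically rotating leading $\alpha$'s to the tail, and the injectivity of $w\mapsto|\alpha^N w|$ on $\mathcal{B}$ holds because for $N\ge m-1$ the straddling $\alpha$-run in the necklace $|\alpha^N w_0\alpha^t|$ has length $N+t\ge m-1$, strictly larger than any internal run of $w_0$ (length at most $m-t-2$), so the necklace determines $w$ uniquely. As you observe, the single value $N=m-1$ already suffices; the hypothesis for all $N\ge m-1$ is stronger than needed.
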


\begin{lemma}
Let $g$ be a special automorphism, and suppose also that $g \in \mathrm{Aut}(L, \delta_{\mathrm{gr}}^f)$. Then there exists an inner derivation $\psi_g:A \rightarrow A$ making the following diagram commutative:
\begin{equation}\label{spec_q_f}
    \begin{tikzcd}
        A \arrow[r, "q^f"] \arrow[d, "g"] & A \arrow[d, "g"] \\
        A \arrow[r, "q^f +\psi_g" swap] & A
    \end{tikzcd}.
\end{equation}
\end{lemma}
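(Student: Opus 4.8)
The statement to prove asserts that, for a special automorphism $g$ which additionally preserves the graded Turaev cobracket $\delta_{\mathrm{gr}}^f$, the quasi-derivation $q^f$ is preserved \emph{up to an inner derivation} $\psi_g$; that is, $g \circ q^f = (q^f + \psi_g) \circ g$ for some inner derivation $\psi_g : A \to A$. The natural strategy is to first reduce to a statement about the associated cobracket and then extract the correction term $\psi_g$ by a cohomological/cyclic-word argument. Since $g$ is assumed to preserve $\delta_{\mathrm{gr}}^f$, and by Corollary \ref{qf=Tur} we have $\delta_{q^f} = \delta_{\mathrm{gr}}^f$, the preservation of the cobracket translates into $g \circ \delta_{q^f} = \delta_{q^f} \circ g$ on the level of cyclic words $|A|$. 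Using the naturality of the assignment $q \mapsto \delta_q$ established in the proof of Theorem \ref{RelCo--GoTu} (namely $g \delta_q = \delta_{gq}$), this says $\delta_{g q^f} = \delta_{q^f g}$ as maps into $|A| \otimes |A|$, so the quasi-derivation $g q^f g^{-1} - q^f$ induces the \emph{zero} cobracket.

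\textbf{Key steps.} First I would observe that $g$ is a Hopf algebra automorphism, so $g q^f g^{-1}$ is again a quasi-derivation; by Proposition \ref{spec_fix_rho} the special automorphism $g$ fixes $\rho_G$ exactly, hence both $q^f$ and $g q^f g^{-1}$ are associated to the \emph{same} Fox pairing $-\rho_G$. Their difference $\Phi \defeq g q^f g^{-1} - q^f$ is therefore associated to the zero Fox pairing, i.e. $\Phi$ is an honest derivation of $A$. Setting $\psi_g \defeq \Phi \circ g$ (equivalently rearranging $g q^f = (q^f + \psi_g) g$ with $\psi_g = g q^f g^{-1} - q^f$ composed appropriately), the content reduces to showing this derivation $\Phi$ is \emph{inner}. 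The second step is to show that a derivation whose induced cobracket $\delta_\Phi$ vanishes on $|A|$ must be inner. This is where Lemma \ref{cyc_aux} enters: the vanishing of $\delta_\Phi$ imposes cyclic-word constraints of the form $|\alpha_i^N \Phi(\text{-})| = 0$, and Lemma \ref{cyc_aux} converts such cyclic vanishing into membership in a commutator ideal $[\alpha_i, A]$, which is precisely the image of an inner derivation $\mathrm{ad}(e)$ for a suitable $e \in A$.

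\textbf{The main obstacle.} The delicate step is the last one: deducing that $\Phi$ is \emph{inner} (i.e. $\Phi = \mathrm{ad}(e)$ for some $e \in A$) rather than merely some derivation with vanishing cobracket. A general derivation inducing the zero cobracket need not be inner on the nose, so I expect to need the full force of the special hypothesis $g(\omega) = \omega$ together with the tangential structure $g(z_j) = f_j^{-1} z_j f_j$. The tangential form should allow me to write $\Phi$ explicitly on generators in terms of the $f_j$ and the element $\omega$, and the condition $g(\omega) = \omega$ should pin down the ambiguity so that the derivation is genuinely of the form $\mathrm{ad}(e)$. Executing this cleanly will require running the argument generator-by-generator, using Lemma \ref{cyc_aux} to control homogeneous components degree by degree and to identify the element $e$ whose adjoint action recovers $\Phi$; the grading ensures the infinite sums converge in the degree-completion $A$. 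I would finish by verifying that $\psi_g$ so constructed is the advertised inner derivation making diagram (\ref{spec_q_f}) commute.
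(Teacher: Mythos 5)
Your first half is exactly the paper's reduction: set $\psi_g \defeq gq^fg^{-1}-q^f$ (the paper writes $g^{-1}q^fg-q^f$; either variant works identically), observe via Proposition \ref{spec_fix_rho} that the conjugated map and $q^f$ are quasi-derivations associated to the \emph{same} Fox pairing $-\rho_G$, so their difference is an honest derivation, and use $g\in\mathrm{Aut}(L,\delta^f_{\mathrm{gr}})$ together with $\delta_{\mathrm{gr}}^f=\delta_{q^f}$ to get $\delta_{\psi_g}=0$. Two small slips there: your intermediate formula $\psi_g=\Phi\circ g$ is not a derivation and does not make the square commute (only the parenthetical $\psi_g=gq^fg^{-1}-q^f$ is right), and you never record that $\psi_g$ inherits \emph{skew-symmetry} from $q^f$, which the paper needs in the next step.

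The genuine gap is the step you yourself flag as the main obstacle, and the route you propose for it points in the wrong direction. The paper makes no use of the tangential form $g(z_j)=f_j^{-1}z_jf_j$ or of $g(\omega)=\omega$ at this stage: the special hypothesis is entirely consumed in Proposition \ref{spec_fix_rho}, and in fact your route cannot work in genus $g>0$, where tangentiality says nothing about the images of the generators $x_i,y_i$, so ``writing $\Phi$ explicitly in terms of the $f_j$ and $\omega$'' is not available. What the paper actually proves is that a \emph{skew-symmetric derivation with vanishing cobracket is automatically inner}, by a purely cyclic-word argument. Concretely: applying $(\varepsilon\otimes\mathrm{id})$ to $\delta_{\psi_g}=0$ gives $|\psi_g(a)-S\psi_g(a)|=0$ for all $a\in A$; evaluating this at $a=\alpha_i^N$ for arbitrarily large $N$ of \emph{both parities} and invoking Lemma \ref{cyc_aux} produces elements $b_i,c_i$ with $\psi_g(\alpha_i)\pm S\psi_g(\alpha_i)=[b_i,\alpha_i]$, $[c_i,\alpha_i]$, whence $\psi_g(\alpha_i)=[a_i,\alpha_i]$. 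But this only gives innerness \emph{generator by generator}, with a possibly different $a_i$ for each $\alpha_i$; the step your sketch does not anticipate is showing that all the $a_i$ coincide, which is exactly what ``inner'' means here. The paper does this by normalizing $a_1=0$ (subtracting an inner derivation, which is exact and hence preserves all hypotheses, cf. Definition \ref{inner} and Proposition \ref{big2}), then evaluating the cyclic identity on the mixed monomials $\alpha_1^N\alpha_j^M$ and applying cyclic Fox-derivative operators to force $a_j=0$. Without this two-stage mechanism --- per-generator application of Lemma \ref{cyc_aux} plus the equalization argument on mixed monomials --- the proof does not close, and no amount of tangential bookkeeping substitutes for it.
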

\begin{proof}
     By inner derivation, we mean that there exits $v_g \in A$ such that
    \begin{equation*}
        \psi_g(a)=[v_g, a]
    \end{equation*}
    for all $a \in A$.
    
    We define $\psi_g \defeq g^{-1}q^fg-q^f$. Notice that $\psi_g$ is a derivation, in the sense that $\psi_g \in \mathrm{Qder}(0)$, since 
    \begin{equation*}
        g^{-1}q^fg \in \mathrm{Qder}(g^{-1}\rho_Gg)=\mathrm{Qder}(\rho_G),
    \end{equation*}
    where the previous equality of sets is a consequence of Proposition \ref{spec_fix_rho}. It remains to show that $\psi_g$ is in fact inner.

    The assumption that $g \in \mathrm{Aut}(L, \delta_{\mathrm{gr}}^f)$ means that $\delta_{\psi_g}=0$. The skew-symmetry of $q^f$ implies that of $\psi_g$, so we may write this as
    \begin{equation*}
        d_{\psi_g}-Pd_{\psi_g}=0.
    \end{equation*}
    In particular, this means that
    \begin{equation} \label{psi_null}
        0=(\varepsilon \otimes \mathrm{id})|\delta_{\psi_g}(a)|=|\psi_g(a)-S\psi_g(a)|
    \end{equation}
    for all $a\in A$. Let $\alpha_i$ denote the generators of $A$. We now evaluate (\ref{psi_null}) on $a=\alpha_i^N$, with the intention of using Lemma \ref{cyc_aux}. By alternating between arbitrarily large odd or even numbers, we learn that there must exist $b_i, c_i \in A$ such that
    \begin{equation*}
        \begin{split}
            \psi_g(\alpha_i) + S\psi_g(\alpha_i)& =[b_i, \alpha_i];\\
            \psi_g(\alpha_i)-S\psi_g(\alpha_i) &= [c_i, \alpha_i].
        \end{split}
    \end{equation*}
    Together, the last two equalities imply that $\psi_g(\alpha_i)=[a_i, \alpha_i]$ for $a_i=\frac{1}{2}(b_i+c_i).$ It remains to show that $a_i=a_j$ for all $i,j$. Equivalently, we will assume $a_1=0$ and show that this implies $a_i=0$ for all $i$. If we now evaluate equation (\ref{psi_null}) on $a= \alpha_1^N \alpha_j^M$, we get
    \begin{equation} \label{NM-null}
        |\alpha_j^M \alpha_1^N a_j-\alpha_1^N \alpha_j^M a_j|=0.
    \end{equation}
    The only way this last equality can hold for arbitrarily large $M$ and $N$ is if $a_j=0$. More formally, given any $n \geq 1$, let us define cyclic partial derivatives $D_i^n: |A| \rightarrow A$ by setting
    \begin{equation*}
        D_i^n(|\alpha_{i_1} \dots\alpha_{i_k}|) \defeq \sum_{l=1}^k \partial_j^n(\sigma^l \cdot \alpha_{i_1} \dots\alpha_{i_k}),
    \end{equation*}
    where $\partial_i:A \rightarrow A$ is the right Fox derivative defined in equation (\ref{fox_base}). The cyclic permutation $\sigma \in S_l$ is $\sigma \defeq (i_i \cdots i_k)$. A permutation $\gamma \in S_l$ acts on a monomial of degree $l$ in A by
    $\gamma \cdot \alpha_{i_1} \dots\alpha_{i_k} = \alpha_{\gamma(i_1)} \dots\alpha_{\gamma(i_k)}$.
    If we apply the transformation $\partial_i^N\circ D_j^M$ to equation (\ref{NM-null}), we obtain
    \begin{equation*}
        0=a_j +\partial_j^M \partial_1^N(a_j)[ \alpha_j^M \alpha_1^N+  \alpha_1^N \alpha_j^M].
    \end{equation*}
    However, we can always ensure the term $\partial_j^M \partial_1^N(a_j)$ is zero by choosing large enough numbers $N, M \in \mathbb{N}$, so necessarily $a_j=0.$
\end{proof}

We are similarly interested in the action of conjugation automorphisms on quasi-derivations.
\begin{lemma}
    Let $x \in A$ be a group-like element, and let $h:A \rightarrow A$ denote the conjugation by $x$, so that $h(a)=x^{-1}ax$ for all $a\in A$. Then any quasi-derivation $q: A  \rightarrow A$ fits in the commutative diagram
    \begin{equation}\label{conj_on_q}
        \begin{tikzcd}
            A \arrow[r, "q"] & A \\
            A \arrow[r, "q + q_h" swap] \arrow[u, "h"] & A \arrow[u, "h"]
        \end{tikzcd},
    \end{equation}
    where $q_h$ is an exact quasi-derivation.
\end{lemma}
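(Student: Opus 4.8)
The diagram (\ref{conj_on_q}) commutes by construction as soon as we set $q_h \defeq h^{-1}\circ q\circ h - q$, since $q\circ h = h\circ (q+q_h)$ is then a tautology using that $h$ is an algebra automorphism. Hence the whole content of the lemma is that this $q_h$ is an \emph{exact} quasi-derivation in the sense of Definition \ref{def_ex_qder}. The plan is to produce the decomposition $q_h=\partial_L+\partial_R$ explicitly, with $\partial_L$ a left and $\partial_R$ a right Fox derivative.

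First I would record a structural sanity check. Writing $\rho$ for the Fox pairing associated to $q$, a short computation (expanding $(h^{-1}qh)(a)b+a(h^{-1}qh)(b)-(h^{-1}qh)(ab)$ and using that $h^{\pm 1}$ are algebra maps) shows $h^{-1}qh\in\mathrm{Qder}(h^{-1}\rho\,(h\otimes h))$. By the companion lemma for Fox pairings, diagram (\ref{conj_on_fox}) gives $h^{-1}\rho\,(h\otimes h)=\rho+\rho_h$ with $\rho_h$ exact, so by linearity of the assignment $q\mapsto\rho$ we get $q_h\in\mathrm{Qder}(\rho_h)$. This is only suggestive, not sufficient: a quasi-derivation is not determined by its Fox pairing (two members of $\mathrm{Qder}(\rho_h)$ differ by a genuine derivation), so exactness of $\rho_h$ does not by itself force exactness of $q_h$. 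The explicit computation below is what actually proves the claim.

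The core step is to compute $q(x^{-1}ax)$ by applying the quasi-derivation relation twice, once to $q\bigl((x^{-1}a)x\bigr)$ and once to $q(x^{-1}a)$, and then to conjugate by $x$. Here I would use $x^{-1}x=xx^{-1}=1$ to collapse the products, together with $q(1)=0$ (forced since Fox derivatives vanish at $1$, whence $\rho(1,1)=q(1)$ and $q(1)=q(1\cdot 1)$) and the identity $\rho(x^{-1},x)=q(x^{-1})x+x^{-1}q(x)$, which yields $x\,\rho(x^{-1},x)\,x^{-1}=xq(x^{-1})+q(x)x^{-1}$. The payoff is that, after splitting $a=D(a)+\varepsilon(a)$ with $D(a)=a-\varepsilon(a)$, all terms proportional to $\varepsilon(a)$ cancel, leaving
\[
q_h(a)=\bigl(D(a)\,q(x)x^{-1}-\rho(a,x)x^{-1}\bigr)+\bigl(xq(x^{-1})\,D(a)-x\,\rho(x^{-1},a)\bigr).
\]
I would then \emph{define} $\partial_L(a)\defeq D(a)\,q(x)x^{-1}-\rho(a,x)x^{-1}$ and $\partial_R(a)\defeq xq(x^{-1})\,D(a)-x\,\rho(x^{-1},a)$, so that $q_h=\mu(\partial_L\oplus\partial_R)$ is exact by definition once $\partial_L\in\mathrm{LFox}(A)$ and $\partial_R\in\mathrm{RFox}(A)$.

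The remaining verifications are routine and I would only sketch them: for any constant $c\in A$, the map $a\mapsto D(a)c$ is a left Fox derivative and $a\mapsto cD(a)$ a right Fox derivative (both follow from $D$ being simultaneously a left and right Fox derivative, as in the Example after the definition of Fox derivatives); $a\mapsto\rho(a,x)x^{-1}$ is a left Fox derivative because $\rho$ is a left Fox derivative in its first slot; and $a\mapsto x\,\rho(x^{-1},a)$ is a right Fox derivative because $\rho$ is a right Fox derivative in its second slot. Thus $\partial_L\in\mathrm{LFox}(A)$ and $\partial_R\in\mathrm{RFox}(A)$, and $q_h=\partial_L+\partial_R$ is exact. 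The main obstacle is purely the bookkeeping in the double expansion: the raw formula carries several constant-coefficient and $\varepsilon(a)$-weighted contributions, and it is exactly the identity $x\,\rho(x^{-1},x)\,x^{-1}=xq(x^{-1})+q(x)x^{-1}$ that kills the $\varepsilon(a)$-part and lets the remainder split cleanly into a left plus a right Fox derivative.
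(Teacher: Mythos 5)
Your proposal is correct and takes essentially the same route as the paper: both define $q_h \defeq h^{-1}qh - q$, expand $q(x^{-1}ax)$ by applying the quasi-derivation relation twice together with the group-like property of $x$, and then exhibit $q_h$ explicitly as a sum of a left and a right Fox derivative built from terms of the form $D(\cdot)c$, $c\,D(\cdot)$, and partial evaluations of $\rho$. The only (cosmetic) differences are that the paper further rewrites $\rho(a,x)x^{-1}=-\rho(a,x^{-1})$ and $x\rho(x^{-1},a)=-\rho(x,a)$ using the vanishing of Fox pairings at $xx^{-1}=1$, and verifies the identity only on primitive elements $a\in L$ (where $\varepsilon(a)=0$), whereas your $\varepsilon(a)$-cancellation via $x\rho(x^{-1},x)x^{-1}=xq(x^{-1})+q(x)x^{-1}$ establishes the formula on all of $A$ directly.
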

\begin{proof}
    Let $a \in L \subset A$. By direct computation, we may verify that
    \begin{equation}\label{g_invqg}
        \begin{split}
            h^{-1}qh(a)=q(a)+xq(x^{-1})a+aq(x)x^{-1}+\rho_G(a,x)x^{-1}+x\rho_G(x^{-1},a).
        \end{split}
    \end{equation}
    We may use the equalities
    \begin{gather*}
    0=q(xx^{-1})=q(x)x^{-1}+xq(x^{-1})+\rho_G(x,x^{-1});\\
        0=\rho_G(a, xx^{-1})=\rho(a,x)x^{-1}+\rho(a,x^{-1});\\
        0=\rho_G(xx^{-1},a)=x\rho_g(x^{-1},a)+\rho_{G}(x, a);
    \end{gather*}
    to substitute for the relevant terms in equation (\ref{g_invqg}). Simplifying, our result follows if we set
    \begin{equation*}
        q_h(a)\defeq - D(a)q(x)x^{-1}-xq(x^{-1})D(a) - \partial_L^h(a)-\partial_R^h(a),
    \end{equation*}
    where the Fox derivatives $\partial_L^h$ and $\partial_R^h$ are defined in equation (\ref{conj_Fox_der}).
\end{proof}
We can summarize all our observations from this section in the following lemma:
\begin{lemma} \label{spec_on_q_p}
    Let $g \in \mathrm{Aut}(L, [-,-]_{\mathrm{gr}},\delta^f_\mathrm{gr})$ be expressible as the composition $g=h \circ g_0$, where $g_0$ is a special automorphism and $h \in \mathrm{Aut}(A)$ denotes conjugation by a group-like element $x \in A$. Then the following diagrams are commutative: 
    \begin{equation*}
        \begin{tikzcd}
            A \otimes A \arrow[d, "g \otimes g" swap] \arrow[r, "\rho_G"] & A \arrow[d, "g"]\\
            A \otimes A \arrow[r, "\rho_G + \rho_h" swap] & A
        \end{tikzcd}, \hspace{3 em}
        \begin{tikzcd}
            A \arrow[r, "q^f"]  & A\\
            A \arrow[r, "q^f + \psi_g + q_h " swap] \arrow[u, "g"] & A \arrow[u, "g"]
        \end{tikzcd}.
    \end{equation*}
    \begin{proof}
        The commutativity of the first diagram follows from diagrams (\ref{spec_rho_G}) and (\ref{conj_on_fox}). The commutativity of the second diagram in turn follows from (\ref{spec_q_f}) and (\ref{conj_on_q}).
    \end{proof}
\end{lemma}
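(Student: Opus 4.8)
The plan is to prove the lemma purely by \emph{pasting} the two pairs of commutative squares already in hand, with no fresh computation; the only work is tracking how the correction terms behave under the composition $g = h\circ g_0$. First I would record that each factor separately meets the hypotheses of its ingredient lemma. The special automorphism $g_0$ fixes $\rho_G$ on the nose by Proposition \ref{spec_fix_rho}, so square (\ref{spec_rho_G}) applies to it; the conjugation $h$ by a group-like element satisfies square (\ref{conj_on_fox}). For the quasi-derivation side one must check that $g_0\in\mathrm{Aut}(L,\delta^f_{\mathrm{gr}})$ before invoking square (\ref{spec_q_f}); this holds because $h$ is a conjugation and hence induces the identity on cyclic words, so it automatically preserves $\delta^f_{\mathrm{gr}}$, whence $g_0=h^{-1}\circ g$ inherits the preservation of $\delta^f_{\mathrm{gr}}$ from the hypothesis $g\in\mathrm{Aut}(L,[-,-]_{\mathrm{gr}},\delta^f_{\mathrm{gr}})$.

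For the first diagram I would verify the equivalent relation $g^{-1}\circ\rho_G\circ(g\otimes g)=\rho_G+\rho_h$ by evaluating inside out. Writing $g^{-1}\rho_G(g\otimes g)=g_0^{-1}\bigl(h^{-1}\rho_G(h\otimes h)\bigr)(g_0\otimes g_0)$, the conjugation square (\ref{conj_on_fox}) read with $\rho=\rho_G$ replaces the inner bracketed term by $\rho_G+\rho_h$, giving $g_0^{-1}\rho_G(g_0\otimes g_0)+g_0^{-1}\rho_h(g_0\otimes g_0)$. Proposition \ref{spec_fix_rho} collapses the first summand to $\rho_G$, leaving the deviation $g_0^{-1}\rho_h(g_0\otimes g_0)$, a $g_0$-conjugate of the exact Fox pairing $\rho_h$. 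This is the asserted square.

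For the second diagram the argument is dual, using the up-arrow (conjugation) convention $g^{-1}q^fg=q^f+\psi_g+q_h$. I would expand $g^{-1}q^fg=g_0^{-1}\bigl(h^{-1}q^fh\bigr)g_0$, apply square (\ref{conj_on_q}) to rewrite the inner term as $q^f+q_h$, obtaining $g_0^{-1}q^fg_0+g_0^{-1}q_hg_0$, and then apply square (\ref{spec_q_f}) to the special part so that $g_0^{-1}q^fg_0=q^f+\psi_g$. The total deviation is therefore $\psi_g+g_0^{-1}q_hg_0$, where $g_0^{-1}q_hg_0$ is a $g_0$-conjugate of the exact quasi-derivation $q_h$.

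The hard part — indeed the only genuine subtlety — is the bookkeeping just exposed: stacking the two squares does not return the correction terms literally as $\rho_h$ and $q_h$, but as their $g_0$-conjugates $g_0^{-1}\rho_h(g_0\otimes g_0)$ and $g_0^{-1}q_hg_0$. The key point to argue is that these conjugates remain, respectively, exact Fox pairings and exact quasi-derivations, so the composite deviation is still of the stated form; here it is essential that $g_0$ is \emph{special}, fixing both $\omega$ and $\rho_G$, and that $h$ is an algebra automorphism acting trivially on cyclic words. Even granting a discrepancy between these conjugates and the nominal $\rho_h$, $q_h$, the conclusion is unaffected for the intended use, since by Propositions \ref{FP_exact=null} and \ref{big2} every exact Fox pairing induces the zero bracket and every exact quasi-derivation induces the zero cobracket; the two squares therefore commute up to terms that act as zero on $(|A|,[-,-],\delta)$, which is exactly what the surrounding argument requires.
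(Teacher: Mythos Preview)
Your approach is exactly the paper's: paste the square for the special automorphism against the square for the conjugation. The paper's proof is the two-line sentence you have reconstructed, and you have supplied the missing verification that $g_0$ inherits membership in $\mathrm{Aut}(L,\delta^f_{\mathrm{gr}})$ because $h$ acts trivially on cyclic words.

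Your observation about the $g_0$-conjugates is well taken and is not addressed in the paper's terse proof. With $g=h\circ g_0$, the pasting literally yields $\rho_G + g_0^{-1}\rho_h(g_0\otimes g_0)$ and $q^f + \psi_{g_0} + g_0^{-1}q_h g_0$ rather than the nominal $\rho_G+\rho_h$ and $q^f+\psi_g+q_h$; the paper is loose here (note also that in the subsequent Theorem~\ref{biiig_theo} it silently switches to writing $\psi_{g_0}$). Your resolution is the right one: since $g_0$ is a Hopf automorphism commuting with $D$ and $\varepsilon$, conjugating an exact Fox pairing or exact quasi-derivation by $g_0$ again yields an exact one, so the discrepancy is invisible at the level of $(|A|,[-,-],\delta)$, which is all the surrounding argument needs.
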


\begin{proof}[Proof (of Theorem \ref{biiig_theo}).]
    Recall we have also denoted
    \begin{equation*}
        \mathcal{C}^f \defeq ( L,\, q^f \oplus \rho_G \in Z^2(\mathcal{M}(\mathrm{id}_\mathfrak{h}))).
    \end{equation*} 
    According to Theorem \ref{RelCo--GoTu}, if
    \begin{equation*}
        ( g\in \mathrm{Aut}_{\mathbf{Lie}}(L),\, r \in \mathcal{C}^1(\mathcal{M}(\mathrm{id}_{L})) ) \in \mathrm{Aut}_{\mathbf{Fox}}(\mathcal{C}^f),
    \end{equation*}
    then $g \in \mathrm{Aut}_\mathbf{GoTu}(A, [-,-]^{\rho_G}, \delta_{q^f})$. But according to Corollaries \ref{rho_G=gold} and \ref{qf=Tur}, respectively,
    \begin{equation*}
        [-,-]^{\rho_G}=[-,-]_{\mathrm{gr}} \hspace{1 em} \text{and}\hspace{1em}\delta_{q^f}=\delta_\mathrm{gr}^{f}.
    \end{equation*}
    For the other half of the statement, notice that $\Psi: \{f,r\} \mapsto\mathrm{id}_L$ if and only if $g=\mathrm{id}_L$ and 
    \begin{equation*}
        \delta r=-\mu( r)\oplus -\tau(r)=0.
    \end{equation*}
    Set $r=\partial_L \oplus \partial_R$, then the requirement that $-\mu(r)=0$ means is equivalent to the statement $\partial_L = - \partial_R$, which implies that $\partial_L$ is a right and left Fox derivative at once.

    All that remains to show is that the map $\psi$ is surjective. Let $g \in \mathrm{Aut}(L, [-,-]_{\mathrm{gr}}, \delta_{\mathrm{gr}}^f)$ be expressible as the composition $g=h \circ g_0$, where $g_0$ is a special automorphism and $h:A \rightarrow A$ denotes conjugation by a group-like element $x\in A$. By Lemma \ref{spec_on_q_p}, the quasi-derivation $q^f$ and the Fox pairing $\rho_G$ are preserved up to exact terms, in the sense that
    \begin{equation*}
        g^{-1}\rho_G(g \otimes g)=\rho_g + \rho_h \hspace{1 em} \text{and} \hspace{1em} g^{-1}q^fg=q^f+q_h+ \psi_g.
    \end{equation*}
    Now we notice that the term $q_h\oplus \rho_h + \psi_{g_0} \in \mathrm{Qder}(A) \oplus \mathrm{Fox}(A)$ is exact with respect to the total complex (\ref{goodone}). That is, if we define left and right Fox derivatives by
    \begin{equation*}
        \partial_L^{x}(a) \defeq  D(a)q(x)x^{-1}, \hspace{1 em} \partial_R^{x}(a) \defeq xq(x^{-1})D(a),
    \end{equation*}
    then we can see that
    \begin{equation*}
        q_h \oplus\rho_h=(\mu \oplus\tau)[(\partial_L^{x}+\partial_L^h)\oplus(\partial_R^{x}+\partial_R^h)],
    \end{equation*}
    where $\partial_L^h$ and $\partial_R^h$ are defined in equation (\ref{conj_Fox_der}). Similarly, if we set 
    \begin{equation*}
        \partial_L^v(a) \defeq D(a)v_{g_0}, \hspace{1em} \partial_R^v(a) \defeq v_{g_0} D(a)
    \end{equation*}
    for all $a \in A$, then 
    \begin{equation*}
        \psi_g \oplus0= (\mu \oplus\tau)[\partial_L^v\oplus\partial_R^v].
    \end{equation*}
    In other words, $g \in \mathrm{Aut}_{\mathbf{Fox}}(\mathcal{C}^f)$, and $\Psi(g)=g$.
\end{proof}

\begin{theorem}\label{diff_char}
    There exists an isomorphism
    \begin{equation*}
       \mathrm{Aut}_{\mathbf{Fox}^\eta}(\mathcal{C}^f)/ \mathbb{K} \cong \overline{\mathrm{KRV}}^f_{(g,n+1)},
    \end{equation*}
\end{theorem}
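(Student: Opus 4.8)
The plan is to read off Theorem \ref{diff_char} as an essentially formal consequence of the split exact sequence established in Theorem \ref{biiig_theo}, combined with the computation of simultaneous left-and-right Fox derivatives in Lemma \ref{LcapR=D}. First I would note that, since $L = L_{2g+n}$ is (the degree completion of) a free Lie algebra, the object $\mathcal{C}^f = (L, q^f \oplus \rho_G)$ lies in the full subcategory $\mathbf{Fox}^\eta \subset \mathbf{Fox}$, so that $\mathrm{Aut}_{\mathbf{Fox}^\eta}(\mathcal{C}^f) = \mathrm{Aut}_{\mathbf{Fox}}(\mathcal{C}^f)$ and there is no loss in working in the larger category where Theorem \ref{biiig_theo} is phrased.

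Next I would invoke Theorem \ref{biiig_theo}, which gives the split short exact sequence
\begin{equation*}
\begin{tikzcd}
1 \arrow[r] & \mathrm{LFox}(A) \cap \mathrm{RFox}(A) \arrow[r] & \mathrm{Aut}_{\mathbf{Fox}}(\mathcal{C}^f) \arrow[r, "\Psi"] & \mathrm{Aut}(L, [-,-]_{\mathrm{gr}}, \delta_{\mathrm{gr}}^f) \arrow[r] & 1.
\end{tikzcd}
\end{equation*}
Since $2g+n \geq 2$, Lemma \ref{LcapR=D} identifies the kernel $\mathrm{LFox}(A) \cap \mathrm{RFox}(A) = \mathbb{K}\{D\}$ as one-dimensional. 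I would then verify that, as a group, this kernel is $(\mathbb{K}, +)$: from the proof of Theorem \ref{biiig_theo} an element of $\ker \Psi$ has the form $(\mathrm{id}_L,\, \lambda D \oplus (-\lambda D))$ with $\lambda \in \mathbb{K}$, and because the underlying Lie map is $\mathrm{id}_L$ (hence $\mathrm{id}_{UL}$ on the enveloping algebra), the composition law $\circ_{\mathbf{Fox}}$ restricts to $(\lambda D \oplus -\lambda D)\cdot(\mu D \oplus -\mu D) = (\lambda+\mu)D \oplus -(\lambda+\mu)D$. This is exactly the subgroup the statement denotes $\mathbb{K}$ (scalar multiplication), and as $\ker \Psi$ it is automatically normal.

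It then remains to identify the base of the sequence with $\overline{\mathrm{KRV}}^f_{(g,n+1)}$. By Definition \ref{KRV_def} we have $\overline{\mathrm{KRV}}^f_{(g,n+1)} = \mathrm{Aut}^+(L, [-,-]_{\mathrm{gr}}, \delta_{\mathrm{gr}}^f)$, and under the standing positive-grading (prounipotent) convention of this chapter—already used in the proof of Theorem \ref{biiig_theo} through the conjugation-to-special statement \cite[Theorem 2.7]{Flor3}—this coincides with the group $\mathrm{Aut}(L, [-,-]_{\mathrm{gr}}, \delta_{\mathrm{gr}}^f)$ appearing as the cokernel above. Exactness of the sequence then yields
\begin{equation*}
\mathrm{Aut}_{\mathbf{Fox}^\eta}(\mathcal{C}^f)/\mathbb{K} = \mathrm{Aut}_{\mathbf{Fox}}(\mathcal{C}^f)/\mathbb{K}\{D\} \cong \mathrm{Aut}(L, [-,-]_{\mathrm{gr}}, \delta_{\mathrm{gr}}^f) = \overline{\mathrm{KRV}}^f_{(g,n+1)},
\end{equation*}
which is the desired isomorphism.

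The main obstacle I anticipate is not the exactness bookkeeping but the two identifications that flank it: confirming that the kernel's group law is genuinely additive (so the quotient is by the additive group $(\mathbb{K},+)=\mathbb{K}\{D\}$, matching the "scalar multiplication" of the statement), and—more delicately—justifying that the cokernel $\mathrm{Aut}(L, [-,-]_{\mathrm{gr}}, \delta_{\mathrm{gr}}^f)$ of Theorem \ref{biiig_theo} is precisely the positively-graded group $\mathrm{Aut}^+(L, \ldots)$. The latter rests on the convention that every Lie automorphism in play is filtered of positive degree, which is the setting of \cite{Flor1} and \cite{Flor3}; I would make this convention explicit so as to preclude a spurious semisimple (grading-rescaling) discrepancy between $\mathrm{Aut}$ and $\mathrm{Aut}^+$.
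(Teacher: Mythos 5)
Your proposal is correct and takes essentially the same route as the paper's own proof, which simply quotients the split exact sequence of Theorem \ref{biiig_theo} by its kernel and identifies that kernel with $\mathbb{K}$ via Lemma \ref{LcapR=D}. The extra care you take---fullness of $\mathbf{Fox}^\eta$ in $\mathbf{Fox}$, the additive group law on $\mathbb{K}\{D\}$, and reconciling $\mathrm{Aut}$ with the positively graded $\mathrm{Aut}^+$ of Definition \ref{KRV_def}---only makes explicit details the paper's two-line argument leaves implicit.
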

\begin{proof}
    We know
    \begin{equation*}
       \mathrm{Aut}_{\mathbf{Fox}}(\mathcal{C}^f)/ (\mathrm{LFox(A) \cap \mathrm{RFox(A}}) ) \cong \mathrm{Aut}_{\mathbf{GoTu}}(L, [-,-]_{\mathrm{gr}}, \delta_{\mathrm{gr}}^f) =\overline{\mathrm{KRV}}^f_{(g,n+1)},
    \end{equation*}
    where the first isomorphism follows from Theorem \ref{biiig_theo}. Lastly, in accordance with Lemma \ref{LcapR=D}, $(\mathrm{LFox(A) \cap \mathrm{RFox(A}}))\cong\mathbb{K}$.
\end{proof}

\section{The case of $\mathrm{KRV}_{(0,n)}^{f}$}\label{app_KRV}

Morally speaking, we can treat the genus zero case by formally setting $\mathbf{PaCD}^f_{g=0} \defeq \mathbf{PaCD}^f$. This is less than ideal, as it is not immediately obvious how to adapt some of our previous statements to the instance when $g=0$.  Instead, we will devote this appendix to handling this case. On the one hand, this is justified by the historical significance of the group $\mathrm{KRV}_{(0,3)}^{f^{\mathrm{adp}}}$, which acts freely and transitively on the set of solutions to the original Kashiwara-Vergne conjecture. On the other hand, building the genus zero case from the ground up will allow us to highlight a connection to the work of Kuno \cite{kuno} regarding the \textit{emergent Drinfeld-Kohno braid algebras} (see Definition \ref{em_drin}).

For the convenience of the reader, we recall some notation from Section \ref{krv_chap}. Let $\Sigma \defeq \Sigma_{0,n+1}$ be a compact oriented surface of genus zero with $n+1$ boundary components. We denote $\pi=\pi_1(\Sigma, *)$, where $*$ is a boundary point in $\Sigma$. Given a \textit{generating system} $\{\gamma_1, \dots, \gamma_n \}$ of $\pi$, we define a weight filtration on $\mathbb{K}\pi$ by setting
\begin{equation*}
    \mathrm{wt}(\gamma_j -1)=2
\end{equation*}
for $j=1, \dots, n$. Let $A$ denote the algebra of power series $A \defeq \mathbb{K}\langle \langle z_1, \dots, z_n \rangle \rangle$. We recall that there is a natural isomorphism
\begin{equation*}
    \mathrm{gr^{wt}}(\mathbb{K}\pi) \cong A
\end{equation*}
which maps $\mathrm{gr}(\gamma_j)$ to $z_j$ (cf. Proposition \ref{A_is_grad}). Via this isomorphism, we view the associated graded of the Goldman bracket and Turaev cobracket as maps
\begin{equation*}
    [-,-]_{\mathrm{gr}}:|A| \otimes|A| \rightarrow|A|, \hspace{2em} \delta^f_{\mathrm{gr}}: |A| \rightarrow|A| \otimes|A|,
\end{equation*}
where the Turaev cobracket depends on a choice of framing $f$ on $\Sigma$,
\begin{equation*}
    \begin{tikzcd}
        f: T\Sigma \arrow[r, "\cong" maps to] & \Sigma \times \mathbb{R}^2
    \end{tikzcd}.
\end{equation*}

We already described the Goldman bracket and Turaev cobracket in Section \ref{GT_bialg}, but we summarize the relevant information in the next two lemmas:

\begin{lemma}\label{rho_adp}
    Let $\rho_G:A \times A \rightarrow A $ be the Fox pairing with non-vanishing entries given by
    \begin{equation*}
        \rho_G(z_j, z_j)=-z_j
    \end{equation*}
    for $j=1, \dots, n$. Then $[-,-]^{\rho_G}=[-,-]_{\mathrm{gr}}$.
\end{lemma}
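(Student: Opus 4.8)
The plan is to mirror the argument of Proposition \ref{rhoG_gives_brac} and Corollary \ref{rho_G=gold}, specialized to the generators $\{z_1,\dots,z_n\}$ of $A=\mathbb{K}\langle\langle z_1,\dots,z_n\rangle\rangle$. Since the graded Goldman bracket $[-,-]_{\mathrm{gr}}$ is induced, via Proposition \ref{brac_from_double}, by the graded double bracket $\kappa_{\mathrm{gr}}$, while $[-,-]^{\rho_G}$ is induced by the Massuyeau--Turaev double bracket $\dbl-,-\dbr^{\rho_G}$ of (\ref{masbr}), it suffices to check that these two double brackets coincide. Both are genuine double brackets: for $\dbl-,-\dbr^{\rho_G}$ this relies on the skew-symmetry of $\rho_G$, which I would record first. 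From Definition \ref{fox-skew-def} together with $S(z_j)=-z_j$ and bilinearity one computes $\rho_G^t(z_j,z_j)=S\rho_G(-z_j,-z_j)=S\rho_G(z_j,z_j)=S(-z_j)=z_j=-\rho_G(z_j,z_j)$, so $\rho_G=-\rho_G^t$. Because a double bracket satisfies the Leibniz-type identities (\ref{double}), it is completely determined by its values on the free generators of $A$; hence it is enough to compare generator entries.

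The computational core is to evaluate $\dbl z_j,z_j\dbr^{\rho_G}$ from (\ref{masbr}) using $\Delta(z_j)=z_j\otimes 1+1\otimes z_j$ and $S(z_j)=-z_j$. Expanding the Sweedler sums over the coproducts of both slots produces four terms; three of them contain a factor $\rho_G(1,-)$ or $\rho_G(-,1)$, which vanish because a Fox derivative annihilates the unit. The surviving contribution is $S(\rho_G(z_j,z_j)')\otimes\rho_G(z_j,z_j)''$, and substituting $\Delta(\rho_G(z_j,z_j))=\Delta(-z_j)=(-z_j)\otimes 1+1\otimes(-z_j)$ and applying the antipode gives $z_j\otimes 1-1\otimes z_j$. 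A parallel but shorter check shows $\dbl z_j,z_k\dbr^{\rho_G}=0$ for $j\neq k$, and that no other generator pairings survive, since $\rho_G$ vanishes off the diagonal. This matches exactly the genus-zero specialization of the entries recorded in \cite[Lemma 3.14]{Flor1}, namely $\kappa_{\mathrm{gr}}(z_j,z_j)=z_j\otimes 1-1\otimes z_j$.

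Having matched every generator entry, the determinacy of double brackets forces $\dbl-,-\dbr^{\rho_G}=\kappa_{\mathrm{gr}}$ on all of $A$, and applying Proposition \ref{brac_from_double} to both sides yields $[-,-]^{\rho_G}=[-,-]_{\mathrm{gr}}$, as desired. I do not anticipate a real obstacle: the only points needing care are the bookkeeping of the antipode in the surviving Sweedler term and the explicit confirmation that the off-diagonal entries vanish, both of which are routine. The one conceptual step worth stating cleanly is the reduction \emph{agreement on generators implies agreement everywhere}, which rests on the derivation identities (\ref{double}) and the freeness of $A$.
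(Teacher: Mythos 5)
Your proposal is correct and follows essentially the same route as the paper: the paper proves this lemma (implicitly, as the genus-zero specialization of Proposition \ref{rhoG_gives_brac} and Corollary \ref{rho_G=gold}) by comparing the generator entries of the double bracket $\dbl -,-\dbr^{\rho_G}$ from equation (\ref{masbr}) with those of $\kappa_{\mathrm{gr}}$ recorded in \cite[Lemma 3.14]{Flor1}, and then passing to the induced brackets on $|A|$ via Proposition \ref{brac_from_double}. Your write-up merely makes explicit the steps the paper leaves as ``direct computation'' — the skew-symmetry check, the vanishing of the Sweedler terms involving $\rho_G(1,-)$ and $\rho_G(-,1)$, and the determinacy of double brackets by their values on generators — all of which are handled correctly.
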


\begin{lemma}\label{q_adp}
    Given a framing $f$ on $\Sigma$, let $q^{f}$ be the quasi-derivation associated to the Fox pairing $-\rho_G$, and with values on generators given by
    \begin{equation*}
        q^{f}(z_j)=r^{f}(z_j)=\mathrm{rot}^f(\gamma_j)+1
    \end{equation*}
    for $j=1, \dots,n$. Then $\delta_{q^{f}}=\delta^{f}_{\mathrm{gr}}$.
\end{lemma}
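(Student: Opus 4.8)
The plan is to specialize the proof of Corollary \ref{qf=Tur} to the genus zero algebra $A = \mathbb{K}\langle\langle z_1, \dots, z_n \rangle\rangle$, in which every generator $z_j$ carries weight two and the Fox pairing $\rho_G$ has the single family of nonzero entries $\rho_G(z_j, z_j) = -z_j$ (Lemma \ref{rho_adp}). As in the higher genus case, I would reduce the claim to the two identities
\begin{equation*}
    (|\cdot| \otimes \mathrm{id})\, d_{q^f} = \mu_{r,\mathrm{gr}}^f, \qquad (\mathrm{id} \otimes |\cdot|)\, P d_{(q^f)^t} = \mu_{l,\mathrm{gr}}^f,
\end{equation*}
and then combine them with the definition $\delta_{q^f} = d_{q^f} + P d_{(q^f)^t}$ from equation (\ref{delta_def}) and the characterization $\delta^f_{\mathrm{gr}} = (\mathrm{id}\otimes|\cdot|)\mu_{r,\mathrm{gr}}^f + (|\cdot|\otimes\mathrm{id})\mu_{l,\mathrm{gr}}^f$ to conclude $\delta_{q^f} = \delta^f_{\mathrm{gr}}$.

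For the first identity I would split $q^f = q_1 + q_2$ with $q_1 \in \mathrm{Qder}(0)$ determined by $q_1(z_j) = r^f(z_j)$ and $q_2 \in \mathrm{Qder}(-\rho_G)$ vanishing on all generators, so that $d_{q^f} = d_{q_1} + d_{q_2}$. Expanding $d_{q_2}$ using only that $\rho_G$ is a right Fox derivative in its second argument produces a sum over splitting positions $k$ whose $\dbl-,-\dbr^{\rho_G}$ factor is precisely $\kappa_{\mathrm{gr}}$, while $d_{q_1}$ contributes the $r^f$-weighted term. Matching these termwise against the explicit formula for $\mu_{r,\mathrm{gr}}^f$ in Lemma \ref{lem_mu_gr} yields the first identity. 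This is verbatim the higher genus computation, the sole simplification being the absence of weight-one generators, so that the $r^f(x_i)$ and $r^f(y_i)$ contributions drop out and only the $z_j$ entries of $\rho_G$ and $\kappa_{\mathrm{gr}}$ survive.

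For the second identity I would first record that $q^f$ is skew-symmetric in the sense of Definition \ref{qder-skew-def}: since $r^f(z_j) \in \mathbb{K}$ and $S(z_j) = -z_j$, one computes $(q^f)^t(z_j) = -S(q^f(z_j)) = -q^f(z_j)$, so $q^f + (q^f)^t \in \mathrm{Qder}(0)$ is a derivation vanishing on the generators of the free algebra, hence identically zero. The resulting skew-symmetry of $q^f$ forces $d_{q^f}$ and $P d_{(q^f)^t}$ to obey the analogue of relation (\ref{mu_skew}), which together with the first identity gives the second for free.

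No genuine obstacle arises: the argument is a direct specialization of the proposition underlying Corollary \ref{qf=Tur}. The only point meriting attention is that Lemma \ref{lem_mu_gr} continues to apply when no weight-one generators are present; since that lemma is stated for an arbitrary product $a = a_1 \cdots a_m$ with each $a_i \in L$, it applies unchanged, the relevant rotation data now entering solely through $r^f(z_j) = \mathrm{rot}^f(\gamma_j) + 1$.
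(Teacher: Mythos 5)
Your proposal is correct and takes essentially the same route as the paper: the paper states Lemma \ref{q_adp} without a separate proof, presenting it as a summary/direct specialization to $g=0$ of the results of Section \ref{GT_bialg} (the proposition establishing $(|\cdot|\otimes\mathrm{id})d_{q^f}=\mu^f_{r,\mathrm{gr}}$ together with Corollary \ref{qf=Tur}), which is exactly the specialization you carry out. The two points you flag — skew-symmetry of $q^f$ checked on the weight-two generators, and the continued applicability of Lemma \ref{lem_mu_gr} in the absence of weight-one generators — are the right things to verify and indeed present no obstruction.
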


Let $L$ denote the degree completed Lie algebra in generators $\{z_1, \dots, z_n\}$, so that $UL \cong A$. As before, we consider the object in $\mathbf{Fox}^\eta$
\begin{equation} \label{Cfadp_delta}
    \mathcal{C}^{f} \defeq (L, q^{f} \oplus \rho_G \in Z^2(\mathcal{M}(\mathrm{id_L))} ),
\end{equation}
and its counterpart in $\mathbf{RelE}$, which we denote by the same symbol:
\begin{equation*}
        \mathcal{C}^{f}=
    \begin{tikzcd}
        & L \arrow[d, "\Delta"] \arrow[dl, "\Delta \times T^{f}" swap]\\
         L \oplus L \times_G A \arrow[r, "P_1" swap] & L \oplus L
    \end{tikzcd}
    \end{equation*}
Here, $G \in \mathrm{Hom}_\mathbb{K}(\bigwedge^2 L \oplus L, A)$ and $T^{f} \in \mathrm{Hom}_{\mathbb{K}}(L,A)$ are the Chevalley-Eilenberg cochains corresponding to the relative cochain $q^{f} \oplus \rho_G$ under the equivalence of Theorem \ref{quasi-iso}. 

\begin{remark}\label{big_rem}
    By how we have defined $\mathcal{C}^{f}$, the functor $\Psi: \mathbf{Fox} \rightarrow \mathbf{GoTu}$ gives a non-trivial morphism
    \begin{equation*}
        \begin{tikzcd}
             \mathrm{Aut}_{\mathbf{Fox}^\eta}(\mathcal{C}^{f}) \arrow[r, "\Psi"] & \mathrm{Aut}_{\mathbf{GoTu}}(A, [-,-]_{\mathrm{gr}}, \delta^{f}_{\mathrm{gr}})= \overline{\mathrm{KRV}}_{(0,n+1)}^{f},
        \end{tikzcd}
    \end{equation*}
where the last equality follows from Lemmas \ref{rho_adp} and \ref{q_adp}.
\end{remark}

\subsection{Braid algebras}

We can naturally regard $\mathbf{PaCD}^f$ as a right $\mathbf{PaCD}^f$-operad module. Then, by definition,
\begin{equation*}
    \Pi_{n+1}(\mathbf{PaCD}^f, \mathbf{PaCD}^f)=
    \begin{tikzcd}[column sep=large]
        K_{n+1} \arrow[r, "\pi"] \arrow[d, "d_{n+1}"] & K_{n+1}/\langle t_{(n+1)(n+1)} \rangle \arrow[d, "\Delta"] \\
        \mathfrak{e} \arrow[r, "s_{n+2} \oplus s_{n+1}" swap] & (K_{n+1}/\langle t_{(n+1)(n+1)}\rangle)^{\oplus 2}
    \end{tikzcd},
\end{equation*}
where 
\begin{equation*}
    \mathfrak{e}\defeq (H_{n+2}/\langle t_{(n+1)(n+1)},t_{(n+2)(n+2)} \rangle)/[\ker (s_{n+2} \oplus s_{n+1}),\ker (s_{n+2} \oplus s_{n+1}) ], 
\end{equation*}
and $K_{n+1}$ and $H_{n+2}$ are part of short exact sequences
 \begin{equation*}
            \begin{tikzcd}[column sep= large, row sep= tiny]
            K_{n+1} \arrow[r] & \mathfrak{t}_{n+1}^f \arrow[r, "s_{n+1}"] & \mathfrak{t}_{n}^f,\\
            H_{n+2} \arrow[r] & \mathfrak{t}_{n+2}^f \arrow[r, "s_{n+1}\circ s_{n+2}"] & \mathfrak{t}_{n}^f.
            \end{tikzcd}
        \end{equation*}

In the same way that we handled the higher genus cases, we can use Proposition \ref{main_app} to write $K_{n+1}$ in terms of generators and relations. This yields that 
\begin{equation*}
    K_{n+1}/\langle t_{(n+1)(n+1)} \rangle=\widehat{\mathrm{Lie}}(t_{1(n+1)}, \dots , t_{n(n+1)})\eqdef L^n.
\end{equation*}
We also define
\begin{equation*}
    \overline{H}_{n+2} \defeq H_{n+2}/\langle t_{(n+1)(n+1)},t_{(n+2)(n+2)} \rangle.
\end{equation*}

\begin{lemma}\label{ker_sis}
    Let $N$ denote the abelian Lie subalgebra of $\mathfrak{e}$,
    \begin{equation*}
        N \defeq \langle t_{(n+1)(n+2)} \rangle_{\overline{H}_{n+2}}/ [\langle t_{(n+1)(n+2)} \rangle, \langle t_{(n+1)(n+2)} \rangle]_{\overline{H}_{n+2}}.
    \end{equation*}
    The following sequence is short-exact:
    \begin{equation*}
        \begin{tikzcd}
            0 \arrow[r]& N \arrow[r, "\iota"]& \mathfrak{e} \arrow[r, "P"] & L^{n} \oplus L^{n}
        \end{tikzcd},
    \end{equation*}
    where $P=s_{n+2} \oplus s_{n+1}$ and $\iota$ stands for the inclusion.
\end{lemma}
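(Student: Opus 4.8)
The plan is to mirror the proof of the analogous higher-genus lemma from Section~\ref{sec_braid_alg} essentially verbatim, since specializing to $g=0$ simply deletes the generators $x_i^a,y_i^a$ and leaves only the $t_{ij}$. First I would dispatch the two easy conditions. Injectivity of $\iota$ is immediate, as it is the inclusion of a subalgebra. Surjectivity of $P=s_{n+2}\oplus s_{n+1}$ follows by observing that $s_{n+2}$ carries the generators $t_{i(n+1)}$ (for $1\le i\le n$) to the generators of the first copy of $L^n$ while killing every element indexed by $n+2$, and symmetrically $s_{n+1}$ carries the $t_{i(n+2)}$ onto the second copy of $L^n$; since $L^n$ is free on the $t_{i(n+1)}$, there are no relations among the images obstructing surjectivity of either factor.

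The content of the lemma is exactness in the middle, namely $\ima\iota=\ker P$. The inclusion $\ima\iota\subseteq\ker P$ is clear, because $t_{(n+1)(n+2)}$ involves both deleted strands, so $P(t_{(n+1)(n+2)})=0$, and $N$ is generated as an ideal by $t_{(n+1)(n+2)}$. For the reverse inclusion I would exhibit the vector-space decomposition
\begin{equation*}
    \mathfrak{e} = \langle t_{(n+1)(n+2)}\rangle_{\mathfrak{e}} \oplus \big(t_{i(n+1)}\big)_{\mathfrak{e}} \oplus \big(t_{i(n+2)}\big)_{\mathfrak{e}},
\end{equation*}
where $(\,-\,)_{\mathfrak{e}}$ denotes the subalgebra generated in $\mathfrak{e}$ and $1\le i\le n$. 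Since $P$ restricts to an isomorphism on each of the two outer summands (onto the first, resp.\ second, copy of $L^n$) and annihilates the first summand, no element of $\mathfrak{e}$ lying outside $\langle t_{(n+1)(n+2)}\rangle_{\mathfrak{e}}$ can be sent to zero, giving $\ker P\subseteq\langle t_{(n+1)(n+2)}\rangle_{\mathfrak{e}}=N$.

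The main obstacle will be justifying the displayed direct-sum decomposition, and in particular verifying that the two outer summands meet the ideal $\langle t_{(n+1)(n+2)}\rangle_{\mathfrak{e}}$ only in zero. This is precisely where I would invoke Proposition~\ref{main_app}: writing $\overline{H}_{n+2}$, and hence $\mathfrak{e}$ after abelianizing $\ker P$, as a quotient of a free Lie algebra on the generators $t_{i(n+1)}$, $t_{i(n+2)}$, $t_{(n+1)(n+2)}$ modulo braid relations of the form $[v,w]=f$, the semidirect-product description of Proposition~\ref{main_app} splits off $\langle t_{(n+1)(n+2)}\rangle$ as an ideal complemented by the subalgebra generated by the remaining $t$'s, and the required transversality follows just as in the higher-genus computation. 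Once this decomposition is established the exactness is forced, and the sequence is short-exact.
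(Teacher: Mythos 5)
Your outline reproduces the paper's own proof almost verbatim: injectivity and surjectivity are dispatched in the same way, and exactness in the middle rests on the same three-fold vector-space decomposition of $\mathfrak{e}$ followed by the observation that $P$ kills the ideal and is injective on the sum of the two subalgebras $(t_{i(n+1)})_{\mathfrak{e}}$ and $(t_{i(n+2)})_{\mathfrak{e}}$. Up to that point there is nothing to object to. The problem is the one step you correctly identify as the crux --- justifying the decomposition --- where your proposed appeal to Proposition~\ref{main_app} does not work.

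Proposition~\ref{main_app} requires every relation coupling $V$ to $W$ to have the form $[v,w]=f$ with $f\in L(W)$. If you take $W=\mathbb{K}t_{(n+1)(n+2)}$ and $V$ spanned by the remaining generators, the relevant relations coming from (\ref{F4T}), namely $[t_{i(n+1)},\,t_{i(n+2)}+t_{(n+1)(n+2)}]=0$, are \emph{not} of this form: they only equate $[t_{i(n+1)},t_{(n+1)(n+2)}]$ to $-[t_{i(n+1)},t_{i(n+2)}]$, which lies outside $L(W)$. Worse, the conclusion you want from the semidirect product is false. If Proposition~\ref{main_app} applied with this splitting, the ideal $\langle t_{(n+1)(n+2)}\rangle_{\overline{H}_{n+2}}$ would be a quotient of the one-dimensional Lie algebra $L(W)$, whereas $N$ is free of rank one as an $L^n\oplus L^n$-module (Remark~\ref{imp_rem}), hence infinite-dimensional. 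Concretely, the subalgebra generated by \emph{all} the $t_{i(n+1)},t_{i(n+2)}$ is not transverse to the ideal: by (\ref{F4T}) one has $[t_{i(n+1)},t_{i(n+2)}]=[t_{i(n+2)},t_{(n+1)(n+2)}]$, a nonzero element lying in both. This is exactly why the paper's decomposition uses the two subalgebras generated \emph{separately} by the $t_{i(n+1)}$ and by the $t_{i(n+2)}$, rather than the subalgebra they generate jointly: the mixed brackets are pushed into the ideal by the braid relations. A repair along your lines is possible but needs two stages: first apply Proposition~\ref{main_app} with $W$ spanned by all generators carrying the index $n+2$ (i.e.\ $t_{1(n+2)},\dots,t_{n(n+2)},t_{(n+1)(n+2)}$, after rewriting the (\ref{F4T}) relations in the required form $[v,w]=f\in L(W)$), yielding $\overline{H}_{n+2}\cong L^n\ltimes L(W)$ with $L(W)$ free of rank $n+1$; then split off $\langle t_{(n+1)(n+2)}\rangle$ \emph{inside} the free factor $L(W)$ by Lazard elimination, which yields precisely the three summands. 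Alternatively, one can verify the decomposition directly in the explicit model $\mathfrak{e}\cong (L\oplus L)\times_G A$ of Kuno cited after the lemma, where the three summands become $(0,A)$, $(L\oplus 0,0)$ and $(0\oplus L,0)$.
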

\begin{proof}
    As in the higher genus case, the injectivity of $\iota$ and surjectivity of $P$ are straightforward; our task is to show that $\ima \iota = \ker P$. Given that $t_{(n+2)(n+1)} \in \ker P$, we already know that $\ima \iota \subset \ker P$. For the reverse inclusion, notice the equality of vector spaces
    \begin{equation} \label{decomp}
        \mathfrak{e}= \langle t_{(n+1)(n+2)} \rangle_{\mathfrak{e}} \oplus (t_{1(n+1)}, \dots t_{n(n+1)})_{\mathfrak{e}} \oplus ( t_{1(n+2)},\dots, t_{n(n+2)})_{\mathfrak{e}},
    \end{equation}
    where the parentheses $( - )_{\mathfrak{g}}$ stand for the subalgebra generated in $\mathfrak{e}$. It is clear no element in $\mathfrak{e}-\langle t_{(n+1)(n+2)} \rangle_{\mathfrak{g}}$ maps to zero under $P$.
\end{proof}
As before, we point out $N$ is an abelian $L^n \oplus L^n$-module with respect to the action $g \cdot m =[m, u(g)]$
for $g \in L^n \oplus L^n$ and $m\in N$, where $u$ is any section\footnote{Since $N$ is abelian, this action is independent of the choice of section} of the projection $P: \mathfrak{e} \rightarrow L^n \oplus L^n$.

Notice also how the proof of Lemma \ref{ker_sis} implies that 
\begin{equation}
    \ker (s_{n+2} \oplus s_{n+1}: \overline{H}_{n+2} \rightarrow L^n \oplus L^n)=\langle t_{(n+1)(n+2)} \rangle.
\end{equation}
This observation is key, in that it relates $\mathfrak{e}$ to the emergent Drinfeld-Kohno Lie algebra of type (n,2). 

\begin{definition}[\cite{kuno}, Definition 3.3]\label{em_drin}
    The \textit{Drinfeld-Kohno Lie algebra of type (m,l)}, $\mathrm{dk}_{m,l}$, is the graded Lie algebra in generators $t_{i(m+j)}$ for $1\leq i \leq m$, $1 \leq j \leq l$, and $t_{(m+i)(m+j)}$ for $1 \leq i \neq j \leq l$, subject to relations (\ref{FL}) and (\ref{F4T}).
    
    Let $\mathbf{c}_{m,l}$ be the ideal in $\mathrm{dk}_{m,l}$ generated by the symbols $t_{(m+i)(m+j)}$ for $1 \leq i \neq j \leq l$. The \textit{emergent Drinfeld-Kohno Lie algebra of type (m,l)}, $\mathrm{edk}_{m,l}$, is the quotient algebra
    \begin{equation*}
        \mathrm{edk}_{m,l} \defeq \mathrm{dk}_{m,l}/[\mathbf{c}_{m,l},\mathbf{c}_{m,l}].
    \end{equation*}
\end{definition}

As a direct consequence of Proposition \ref{main_app}, we have that, by definition of $\mathrm{edk_{n,2}}$,
\begin{equation*}
    \widehat{\mathrm{edk}}_{n,2} \cong \mathfrak{e}.
\end{equation*}
In \cite{kuno}, Kuno describes the Lie bracket in $\mathrm{edk}_{n,2}$ in terms of a decomposition analogous to (\ref{decomp}). We rephrase their result in our notation, and in terms of $\mathfrak{e}$:

\begin{lemma}[\cite{kuno}, \S 3.2] The map $\varphi: \mathfrak{e} \rightarrow (L \oplus L) \times_G A$ given by
\begin{equation*}
    \begin{split}
        t_{j(n+1)}& \longmapsto(z_{j} \oplus 0, 0),\\
        t_{j(n+2)}& \longmapsto( 0 \oplus z_{j}, 0),\\
        t_{(n+1)(n+2)} & \longmapsto ( 0 \oplus 0, 1),
    \end{split}
\end{equation*}
 for $j=1, \dots, n$ is an isomorphism of Lie algebras.
\end{lemma}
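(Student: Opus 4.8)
The plan is to reproduce, in genus zero, the argument of Appendix \ref{iso_lemma}, taking advantage of the fact that here the Fox pairing $\rho_G$ has only the single family of non-vanishing entries $\rho_G(z_j, z_j) = -z_j$. First I would check that $\varphi$ is a well-defined Lie morphism. Since $\mathfrak{e} \cong \widehat{\mathrm{edk}}_{n,2}$ is presented by the relations (\ref{FL}) and (\ref{F4T}), together with the vanishing of the central generators $t_{(n+1)(n+1)}, t_{(n+2)(n+2)}$ and the quadratic relation $[\mathbf{c}_{n,2}, \mathbf{c}_{n,2}] = 0$, it suffices to verify that the proposed images satisfy each of these. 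The relations (\ref{FL}) and (\ref{F4T}) follow by direct computation in $(L \oplus L) \times_G A$ using the explicit bracket $[(x, a), (y, b)] = ([x, y],\, y \cdot a - x \cdot b + G(x, y))$; the only delicate point is well-definedness on the quotient by $[\langle t_{(n+1)(n+2)} \rangle, \langle t_{(n+1)(n+2)} \rangle]$. Exactly as in Proposition \ref{iso1}, I would invoke the decomposition $\langle x_n \rangle_{L_n} = [L_n, x_n] \oplus \mathbb{K} x_n$ to reduce the claim to showing $[[v, t_{(n+1)(n+2)}], t_{(n+1)(n+2)}] \in \ker \varphi$ for all $v$ in the relevant ideal, and then observe that the set of such $v$ is a subalgebra of the free Lie algebra containing every generator.

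Next I would produce an explicit inverse $\psi$, following Proposition \ref{psiinverse}. Fix the section $\alpha \colon L \oplus L \to \mathfrak{e}$ determined by $z_j \oplus 0 \mapsto t_{j(n+1)}$ and $0 \oplus z_j \mapsto t_{j(n+2)}$, endow $\mathfrak{e}$ with the $(L \oplus L)$-module structure $g \cdot m = [m, \alpha(g)]$, and let $\beta \colon A \to \mathfrak{e}$ be the unique $(L \oplus L)$-module map with $\beta(1) = t_{(n+1)(n+2)}$. Setting $\psi(v, a) = \alpha(v) + \beta(a)$, the genus-zero instance of Lemma \ref{betaint} gives $\beta(a) \in N$ for all $a$, so $\psi$ has the correct target. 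To see that $\psi$ is a Lie homomorphism, the crux is the identity $\beta(G(v, w)) = [\alpha(v), \alpha(w)] - \alpha([v, w])$, which, since $G$ is assembled from $\rho_G$, reduces to $\beta(\rho_G(y, z)) = [\alpha(y \oplus 0), \alpha(0 \oplus z)]$ on generators; one proves this by checking that the set of $y$ satisfying it for a fixed generator $z$ forms a subalgebra of $L$ containing all generators, just as in the higher-genus computation.

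Finally, since $\varphi$ and $\psi$ restrict to mutually inverse assignments on generators and both are Lie morphisms, $\psi \varphi = \mathrm{id}_{\mathfrak{e}}$ and $\varphi \psi = \mathrm{id}$, so $\varphi$ is an isomorphism. The main obstacle is the pair of \emph{subalgebra-containing-generators} verifications — the well-definedness on the quadratic quotient in the first step and the cocycle identity $\beta(\rho_G(y, z)) = [\alpha(y \oplus 0), \alpha(0 \oplus z)]$ in the second — but both are markedly lighter here than in genus $g$, because $\rho_G$ is supported only on the diagonal pairs $(z_j, z_j)$. In fact, once the identification $\widehat{\mathrm{edk}}_{n,2} \cong \mathfrak{e}$ is in place, the statement is essentially Kuno's description of the bracket on $\mathrm{edk}_{n,2}$, so it is equally legitimate to simply cite \cite{kuno} for the conclusion.
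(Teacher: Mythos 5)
Your proposal is correct, but it takes a genuinely different route from the paper: the paper does not prove this lemma at all. Having identified $\mathfrak{e} \cong \widehat{\mathrm{edk}}_{n,2}$ as a consequence of Proposition \ref{main_app}, the paper treats the lemma as a restatement, in its own notation, of Kuno's description of the Lie bracket on $\mathrm{edk}_{n,2}$ relative to the decomposition (\ref{decomp}) --- which is exactly the citation option you mention in your closing sentence. Your primary argument, by contrast, is a self-contained genus-zero rerun of Appendix \ref{iso_lemma}: first well-definedness of $\varphi$ on the quadratic quotient, via the decomposition $\langle x_n\rangle_{L_n} = [L_n, x_n]\oplus \mathbb{K}x_n$ and the subalgebra-containing-generators trick of Proposition \ref{iso1}; then an explicit inverse $\psi = \alpha + \beta$ as in Proposition \ref{psiinverse}, with the key identity $\beta(G(v,w)) = [\alpha(v),\alpha(w)] - \alpha([v,w])$ reduced to $\beta(\rho_G(y,z)) = [\alpha(y\oplus 0),\alpha(0\oplus z)]$ on generators. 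This is sound, and your observation that it is lighter than the genus-$g$ case is accurate for a concrete reason worth recording: since $\rho_G$ is supported only on the diagonal pairs $(z_j,z_j)$ and $\mathfrak{t}_{n}^f$ has no analogue of relation (\ref{FTg}), every generator of $\mathfrak{e}$ maps to an element without correction terms (contrast the image of $t_{1(n+2)}$ in Proposition \ref{iso1}, which is forced by (\ref{FTg}) to carry the summand $-(0\oplus 0,1)$). What your route buys is independence from \cite{kuno} and a treatment uniform with the genus-$g$ appendix; what the paper's route buys is brevity and the explicit bridge to Kuno's emergent Drinfeld-Kohno algebras, which is one of the stated motivations for this appendix. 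The only point to keep as careful as the paper does in Appendix \ref{iso_lemma} is the module structure $g \cdot m = [m,\alpha(g)]$ underlying $\beta$: it is an honest $(L\oplus L)$-action only after restriction to the abelian kernel $N$ (your genus-zero analogue of Lemma \ref{betaint} is what licenses this), so that step should be stated in the same order as in the higher-genus proof.
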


We close this subsection with the genus zero analogue of Proposition \ref{Cgamma_is_C}:
\begin{lemma} \label{D_is_C_adp}
    The two commutative diagrams
    \begin{equation*}
        \mathcal{D}^{f} \defeq
        \begin{tikzcd}
         & L^{2} \arrow[d, "\Delta"] \arrow[dl, "d_{{n+1}}^{f}" swap]\\
         \mathfrak{e} \arrow[r, "P" swap] & L^{n} \oplus L^{n}
    \end{tikzcd} \hspace{2em} \text{and} \hspace{2em}
    \mathcal{C}^{f} \defeq
    \begin{tikzcd}
        & L \arrow[d, "\Delta"] \arrow[dl, "\Delta \times T^{f}" swap]\\
         L \oplus L \times_G A \arrow[r, "P_1" swap] & L \oplus L
    \end{tikzcd}
    \end{equation*}
    are isomorphic as objects in $\mathbf{RelE}$.
\end{lemma}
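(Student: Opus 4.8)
The plan is to transcribe the argument of Appendix \ref{iso_lemma}, which proved Proposition \ref{Cgamma_is_C} in the higher genus case, into the genus zero setting. The one ingredient that carried the bulk of the work there — the construction and inversion of the Lie isomorphism $\varphi$ between the extension algebra and $(L\oplus L)\times_G A$ — is already available here: it is exactly the content of the preceding lemma (Kuno's description of $\mathrm{edk}_{n,2}$), which provides an isomorphism of Lie algebras $\varphi\colon \mathfrak{e}\to (L\oplus L)\times_G A$. Throughout I identify $L=\widehat{\mathrm{Lie}}(z_1,\dots,z_n)$ with $L^{n}=K_{n+1}/\langle t_{(n+1)(n+1)}\rangle$ via $z_j\mapsto t_{j(n+1)}$, so that the source objects and the bases of $\mathcal{D}^{f}$ and $\mathcal{C}^{f}$ coincide on the nose.

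First I would assemble the candidate isomorphism in $\mathbf{RelE}$. By Definition \ref{def_relE} a morphism $\mathcal{D}^{f}\to\mathcal{C}^{f}$ is a triple of Lie maps on the three corners; I take the identity on the top object $L$ and on the base $L\oplus L$, and the isomorphism $\varphi$ on the extension. Since $\varphi$ is invertible and the identities trivially are, this will be an isomorphism in $\mathbf{RelE}$ as soon as the three defining squares commute. Two of them are immediate: the square relating the two copies of $\Delta$ commutes because its verticals are identities, and the square relating the projections $P$ and $P_1$ commutes because $\varphi$ is built to intertwine them, i.e. $P_1\circ\varphi=P$, which one reads off from the generator values $\varphi(t_{j(n+1)})=(z_j\oplus 0,0)$ and $\varphi(t_{j(n+2)})=(0\oplus z_j,0)$.

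The only square with real content is the one comparing the two sections, namely the identity
\[
    \varphi\circ d^{f}_{n+1}=\Delta\times T^{f}.
\]
Both sides are Lie algebra homomorphisms $L\to (L\oplus L)\times_G A$ — the right-hand side because $T^{f}\oplus G$ is a relative cocycle (cf. the verification that $f\times\omega$ is a Lie map in Appendix \ref{cat_appendix}), and the left-hand side as a composite of Lie maps — so it suffices to check equality on the generators $z_j$. Writing $d^{f}_{n+1}(z_j)=d_{n+1}(z_j)+r^{f}(z_j)\,t_{(n+1)(n+2)}$ and using the operadic composition \eqref{op_tn} to evaluate $d_{n+1}(t_{j(n+1)})=t_{j(n+1)}+t_{j(n+2)}$, together with $\varphi(t_{(n+1)(n+2)})=(0\oplus 0,1)$, I obtain $\varphi(d^{f}_{n+1}(z_j))=(\Delta(z_j),r^{f}(z_j))$; on the other side $(\Delta\times T^{f})(z_j)=(\Delta(z_j),T^{f}(z_j))$ and $T^{f}(z_j)=q^{f}(z_j)=r^{f}(z_j)$ by Lemma \ref{q_adp}, so the two agree. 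I expect this generator computation — pinning down $d_{n+1}$ on $t_{j(n+1)}$ from \eqref{op_tn} and tracking the scalar $r^{f}(z_j)$ correctly through $\varphi$ — to be the only place needing care; it is precisely the genus zero shadow of the final computation in Appendix \ref{iso_lemma}. Once it is in hand, $(\mathrm{id}_L,\mathrm{id}_{L\oplus L},\varphi)$ is an isomorphism in $\mathbf{RelE}$, which is the assertion of the lemma.
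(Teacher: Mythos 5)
Your proposal is correct and follows essentially the same route as the paper: it uses Kuno's isomorphism $\varphi\colon \mathfrak{e}\to (L\oplus L)\times_G A$ together with the identification $\mu\colon L^n\to L$, $t_{j(n+1)}\mapsto z_j$ (which you build in implicitly rather than carrying as an explicit vertical map), reduces the claim to three commuting squares, and isolates the section square as the only one with content, settling it by the same generator computation $\varphi\bigl(d^{f}_{n+1}(t_{j(n+1)})\bigr)=(z_j\oplus z_j, r^f(z_j))=(\Delta\times T^{f})(z_j)$. The only (harmless) additions are your explicit justification that checking on generators suffices because both sides are Lie maps, and the citation of Lemma \ref{q_adp} for $T^{f}(z_j)=q^{f}(z_j)=r^{f}(z_j)$, which the paper states without reference.
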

\begin{proof}
    As in the higher genus case, we can construct an explicit isomorphism in $\mathbf{RelE}$:
    \begin{equation*}
        \begin{tikzcd}
            \mathfrak{e} \arrow[r, "P"] \arrow[d, "\varphi"]& L^n \oplus L^n \arrow[d, "\mu \oplus \mu"] \\
            (L \oplus L) \times_G A \arrow[r, "P_1"] & L \oplus L
        \end{tikzcd}, \hspace{1em}
        \begin{tikzcd}
            L^n \arrow[r, "\Delta"] \arrow[d, "\mu"] & L^n \oplus L^n \arrow[d, "\mu \oplus \mu"]\\
            L \arrow[r, "\Delta"] & L \oplus L
        \end{tikzcd}, \hspace{1 em}
        \begin{tikzcd}
            L^n \arrow[r, "{d_{n+1}^{f}}"] \arrow[d, "\mu"] & \mathfrak{e} \arrow[d, "\varphi"]\\
            L \arrow[r, "\Delta \times T^{f}" swap] & (L \oplus L ) \times_G A
        \end{tikzcd},
    \end{equation*}
    where $\mu: L^n \rightarrow L$ is the isomorphism specified by $\mu(t_{j{n+1}})=z_j$ for $j=1, \dots, n$. The commutativity of the first two diagrams can be checked by direct computation. To check the commutativity of the third, notice that, as a map in $\mathrm{Hom}_\mathbb{K}(L, A)$,
    \begin{equation*}
        T^{f}(z_j)=q^{f}(z_j)=r^f(z_j)
    \end{equation*}
    for $j=1, \dots, 2$. Then
    \begin{equation*}
        \begin{split}
            \varphi d_{n+1}^{f}(t_{j(n+1)})&= \varphi(t_{j(n+1)}+t_{j(n+2)}+r^f(t_{j(n+1)})t_{(n+1)(n+2)})\\
            &=(z_j \oplus z_j,r^f(t_{j(n+1)}))\\ 
            &=\Delta \times T^{f}(z_j)=\Delta \times T^{f} (\mu (t_{j(n+1)})).
        \end{split}
    \end{equation*}
\end{proof}
\begin{remark}\label{imp_rem}
    Consider the section $u: L\oplus L \rightarrow (L \oplus L) \times_G A$, $u(x\oplus y)= (x \oplus y, 0)$ of the projection $P_1$. We can express the $L \oplus L$-action on $\ker P_1$ as
\begin{equation*}
    (x\oplus y) \cdot(0,a)=(0,xa-ay)
\end{equation*}
for all $a \in A$. This immediately implies that $N$ is free as an $L \oplus L$-module.
\end{remark}

\begin{lemma}
    Let $n \geq 1$. Given any framing $f$ of $\Sigma$,
    \begin{equation*}
        \mathcal{M}^f \defeq ((\mathrm{PaCD}^f, \mathrm{PaCD}^f, t_{11}),\, \gamma^f: \overline{K}_{n+1} \rightarrow K_{n+1},\, \phi: N \rightarrow U\overline{K}_{n+1})\in \mathrm{OpR}^\Delta_f(n+1),
    \end{equation*}
    where $\phi$ is the $\overline{K}_{n+1} \oplus \overline{K}_{n+1}$-module isomorphism uniquely defined by setting $\phi(t_{(n+1)(n+2)})=1$.
\end{lemma}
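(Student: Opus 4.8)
The plan is to verify the three conditions defining membership in $\mathrm{OpR}^\Delta_f(n+1)$ as laid out in Definition \ref{op_delta}. First I would confirm that the triplet's underlying object $(\mathbf{PaCD}^f, \mathbf{PaCD}^f, t_{11})$ lies in $\mathrm{OpR}^\Delta$; this is immediate from the earlier discussion, since $\mathbf{PaCD}^f$ shares objects with $\mathbf{Pa}$, and the relation $t_{ii} \circ_1 \mathbf{u} = \sum_{\{p,q\}\subset \varnothing} t_{pq} = 0$ holds by the genus-zero analogue of the operadic composition, so $t_{11}$ is a valid marking. Next I would recall from the computation in this appendix that $\overline{K}_{n+1} = K_{n+1}/\langle t_{(n+1)(n+1)}\rangle \cong L^n = \widehat{\mathrm{Lie}}(t_{1(n+1)}, \dots, t_{n(n+1)})$, which is by construction a free Lie algebra, discharging the freeness requirement on $\overline{K}_{n+1}$.

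The section requirement is handled by defining $\gamma^f(x) = x + r^f(x)\, t_{(n+1)(n+1)}$ exactly as in the higher-genus case (cf. Lemma \ref{sec_from_frames}), where $r^f$ is the linear map associated to the framing $f$; one checks $\pi \circ \gamma^f = \mathrm{id}$ directly since $t_{(n+1)(n+1)} \in \ker \pi$. This leaves the module isomorphism $\phi$ as the genuine content. Here I would invoke Remark \ref{imp_rem}, which established (via the isomorphism $\mathfrak{e} \cong (L \oplus L)\times_G A$ of the preceding lemma) that $N$ is free as an $L \oplus L$-module, so that $N$ is freely generated by $t_{(n+1)(n+2)}$ as an $\overline{K}_{n+1} \oplus \overline{K}_{n+1}$-module. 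Consequently the single assignment $\phi(t_{(n+1)(n+2)}) = 1$ extends uniquely to a module isomorphism $\phi: N \to U\overline{K}_{n+1}$, precisely mirroring the argument given for $M$ in the body of the paper.

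The main obstacle, such as it is, lies entirely in justifying that $N$ is free of rank one as a module over $\overline{K}_{n+1} \oplus \overline{K}_{n+1}$ — everything else is bookkeeping. This freeness is the genus-zero counterpart of Corollary \ref{M_is_free}, and I expect it to follow from Remark \ref{imp_rem}, where the explicit action $(x \oplus y)\cdot(0,a) = (0, xa - ay)$ on $\ker P_1 \cong A = U\overline{K}_{n+1}$ exhibits $A$ as the free module on the generator $1$. Transporting this across the isomorphism $\varphi: \mathfrak{e} \to (L \oplus L)\times_G A$ sends $1$ to $t_{(n+1)(n+2)}$, so $N$ is free on $t_{(n+1)(n+2)}$; condition $\phi(t_{(n+1)(n+2)}) = 1$ then determines $\phi$ uniquely, completing the verification that $\mathcal{M}^f \in \mathrm{OpR}^\Delta_f(n+1)$.
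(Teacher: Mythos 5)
Your proposal is correct and takes essentially the same approach as the paper: the paper's proof likewise reduces everything to checking that $\phi$ is well defined, invoking Remark \ref{imp_rem} for the freeness of $N$ as an $L^n \oplus L^n$-module and using the isomorphism $\varphi$ (which sends $t_{(n+1)(n+2)}$ to the generator $(0,1)$ of $\ker P_1 \cong A$) to conclude that $t_{(n+1)(n+2)}$ generates $N$ freely. Your explicit verification of the section $\gamma^f(x) = x + r^f(x)\,t_{(n+1)(n+1)}$ is a small bookkeeping step the paper leaves implicit, but it is the correct genus-zero analogue of Lemma \ref{sec_from_frames}.
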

\begin{proof}
    We can see that $(\mathrm{PaCD^f}, \mathrm{PaCD}^f, t_{11})\in \mathrm{OpR}^\Delta$, and we have already observed how $\overline{K}_{n+1}=K_{n+1}/\langle t_{(n+1)(n+1)}\rangle$ is isomorphic to a free Lie algebra. We just need to verify that $\phi$ is a well defined isomorphism. By Remark \ref{imp_rem}, we already know $N$ is free as an $L^n \oplus L^n$-module. The element $t_{(n+1)(n+2)}$ is indeed a generator, as $\varphi(t_{(n+1)(n+2)})=(0,1)$ is the generator of $\ker P_1 \cong A$ as an $L \oplus L$-module.
\end{proof}

\begin{theorem}\label{big_adp}
    For $n \geq 1$, let $f$ be a framing of $\Sigma_{0,n+1}$. Then $\Lambda_{n+1}(\mathcal{M}^f)$ is isomorphic to the associated graded Goldman-Turaev Lie bialgebra on $\Sigma_{0,n+1}$ with respect to $f$.
\end{theorem}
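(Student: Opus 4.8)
The plan is to trace the single object $\mathcal{M}^f$ through the four stages of the composite $\Lambda_{n+1} = \Psi \circ \mathrm{E}^{-1} \circ \mathrm{F}^{-1} \circ \mathrm{D}_{n+1}$, in exact parallel with the higher-genus argument of Proposition \ref{PaCD--GoTU} together with Theorem \ref{PaCD_is_GT}. First I would evaluate $\mathrm{D}_{n+1}(\mathcal{M}^f)$. Using the computation of $\Pi_{n+1}(\mathbf{PaCD}^f, \mathbf{PaCD}^f)$ opening the braid-algebra subsection of this appendix, and the identifications $\overline{K}_{n+1} = K_{n+1}/\langle t_{(n+1)(n+1)}\rangle \cong L^n$ and $\mathfrak{e} \cong \widehat{\mathrm{edk}}_{n,2}$ obtained via Proposition \ref{main_app}, the image is the decorated diagram with underlying $\mathbf{RelE}$ data $\mathcal{A}^{\gamma^f}$ and module isomorphism $\phi$. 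Since $d_{n+1}^{f} = d_{n+1} \circ \gamma^f$ by the definition of $\gamma^f$, this underlying diagram is precisely $\mathcal{D}^f$ of Lemma \ref{D_is_C_adp}, carrying $\phi: N \to U\overline{K}_{n+1}$.

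Next I would invoke Lemma \ref{D_is_C_adp}, which supplies an isomorphism $\mathcal{D}^f \cong \mathcal{C}^f$ in $\mathbf{RelE}$; because $\mathrm{F}^{-1}$, $\mathrm{E}^{-1}$ and $\Psi$ all respect isomorphism, this lets me replace $\mathcal{D}^f$ by $\mathcal{C}^f = (\Delta\colon L \to L\oplus L,\, A,\, T^{f} \oplus G)$. Applying $\mathrm{F}^{-1}$ (Proposition \ref{co_del=e_del}) and then $\mathrm{E}^{-1}$ (Theorem \ref{quasi-iso}), exactly as in Proposition \ref{PaCD--GoTU}, lands on the object $(L,\, q^{f} \oplus \rho_G) \in \mathbf{Fox}^\eta$, where $q^{f}$ and $\rho_G$ are the quasi-derivation and Fox pairing recorded in Lemmas \ref{q_adp} and \ref{rho_adp}. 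Finally $\Psi$ (Theorem \ref{RelCo--GoTu}) sends this to $(A,\, [-,-]^{\rho_G},\, \delta_{q^{f}})$, and Lemmas \ref{rho_adp} and \ref{q_adp} identify $[-,-]^{\rho_G} = [-,-]_{\mathrm{gr}}$ and $\delta_{q^{f}} = \delta^{f}_{\mathrm{gr}}$, which is the asserted associated graded Goldman-Turaev bialgebra of $\Sigma_{0,n+1}$ with respect to $f$.

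The one point requiring care is the compatibility of the module decoration $\phi$ with the equivalence $\mathbf{RelE}^\eta \cong \mathbf{RelCo}^\eta$: I must verify that the normalization $\phi(t_{(n+1)(n+2)}) = 1$ — legitimate because Remark \ref{imp_rem} shows $N$ is free of rank one over $L^n \oplus L^n$, generated by $t_{(n+1)(n+2)}$ — reproduces the cocycle $G$ on the nose, so that $\Psi$ yields the Goldman Fox pairing $\rho_G$ itself rather than a scalar multiple of it. Everything else is bookkeeping, since the genus-zero diagram chase is formally identical to the higher-genus one; the only genuine input specific to $g = 0$ is the $\mathrm{edk}_{n,2}$ description of $\mathfrak{e}$ and the degree-two weighting of the $z_j$ (equivalently, of the $t_{j(n+1)}$), both of which are already furnished by the preceding lemmas.
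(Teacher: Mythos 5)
Your proposal is correct and takes essentially the same route as the paper's proof: compute $\mathrm{D}_{n+1}(\mathcal{M}^f)=(\mathcal{D}^f,\phi)$, transfer across the isomorphism of Lemma \ref{D_is_C_adp} to $\mathcal{C}^f$, pass through $\mathrm{F}^{-1}$ and $\mathrm{E}^{-1}$ to land on $(L, q^{f}\oplus\rho_G)\in\mathbf{Fox}^\eta$, and conclude with $\Psi$ together with Lemmas \ref{rho_adp} and \ref{q_adp}. The extra compatibility check you flag for the decoration $\phi$ is legitimate but is already settled by the isomorphism $\varphi$ of Lemma \ref{D_is_C_adp}, which sends $t_{(n+1)(n+2)}$ to $(0\oplus 0,1)$ and hence identifies $\phi$ with the identity module decoration carried by $\mathcal{C}^f$ in $\mathbf{RelE}^\eta$, so no rescaling of $G$ can occur.
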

\begin{proof}
    Recall that the functor $\Lambda_{n+1}$ consists of the composition
    \begin{equation*}
        \begin{tikzcd}[row sep=small]
            \mathrm{OpR}^\Delta_f(n+1)  \arrow[r, "\mathrm{D}_{n+1}"]    & \mathbf{RelE}^\eta \arrow[r, "\mathrm{F}^{-1}", "\cong"'] & \mathbf{RelCo}^\eta \arrow[r, "\mathrm{E}^{-1}", "\cong"'] & \mathbf{Fox}^\eta \arrow[r, "\Psi"] & \mathbf{GoTu}
        \end{tikzcd}.
    \end{equation*}
    By definition, the bracket and cobracket on $\Lambda(\mathcal{M}^f)$ are induced by the Fox pairing and quasi-derivation that correspond to the object of $\mathbf{RelE}^\eta$
    \begin{equation*}
    \left(\mathrm{P}\left(
    \begin{tikzcd}[column sep=large]
        K_{n+1} \arrow[r, "\pi"] \arrow[d, "d_{n+1}"] & \overline{K}_{n+1}\arrow[d, "\Delta"] \arrow[l, bend right=30, swap, "\gamma^f"]\\
        \mathfrak{e} \arrow[r, "s_{n+2} \oplus s_{n+1}" swap] & 
        \overline{K}_{n+1} \oplus \overline{K}_{n+1}
    \end{tikzcd} \right),\,
    \begin{tikzcd}
        N \arrow[d, "\phi"]\\
        U\overline{K}_{n+1}
    \end{tikzcd}
    \right) = (\mathcal{D}^f, \phi).
    \end{equation*}
    By Lemma \ref{D_is_C_adp}, the bialgebra $\Lambda_{n+1}(\mathcal{M}^f)$ will be isomorphic to the bialgebra corresponding to $(L, q^f \oplus \rho_G) \in \mathrm{Ob}(\mathbf{Fox})$, but by Lemmas \ref{rho_adp} and \ref{q_adp}, this is the desired Goldman-Turaev bialgebra.
\end{proof}

\subsection{$\mathrm{GRT}^f$}

\begin{definition}[\cite{gonz}, Definition 2.16]
    The \textit{graded framed Grothendieck-Teichm\"uller} group is the group
    \begin{equation*}
        \mathrm{GRT}^f \defeq \mathrm{Aut}^+_{\mathrm{Op\, \mathbf{Cat(CoAlg)}}}(\mathrm{PaCD}^f),
    \end{equation*}
    where the superscript $+$ indicates that maps in $\mathrm{GRT}^f$ are the identity on objects. We also define $\mathrm{GRT}^f_1$ to be the subgroup of elements $F \in \mathrm{GRT}^f$ that satisfy
    \begin{equation}\label{fix_t11}
        F(t_{11})=t_{11}.
    \end{equation}
\end{definition}

\begin{definition}
    Define the group
    \begin{equation*}
        \mathrm{GRT}^{0} \defeq \mathrm{Aut}^+_{\mathrm{OpR\, \mathbf{Cat(CoAlg)}}}(\mathrm{PaCD}^f, \mathrm{PaCD}^f).
    \end{equation*}
    Similarly to before, we set $\mathrm{GRT^0_1}$ to be the subgroup
    \begin{equation*}
        \mathrm{GRT}^{0}_1 \defeq \{(F,G) \in \mathrm{GRT}^0 \,| \, F(t_{11})=1\}.
    \end{equation*}
\end{definition}
\begin{lemma} \label{GRT_is_GRT0}
    The map:
    \begin{equation*}
        \begin{split}
            \mathrm{GRT}^0 & \xrightarrow{\iota} \mathrm{GRT}^f\\
            (F,G) & \mapsto F
        \end{split}
    \end{equation*}
    is a group isomorphism.
\end{lemma}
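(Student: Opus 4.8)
The plan is to construct an explicit inverse to $\iota$, thereby proving it is a bijective group homomorphism. The statement asserts that an automorphism of the operad $\mathbf{PaCD}^f$ extends uniquely to an automorphism of the operad module pair $(\mathbf{PaCD}^f, \mathbf{PaCD}^f)$, where $\mathbf{PaCD}^f$ is regarded as a right module over itself. The key observation is that when the module coincides with the operad, a module map $G$ is heavily constrained by its operad-map counterpart $F$: the module structure maps are themselves operadic compositions, so equivariance forces $G$ to agree with $F$ on everything generated by the operadic action.

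First I would verify that $\iota$ is a well-defined group homomorphism; this is immediate since forgetting the module component respects composition and identities, and since $F$ being the identity on objects is inherited from $(F,G)$ being so. Next, to build the inverse, I would take $F \in \mathrm{GRT}^f$ and produce the canonical module map $G \defeq F$ (using that the underlying $\mathbb{S}$-module of $\mathbf{PaCD}^f$ as a module equals that of $\mathbf{PaCD}^f$ as an operad). The content is to check that $(F, F)$ is a genuine morphism in $\mathrm{OpR}\,\mathbf{Cat}(\mathbf{CoAlg})$, i.e. that $F$ intertwines the right module action with itself. Since the right action of $\mathbf{PaCD}^f$ on the module $\mathbf{PaCD}^f$ is by the same partial compositions $\circ_i$ that define the operad structure, and $F$ is by hypothesis an operad automorphism preserving these compositions, the module-equivariance diagram commutes automatically. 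One should also confirm $F$ is the identity on objects as a module map, which holds because both components share the object set $\mathrm{Ob}(\mathbf{Pa})$ and $F$ fixes parenthesizations.

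With these two assignments in hand, I would check that $(F \mapsto (F,F))$ is a two-sided inverse to $\iota$. The composition $\iota \circ (F \mapsto (F,F)) = \mathrm{id}$ is clear. For the other direction, given $(F,G) \in \mathrm{GRT}^0$, I must show $G = F$. This is the crux: $G$ is a module automorphism covering the operad automorphism $F$, and I would argue that any such $G$ is determined by $F$. Concretely, every morphism in the module $\mathbf{PaCD}^f$ lies in an endomorphism algebra $\Upsilon_n = \mathrm{End}_{\mathbf{PaCD}^f(n)}(d^{n-1}(1))$, which by Proposition \ref{pav_prop} is generated under operadic composition (and the Hopf operations) by the elements $T$, $X$, and $a^{1,2,3}$; since $G$ must agree with the operad action of $F$ on these generators and is multiplicative, $G$ is forced to equal $F$ wherever $F$ is defined.

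The main obstacle I anticipate is making rigorous the claim that the module map $G$ is uniquely pinned down by $F$. This requires knowing that the module generators coincide with, or are controlled by, the operadic generators, so that equivariance leaves no freedom. I would lean on Proposition \ref{pav_prop} (the generation of the module by $T$, $X$, $a^{1,2,3}$) together with the fact that $\mathbf{PaCD}^f$-as-a-module and $\mathbf{PaCD}^f$-as-an-operad literally have the same underlying categories, so that the module action is internal to the operad. Once this identification of generating sets is granted, the uniqueness is a routine diagram chase using multiplicativity of $G$ and the equivariance squares, and the theorem follows.
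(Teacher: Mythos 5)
Your overall strategy (exhibit the inverse $F \mapsto (F,F)$ and show the module component $G$ of any $(F,G) \in \mathrm{GRT}^0$ is forced to equal $F$) matches the paper's, and the surjectivity half is fine: the right action of $\mathbf{PaCD}^f$ on itself is given by the operadic compositions, so any operad automorphism $F$ intertwines it and $(F,F)$ is a legitimate element of $\mathrm{GRT}^0$. The gap is in the uniqueness half. You assert that ``$G$ must agree with the operad action of $F$ on these generators,'' but this is exactly what has to be proved, and generation of the module by $T$, $X$, $a^{1,2,3}$ does not give it to you: module-map equivariance, $G(m \circ_i p) = G(m) \circ_i F(p)$, only propagates the values of $G$ from a set of module elements to everything they generate under the right action. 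It says nothing about the values $G(T)$, $G(X)$, $G(a^{1,2,3})$ themselves, so a priori $G$ could differ from $F$ on the generators and your diagram chase never gets started. (Also, Proposition \ref{pav_prop} as stated concerns the module $\mathbf{PaCD}^f_g$, not $\mathbf{PaCD}^f$ as a module over itself, so even the generation statement you invoke is being cited for the wrong object.)

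The missing ingredient --- and essentially the paper's entire argument --- is the unit trick. The module $\mathbf{PaCD}^f$-over-itself is generated by the single element $\mathbf{1} \in \mathrm{End}_{\mathbf{PaCD}^f(1)}(1)$, the identity morphism of the object $1$, and the value $G(\mathbf{1})$ \emph{is} forced: $G$ is an enriched functor that fixes objects, hence preserves identity morphisms, so $G(\mathbf{1}) = \mathbf{1}$. Then for any morphism $\sigma$ of arity $n$, writing $\sigma = \mathbf{1} \circ_1^{1,n} \sigma$ (module element on the left, operad element on the right) and applying equivariance gives
\begin{equation*}
    G(\sigma) = G(\mathbf{1}) \circ_1^{1,n} F(\sigma) = \mathbf{1} \circ_1^{1,n} F(\sigma) = F(\sigma),
\end{equation*}
so $G = F$ pointwise with no appeal to generators at all. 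This single observation both closes your gap and renders Proposition \ref{pav_prop} unnecessary.
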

\begin{proof}
    Recall the notation $\Upsilon_n = \mathrm{End}_{\mathrm{PaCD}^f}(d^{n-1}(1))$. The map $\iota$ is clearly surjective. To prove that it is also injective, assume $(\mathrm{id}, G) \in \mathrm{GRT^0}$, and let $\sigma$ be a morphism of arity $n$ in $\mathbf{PaCD}^f$. Then
    \begin{equation*}
        G(\sigma)=G(\mathbf{1}\circ_1^{1,n} \sigma)=G(\mathbf{1})\circ \mathrm{id}(\sigma)=\sigma,
    \end{equation*}
    where $\mathbf{1}$ stands for the multiplicative identity in $\mathrm{End}_{\mathbf{PaCD}^f(1)}(1)$. This implies that also $G = \mathrm{id}$.  
\end{proof}

Let $\gamma: L^n \rightarrow K_{n+1}$ be a section of the canonical projection
\begin{equation*}
    \pi: K_{n+1} \rightarrow K_{n+1}/\langle t_{(n+1)(n+1)} \rangle.
\end{equation*}
Given $(F,G) \in \mathrm{GRT}_1^0$, $g$ induces automorphisms in $K_{n+1}$ and $L^n$; we distinguish them as
\begin{equation*}
    \overline{G}_{n+1} \in \mathrm{Aut_{\mathbf{Lie}}}(K_{n+1}), \hspace{2 em} G_{n+1} \in \mathrm{Aut}_\mathbf{Lie}(L^n).
\end{equation*}
Let $\mathrm{GRT}_1^{0,\gamma}$ denote the subgroup of $\mathrm{GRT}_1^0$ that preserves the section $\gamma$, in the sense that
\begin{equation}\label{fix_adp}
    \gamma \circ G_{n+1} = \overline{G}_{n+1} \circ \gamma
\end{equation}
for all $(F,G) \in \mathrm{GRT}_1^{0,\gamma}$. Almost as a direct consequence of Theorem \ref{big_adp}, we have that
\begin{proposition}
    Given a framing $f$ of $\Sigma_{0,n+1}$, the map
    \begin{equation*}
        \begin{split}
            \mathrm{GRT}_1^{0, \gamma^f} &\longrightarrow \overline{\mathrm{KRV}}_{0,n+1}^f\\
            (F,G) & \longmapsto(G_{n+1}: L \rightarrow L)
        \end{split}
    \end{equation*}
    is a group morphism.
\end{proposition}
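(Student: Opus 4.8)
The plan is to reduce this proposition to Theorem~\ref{GRT_in_KRV}, treating the genus zero case as the formal specialization $g=0$ of the higher genus machinery already established in the body of the paper. The essential observation is that the triple $\mathcal{M}^f = ((\mathrm{PaCD}^f, \mathrm{PaCD}^f, t_{11}),\, \gamma^f,\, \phi)$ plays exactly the role of $\mathcal{N}^f$ in Theorem~\ref{PaCD_is_GT}, and that $\Lambda_{n+1}(\mathcal{M}^f)$ is isomorphic to the Goldman-Turaev bialgebra of $\Sigma_{0,n+1}$ by Theorem~\ref{big_adp}. Since $\Lambda_{n+1}$ is a functor and $\Lambda_{n+1}(\mathcal{M}^f) \cong (A, [-,-]_{\mathrm{gr}}, \delta^f_{\mathrm{gr}})$, any automorphism of the object $\mathcal{M}^f$ in $\mathrm{OpR}^\Delta_f(n+1)$ is carried to an automorphism of the Goldman-Turaev bialgebra, i.e.\ an element of $\overline{\mathrm{KRV}}^f_{0,n+1} = \mathrm{Aut}^+(L, [-,-]_{\mathrm{gr}}, \delta^f_{\mathrm{gr}})$.

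First I would show that $\mathrm{GRT}_1^{0,\gamma^f} \subset \mathrm{Aut}_{\mathrm{OpR}^\Delta_f(n+1)}(\mathcal{M}^f)$, mirroring the proof of Theorem~\ref{GRT_in_KRV}. Given $(F,G) \in \mathrm{GRT}_1^{0,\gamma^f}$, the condition $F(t_{11})=t_{11}$ places $(F,G)$ in $\mathrm{Aut}_{\mathrm{OpR}^\Delta}(\mathrm{PaCD}^f, \mathrm{PaCD}^f, t_{11})$, so that $\Pi_{n+1}$ applies. The defining condition~(\ref{fix_adp}) that $G$ preserves the section $\gamma^f$ is precisely the requirement that $G$ preserve the marked section in $\mathrm{OpR}^\Delta_f(n+1)$. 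The remaining datum to check is that $G$ is compatible with the module isomorphism $\phi: N \to UL^n$, which by the genus zero analogue of Lemma~\ref{phi_is_fix} amounts to verifying $G_{n+2}(t_{(n+1)(n+2)}) = t_{(n+1)(n+2)}$. This follows by the same graphical argument: one writes $t_{(n+1)(n+2)} \cdot \sigma = d^n H$ for an appropriate parenthesization $\sigma$, uses the relation $H = d_1 T - d_0 T - d_2 T$ from~(\ref{H_from_T}), and invokes the fact that elements of $\mathrm{GRT}^0$ fix the morphism $H$ (since they fix $T$ up to the normalization $F(t_{11})=t_{11}$).

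Having established $\mathrm{GRT}_1^{0,\gamma^f} \subset \mathrm{Aut}_{\mathrm{OpR}^\Delta_f(n+1)}(\mathcal{M}^f)$, the map in the statement is simply the composite
\begin{equation*}
    \mathrm{GRT}_1^{0,\gamma^f} \hookrightarrow \mathrm{Aut}_{\mathrm{OpR}^\Delta_f(n+1)}(\mathcal{M}^f) \xrightarrow{\Lambda_{n+1}} \mathrm{Aut}_{\mathbf{GoTu}}(\Lambda_{n+1}(\mathcal{M}^f)) \cong \overline{\mathrm{KRV}}^f_{0,n+1},
\end{equation*}
where the last isomorphism is the content of Theorem~\ref{big_adp} together with Remark~\ref{big_rem}. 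Functoriality of $\Lambda_{n+1}$ guarantees this is a group homomorphism, and tracing through the construction shows that $(F,G)$ is sent to the automorphism $G_{n+1}: L \to L$ of the free Lie algebra, under the identification $L^n \cong L$ via $t_{j(n+1)} \mapsto z_j$ used throughout the genus zero section.

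The main obstacle I anticipate is the compatibility with $\phi$, i.e.\ establishing the genus zero version of Lemma~\ref{phi_is_fix}. In the higher genus setting this relied on the explicit presence of the generators $x_i^a, y_i^a$ and the particular form of the morphism $H$; in genus zero the module $N$ is generated by $t_{(n+1)(n+2)}$ and one must confirm that the same graphical identity holds after deleting all the $x,y$ strands, so that the relevant endomorphism is still fixed by every element of $\mathrm{GRT}^0$. The rest of the argument is a routine bookkeeping translation of Theorem~\ref{GRT_in_KRV} to the case $\mathbf{PaCD}^f_{g=0} = \mathbf{PaCD}^f$, using Lemma~\ref{GRT_is_GRT0} to identify $\mathrm{GRT}^0$ with $\mathrm{GRT}^f$ where convenient.
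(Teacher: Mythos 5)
Your proposal is correct and follows essentially the same route as the paper's proof: establish the inclusion $\mathrm{GRT}_1^{0,\gamma^f} \subset \mathrm{Aut}_{\mathrm{OpR}^\Delta_f(n+1)}(\mathcal{M}^f)$ by checking the three conditions (fixing $t_{11}$, preserving $\gamma^f$, and compatibility with $\phi$ via the genus zero analogue of Lemma \ref{phi_is_fix}), and then let functoriality of $\Lambda_{n+1}$ together with Theorem \ref{big_adp} yield the group morphism into $\overline{\mathrm{KRV}}^f_{0,n+1}$. If anything, you are slightly more careful than the paper, which simply asserts that Lemma \ref{phi_is_fix} ``still implies'' the fixing of $t_{(n+1)(n+2)}$, whereas you correctly flag and sketch the re-verification of the graphical identity after deleting the genus strands.
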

\begin{proof}
    We only need to show that $\mathrm{GRT}_1^{0, \gamma^f} \subset \mathrm{Aut}_{\mathrm{OpR}^\Delta_f(n+1)}(\mathcal{M}^f)$. By definition, any element of $\mathrm{GRT}_1^{0,\gamma^f}$ will fix $t_{11}$ and preserve the section $\gamma^f$. Lastly, let $\tilde{G}: \mathfrak{e} \rightarrow \mathfrak{e}$ be the automorphism of $\mathfrak{e}$ induced by $(F,G) \in \mathrm{GRT}_1^{0, \gamma^f}$. Lemma \ref{phi_is_fix} still implies that $\tilde{G}(t_{(n+1)(n+2)})=t_{n(n+1)(n+2)}$, so that the $L \oplus L$-isomorphism $\phi: N \rightarrow A$ uniquely defined by $\phi(t_{(n+1)(n+2)})=1$ is indeed \textit{fixed}.
\end{proof}

Lastly, we would like to give special treatment to the group $\overline{\mathrm{KRV}}_{0,3}^{f^{\mathrm{adp}}}$. Recall that the \textit{adapted framing} $f^{\mathrm{adp}}$ on $\Sigma_{0,3}$ is specified by the requirement
    \begin{equation*}
        \mathrm{rot}^{f^\mathrm{adp}}(\gamma_j)=-1
    \end{equation*}
for $j=1,2$.
\begin{lemma}\label{GRTgamma_is_GRT}
    Let $\gamma^{\mathrm{adp}}:L^2 \rightarrow K_3$ be the section induced by the adapted framing $f^{\mathrm{adp}}$ in the sense of Lemma \ref{sec_from_frames}, so that
    \begin{equation*}
        \gamma^{\mathrm{adp}}(t_{j3})=t_{j3}
    \end{equation*}
    for $j=1,2$. Then $\mathrm{GRT}_1^{0, \gamma^{\mathrm{adp}}}=\mathrm{GRT}_1^0$.
\end{lemma}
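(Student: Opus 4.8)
The plan is to unwind the defining condition \eqref{fix_adp} and reduce the equality of groups to a single statement about how the induced automorphism $\overline{G}_3$ acts on the degree-two generators $t_{13},t_{23}$ of $K_3$, which can then be verified directly using naturality of the strand-deletion maps. Throughout I set $n=2$, so $\Sigma=\Sigma_{0,3}$, $\overline{K}_3=K_3/\langle t_{33}\rangle\cong L^2=\widehat{\mathrm{Lie}}(t_{13},t_{23})$, and I use that $t_{33}$ is central in $K_3$ (the genus-zero operad $\mathfrak{t}^f$ carries no framing relation $\mathrm{FT}_g$), so that as a Lie algebra $K_3=L^2\oplus\mathbb{K}t_{33}$ with $L^2$ the free subalgebra generated by $t_{13},t_{23}$. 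Since $r^{f^{\mathrm{adp}}}=0$, the adapted section $\gamma^{\mathrm{adp}}$ is exactly the inclusion of this subalgebra, and therefore $\gamma^{\mathrm{adp}}\circ\pi$ is the linear projection of $K_3$ onto $L^2$ killing the $t_{33}$-component.

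First I would record that any $(F,G)\in\mathrm{GRT}_1^0$ fixes $t_{33}$. Indeed $t_{33}=q_3(1\otimes t_{11})$, so by \eqref{q_ik_f} together with $F(t_{11})=t_{11}$ one gets $\overline{G}_3(t_{33})=t_{33}$; in particular $\overline{G}_3$ preserves $\langle t_{33}\rangle=\mathbb{K}t_{33}$, so $G_3$ is well defined and satisfies $G_3\circ\pi=\pi\circ\overline{G}_3$. Composing this identity on the right with $\gamma^{\mathrm{adp}}$ and using $\pi\circ\gamma^{\mathrm{adp}}=\mathrm{id}$ yields $G_3=\pi\circ\overline{G}_3\circ\gamma^{\mathrm{adp}}$, whence condition \eqref{fix_adp} becomes $(\gamma^{\mathrm{adp}}\circ\pi)\circ\overline{G}_3\circ\gamma^{\mathrm{adp}}=\overline{G}_3\circ\gamma^{\mathrm{adp}}$. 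Since $\gamma^{\mathrm{adp}}\circ\pi$ is the projection onto $L^2$, this is equivalent to $\overline{G}_3(L^2)\subseteq L^2$. As $\overline{G}_3$ is a graded Lie homomorphism and $L^2$ is generated by $t_{13},t_{23}$, it suffices to prove $\overline{G}_3(t_{13}),\overline{G}_3(t_{23})\in L^2$.

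The core computation eliminates the $t_{33}$-component. Since $\overline{G}_3$ is degree-preserving and the degree-two part of $K_3$ is $\mathbb{K}t_{13}\oplus\mathbb{K}t_{23}\oplus\mathbb{K}t_{33}$, write $\overline{G}_3(t_{13})=a\,t_{13}+b\,t_{23}+c\,t_{33}$. I then apply the strand-deletion map $s_1\colon\mathfrak{t}_3^f\to\mathfrak{t}_2^f$. Because $(F,G)$ is an operad morphism it commutes with operadic composition, so $s_1\circ g_3=g_2\circ s_1$; combined with $s_1(t_{13})=0$ this gives $s_1(\overline{G}_3(t_{13}))=g_2(s_1(t_{13}))=0$. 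On the other hand $s_1(t_{23})=t_{12}$ and $s_1(t_{33})=t_{22}$ (a chord between the two surviving strands and a self-chord, respectively), so $s_1(\overline{G}_3(t_{13}))=b\,t_{12}+c\,t_{22}$. As $t_{12}$ and $t_{22}$ are linearly independent in the abelian algebra $\mathfrak{t}_2^f$, we conclude $b=c=0$ and $\overline{G}_3(t_{13})=a\,t_{13}\in L^2$. Applying $s_2$ to $\overline{G}_3(t_{23})$ in exactly the same way gives $\overline{G}_3(t_{23})\in L^2$. Hence $\overline{G}_3(L^2)\subseteq L^2$, so every $(F,G)\in\mathrm{GRT}_1^0$ preserves $\gamma^{\mathrm{adp}}$, proving $\mathrm{GRT}_1^{0,\gamma^{\mathrm{adp}}}=\mathrm{GRT}_1^0$.

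I expect no genuine obstacle, only two points requiring care: the bookkeeping in the second paragraph (checking that once $\overline{G}_3$ fixes $t_{33}$ the section-preservation condition really collapses to $\overline{G}_3(L^2)\subseteq L^2$), and the relabeling convention for $s_1,s_2$, which must be confirmed so that $s_1(t_{23})$ and $s_1(t_{33})$ land on distinct, linearly independent generators of $\mathfrak{t}_2^f$. The contrast with a general framing is illuminating and worth noting: for $r^f\neq0$ the section reads $t_{j3}\mapsto t_{j3}+r^f(z_j)\,t_{33}$, and the same computation forces the scalar $a$ on each strand to equal $1$ in order to preserve it; it is precisely the vanishing $r^{f^{\mathrm{adp}}}=0$ that removes this extra constraint and recovers the entire group.
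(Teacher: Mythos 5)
Your overall strategy --- reducing condition (\ref{fix_adp}) to the containment $\overline{G}_3(L^2)\subseteq L^2$ and then killing the $t_{33}$-component via the strand-deletion maps --- is genuinely different from the paper's argument, and most of it is sound (Step 1, via $t_{33}=q_3(1\otimes t_{11})$ and (\ref{q_ik_f}), and the reduction in Step 2 are both correct). However, the core computation as written rests on a false premise: you assert that $\overline{G}_3$ is degree-preserving, so that $\overline{G}_3(t_{13})=a\,t_{13}+b\,t_{23}+c\,t_{33}$. Elements of $\mathrm{GRT}_1^0$ are automorphisms in $\mathbf{Cat}(\mathbf{CoAlg})$, which only preserve the descending filtration on the completed algebras, not the grading; the pro-unipotent part of $\mathrm{GRT}$ acts by maps of the form $\mathrm{id}+(\text{higher order})$. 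Indeed, the paper's own proof of this lemma exhibits $\overline{G}_3(t_{j3})=\phi^{-1}t_{j3}\phi$ for a group-like $\phi=\phi(t_{13},t_{23})\in UL_2$, which is not homogeneous of degree two. In particular your conclusion $\overline{G}_3(t_{13})=a\,t_{13}$ is false for a general element of $\mathrm{GRT}_1^0$, and the closing remark about non-adapted framings, which leans on that conclusion, is unfounded as well.

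The gap is repairable inside your own framework, and the repaired argument is worth recording because it avoids the tangentiality input the paper quotes. Since $t_{33}$ is central, the splitting $K_3=L^2\oplus\mathbb{K}t_{33}$ holds in every degree, so you may write $\overline{G}_3(t_{13})=\ell+c\,t_{33}$ with $\ell\in L^2$ a completed, generally inhomogeneous Lie series in $t_{13},t_{23}$. Applying $s_1$ and using $s_1\circ\overline{G}_3=G_2\circ s_1$ still yields $0=s_1(\ell)+c\,t_{22}$; now observe that $s_1(\ell)\in\mathbb{K}t_{12}$, because $s_1(t_{13})=0$, $s_1(t_{23})=t_{12}$ (with the evident relabeling), and every bracket of length at least two dies in the abelian algebra $\mathfrak{t}_2^f$. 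Linear independence of $t_{12}$ and $t_{22}$ then forces $c=0$, i.e.\ $\overline{G}_3(t_{13})\in L^2$, and symmetrically for $t_{23}$ via $s_2$; this containment is all that Step 2 requires --- you neither need, nor can have, that $\overline{G}_3(t_{13})$ is a scalar multiple of $t_{13}$. By contrast, the paper's proof invokes the fact that $\overline{G}_3$ is a tangential automorphism whose conjugating element can be taken in $UL_2$ thanks to the splitting $\mathfrak{t}_3^f=\bigoplus_i\mathbb{K}t_{ii}\oplus\mathfrak{t}_3$, so membership of $\overline{G}_3(t_{j3})$ in $L^2$ is immediate there; your corrected route buys the same conclusion using only functoriality under $s_1,s_2$ and the structure of $K_3$ and $\mathfrak{t}_2^f$.
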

\begin{proof}
    Let $(F,G) \in \mathrm{GRT}_1^0$. We would like to prove equality (\ref{fix_adp}) for $\gamma=\gamma^{\mathrm{adp}}$. We know $G_3$ and $\overline{G}_3$ are tangential automorphisms. Since 
    \begin{equation*}
        \mathfrak{t}_3^f= \left(\bigoplus_{i=1}^3 \mathbb{K}t_{ii} \right) \oplus \mathfrak{t}_3, 
    \end{equation*}
    there exists a group-like element $\phi=\phi(t_{13},t_{23}) \in UL_2$ such that
    \begin{equation*}
        \overline{G}_3 \gamma^{\mathrm{adp}}(t_{j3})=\overline{G}_3(t_{j3})=\phi^{-1}t_{j3}\phi=G_3(t_{j3})=\gamma^{\mathrm{adp}}G_3(t_{j3})
    \end{equation*}
    for $j=1,2$.
\end{proof}
In view of the isomorphism $\mathrm{GRT}_1^0 \cong \mathrm{GRT}^f_1$ of Lemma \ref{GRT_is_GRT0}, the preceding lemma implies our version of the Alekseev-Torossian injection of $\mathrm{GRT}$ into $\mathrm{KRV}$:
\begin{corollary}
    The map
    \begin{equation*}
        \begin{split}
            \mathrm{GRT}^f_1 &\longrightarrow \overline{\mathrm{KRV}}_{(0,3)}^{f^\mathrm{adp}}\\
            G & \longmapsto (G_3: L \rightarrow L)
        \end{split}
    \end{equation*}
    is a group morphism.
\end{corollary}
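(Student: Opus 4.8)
The plan is to obtain this corollary purely by assembling three results already established in the excerpt, with no new computation required: the group morphism of the Proposition immediately preceding it, the identification of source groups in Lemma \ref{GRTgamma_is_GRT}, and the isomorphism $\mathrm{GRT}^0 \cong \mathrm{GRT}^f$ of Lemma \ref{GRT_is_GRT0}. I would first specialize, then rewrite the domain, then transport along $\iota^{-1}$.

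First I would apply the preceding Proposition to the specific case $n = 2$ and $f = f^{\mathrm{adp}}$ on the surface $\Sigma_{0,3}$. This directly yields that the assignment $(F,G) \mapsto (G_{3} \colon L \to L)$ defines a group morphism $\mathrm{GRT}_1^{0, \gamma^{\mathrm{adp}}} \to \overline{\mathrm{KRV}}_{(0,3)}^{f^{\mathrm{adp}}}$, where $\gamma^{\mathrm{adp}}$ is the section of Lemma \ref{sec_from_frames} associated to the adapted framing and $G_3$ is the automorphism of $L^2 \cong L$ induced on $K_3/\langle t_{33}\rangle$. The content behind this Proposition is Theorem \ref{big_adp}, which guarantees that elements preserving $t_{11}$, the section $\gamma^{\mathrm{adp}}$, and the module isomorphism $\phi$ lie in $\mathrm{Aut}_{\mathrm{OpR}^\Delta_f(3)}(\mathcal{M}^{f^{\mathrm{adp}}})$, so that $\Lambda_3$ carries them to automorphisms of the Goldman--Turaev bialgebra. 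Next I would invoke Lemma \ref{GRTgamma_is_GRT}, which establishes the equality $\mathrm{GRT}_1^{0, \gamma^{\mathrm{adp}}} = \mathrm{GRT}_1^0$ (the adapted section being automatically preserved by tangentiality together with the splitting $\mathfrak{t}_3^f = \bigoplus_i \mathbb{K}t_{ii} \oplus \mathfrak{t}_3$). Hence the group morphism from the previous step is in fact defined on all of $\mathrm{GRT}_1^0$.

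Finally I would precompose with the inverse of the isomorphism $\iota \colon \mathrm{GRT}^0 \to \mathrm{GRT}^f$ of Lemma \ref{GRT_is_GRT0}, restricted to the subgroups fixing $t_{11}$, so that $\mathrm{GRT}_1^0 \cong \mathrm{GRT}_1^f$. Since a composite of group morphisms is again a group morphism, this produces the desired morphism $\mathrm{GRT}_1^f \to \overline{\mathrm{KRV}}_{(0,3)}^{f^{\mathrm{adp}}}$ sending an operad automorphism $G$ to $G_3$. The only point requiring care — and the closest thing to an obstacle, though it is genuinely minor — is the bookkeeping across $\iota^{-1}$: one must confirm that for $G \in \mathrm{GRT}_1^f$ the corresponding pair $\iota^{-1}(G) = (G, \hat G) \in \mathrm{GRT}_1^0$ induces on $L^2$ the same automorphism $G_3$ whether read off from the operad component $G$ or from the module component $\hat G$. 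This is immediate because $\mathbf{PaCD}^f$ is regarded as a right module over itself, and the proof of Lemma \ref{GRT_is_GRT0} shows the module component is forced to coincide with the operad component; there is therefore no ambiguity in $G_3$, and the stated map is exactly the transported composite.
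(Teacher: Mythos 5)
Your proposal is correct and follows exactly the paper's own route: specialize the preceding Proposition to $n=2$, $f=f^{\mathrm{adp}}$, enlarge the domain via Lemma \ref{GRTgamma_is_GRT}, and transport along the isomorphism of Lemma \ref{GRT_is_GRT0}. Your extra bookkeeping check that $\iota^{-1}(G)=(G,G)$ (the module component being forced to agree with the operad component, since $(G,G)$ is a valid pair and $\iota$ is injective) is a correct and harmless elaboration of what the paper leaves implicit.
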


\printbibliography

\end{document}